\newtheorem{lemma}{Lemma}[section]
\newtheorem{proposition}[lemma]{Proposition}
\newtheorem{corollary}[lemma]{Corollary}
\newtheorem{theorem}[lemma]{Theorem}
\newtheorem{example}[lemma]{Example}
\newtheorem{definition}[lemma]{Definition}
\newtheorem{remark}[lemma]{Remark}
\newtheorem*{Acknowledgement}{Acknowledgements}
\newtheorem{assumption}[lemma]{Assumption}
\newcommand\ie{i\@.e\@. }
\newcommand\pa{ \partial}
\newcommand\bbC{\mathbb C}
\newcommand\bbI{\mathbb I}
\newcommand\bbN{\mathbb N}
\newcommand\bbP{\mathbb P}
\newcommand\bbR{\mathbb R}
\newcommand\bbS{\mathbb S}
\newcommand\bbZ{\mathbb Z}
\newcommand\Crit{\operatorname{Crit}}
\newcommand{\lrp}[1]{\left( {#1} \right)}
\newcommand\tX{\widetilde{X}}
\newcommand\hX{\widehat{X}}
\newcommand\hH{\widehat{H}}
\newcommand\hW{\widehat{W}}
\newcommand\CI{\mathcal{C}^{\infty}}
\newcommand\cC{\mathcal{C}}
\newcommand\cJ{\mathcal{J}}
\newcommand\cA{\mathcal{A}}
\newcommand\cD{\mathcal{D}}
\newcommand\cL{\mathcal{L}}
\newcommand\cR{\mathcal{R}}
\newcommand\db{\overline{\pa}}
\newcommand\tH{\widetilde{H}}
\newcommand\cV{\mathcal{V}}
\newcommand\cU{\mathcal{U}}
\newcommand\cI{\mathcal{I}}
\newcommand\End{\operatorname{End}}
\newcommand\tV{\widetilde{V}}
\newcommand\phg{\operatorname{phg}}
\newcommand\Spec{\operatorname{Spec}}
\newcommand\Id{\operatorname{Id}}
\newcommand\hG{\widehat{G}}
\newcommand\cH{\mathcal{H}}
\newcommand\hC{\widehat{C}}
\newcommand\cM{\mathcal{M}}
\newcommand\Vol{\operatorname{Vol}}
\newcommand\AC{\operatorname{AC}}
\newcommand\SU{\operatorname{SU}}
\newcommand\tr{\operatorname{tr}}
\newcommand{\hx}{\widehat{x}}
\newcommand\hZ{\widehat{Z}}
\newcommand\hV{\widehat{V}}
\newcommand\cP{\mathcal{P}}
\newcommand\QAC{\operatorname{QAC}}
\newcommand\nQAC{\mathfrak{n}\operatorname{QAC}}
\newcommand\nQFB{\mathfrak{n}\operatorname{QFB}}
\newcommand\Qb{\operatorname{Qb}}
\newcommand\nQb{\mathfrak{n}\operatorname{Qb}}
\newcommand\QFB{\operatorname{QFB}}
\newcommand\QALE{\operatorname{QALE}}
\newcommand\bV{\overline{V}}
\newcommand\sing{\operatorname{sing}}
\newcommand\zero{\operatorname{zero}}
\newcommand\cK{\mathcal{K}}
\newcommand\cB{\mathcal{B}}
\newcommand\cO{\mathcal{O}}
\newcommand\cS{\mathcal{S}}
\newcommand\nm{\operatorname{nm}}
\newcommand\wX{\widetilde{X}}
\newcommand\Diam{\operatorname{Diam}}
\newcommand\area{\operatorname{area}}
\begin{document}
\title[warped QAC Calabi-Yau metrics]
{ Warped quasi-asymptotically conical Calabi-Yau metrics}

\author{Ronan J.~Conlon}
\address{Department of Mathematical Sciences, The University of Texas at Dallas, Richardson, TX 75080, USA}
\email{ronan.conlon@utdallas.edu}

\author{Fr\'ed\'eric Rochon}
\address{D\'epartement de Math\'ematiques, Universit\'e du Qu\'ebec \`a Montr\'eal}
\email{rochon.frederic@uqam.ca}

\maketitle

%\dedicatory{\datverp}
\begin{abstract}
We construct many new examples of complete Calabi-Yau metrics of maximal volume growth on certain smoothings of Cartesian products of Calabi-Yau cones with smooth cross-sections, in particular with tangent cone at infinity having singularities of arbitrary depth.  A detailed description of the geometry at infinity of these metrics is given in terms of a compactification by a manifold with corners obtained through the notion of weighted blow-up.   A key analytical step in the construction of these Calabi-Yau metrics is to derive good mapping properties of the Laplacian on some suitable weighted H\"older spaces.  Our methods also produce singular Calabi-Yau metrics with an isolated conical singularity modelled on a Calabi-Yau cone distinct from the tangent cone at infinity, in particular yielding a transition behavior between different Calabi-Yau cones as conjectured by Yang Li.  This is used to exhibit many examples where the tangent cone at infinity does not uniquely specify a complete Calabi-Yau metric with exact K\"ahler form. 
\end{abstract}

\tableofcontents

\numberwithin{equation}{section}

\section{Introduction}

A complete K\"ahler manifold $(X,g,J)$ is Calabi-Yau if it is Ricci-flat and has a nowhere vanishing parallel holomorphic volume form $\Omega_X\in H^0(X;K_X)$.  This latter condition forces the holonomy of $g$ to be contained in $\SU(m)$ with $m=\dim_{\bbC}X$.  By the result of Yau \cite{Yau1978}, we know that a compact K\"ahler manifold admits a Calabi-Yau metric if and only if its canonical bundle is trivial, in which case a unique Calabi-Yau metric can be obtained in each K\"ahler class by solving a complex Monge-Amp\`ere equation.  On non-compact complete K\"ahler manifolds, the triviality of the canonical bundle is also a necessary condition for the existence of a Calabi-Yau metric, but one needs also to take into account the geometry at infinity.  For instance, on $\bbC^2$, the flat metric and the Taub-NUT metric are two complete Calabi-Yau metrics in the same K\"ahler class, but with quite distinct geometry at infinity, the volume growth of the latter being only cubic instead of order $4$.  

In this paper, we will focus on Calabi-Yau metrics of maximal volume growth, that is, such that the volume of a ball of radius $r$ is comparable to $r^{2m}$ for $r$ large with $m$ the complex dimension of the manifold.  A tangent cone at infinity of such a metric is then of the same dimension.  When such a tangent cone at infinity has a smooth cross-section, or equivalently when the Calabi-Yau metric has quadratic curvature decay, then by \cite{Colding-Minicozzi}, this is in fact the unique tangent cone at infinity of the metric.  By \cite{Sun-Zhang},  such Calabi-Yau metrics are asymptotically conical ($\AC$-metrics for short), that is, Calabi-Yau metrics converging smoothly at infinity at a rate $\mathcal{O}(r^{-\epsilon})$ for some $\epsilon>0$ to a Calabi-Yau cone with smooth cross-section.  Various examples have been obtained over the years by solving a complex Monge-Amp\`ere equation, notably in \cite{Joyce, vanC, vanC2011, Goto, CH2013,Ch2015}.  For a fixed Calabi-Yau cone at infinity $(C,g_C)$ with smooth cross-section, a complete classification of asymptotically conical Calabi-Yau manifolds with tangent cone at infinity $(C,g_C)$ was obtained in \cite{CH3}, generalizing in particular Kronheimer's classification \cite{Kronheimer1989} of asymptotically locally Euclidean hyperK\"ahler $4$-manifolds.  The upshot is that all such asymptotically conical Calabi-Yau metrics are obtained by considering a K\"ahler crepant resolution of a deformation of $C$ seen as an affine variety.  

Allowing the tangent cone at infinity to have a singular cross-section greatly opens up the possibilities of examples that can occur.  When $C= \bbC^m/\Gamma$ with $\Gamma$ a finite subgroup of $\SU(m)$ and $g_C$ is the metric induced by the Euclidean metric on $\bbC^m$, Joyce \cite{Joyce} constructed examples of Calabi-Yau metrics on K\"ahler crepant resolutions of $C$, the so called quasi-asymptotically locally Euclidean metrics ($\QALE$-metrics for short).  This was extended in \cite{CDR2016} to obtain Calabi-Yau quasi-asymptotically conical metrics ($\QAC$-metrics for short) in the sense of \cite{DM2014}.  If one considers instead smooth deformations of Calabi-Yau cones with singular cross-sections, then already on $\bbC^n$, many examples were obtained in \cite{YangLi, CR2021,Szekelyhidi}, providing in particular counter-examples to a conjecture of Tian \cite{Tian2006}.  As pointed out in \cite{CR2021}, these examples are not quite $\QAC$-metrics, but they are very close to being so in that they are in some sense warped $\QAC$-metrics.  The Calabi-Yau metrics of \cite{Biquard-Delcroix} constructed on complex symmetric spaces seem to have a similar behavior at infinity.  More recently, adapting the strategy of \cite{Szekelyhidi}, new examples of Calabi-Yau metrics were obtained  in \cite{Firester} by smoothing $\bbC\times C$ when $C$ is a complete intersection  with smooth cross-section equipped with a Calabi-Yau cone metric $g_C$.  

On $\bbC^n$, motivated by the conjecture of Tian \cite{Tian2006}, one could hope that a complete Calabi-Yau metric of maximal volume growth is completely determined up to scale and isometry by its tangent cone at infinity.  When $C=\bbC\times A_1$ with $A_1$ the $(n-1)$-dimensional Stenzel cone with Calabi-Yau cone metric $g_{A_1}$, Sz\'{e}kelyhidi \cite{Szekelyhidi2} showed that this is indeed the case.  However, already on $\bbC^3$, if one takes instead $C=\bbC\times A_2$ with $A_2$ the singular hypersurface 
$$
  \{ z_1^2+z_2^2+z_3^3=0\} \subset \bbC^3,
$$
the Calabi-Yau metric with tangent cone at infinity $\bbC\times A_2$ constructed in \cite{Szekelyhidi} is not unique.  Indeed, a $1$-parameter family of distinct such metrics was recently constructed by Chiu \cite{Chiu}.

In the present paper, we generalize the approach of \cite{CR2021} to construct new examples of complete Calabi-Yau metrics of maximal volume growth having a tangent cone at infinity with singular cross-section.    To state our results, consider $N$ Calabi-Yau cones $(W_1,g_{1}), \ldots, (W_N,g_N)$ with singular apex (\ie not corresponding to the Euclidean space) but with smooth cross-sections.  For each $q\in \{1,\ldots,N\}$, suppose that $W_q$ is a complete intersection in $\bbC^{m_q+n_q}$,
$$
        W_q= \{z_q\in \bbC^{m_q+n_q} \; | \; P_{q,1}(z_q)=\cdots = P_{q,n_q}(z_q)=0\},
$$
for $n_q$ polynomials $P_{q,1},\ldots, P_{q,n_q}$, where $m_q=\dim_{\bbC}W_q$.  Suppose that the natural $\bbR^+$-action on $W_q$ is induced by a diagonal action 
$$
      \begin{array}{lccl}
    \bbR^+\ni  t: & \bbC^{m_q+n_q} & \to & \bbC^{m_q+n_q}  \\
          & z_q  & \mapsto & t\cdot z_q= (t^{w_{q,1}}z_{q,1},\cdots,t^{w_{q,m_q+n_q}}z_{q,m_q+n_q})
      
      \end{array}
$$
for some positive weights $w_{q,1},\ldots, w_{q,m_q+n_q}$.  We will not assume that the cone is quasi-regular, so these weights are not necessarily rational.  We will assume  that each polynomial $P_{q,i}$ is homogeneous of some degree $d_{q,i}$ with respect to the $\bbR^+$-action,
$$
          P_{q,i}(t\cdot z_q)= t^{d_{q,i}} P_{q,i}(z) \quad \forall q\in \{1,\ldots,N\}, \; \forall i\in \{1,\ldots,n_q\},
$$
and that 
$$
    d_{q,1}\le\cdots\le d_{q,n_q}.
$$
Furthermore, we will assume that there is a $d>1$ such that for each $q\in\{1,\ldots,N\}$, there exists $k_q\in\{1,\ldots,n_q\}$ such that $d_{q,1}=\cdots= d_{q,k_q}=d$.  

On $W_q$, the K\"ahler form of the metric $g_{q}$ is given by $\omega_{q}= \frac{\sqrt{-1}}{2}\pa \db r_q^{2}$ with $r_q$ the radial distance to the origin with respect to the Calabi-Yau metric $g_{q}$.
On $W_q$, there is a holomorphic volume form $\Omega^{m_{q}}_q$ defined implicitly by 
\begin{equation}
  dz_{q,1}\wedge \cdots \wedge dz_{q,m_q+n_q}|_{W_q}= \Omega_q^{m_{q}}\wedge dP_{q,1}|_{W_q}\wedge\cdots \wedge dP_{q,n_q}|_{W_q}.
\label{cyc.2}\end{equation}
The fact that $(W_q,g_{q})$ is Calabi-Yau means that there is a constant $c_{m_q}\in\bbC\setminus \{0\}$ depending only on $m_q$ such that
\begin{equation}
  \omega_{q}^{m_q}= c_{m_q}\Omega^{m_q}_{q}\wedge \overline{\Omega}^{m_q}_{q}.
\label{cyc.3}\end{equation}
By \eqref{cyc.2}, the holomorphic volume form $\Omega^{m_q}_{q}$ is homogeneous of degree $\sum_{j=1}^{m_q+n_q}w_{q,j}-\sum_{j=1}^{n_q}d_{q,j}$ with respect to the $\bbR^+$-action, while $\omega_{q}$ is of degree $2$, so we deduce from \eqref{cyc.3} that
\begin{equation}
  m_q = \lrp{\sum_{j=1}^{m_q+n_q} w_{q,j}} -\sum_{j=1}^{n_q} d_{q,j}.
\label{cyc.4}\end{equation}
Using the convention that $m_0=1$ and $n_0=0$, consider the Calabi-Yau cone $W_0:=\bbC^{m_0+n_0}=\bbC$ with canonical Euclidean metric $g_0$, radial function $r_0=|z_0|$ for $z_0\in\bbC$ and holomorphic volume form $\Omega_0^{m_0}=dz_{0}$.  The corresponding $\bbR^+$-action is the standard one, so the weight is just $w_{0}=1$.  We will also be interested in the case $m_0=0$, in which case we will set $W_0=\{0\}$. 

We will consider the 
Cartesian product
\begin{equation}
     C_0:= W_0\times W_1\times\cdots\times W_q \subset \bbC^{m_0+n_0}\times \cdots\times \bbC^{m_q+n_q}=\bbC^{m+n}, \quad m:=\sum_{q=0}^N m_q, \; n:=\sum_{q=0}^N n_q,
\label{pr.1}\end{equation}
with product Calabi-Yau metric $g_{C_0}= g_0\times \cdots g_N$.  Thus, $(C_0,g_{C_0})$ is a Calabi-Yau cone with radial function
\begin{equation} 
     r:= \sqrt{\sum_{q=0}^{N} r_q^2},
\label{pr.2}\end{equation}  
 K\"ahler form 
\begin{equation}
    \omega_{C_0}=\frac{\sqrt{-1}}2\pa\db r^2
\label{pr.3}\end{equation}
and holomorphic volume form
\begin{equation}
 \Omega^m_{C_0}= (-1)^{\sum_{q=2}^Nm_q(n_{q-1}+\cdots+n_1)}\Omega^{m_0}_0\wedge \Omega^{m_1}_1\wedge\ldots\wedge \Omega^{m_N}_N.
\label{pr.3b}\end{equation}
The possible sign in the right hand side of \eqref{pr.3b} is to ensure that this definition agrees with the implicit definition
\begin{multline}
dz_{0}|_{C_0}\wedge (dz_{1,1}\wedge\ldots\wedge dz_{1,m_1+n_1})|_{C_0}\wedge\ldots\wedge(dz_{N,1}\wedge\ldots\wedge dz_{N,m_N+n_N})|_{C_0}=  \\\Omega^m_{C_0}\wedge (dP_{1,1}\wedge \ldots dP_{1,n_1})|_{C_0}\wedge\ldots\wedge (dP_{N,1}\wedge \ldots\wedge dP_{N,n_N})|_{C_0},
\label{pr.3c}\end{multline}
where we use the convention that $dz_{0}|_{C_0}=1$ if $W_0=\{0\}$.
Since $W_q$ is singular at the origin for $q>0$, notice that $(C_0,g_0)$ has a singular cross-section.   In fact, for each subset $\{0\}\subsetneq \mathfrak{q}\subset \{0,1\ldots,N\}$, 
\begin{equation}
    W_{\mathfrak{q}}:= \{ z=(z_0,z_1,\ldots,z_q)\in C_0\; | \; z_q=0 \; \mbox{for} \; q\notin \mathfrak{q}\}
\label{pr.4}\end{equation}
consists of singular points of $C_0$ with the singular locus of $C_0$ given by
\begin{equation}
      C_{0,\sing}= \bigcup_{\{0\}\subset \mathfrak{q}\subsetneq \{0,\ldots,N\}} W_{\mathfrak{q}}.
\label{pr.5}\end{equation}
Moreover, for each such $\mathfrak{q}$, 
\begin{equation}
    C_0= W_{\mathfrak{q}}\times W_{\mathfrak{q}}^{\perp}
\label{pr.6}\end{equation}
with 
\begin{equation}
    W_{\mathfrak{q}}^{\perp}= W_{\mathfrak{q}^c}= \{ z=(z_0,z_1,\ldots,z_q)\in C_0\; | \; z_q=0 \; \mbox{for} \; q\in \mathfrak{q}\},
\label{pr.7}\end{equation}
where $\mathfrak{q}^c= \{0,1,\ldots,N\}\setminus \mathfrak{q}$.  The decomposition \eqref{pr.6} is also Riemannian, namely the restriction of the Calabi-Yau metric $g_{C_0}$ to $W_{\mathfrak{q}}$ and $W_{\mathfrak{q}^c}$ induces Calabi-Yau cone metrics $g_{\mathfrak{q}}$ and $g_{\mathfrak{q}^c}$ such that $g_{C_0}$ is just the Cartesian product of $g_{\mathfrak{q}}$ and $g_{\mathfrak{q}^c}$.  Let
\begin{equation}
V_{\mathfrak{q}}:= \{ z=(z_0,z_1,\ldots,z_q)\in \bbC^{m+n}\; | \; z_q=0 \; \mbox{for} \; q\notin \mathfrak{q}\},
\label{pr.8}\end{equation}
\begin{equation}
V_{\mathfrak{q}}^{\perp}= V_{\mathfrak{q}^c}= \{ z=(z_0,z_1,\ldots,z_q)\in \bbC^{m+n}\; | \; z_q=0 \; \mbox{for} \; q\in \mathfrak{q}\},
\label{pr.9}\end{equation}
be the subspaces such that $W_{\mathfrak{q}}=C_0\cap V_{\mathfrak{q}}$ and $W_{\mathfrak{q}^c}=C_0\cap V_{\mathfrak{q}^c}$.

We will suppose that for some $\epsilon\in\bbC\setminus\{0\}$, the cone $C_0$ admits a smoothing $C_{\epsilon}$ of the form
\begin{equation}
C_{\epsilon}= \{ z=(z_0,\ldots,z_N)\in\bbC^{m+n}\; | \; P_{q,i}(z_q)=\epsilon Q_{q,i}(z_0)\quad  \mbox{for} \quad q\in\{1,\ldots,N\}\; \mbox{and} \; i\in \{1,\ldots,n_q\}\},
\label{cyc.6}\end{equation} 
where each $Q_{q,i}$ is a polynomial in $z_0\in\bbC^{m_0+n_0}=\bbC$ of (weighted) degree $\ell<d$ for some fixed $\ell\ge 0$ not depending on $q$ and $i$.  If  $\ell=0$, we will assume that $W_0=\{0\}$. It comes with a natural holomorphic volume form $\Omega^m_{C_{\epsilon}}$ defined explicitly by
\begin{multline}
dz_{0}|_{C_{\epsilon}}\wedge(dz_{1,1}\wedge\ldots\wedge dz_{1,m_1+n_1})|_{C_{\epsilon}}\wedge\ldots\wedge(dz_{N,1}\wedge\ldots\wedge dz_{N,m_N+n_N})|_{C_{\epsilon}}=  \\\Omega^m_{C_{\epsilon}}\wedge (d(P_{1,1}-\epsilon Q_{1,1})\wedge \ldots d(P_{1,n_1}-\epsilon Q_{1,n_1}))|_{C_\epsilon}\wedge\ldots\wedge (d(P_{N,1}-\epsilon Q_{N,1})\wedge \ldots\wedge d(P_{N,n_N}-\epsilon Q_{N,n_N}))|_{C_\epsilon},
\label{pr.3d}\end{multline}
where again we use the convention that $dz_0|_{C_{\epsilon}}=1$ if $W_0=\{0\}$.  We will make the following two assumptions on the polynomials $Q_{q,i}$.

\begin{assumption} The polynomials $Q_{q,i}$ will not be assumed to be homogeneous, but we will assume that the homogenous part of degree $\ell$, denoted $[Q_{q,i}]$, is non-zero for each $q$ and $i$.  Moreover, if $\ell>0$ and $N>1$, then we will assume that $m_0=1$, so that the zero locus $[Q_{q,i}](z_0)=0$ is the origin in $\bbC$ and corresponds to the hyperplane
$$
      V_{\{1,\ldots,N\}}= \{0\}\times \bbC^{m_1+n_1}\times\cdots \times \bbC^{m_N+n_N}\subset \bbC^{m+n}
$$ 
in $\bbC^{m+n}$.
\label{smooth.1}\end{assumption}

\begin{assumption} For the same $\epsilon\in \bbC\setminus\{0\}$ such that \eqref{cyc.6} is smooth, we suppose that
for each subset $\{0\}\subset \mathfrak{q}\subset \{0,1,\ldots,N\}$,
\begin{equation}
        W_{\mathfrak{q},\epsilon}:= \{ z_{\mathfrak{q}}\in V_{\mathfrak{q}}\; | \; P_{q,i}(z_q)=\epsilon Q_{q,i}(z_0) \quad \mbox{for} \; q\in \mathfrak{q}\setminus\{0\}, \; i\in \{1,\ldots,n_q\}\}
\label{pr.10}\end{equation}
is a smoothing of $W_{\mathfrak{q}}$.  If $m_0=1$ and $W_0=\bbC$, then for all $\omega_{\mathfrak{q}}\in W_{\mathfrak{q}}\cap\bbS(V_{\mathfrak{q}})$ with $\bbS(V_{\mathfrak{q}})$ the unit sphere in $V_{\mathfrak{q}}$, we will also suppose that 
\begin{multline}
 W^{\perp}_{\mathfrak{q},\omega_{\mathfrak{q}},\epsilon}= W_{\mathfrak{q}^c,\omega_{\mathfrak{q}},\epsilon}= \left\{ z_{\mathfrak{q}^c}\in V_{\mathfrak{q}^c}\; | \; \forall q\in \mathfrak{q}^c, \; i \in\{1,\ldots,n_q\}, \right. \\  \left. P_{q,i}(z_{q})=\epsilon[Q_{q,i}](\omega_{0}), \; i\le k_q,  \quad P_{q,i}(z_{q})=0, \;i> k_q \right\}
\label{pr.11}\end{multline}
is a smoothing of $W^{\perp}_{\mathfrak{q}}= W_{\mathfrak{q}^c}$ for $\omega_0\ne 0$, where $\omega_0$ is the component of $\omega_{\mathfrak{q}}$ in $V_0$.
\label{cyc.6c}\end{assumption}
\begin{remark}
The smoothing \eqref{pr.11} is a Cartesian product of the smoothings $W_{q,\omega_{\mathfrak{q}},\epsilon}$ of $W_q$ for $q\in\mathfrak{q}^c$.
\label{pr.11b}\end{remark}

We can now state the main result of this paper; see also Corollaries~\ref{mae.15} and \ref{mae.16} below for more details.
\begin{theorem}
Suppose that Assumptions~\ref{smooth.1} and \ref{cyc.6c} hold.  If $\nu:= \frac{\ell}{d}$ and $\beta:=\min\{d,2m_1,\ldots,2m_N\}$ are such that
\begin{equation}
   \beta> \frac{2}{1-\nu},
\label{int.1a}\end{equation}
then $C_{\epsilon}$ admits a complete Calabi-Yau metric of maximal volume growth with tangent cone at infinity $(C_0,g_{C_0})$.  Furthermore, if $N=1$, this result still holds if instead of \eqref{int.1a}, $\nu$ and $\beta$ are such that
\begin{equation}
   3<\beta\le \frac{2}{1-\nu}<2m_1+5.  
\label{int.1b}\end{equation}
\label{int.1}\end{theorem}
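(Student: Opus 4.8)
The plan is to follow the standard three–step scheme for complete Calabi--Yau metrics of maximal volume growth: first build on $C_{\epsilon}$ an \emph{approximate} Calabi--Yau metric with the prescribed asymptotics; then set up weighted H\"older spaces, adapted to a compactification of $C_{\epsilon}$ by a manifold with corners, on which the Laplacian of the approximate metric is essentially invertible; and finally perturb to an exact solution by solving a complex Monge--Amp\`ere equation. Note that, $C_{\epsilon}$ being a smooth complete intersection, the form $\Omega^{m}_{C_{\epsilon}}$ of \eqref{pr.3d} is a nowhere vanishing holomorphic volume form, so $K_{C_{\epsilon}}$ is trivial and there is a genuine Calabi--Yau problem. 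Away from a large ball and from tubular neighbourhoods of the subcones $W_{\mathfrak{q}}$, the smoothing $C_{\epsilon}$ is biholomorphic to the corresponding region of $C_{0}$ via a map converging to the identity at infinity, so there one transplants $\omega_{C_{0}}$. Near the asymptotic region where $C_{\epsilon}$ accumulates on a stratum $W_{\mathfrak{q}}$, equation \eqref{cyc.6} shows $C_{\epsilon}$ is to leading order the product of $W_{\mathfrak{q}}$ with the smoothing $W^{\perp}_{\mathfrak{q},\omega_{\mathfrak{q}},\epsilon}$ of \eqref{pr.11}, which by Assumption~\ref{cyc.6c} and Remark~\ref{pr.11b} is a Cartesian product of smooth affine varieties carrying an asymptotically conical Calabi--Yau metric, this product being warped in the $W_{\mathfrak{q}}$–directions by the scale $\sim r^{1-\nu}$ dictated by the polynomials $\epsilon Q_{q,i}$ of degree $\ell = \nu d$. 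Patching with cutoffs produces a K\"ahler form $\omega$ on $C_{\epsilon}$ of \emph{warped QAC} type at infinity, with positivity arranged near infinity for the correct parameters and globally by adding $\sqrt{-1}\pa\db$ of a compactly supported potential.

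Next, encode the geometry at infinity by a compactification $\overline{C_{\epsilon}}$ of $C_{\epsilon}$ to a manifold with corners, built from an iterated sequence of weighted blow-ups of $\bbC^{m+n}$ ordered by the depth of the strata $W_{\mathfrak{q}}$, with one boundary hypersurface for each $\mathfrak{q}$ together with a ``main'' face; on $\overline{C_{\epsilon}}$ introduce the weighted H\"older spaces $r^{\vec{\delta}}\cC^{k,\alpha}_{\wQb}(C_{\epsilon})$ adapted to this structure. Writing
\[
  \omega^{m}= e^{f}\,c_{m}\,\Omega^{m}_{C_{\epsilon}}\wedge\overline{\Omega}^{m}_{C_{\epsilon}},
\]
the error $f$ decays at each face at a rate read off from the homogeneity bookkeeping: at the main face by the discrepancy between $dP_{q,i}$ and $d(P_{q,i}-\epsilon Q_{q,i})$, i.e.\ by $\ell-d=-d(1-\nu)$; at the face over $W_{\mathfrak{q}}$ by the asymptotically conical rates $d$ and $2m_{q}$ of the fibre metrics, whence the exponent $\beta=\min\{d,2m_{1},\dots,2m_{N}\}$. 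Thus $f\in r^{\vec{\delta}-2}\cC^{k,\alpha}_{\wQb}(C_{\epsilon})$ for an admissible weight vector $\vec{\delta}$ precisely when these exponents fall in the range usable by the linear theory, which is exactly condition \eqref{int.1a} (resp.\ \eqref{int.1b} when $N=1$).

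The heart of the matter is then to show that
\[
  \Delta_{\omega}\colon\ r^{\vec{\delta}}\cC^{k+2,\alpha}_{\wQb}(C_{\epsilon})\ \longrightarrow\ r^{\vec{\delta}-2}\cC^{k,\alpha}_{\wQb}(C_{\epsilon})
\]
is an isomorphism for a suitable $\vec{\delta}$. This proceeds by computing the indicial roots at each boundary face --- the cone roots $0$ and $2-2m$ at the main face, modified by the warping so the governing threshold becomes $\tfrac{2}{1-\nu}$, and, at the face over $W_{\mathfrak{q}}$, the roots of the Laplacian of the warped product of the cone $W_{\mathfrak{q}}$ with the asymptotically conical smoothing of $W_{\mathfrak{q}^{c}}$, analyzed by induction on the depth --- then patching the model inverses into a parametrix, so that $\Delta_{\omega}$ is Fredholm for non-indicial $\vec{\delta}$, and finally proving injectivity in the chosen range by integration by parts together with the maximum principle, upgrading Fredholmness to invertibility. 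Hypothesis \eqref{int.1a} is exactly what makes the admissible range of $\vec{\delta}$ nonempty while still containing the weight of $f$; when $N=1$ that naive range can be empty, and one must instead allow $u$ to carry a non-decaying but explicitly determined leading term at the main face, solve for that term by hand, and then solve for a genuinely decaying remainder --- the two-sided bound $3<\beta\le\frac{2}{1-\nu}<2m_{1}+5$ is what guarantees both that this leading term occurs in admissible form and that the remainder decays past the next indicial obstruction at the face over $W_{\{0\}}$. With the linear isomorphism available, rewrite the Monge--Amp\`ere equation $(\omega+\sqrt{-1}\pa\db u)^{m}=e^{f}\omega^{m}$ as a fixed point equation $u=\Delta_{\omega}^{-1}\big(f-\log\tfrac{(\omega+\sqrt{-1}\pa\db u)^{m}}{\omega^{m}}+\Delta_{\omega}u\big)$, whose right-hand side is a contraction on a small ball in $r^{\vec{\delta}+2}\cC^{k+2,\alpha}_{\wQb}$ because $f$ is small by the previous step and the nonlinearity is quadratic in $\sqrt{-1}\pa\db u$, which gains precisely the decay margin supplied by \eqref{int.1a}/\eqref{int.1b}.

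The resulting fixed point $u$ gives a Calabi--Yau metric $\omega_{\CY}=\omega+\sqrt{-1}\pa\db u$ on $C_{\epsilon}$; since $u$ and its derivatives decay in $r$, $\omega_{\CY}$ inherits the warped QAC asymptotics of $\omega$, hence has maximal volume growth, and any rescaled pointed limit at infinity is the metric cone $(C_{0},g_{C_{0}})$, which is therefore its tangent cone at infinity. Elliptic regularity gives smoothness, and the finer asymptotics are the content of Corollaries~\ref{mae.15} and \ref{mae.16}. The principal obstacle is the linear step: establishing uniform mapping properties of the Laplacian on the weighted H\"older spaces over the manifold with corners in the presence of warping, which is exactly the key analytical input advertised in the abstract.
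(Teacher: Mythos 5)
Your overall architecture (approximate K\"ahler metric with warped $\QAC$ asymptotics on a corner compactification, weighted linear theory, then a Monge--Amp\`ere perturbation) matches the paper's, and your accounting of where $\beta=\min\{d,2m_1,\ldots,2m_N\}$ and the thresholds \eqref{int.1a}, \eqref{int.1b} come from is essentially right. But two steps that you treat as routine are in fact where the substance lies, and as written they are gaps. First, the linear isomorphism: you assert that $\Delta_{\omega}$ is Fredholm and then invertible on the weighted H\"older spaces by ``computing indicial roots at each face and patching model inverses into a parametrix''. For warped $\QAC$ geometries of arbitrary depth no such parametrix construction is supplied, and it is genuinely delicate: the boundary faces are fibered, the model operators at non-maximal faces are \emph{families} of lower-depth warped $\QAC$ Laplacians rather than cone operators, and the admissible weight ranges are not read off from indicial roots at the maximal face. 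The paper's route is entirely different and is the advertised key analytic input: following \cite{DM2014}, it establishes the weighted volume doubling property and uniform weighted Poincar\'e inequalities (Theorem~\ref{w.32}), deduces heat kernel and Green's function bounds via \cite{GSC2005} (Proposition~\ref{w.44}), and obtains invertibility on weighted H\"older and Sobolev spaces by a Schur test (Proposition~\ref{w.45}, Theorem~\ref{w.50}, Corollary~\ref{w.53}). Your proposal assumes the conclusion of this step rather than proving it, and offers no evidence that the microlocal alternative goes through at higher depth.

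Second, the nonlinear step cannot be a single global contraction. After the compactly supported modification needed to make the glued form positive (your own last gluing step; Lemma~\ref{kw.14} in the paper), the Monge--Amp\`ere error $f$ is of order one on a fixed compact set, and since $\epsilon$ is fixed there is no parameter making it small; smallness holds only near infinity through decay. The paper therefore runs the fixed point argument only outside a large compact set, following \cite{Szekelyhidi} (Lemma~\ref{mae.3}), and removes the remaining compactly supported error by the Tian--Yau continuity method with a priori estimates: a uniform $C^0$ bound by Moser iteration, which itself needs the Sobolev inequality for warped $\QAC$ metrics (Corollary~\ref{w.41}, again a consequence of the heat kernel bounds), Yau's second-order estimate, Evans--Krylov, and weighted bootstrapping (Theorem~\ref{mae.4}), all organized by an induction that first solves the fiberwise Monge--Amp\`ere equations at each non-maximal face (Theorem~\ref{mae.9}). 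Your treatment of the $N=1$ refinement \eqref{int.1b} is in the right spirit, but in the paper it rests on polyhomogeneity of the error and on inverting the fiberwise indicial family as in \cite{CR2021} (Corollary~\ref{mae.16}), not merely on allowing an unspecified non-decaying leading term; the window $3<\beta\le \frac{2}{1-\nu}<2m_1+5$ is exactly the solvability range of that indicial problem.
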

\begin{remark}
We have not explored it here, but instead of \cite{DM2014}, one could try to develop a higher depth version of the approach of \cite{Szekelyhidi}, see also \cite{Firester, Yan}, to obtain mapping properties of the Laplacian.  The approach of \cite{Szekelyhidi} allows in principle to consider weights for which the Laplacian is no longer an isomorphism, but is still surjective and Fredholm, which would possibly allow condition \eqref{int.1a} in Theorem~\ref{int.1} to be weakened.
\label{int.7}\end{remark}

When $N=1$, $m_0=1$, $n_1=1$ and $\ell=1$ with $Q_{1,1}(z_{0,1})=z_{0,1}$, this theorem corresponds to most of the examples obtained in \cite{YangLi, CR2021, Szekelyhidi}, while most of the examples of \cite{Firester} corresponds to the case $N=1$, $m_0=1$ and $\ell=1$ with $Q_{1,i}$ homogeneous for each $i\in\{1,\ldots, n_1\}$.  Allowing other values of these parameters yields many new examples of Calabi-Yau metrics.  Let us illustrate this with few examples.  
\begin{example}
Letting $N=1$, $m_0=1$ and $n_1=1$ as in \cite{YangLi, CR2021, Szekelyhidi}, we can obtain new examples of Calabi-Yau metrics by taking $\ell\ne 1$ or $Q_{1,1}$ not homogeneous.  For instance, we can take $m_1=2$ with 
$$
      P_{1,1}(z_{1,1},z_{1,2},z_{1,3})= z_{1,1}^2+ z_{1,2}^2+ z_{1,3}^2 \quad \mbox{and} \quad Q_{1,1}(z_{0,1})=-z_{0,1}^{\ell}+1,
$$
so that $C_{\epsilon}\subset \bbC^4$ is the affine hypersurface given by
$$
   z_{1,1}^2+ z_{1,2}^2+ z_{1,3}^2 + \epsilon z_{0,1}^{\ell}=\epsilon 
$$
with $\ell=2$ or $\ell=3$.  In this case, we see from \eqref{cyc.4} that $w_{1,i}=2$ for all $i$, so $d=4$, $\ell\in \{2,3\}$ as just mentioned, $\nu=\frac{\ell}{4}$ and $\beta=4$.  In particular, \eqref{int.1b} holds, so Theorem~\ref{int.1} yields a Calabi-Yau metric on $C_{\epsilon}$ with tangent cone at infinity $\bbC\times A_1$ with $A_1$ the Stenzel cone $z_{1,1}^2+ z_{1,2}^2+ z_{1,3}^2=0$.  When $\ell=2$, notice that $C_{\epsilon}$ is also a smoothing of the Stenzel cone 
\begin{equation}
z_{1,1}^2+ z_{1,2}^2+ z_{1,3}^2 + \epsilon z_{0,1}^{2}=0,
\label{int.2a}\end{equation}
so as such, it also admits an asymptotically conical Calabi-Yau metric with tangent cone at infinity \eqref{int.2a}.  When $\ell=3$, $C_{\epsilon}$ is a smoothing of the $A_2$ singularity
  $$
  z_{1,1}^2+ z_{1,2}^2+ z_{1,3}^2 + \epsilon z_{0,1}^{3}=0.
  $$
By \cite[Corollary~1.6]{LiSun}, the $3$-dimensional $A_2$ singularity admits a Calabi-Yau cone metric, but for a different choice of weights.  
\label{int.2}\end{example}  
\begin{example}
Letting $N=1$, $m_0=1$ and $n_1=1$ as in the previous example, we can also take $m_1\ge 3$ and 
\begin{equation}
   P_{1,1}(z_q)= z_{1,1}^k+\cdots + z_{1,m_1+1}^k
\label{int.2c}\end{equation}
for some $2\le k\le m_1$.  It is well-known in this case, see for instance \cite[Example~2.1]{CR2021}, that $W_q$ admits a Calabi-Yau cone metric.  From \eqref{cyc.4}, we see that
\begin{equation}
      w_{1,i}= \frac{m_1}{m_1+1-k} \quad \mbox{and} \quad d= \frac{km_1}{m_1+1-k}.
\label{int.2d}\end{equation}
Assuming $k\ge 3$ ensures that $d>3$.  Taking $\ell=k$, then 
$$
      \frac{2}{1-\nu}= \frac{2m_1}{k-1}< 2m_1+5 \quad \mbox{and} \quad  \beta>3,
$$
so either \eqref{int.1a} or \eqref{int.1b} is satisfied.  To satisfy Assumption~\ref{cyc.6c}, we can take 
\begin{equation}
   Q_{1,1}(z_{0,1})=-z_{0,1}^k+1,
\label{int.2ee}\end{equation}
so that Theorem~\ref{int.1} yields a complete Calabi-Yau metric on $C_{\epsilon}$ with tangent cone at infinity $C_0=\bbC\times W_1$.  Notice that $C_{\epsilon}$ is also a smoothing of the affine hypersurface given by
\begin{equation}
 z_{1,1}^k+\cdots + z_{1,m_1+1}^k+\epsilon z_{0,1}^k=0,
\label{int.2e}\end{equation}
 which admits a Calabi-Yau cone metric.  As such, $C_{\epsilon}$ also admits an asymptotically conical Calabi-Yau metric with tangent cone at infinity \eqref{int.2e}.
\label{int.2b}\end{example}

\begin{example}
Within the framework of Examples~\ref{int.2} and \ref{int.2b}, our method also yields in any compactly supported K\"ahler class a complete Calabi-Yau metric with tangent cone at infinity $C_0=\bbC\times W_1$ on the crepant resolution $K_{F_k}$ of \eqref{int.2e}, where 
\begin{equation}
  F_k=\{ [z_{1,1}:\ldots : z_{1,m_1+1}: z_{0,1}]\in \bbC\bbP^{m_1+1}\; |\; z_{1,1}^k+\cdots + z_{1,m_1+1}^k+\epsilon z_{0,1}^k=0\}
\label{cr.1a}\end{equation}
is the corresponding Fermat hypersurface.  Indeed, the only minor modification needed for Theorem~\ref{int.1} to apply in this setting concerns the construction of the model metric.  This is explained in Remark~\ref{cr.2} below.
 \label{cr.1}\end{example} 
\begin{example}
Taking $N>1$, we can obtain Calabi-Yau metrics with tangent cone at infinity $(C_0,g_{C_0})$ in equation \eqref{pr.1} whose cross-section has singularities of depth $N$.  For instance, we can take $m_0=1$ with $n_1=\cdots=n_N=1$ and $m_1=\cdots=m_N=m$ with 
\begin{equation}
     P_{q,1}(z_q)= z_{q,1}^k+\cdots + z_{q,m+1}^k \quad \forall q\in \{1,\ldots,N\},
\label{int.4a}\end{equation} 
for some $2\le k\le m$ so that $W_q$ admits a Calabi-Yau cone metric.  From \eqref{cyc.4}, we see that
\begin{equation}
      w_{q,i}= \frac{m}{m+1-k} 
\label{int.4b}\end{equation}
for $q>0$, so $d=\frac{km}{m+1-k}>k$.  Assuming $k>3$, then $d>3$ and to satisfy \eqref{int.1a}, it suffices to take $\ell<d-2$.  To satisfy Assumption~\ref{cyc.6c}, we can take for instance $Q_{q,i}(z_0)=z_0$ for each $q$.
\label{int.4}\end{example}

\begin{remark}
When $N=1$, it would have been possible in principle to take $m_0>1$ in Theorem~\ref{int.1}.  One interesting special case would be to take $N=1$, $\ell=1$ and $n_1>1$ as in \cite{Firester}, but with $k_1=n_1=m_0>1$ and $Q_{1,i}(z_0)=z_{0,i}$ for $i\in \{1,\ldots, n_1\}$, so that $C_{\epsilon}$ corresponds to the smoothing
\begin{equation}
   P_{1,i}(z_1)=\epsilon z_{0,i}\quad \mbox{for} \quad i\in\{1,\ldots,n_1\}.
\label{int.3a}\end{equation}
Since it is the graph of a holomorphic map from $\bbC^{m_1+n_1}$ to $\bbC^{m_0}=\bbC^{n_1}$, the smoothing $C_{\epsilon}$ is in particular biholomorphic to $\bbC^{m_1+n_1}$.  Unfortunately however, the problem is that  condition \eqref{pr.11} of Assumption~\ref{cyc.6c} is never satisfied in this case, so we cannot actually apply Theorem~\ref{int.1} to obtain new examples of Calabi-Yau metrics.  Indeed, the singular locus $\cS$ of the map $F:=(P_{1,1},\ldots, P_{1,n_1})$ is such that
$$
     \dim_{\bbC} \cS\ge m_1+n_1 -(m_1+1)=n_1-1>0
$$
as a subvariety of $\bbC^{m_1+n_1}$. Moreover,  it contains the origin and is also clearly invariant under the $\bbR^+$-action.  Hence, taking $s\in \cS\setminus\{0\}$ close to the origin, then we can take $\omega_0=(P_{1,1}(s),\ldots,P_{1,n_1}(s))$ in \eqref{pr.11} to obtain a singular variety.  Since $W_1$ is assumed to be smooth outside the origin, notice that $\omega_0\ne 0$ automatically. 
\label{int.3}\end{remark}
\begin{remark}
When $\ell=0$ and $m_0=0$, the Calabi-Yau metrics of Theorem~\ref{int.1} are QAC-metrics.  In fact, they are more precisely a Cartesian product of asymptotically conical Calabi-Yau metrics (by \cite[Theorem~6.6]{KR3}, the class of $\QAC$-metrics is closed under taking the Cartesian product), so this does not yield new examples of Calabi-Yau metrics.  
\label{int.3b}\end{remark}

As suggested by Yang Li in \cite[p.41]{YangLi2018}, we can use the metrics in Examples~\ref{int.2} and \ref{int.3} to construct examples of singular Calabi-Yau metrics making a transition between two distinct Calabi-Yau cones.  To do this, we will, instead of the smoothing induced by \eqref{int.2ee}, consider the slightly modified one given by
\begin{equation}
  z_{1,1}^k+\cdots+ z_{1,m_1+1}^k+ z_{0,1}^k=\epsilon
\label{yl.1}\end{equation}
and denote by $\cH_{\epsilon}$ the induced affine hypersurface.  In the limit $\epsilon\searrow 0$, this yields the singular affine hypersurface $\cH_0$ with defining equation
\begin{equation}
  z_{1,1}^k+\cdots+ z_{1,m_1+1}^k+ z_{0,1}^k=0.
\label{yl.2}\end{equation}
As mentioned earlier, it admits a Calabi-Yau cone metric that we will denote by $g_{\cH_0}$.  Now, let $g_{\epsilon}$ be the complete Calabi-Yau metric on $\cH_{\epsilon}$ given by Theorem~\ref{int.1}.  Taking the limit $\epsilon\searrow 0$, we obtain the following result; see Theorem~\ref{sing.37} below for more details.
\begin{theorem}
Take $k=2$, $m_1=2$ and $\ell=2$ as in Example~\ref{int.2}, or else $k$ and $m_1$ as in Example~\ref{int.2b}.  Then as $\epsilon\searrow 0$, $(\cH_{\epsilon},g_{\epsilon})$ converges  smoothly locally away from the origin to a Calabi-Yau metric $g_0$ on $\cH_0\setminus\{0\}$ which has a conical singularity modelled asymptotically on $(\cH_0,g_{\cH_0})$ in the sense of Definition~\ref{sing.23} below, and  elsewhere is smooth and complete with tangent cone at infinity $(C_0,g_{C_0})$.  Moreover, $(\cH_\epsilon, g_\epsilon)$ also converges to $(\cH_0,g_{\cH_0})$ in the pointed Gromov-Hausdorff sense.
\label{yl.3}\end{theorem}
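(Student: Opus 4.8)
The plan is to realise both $g_\epsilon$ and $g_0$ through the \emph{same} construction used to prove Theorem~\ref{int.1}, and to show that every estimate entering that construction is uniform as $\epsilon\searrow0$, so that the limit $\epsilon\to0$ can be taken. Recall that for $\epsilon\ne0$ the metric $g_\epsilon$ on $\cH_\epsilon$ is obtained by first producing a model K\"ahler metric $\omega_\epsilon$ on $\cH_\epsilon$, with exact K\"ahler form, polynomially close to $\omega_{C_0}$ at infinity and approximately Ricci-flat in a weighted sense, and then solving the complex Monge-Amp\`ere equation $(\omega_\epsilon+\sqrt{-1}\pa\db\phi_\epsilon)^m=e^{F_\epsilon}\omega_\epsilon^m$ for $\phi_\epsilon$ in a weighted H\"older space, using the mapping properties of the Laplacian of $\omega_\epsilon$ on such spaces.

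The first task is to let the ingredients of this construction degenerate. Since $\cH_0$, defined by \eqref{yl.2}, is a Fermat cone carrying the Calabi-Yau cone metric $g_{\cH_0}$ (see \cite[Example~2.1]{CR2021}), and since $\cH_\epsilon$ is also a smoothing of $\cH_0$ which by \cite{CH3} carries an asymptotically conical Calabi-Yau metric, the model $\omega_\epsilon$ can be arranged so that near the smoothing region it is built from a fixed asymptotically conical Calabi-Yau metric on the smoothing of $\cH_0$, glued to the cone $g_{\cH_0}$ in an intermediate annular region and to $\omega_{C_0}$ further out. Under the scaling $z\mapsto\lambda\cdot z$ identifying $\cH_{\lambda^\delta\epsilon}$ with $\cH_\epsilon$, where $\delta$ is the weighted degree of the Fermat polynomial, this smoothing cap has $\omega_\epsilon$-diameter $D_\epsilon$ with $D_\epsilon\to0$ as $\epsilon\searrow0$, while away from it $\omega_\epsilon$ is already $\omega_{C_0}$ near infinity and close to $g_{\cH_0}$ near the origin. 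Consequently $\omega_\epsilon\to\omega_0$ smoothly on compact subsets of $\cH_0\setminus\{0\}$, where $\omega_0$ is a model metric on $\cH_0\setminus\{0\}$ with an isolated conical singularity modelled asymptotically on $(\cH_0,g_{\cH_0})$ at the origin and tangent cone at infinity $(C_0,g_{C_0})$.

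Next I would set up a single geometric framework, uniform in $\epsilon$ including $\epsilon=0$: a family of manifolds with corners compactifying the $\cH_\epsilon$, carrying a boundary hypersurface recording the conical singularity at the origin, boundary hypersurfaces recording the strata of $C_0$ at infinity, and with the parameter $\epsilon$ itself resolved by a weighted blow-up so that the collapse of the smoothing cap is visible. On this framework one establishes, uniformly in $\epsilon$, the weighted Schauder estimates and the Fredholm and mapping properties of the Laplacian of $\omega_\epsilon$ used in the proof of Theorem~\ref{int.1}; the essential point is that a single weight can be chosen admissible simultaneously for the conical singularity and for the quasi-asymptotically conical end, and that the relevant indicial roots, hence the constants in the estimates, vary continuously and remain bounded as $\epsilon\searrow0$. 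Granting this, the fixed-point argument for the Monge-Amp\`ere equation runs with $\epsilon$-independent constants, producing $\phi_\epsilon$ bounded uniformly in the weighted H\"older space with $\phi_\epsilon\to\phi_0$ there; then $g_\epsilon=\omega_\epsilon+\sqrt{-1}\pa\db\phi_\epsilon\to g_0:=\omega_0+\sqrt{-1}\pa\db\phi_0$ smoothly on compact subsets of $\cH_0\setminus\{0\}$, and the weighted bounds inherited by $\phi_0$ show that $g_0$ is Calabi-Yau on $\cH_0\setminus\{0\}$, complete with tangent cone at infinity $(C_0,g_{C_0})$, and has an isolated conical singularity at the origin modelled asymptotically on $(\cH_0,g_{\cH_0})$ in the sense of Definition~\ref{sing.23}; this is the content of Theorem~\ref{sing.37}.

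Finally, for the Gromov-Hausdorff statement, the $C^\infty$ convergence on $\cH_0\setminus\{0\}$, the Ricci-flatness of the $g_\epsilon$ (which furnishes pointed Gromov-Hausdorff precompactness together with uniform volume bounds), and the bound $D_\epsilon\to0$ on the diameter of the collapsing cap give pointed Gromov-Hausdorff convergence of $(\cH_\epsilon,g_\epsilon)$, based at points in the smoothing region, to the metric completion of $(\cH_0\setminus\{0\},g_0)$ based at its singular point; this completion is asymptotic near the singular point to the Calabi-Yau cone $(\cH_0,g_{\cH_0})$, and after rescaling by any $\lambda_\epsilon\to\infty$ with $\lambda_\epsilon D_\epsilon\to0$ one recovers $(\cH_0,g_{\cH_0})$ based at its vertex as the pointed Gromov-Hausdorff limit. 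I expect the main obstacle to be precisely this uniformity of the weighted analysis as $\epsilon\searrow0$: the spaces $\cH_\epsilon$ themselves degenerate, so one must build a compactification adapted at once to the conical singularity, to the quasi-asymptotically conical geometry at infinity, and to the degeneration parameter, and then prove elliptic estimates on it with constants that do not blow up in the limit.
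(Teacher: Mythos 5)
Your route is genuinely different from the paper's, but as written it has a gap at its central step. Everything hinges on "uniform weighted Schauder/Fredholm/mapping properties of the Laplacian of $\omega_\epsilon$ with constants that do not blow up as $\epsilon\searrow0$" on a family degenerating simultaneously to a conical singularity, a collapsing cap, and a warped $\QAC$ end; you assert this (and even flag it as the main obstacle) but do not prove it, and it is not a routine extension of Corollary~\ref{w.53} -- it would require a surgery/adiabatic-type uniform theory. Worse, the naive version fails for a concrete reason: to read off the conical asymptotics of $g_0$ at the origin you must invert $\Delta_g$ at a weight $a>2$ in $r_{\cH_0}$ (potential decaying faster than quadratically), but $a=0$ and possibly further values in $(0,2]$ are critical weights of the indicial family \eqref{sing.27}, so the Laplacian is not surjective there. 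This is exactly why the paper's Theorem~\ref{sing.35} must enlarge the domain by the finite-dimensional space $\Phi^*(\chi\cP_{[0,2]})$, using pluriharmonicity of the homogeneous harmonic functions of degree in $(0,2)$ from \cite[Theorem~2.14]{Hein-Sun}, and why the singular behavior is identified through the Hein--Sun continuity method in the auxiliary parameter $t$ rather than by solving directly in weighted spaces on $\cH_0$; the paper also records Hein--Sun's warning that the unbounded bisectional curvature of $(\cH_0,g_{\cH_0})$ obstructs a direct weighted H\"older analysis near the singularity. Your "single admissible weight with continuously varying indicial roots" does not exist in the range you need. Two further unaddressed points: even if your gluing ansatz produced Calabi--Yau metrics on $\cH_\epsilon$, you would still have to identify them with the metrics $g_\epsilon$ of Theorem~\ref{int.1} that the statement is about (via the uniqueness in Theorem~\ref{mae.9}, after checking the two models differ by an admissible potential), and the usual gluing balance -- smallness of the Ricci potential of the glued model in the weighted norms against the size of the inverse, with the error concentrated in the gluing annuli -- is nowhere estimated.

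For contrast, the paper avoids uniform invertibility across the degeneration altogether. It chooses the model $\omega_\epsilon$ to be the restriction of the Euclidean metric near the origin, which gives a uniform upper bound on bisectional curvature (Lemma~\ref{sing.5}); it then combines a uniform Sobolev inequality adapted from \cite{CGT} with Moser iteration and the Chern--Lu inequality to get the $\epsilon$-independent bounds of Lemma~\ref{sing.19} and Proposition~\ref{sing.21}, and extracts the limit solution $u_{t,0}$ (Theorem~\ref{sing.22}) with, a priori, essentially no control at the origin. Only afterwards does it prove the conical asymptotics (Theorem~\ref{sing.37}) by running the Hein--Sun continuity path in $t$ from the glued metric $\widetilde\omega_0$ at $t=1$: openness uses the new isomorphism Theorem~\ref{sing.35} matching conical $b$-analysis at the origin with the $\QAC$ analysis at infinity, and closedness uses Cheeger--Colding theory together with the arguments of \cite{Sun-Zhang} and \cite{CGT} and the rate result of \cite{Chiu-Szekelyhidi}. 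The pointed Gromov--Hausdorff statement then comes from the uniform diameter bound \eqref{sing.38}, its extension to all $t$ in the proof of Theorem~\ref{sing.37}, and Lemma~\ref{sing.40}, not from a collapsing-cap picture. If you want to pursue your gluing strategy, the missing uniform linear theory and the treatment of the critical weights in $(0,2]$ are the points you would have to supply.
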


For $\epsilon\ne 0$, notice that $\cH_{\epsilon}$ is biholomorphic to $\cH_1$.  On the other hand, by Theorem~\ref{yl.3}, the metrics $g_{\epsilon}$ are not all isometric since they  converge in the pointed Gromov-Hausdorff sense to the singular metric $g_{0}$.  Hence, we deduce the following non-uniqueness result.  
\begin{corollary}
For $\cH_1$ as in Theorem~\ref{yl.3}, there exist infinitely many complete Calabi-Yau metrics with tangent cone at infinity $(C_0,g_{C_0})$ that cannot be related by  scaling and pullback by biholomorphism.
\label{ucy.1}\end{corollary}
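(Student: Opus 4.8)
The plan is to realise the required metrics as pullbacks to $\cH_1$ of the Calabi--Yau metrics $g_\epsilon$ of Theorem~\ref{yl.3}, and to separate infinitely many of them using the Gromov--Hausdorff degeneration as $\epsilon\searrow 0$. For $\epsilon>0$ choose a $k$-th root $\epsilon^{1/k}$ and let $\phi_\epsilon\colon\cH_1\to\cH_\epsilon$ be the biholomorphism $z\mapsto\epsilon^{1/k}z$ induced by the overall rescaling of $\bbC^{m+1}$ (it sends $\{\sum z_i^k=1\}$ onto $\{\sum z_i^k=\epsilon\}$, which is why the defining equation \eqref{yl.1} was chosen with $z_{0,1}^k$ rather than $\epsilon z_{0,1}^{\ell}$). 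Put $\tilde g_\epsilon:=\phi_\epsilon^*g_\epsilon$. Then each $\tilde g_\epsilon$ is a complete Calabi--Yau metric on $\cH_1$ for its standard complex structure, it is isometric as a metric space to $(\cH_\epsilon,g_\epsilon)$, and hence it has tangent cone at infinity $(C_0,g_{C_0})$. By Theorem~\ref{yl.3} there are base points $p_\epsilon\in\cH_\epsilon$ with $(\cH_\epsilon,g_\epsilon,p_\epsilon)\to(\cH_0,g_{\cH_0},o)$ in the pointed Gromov--Hausdorff sense as $\epsilon\searrow 0$, $o$ being the apex; transporting by $\phi_\epsilon$ this reads $(\cH_1,\tilde g_\epsilon,\phi_\epsilon^{-1}(p_\epsilon))\to(\cH_0,g_{\cH_0},o)$. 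We record two structural facts about the limit. First, $(\cH_0,g_{\cH_0})$ is not flat: its cross-section is a smooth Sasaki--Einstein manifold that is not the round sphere, since $\cH_0$ is singular at the origin and so is not $\bbC^{m}$. Second, $(\cH_0,g_{\cH_0})$ splits off no Euclidean factor: as a complex variety $\cH_0=\{z_{1,1}^k+\cdots+z_{1,m_1+1}^k+z_{0,1}^k=0\}$ has an isolated singularity at $0$ (as $k\ge 2$), so an isometric splitting $\cH_0=\bbR^j\times C'$ would force $C'$ smooth, whence $\cH_0=\bbC^{m}$, a contradiction, or $C'$ singular, whence $\cH_0$ is singular along a positive-dimensional set, again a contradiction; and since $\cH_0$ is K\"ahler a splitting of $\bbR$ upgrades to a splitting of $\bbC$. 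In particular $(\cH_0,g_{\cH_0})$ is isometric neither to $\bbR^{2m}$ nor to $(C_0,g_{C_0})=(\bbC\times W_1,g_0\times g_1)$, the latter splitting off a $\bbC$-factor.

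Suppose, for contradiction, that $\{\tilde g_\epsilon:\epsilon>0\}$ falls into only finitely many classes under the relation ``$g\sim c\,\psi^{*}g'$ for some $c>0$ and some biholomorphism $\psi$ of $\cH_1$''. Then some class contains $\tilde g_\epsilon$ for $\epsilon$ arbitrarily close to $0$, so there exist $\epsilon_j\searrow 0$, a fixed complete Calabi--Yau metric $h$ on $\cH_1$ (of maximal volume growth, being equivalent to some $\tilde g_{\epsilon_j}$), constants $c_j>0$ and biholomorphisms $\psi_j$ of $\cH_1$ with $\tilde g_{\epsilon_j}=c_j\psi_j^{*}h$. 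Since $\psi_j\colon(\cH_1,\tilde g_{\epsilon_j})\to(\cH_1,c_jh)$ is then an isometry, setting $q_j:=\psi_j(\phi_{\epsilon_j}^{-1}(p_{\epsilon_j}))$ we obtain
\[
  (\cH_1,c_j h,q_j)\;\longrightarrow\;(\cH_0,g_{\cH_0},o)\qquad\text{in the pointed Gromov--Hausdorff sense.}
\]

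The crux is to rule this out. Since $(\cH_1,h)$ is a fixed smooth complete Ricci-flat K\"ahler manifold with $\lim_{r\to\infty}r^{-2m}\Vol(B_r)>0$, Bishop--Gromov gives a uniform lower volume bound $\Vol_{c_j h}(B_r(q_j))\ge \mathrm{const}\cdot r^{2m}$ at every scale, so the sequence is non-collapsed; its limit is a metric cone with apex at the base point, and by the structure theory of non-collapsed Ricci-flat limits (Cheeger--Colding, Colding--Naber) such a limit cone is a tangent cone of $(\cH_1,h)$, either at a point or at infinity. Passing to a subsequence with $c_j\to c\in[0,\infty]$ and tracking the location of $q_j$, one checks that the only cones that occur are $\bbR^{2m}$ (all cases with $c=\infty$, and the cases with $c\in(0,\infty)$ where the limit is forced to be a tangent cone at a point, using that $\cH_1$ is smooth) and the tangent cone at infinity $(C_0,g_{C_0})$ together with its own tangent cones (the cases with $c=0$, or where the base points run to infinity fast enough); as $C_0=\bbC\times W_1$ with $W_1$ of smooth cross-section, these last are just $\bbR^{2m}$ and $\bbC\times W_1=C_0$ again. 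Each of these splits off a $\bbC$-factor, whereas $(\cH_0,g_{\cH_0})$ does not, a contradiction. Hence $\{\tilde g_\epsilon:\epsilon>0\}$ contains infinitely many pairwise inequivalent metrics, each a complete Calabi--Yau metric on $\cH_1$ with tangent cone at infinity $(C_0,g_{C_0})$, which is the assertion.

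The main obstacle is the last paragraph: it relies on knowing the precise asymptotic geometry of the $\tilde g_\epsilon$ (that their tangent cone at infinity is exactly $(C_0,g_{C_0})$, with a definite rate), on the Cheeger--Colding/Colding--Naber description of which metric cones can arise as pointed limits of rescalings of the \emph{fixed} manifold $(\cH_1,h)$, and on the elementary observation that the Fermat cone $(\cH_0,g_{\cH_0})$ splits no Euclidean factor. All of these are available from Theorem~\ref{int.1}, Theorem~\ref{yl.3} and standard metric geometry, so the corollary follows.
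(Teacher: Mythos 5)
Your setup (pulling back by $\phi_\epsilon\colon z\mapsto\epsilon^{1/k}z$ and arguing by contradiction from the Gromov--Hausdorff degeneration) is sound, but the crux of your argument has a genuine gap. You assert that the limit of $(\cH_1,c_jh,q_j)$, being a metric cone, ``is a tangent cone of $(\cH_1,h)$, either at a point or at infinity'' by Cheeger--Colding/Colding--Naber structure theory, and then that ``one checks'' the only cones occurring are $\bbR^{2m}$ and $(C_0,g_{C_0})$. Neither is a standard consequence of structure theory: pointed limits of rescalings with moving basepoints include intermediate (bubble) limits that are neither tangent cones at a point nor at infinity. For the warped $\QAC$ metric $h$ this is not hypothetical: taking $q_j$ along the singular ray at distance $R_j\to\infty$ and $c_j\asymp R_j^{-2\nu}$ (the fiber scale in the model \eqref{int.5}) produces $\bbR^2\times(W_{1,\epsilon'},g')$ with $g'$ an asymptotically conical Calabi--Yau metric on a smoothing of $W_1$. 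Such limits happen not to be isometric to your target space, but their existence shows the principle you invoke is false as stated, and the classification your contradiction rests on is exactly the hard part: it requires a case analysis over all regimes of $(c_j,q_j)$ with uniform control of the convergence of $h$ to its warped $\QAC$ models at the relevant scales, none of which is carried out. A smaller point: you take the limit to be the cone $(\cH_0,g_{\cH_0})$, reading Theorem~\ref{yl.3} literally, whereas the limit actually established (Lemma~\ref{sing.40}) is the completion of $(\cH_0\setminus\{0\},g_0)$, the transition metric; your non-splitting criterion would still apply to it (its tangent cone at the singular point is the non-splitting cone $\cH_0$), so this part is repairable, but as written you argue about the wrong space.

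The paper's proof avoids the classification altogether. It first uses Lemma~\ref{nu.1}: $H_m(\cH_\epsilon;\bbZ)\neq 0$ and nonzero classes have positive minimal area with respect to the fixed metric, while under the degeneration of Theorem~\ref{yl.3} the vanishing cycle collapses, which forces the scaling constants $c_j$ to tend to zero. With $c_j\to 0$ the Gromov--Hausdorff limit $(\cH_0,g_0)$ would be a tangent cone at infinity of the fixed metric, hence a metric cone by Cheeger--Colding; this contradicts the fact that $g_0$ has an isolated conical singularity modelled on $(\cH_0,g_{\cH_0})$ and tangent cone $(C_0,g_{C_0})\neq(\cH_0,g_{\cH_0})$ at infinity, so it is not a cone. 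To salvage your route you would either have to carry out the limit classification in all intermediate regimes, or import an argument like Lemma~\ref{nu.1} to pin down $c_j\to 0$ first, after which only genuine tangent cones at infinity need to be considered.
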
 
\begin{remark}
Since our proof of Corollary~\ref{ucy.1} in \S~\ref{sing.0} is by contradiction, in contrast to \cite{Chiu}, we cannot rule out the possibility that some metrics in the family be related by scaling and pull-back by biholomorphism.
\end{remark}

Since the Calabi-Yau metrics of Theorem~\ref{int.1} are obtained by solving a complex Monge-Amp\`ere equation, they are not given by an explicit formula.  However, as in \cite{CR2021}, we provide a detailed description of their asymptotic behavior at infinity.  In fact, the Calabi-Yau metrics of Theorem~\ref{int.1} are a higher depth version of the warped $\QAC$-metrics of \cite{CR2021}.  Indeed, the local models at infinity are again a warped version of the local model for $\QAC$-metrics, namely a warped product of a cone metric $dr^2+ r^2g_B$ and a warped $\QAC$-metric of lower depth $K_w$, that is
\begin{equation}
dr^2+ r^2g_B+ r^{2\nu_K} K_w
\label{int.5}\end{equation}
with $\nu_K\in \{0,\nu\}$.

Compared to \cite{CR2021}, a new feature of this higher depth version is that warped $\QAC$-metrics are no longer necessarily conformal to a $\QAC$-metric.  They are in general only conformal to a slightly different class of metrics introduced in Definition~\ref{wqb.8} below that we call weighted $\QAC$-metrics.  As for $\QAC$-metrics, those come from a Lie structure at infinity in the sense of \cite{ALN04}, so admit a nice global coordinate free description.  They are in particular automatically complete of infinite volume with bounded geometry and the same holds true for warped $\QAC$-metrics.  There is also a conformally related class of metrics introduced in Definition~\ref{wqb.12} below  that we call weighted $\Qb$-metrics.  They play a similar role to $\Qb$-metrics in \cite{CDR2016} by allowing us to define the right weighted H\"older spaces in which to solve the complex Monge-Amp\`ere equation.  In terms of these H\"older spaces, we can in fact as in \cite{CR2021} adapt the arguments of \cite{DM2014} to derive nice mapping properties of the Laplacian of a warped $\QAC$-metric; see in particular Corollary~\ref{w.53} below, as well as Theorem~\ref{sing.35} for the singular setting of Theorem~\ref{yl.3}.

All these classes of metrics are defined in terms of a compactification by a manifold with corners, so to define them on $C_{\epsilon}$, we need to introduce a suitable compactification $\hC_{\epsilon}$ of $C_{\epsilon}$.  As in \cite{CR2021}, our starting point is the radial compactification $\overline{\bbC^{m+n}_w}$ of $\bbC^{m+n}$ specified by the $\bbR^+$-action induced by the weights $w_{q,i}$.  Let  $\overline{C}_{\epsilon}$ be the closure of $C_{\epsilon}$ in $\overline{\bbC^{m+n}_w}$.  This closure has singularities on the boundary $\pa \overline{\bbC^{m+n}_w}$ of $ \overline{\bbC^{m+n}_w}$ that can be resolved by blowing them up.  However, this cannot be achieved with the usual notion of blow-up \cite[\S~2.2]{HHM1995} of a $p$-submanifold in a manifold with corners.  We need instead to consider a weighted version of it, one special instance being the parabolic blow-up of Melrose \cite[(7.4)]{MelroseAPS}.  Compared to the usual blow-up, this weighted version is sensitive to the choice of a tubular neighborhood of the $p$-submanifold, but in our setting, the $p$-submanifolds that need to be blown up in a weighted manner come with a natural choice of tubular neighborhood, ensuring that each weighted blow-up is well-defined.  

Once the natural compactification $\hC_{\epsilon}$ is given, each boundary hypersurface gives a model at infinity, essentially \eqref{pr.10} and \eqref{pr.11}.  Proceeding inductively on the depth of $\hC_{\epsilon}$ and using a convexity argument of \cite[Lemma~4.3]{vanC}, we can first construct K\"ahler examples of warped $\QAC$-metrics on $C_{\epsilon}$.  To obtain Calabi-Yau examples, we need to solve a complex Monge-Amp\`ere equation.  As in \cite{CDR2016}, proceeding by induction on the depth of $\hC_{\epsilon}$, we can first solve the complex Monge-Amp\`ere equation on each model at infinity.  Using the fixed point argument of \cite[Proposition~25]{Szekelyhidi}, we can then solve the complex Monge-Amp\`ere equation outside a large compact set.  From that point, we can then completely solve the complex Monge-Amp\`ere equation using standard techniques.  

To construct the limiting metric in Theorem~\ref{yl.3}, we adapt recent results of Collins-Guo-Tong \cite{CGT} and Sun-Zhang \cite{Sun-Zhang} to first show that away from the origin, the metrics $\{g_{\epsilon}\}$ converges smoothly locally to a Calabi-Yau metric $g_0$ on $\cH_0\setminus\{0\}$ having the expected tangent cone at infinity.  To show that $g_0$ has a conical singularity at the origin asymptotically modelled on $(\cH_0,g_{\cH_0})$, we apply the continuity method of Hein-Sun \cite{Hein-Sun}.  For openness, this requires a suitable isomorphism provided by Theorem~\ref{sing.35} below, while for closedness, this requires a few adjustments essentially provided by \cite{CGT} and \cite{Sun-Zhang}.

The paper is organized as follows.  in \S~\ref{wqac.0}, we introduce the notion of warped $\QAC$-metrics, derive their main properties and introduce the weighted H\"older spaces that we will need.   In \S~\ref{mpl.0}, following \cite{DM2014}, we derive the mapping properties of the Laplacian of a warped $\QAC$-metric that we will need.  In \S~\ref{wbu.0}, we describe the notion of weighted blow-up for manifolds with corners and derive some of its features.  In \S~\ref{sCY.0}, we introduce the compactification $\hC_{\epsilon}$ and construct examples of K\"ahler warped $\QAC$-metrics on $C_{\epsilon}$.  In \S~\ref{mae.0}, we solve a complex Monge-Amp\`ere equation to obtain Theorem~\ref{int.1}.  Finally, \S~\ref{sing.0} gives a proof of Theorem~\ref{yl.3} and Corollary~\ref{ucy.1}. 

\begin{Acknowledgement}
The authors are grateful to Shih-Kai Chiu, Tristan Collins, Benjy Firester and Freid Tong for helpful discussions, as well as to an anonymous referee for useful comments. The first author is supported by NSF grant DMS-1906466 and the second author is supported by NSERC and FRQNT.
\end{Acknowledgement}

\section{Warped quasi-asymptotically conical metrics} \label{wqac.0}

Since it will be a central tool in this paper, let us first recall the notion of manifold with fibered corners.  Thus, let $M$ be a manifold with corners. Unless otherwise stated, we will usually assume that $M$ is compact. Denote by $\cM_1(M)$ the set of boundary hypersurfaces of $M$, that is, the set of corners of codimension $1$.  As in \cite{Melrose1992}, we will assume that its boundary hypersurfaces are all embedded.    We denote by $\pa M$ the union of all the boundary hypersurfaces of $M$.   Suppose that each boundary hypersurface $H$ of $M$ is endowed with a fiber bundle $\phi_H: H\to S_H$ with base $S_H$ and fibers also manifolds with corners.  Denote by $\phi$ the collection of these fiber bundles.
\begin{definition}[\cite{AM2011,ALMP2012,DLR}]
We say that $(M,\phi)$ is a \textbf{manifold with fibered corners} or that $\phi$ is an \textbf{iterated fibration structure for $M$} if there is a partial order on $\cM_1(M)$ such that
\begin{itemize}
\item Any subset $\cI\subset \cM_1(M)$ such that $\displaystyle \bigcap_{H\in\cI} H\ne \emptyset$ is totally ordered;
\item If $H<G$, then $H\cap G\ne \emptyset$, the map $\phi_H|_{H\cap G}: H\cap G \to S_H$ is a surjective submersion, $S_{GH}:= \phi_G(H\cap G)$ is one of the boundary hypersurfaces of $S_G$ and there is a surjective submersion $\phi_{GH}: S_{GH}\to S_H$ such that $\phi_{GH}\circ \phi_G=\phi_H$ on $H\cap G$;
\item The boundary hypersurfaces of $S_G$ are given by $S_{GH}$ for $H<G$.
\end{itemize}
\label{mwfc.1}\end{definition}

One can check directly from the definition that the base $S_H$ and the fibers of $\phi_H: H\to S_H$ are also naturally manifolds with fibered corners.  If $H$ is minimal with respect to the partial order, then $S_H$ is in fact a closed manifold.  Conversely, if $H$ is maximal with respect to the partial order, then the fibers of $\phi_H: H\to S_H$ are closed manifolds.  In various settings, this allows us to prove assertions by proceeding by induction on the \textbf{depth} of $(M,\phi)$, which is the largest codimension of a corner of $M$.  Manifolds with fibered corners are intimately related with stratified spaces.
\begin{definition}
A \textbf{stratified space} of dimension $n$ is a locally separable metrizable space $X$ together with a \textbf{stratification}, which is a locally finite partition $\mathcal{S}=\{s_i\}$ into locally closed subsets of $X$, called the \textbf{strata}, which are smooth manifolds of dimension $\dim s_i\le n$ such that at least one is of dimension $n$ and 
$$
    s_i\cap \overline{s}_j\ne \emptyset  \quad \Longrightarrow \quad s_i\subset \overline{s}_j.
$$ 
In this case, we write $s_i\le s_j$ and $s_i<s_j$ if $s_i\ne s_j$.  A stratification induces a filtration
$$
   \emptyset \subset X_0\subset \cdots \subset X_n= X,
$$
where $X_j$ is the union of all strata of dimension at most $j$.  The strata included in $X\setminus X_{n-1}$ are said to be \textbf{regular}, while those included in $X_{n-1}$ are said to be \textbf{singular}.
\label{strat.1}\end{definition}
Given a stratified space, notice that the closure of each of its strata is also naturally a stratified space.  The \textbf{depth} of a stratified space is the largest integer $k$ such that one can find $k+1$ different strata with
$$
      s_1<\cdots<s_{k+1}.
$$
As described in \cite{AM2011,ALMP2012,DLR}, a manifold with fibered corners $(M,\phi)$ arises as a resolution of the stratified space given by ${}^{S}M:= M/\sim$, where $\sim$ is the equivalence relation 
$$
    p\sim q \quad \Longleftrightarrow \quad p=q \quad \mbox{or \; $p,q\in H$ with $\phi_H(p)=\phi_H(q)$ for some $H\in\cM_1(M)$}.
$$
If $\beta: M\to {}^{S}M$ denotes the quotient map, which can be thought of as a blow-down map, then  $\beta(M\setminus \pa M)$ yields the regular strata.  In fact, the map $\beta$ gives a one-to-one correspondence between the boundary hypersurfaces of $M$ and the singular strata of ${}^{S}M$, namely $H\in \cM_1(M)$ corresponds to the stratum $s_H$ whose closure  is given by $\overline{s}_H:= \beta(H)$.  In this correspondence, the base $S_H$ of the fiber bundle $\phi_H$ is itself a resolution of $\overline{s}_H$ and $s_{H}= \beta(\phi_H^{-1}(S_H\setminus\pa S_H) )$.  Moreover, the depth of $(M,\phi)$ as a manifold with fibered corners is the same as the depth of ${}^{S}M$ and the partial order on $\cM_1(M)$ matches the one on the strata of ${}^{S}M$.  A stratified space admitting a resolution by a manifold with fibered corners is said to be \textbf{smoothly stratified}.  Not all stratified spaces are smoothly stratified, but as discussed in \cite{ALMP2012,DLR}, the property of being smoothly stratified can be described intrinsically on a stratified space without referring to a manifold with fibered corners. 

Recall from \cite{Melrose1992} that a \textbf{boundary defining function} for $H\in \cM_1(M)$ is a function $x_H\in \CI(M)$ such that $x_H\ge 0$, $H=x_{H}^{-1}(0)$ and $dx_H$ is nowhere zero on $H$.  Following \cite[Definition~1.9]{CDR2016}, we will say that a boundary defining function $x_H$ of $H$ is \textbf{compatible} with the iterated fibration structure if for each $G>H$, the restriction of $x_H$ to $G$ is constant along the fibers of $\phi_G: G\to S_G$.  By \cite[Lemma~1.4]{DLR}, compatible boundary defining functions always exist.  If $p\in \pa M$ is contained in the interior of a corner $H_1\cap\cdots \cap H_k$ of codimension $k$, then without loss of generality, we can assume that $H_1<\ldots<H_k$.  If $x_i$ is a choice of compatible boundary defining function for $H_i$, then by \cite[Lemma~1.10]{CDR2016}, in a neighborhood of $p$ where each fiber bundle $\phi_{H_i}: H_i\to S_{H_i}$ is trivial, we can consider tuples of functions $y_i=(y_i^1,\ldots, y_i^{k_i})$ and $z=(z_1,\ldots,z_q)$ such that
\begin{equation}
       (x_1,y_1,\ldots,x_k,y_k,z)
\label{coord.1}\end{equation}
provides coordinates near $p$ with the property that on $H_i$, $(x_1,y_1,\ldots, x_{i-1},y_{i-1},y_i)$ induces coordinates on the base $S_{H_i}$ with $\phi_{H_i}$ corresponding to the map
$$
 (x_1,y_1,\ldots,\widehat{x}_i,y_i,\ldots,x_k,y_k,z)\mapsto (x_1,y_1,\ldots, x_{i-1},y_{i-1},y_i),
$$
where the notation  `` $\widehat{\;\;}$ " above a variable denotes its omission.

To describe the type of metrics we want to consider on  a manifold with fibered corners $(M,\phi)$, let $\cA_{\phg}(M)$ denote the space of bounded continuous functions on $M$ that are smooth on $M\setminus \pa M$ and polyhomogeneous on $M$ for some index family.  We refer to \cite{Melrose1992} for more details on polyhomogeneous functions on a manifold with corners.
 \begin{definition}
A \textbf{weight function} for $(M,\phi)$ is a function
$$
\begin{array}{lccc}\mathfrak{n}:& \cM_1(M)&\to &\{0,\nu\} \\ & H & \mapsto & \nu_H
\end{array}
$$
for some $\nu\in [0,1)$ such that for all $G,H\in \cM_1(M)$, 
\begin{equation}
      H<G \quad \Longrightarrow \quad \nu_H\le \nu_G.
\label{wqb.1b}\end{equation}
For choices of compatible  boundary defining functions $x_H\in \CI(M)$ for each $H\in \cM_1(M)$, the corresponding \textbf{$\mathfrak{n}$-weighted distance} is the function
$$
     \rho:= \prod_{H\in\cM_1(M)} x_H^{-\frac{1}{1-\nu_H}}.
$$ 
Alternatively, we say that $\rho^{-1}$ is an \textbf{$\mathfrak{n}$-weighted total boundary defining function}.
\label{wqb.1}\end{definition}
A choice of $\mathfrak{n}$-weighted total boundary defining function specifies a Lie algebra of vector fields as follows.
\begin{definition}
Let $\rho^{-1}$ be an $\mathfrak{n}$-weighted total boundary defining function for $(M,\phi)$.  For such a choice, a \textbf{$\mathfrak{n}$-weighted quasi-fibered boundary vector field} ($\nQFB$-vector fields for short) is a $b$-vector field $\xi\in \CI(M;{}^bTM)$ such that
\begin{enumerate}
\item $\xi|_{H}$ is tangent to the fibers of $\phi_H:H\to S_H$ for each $H\in\cM_1(M)$; 
\item $\xi\rho^{-1}\in v\rho^{-1}\cA_{\phg}(M)$, where $v=\prod_{H\in \cM_1(M)}x_H$ is a total boundary defining function.
\end{enumerate} 
We denote by $\cV_{\nQFB}(M)$ the space of all $\nQFB$-vector fields.
\label{wqb.2}\end{definition}
\begin{remark}
When $\mathfrak{n}$ is the trivial weight function given by $\mathfrak{n}(H)=0$ for all $H\in \cM_1(M)$, $\cV_{\nQFB}(M)=\cV_{\QFB}(M)$ corresponds to the Lie algebra of $\QFB$-vector fields of \cite{CDR2016}. 
\label{wqb.2b}\end{remark}
The first condition in Definition~\ref{wqb.2} is clearly closed under taking the Lie bracket, while for the second one, it is closed thanks to the fact that for any $b$-vector field $\xi$, $\xi v\in v\CI(M)$.  Thus, $\nQFB$-vector fields indeed form a Lie subalgebra of the Lie algebra of $b$-vector fields.

The first condition means that $\xi$ is an edge vector field in the sense of \cite{MazzeoEdge, ALMP2012,AG}.  We denote by $\cV_e(M)$ the Lie algebra of edge vector fields.  In the local coordinates \eqref{coord.1}, it is locally generated over $\CI(M)$ by 
\begin{equation}
v_1\frac{\pa}{\pa x_1}, v_1\frac{\pa}{\pa y_1}, \ldots, v_k\frac{\pa}{\pa x_k}, v_k\frac{\pa}{\pa y_k}, \frac{\pa}{\pa z},
\label{wqb.3}\end{equation}
where $v_i:=\prod_{j=i}^k x_j$ and where $\frac{\pa}{\pa y_i}$ and $\frac{\pa}{\pa z}$ stand for 
$$
     \frac{\pa}{\pa y_i^1},\ldots, \frac{\pa}{\pa y_i^{\ell_i}} \quad \mbox{and} \quad \frac{\pa}{\pa z^1},\ldots, \frac{\pa}{\pa z^q} \quad \mbox{respectively}.
$$
Taking into account the second condition in Definition~\ref{wqb.2}, $\nQFB$-vector fields are locally generated over $\CI(M)$ by
\begin{multline}
v_1x_1\frac{\pa}{\pa x_1}, v_1\frac{\pa}{\pa y_1}, v_2\left( (1-\nu_2)x_2\frac{\pa}{\pa x_2}- (1-\nu_1)x_1\frac{\pa}{\pa x_1} \right), v_2\frac{\pa}{\pa y_2},\ldots,  \\ v_k\left( (1-\nu_k)x_k\frac{\pa}{\pa x_k}- (1-\nu_{k-1})x_{k-1}\frac{\pa}{\pa x_{k-1}} \right), v_k\frac{\pa}{\pa y_k}, \frac{\pa}{\pa z},
\label{wqb.4}\end{multline}
where $\nu_i:=\nu_{H_i}$.
Indeed, the natural modification $v_i x_i\frac{\pa}{\pa x_i}$ of the edge vector fields `normal' to the boundary hypersurfaces satisfy the second condition, but the difference $(1-\nu_i)x_i \frac{\pa}{\pa x_i}- (1-\nu_j)x_j\frac{\pa}{\pa x_j}$ even annihilates $\rho^{-1}$, while multiplied by $v_{\min(i,j)+1}$ makes it an edge vector field, so that \eqref{wqb.4} provides a generating set.

In a similar way that the Lie algebra of $\QFB$-vector fields depends on a choice of total boundary defining function \cite[Lemma~1.1]{KR1}, the Lie algebra of $\nQFB$-vector fields depends on a choice of $\mathfrak{n}$-weight total boundary defining function $\rho^{-1}$.  Two such functions will be said to be \textbf{$\nQFB$-equivalent} if they yield the same Lie algebra of vector fields.  The following generalization of \cite[Lemma~1.1]{KR1} gives a simple criterion for two $\mathfrak{n}$-weighted total boundary defining functions to be $\nQFB$-equivalent.  
\begin{lemma}
Two $\mathfrak{n}$-weighted total boundary defining functions $\rho^{-1}$ and $(\rho')^{-1}$ are $\nQFB$-equivalent provided the function 
$$
     f:= \log \lrp{\frac{\rho'}{\rho}} \in \cA_{\phg}(M)
$$
is such that near each boundary hypersurface $H\in \cM_1(M)$, $f=\phi_H^*f_H+ \mathcal{O}(x_H)$ for some $f_H\in\cA_{\phg}(S_H)$.
\label{wqb.5}\end{lemma}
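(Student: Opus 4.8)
The plan is to show that the Lie algebra $\cV_{\nQFB}(M)$ determined by $\rho^{-1}$ coincides with the one determined by $(\rho')^{-1}$. The first condition in Definition~\ref{wqb.2} just says that $\xi$ is an edge vector field, $\xi\in\cV_e(M)$, and is insensitive to the choice of $\mathfrak{n}$-weighted total boundary defining function; and for $\xi\in\cV_e(M)$ one has $\xi\rho^{-1}=-\rho^{-1}\,\xi\log\rho$ with $\xi\log\rho\in\CI(M)$, so the second condition amounts to $\xi\log\rho\in v\cA_{\phg}(M)$ with $v:=\prod_{H\in\cM_1(M)}x_H$, and likewise for $\rho'$. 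Since $\log\rho'=\log\rho+f$, the two Lie algebras will coincide — and $\rho^{-1},(\rho')^{-1}$ will be $\nQFB$-equivalent — once one knows that $\xi f\in v\cA_{\phg}(M)$ for every $\xi\in\cV_e(M)$. Because this statement refers only to $\cV_e(M)$, no separate symmetry argument in $\rho\leftrightarrow\rho'$ is needed.

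The core of the matter is geometric. By hypothesis, near every $H\in\cM_1(M)$ one has $f|_H=\phi_H^*f_H$, so $f$ restricts to $H$ as the pullback of a function on the base $S_H$, hence is constant along the fibres of $\phi_H:H\to S_H$; since $\xi\in\cV_e(M)$ is tangent to those fibres, $(\xi f)|_H=\xi|_H(f|_H)=0$ for every $H\in\cM_1(M)$. As $b$-vector fields preserve $\cA_{\phg}(M)$, also $\xi f\in\cA_{\phg}(M)$, so $\xi f$ is a polyhomogeneous function vanishing on every boundary hypersurface. To upgrade this to $\xi f\in v\cA_{\phg}(M)$, I would write $f=\phi_H^*f_H+x_H g_H$ near $H$ with $g_H\in\cA_{\phg}(M)$ (which is what the $\mathcal{O}(x_H)$ clause provides) and $\phi_H^*f_H$ taken independent of $x_H$, whence $\xi f=\xi(\phi_H^*f_H)+(\xi x_H)g_H+x_H\,\xi g_H\in x_H\cA_{\phg}(M)$ near $H$; an induction over the boundary hypersurfaces meeting a given corner — using the $\mathcal{O}(x_H)$ hypotheses to control the orders of $f$ along each $H$ — then gives $\xi f\in v\cA_{\phg}(M)$ locally near $\pa M$, hence everywhere since there is nothing to check on $M\setminus\pa M$. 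Concretely, this is the assertion that each of the local generators \eqref{wqb.3} of $\cV_e(M)$ in the coordinates \eqref{coord.1} (equivalently, each of the $\nQFB$-generators \eqref{wqb.4}) sends $f$ into $v\cA_{\phg}(M)$, and it can be checked directly using that $\phi_{H_j}^*f_{H_j}$ may be taken to depend only on $(x_1,y_1,\ldots,x_{j-1},y_{j-1},y_j)$, so that $x_j\pa_{x_j}$, $\pa_z$, and $x_i\pa_{x_i}$, $\pa_{y_i}$ for $i>j$ annihilate it.

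I expect the main obstacle to be exactly this last step — matching the precise order of vanishing of $f$ at each $H_j$ guaranteed by the hypothesis against the precise ``bad'' directions of each generator, and running the divisibility-by-a-product argument for polyhomogeneous functions with enough care. Once the remark about which coordinates $\phi_{H_j}^*f_{H_j}$ involves is recorded, each of the finitely many cases is essentially a one-line computation.
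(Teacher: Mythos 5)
Your proposal is correct and takes essentially the same route as the paper: both reduce $\nQFB$-equivalence to showing $\xi f\in v\cA_{\phg}(M)$ for every edge vector field $\xi$ (the paper phrases this via $\frac{d\rho'}{\rho' v}=\frac{1}{v}\left(\frac{d\rho}{\rho}+df\right)$ and the condition $\frac{d\rho}{\rho v}(\xi)\in\cA_{\phg}(M)$), and then invoke the hypothesis $f=\phi_H^*f_H+\mathcal{O}(x_H)$ near each $H$. The paper simply asserts this last implication, while you carry out the local-coordinate verification it leaves implicit, so your write-up is a more detailed version of the same argument.
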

\begin{proof}
Clearly, $\rho^{-1}$ and $(\rho')^{-1}$ are $\nQFB$-equivalent if and only if for all $\xi\in\cV_e(M)$, 
\begin{equation}
   \frac{d\rho}{\rho v}(\xi)\in \cA_{\phg}(M) \; \Longleftrightarrow \; \frac{d\rho'}{\rho'v}\in \cA_{\phg}(M),
\label{wqb.5b}\end{equation}
where $v= \prod_{H\in\cM_1(M)} x_H$ is a total boundary defining function for $M$.  Now, since $\rho'=e^f \rho$, 
\begin{equation}
     \frac{d\rho'}{\rho'v}=\frac{1}{v}\lrp{\frac{d\rho}{\rho}+df}.
\label{wqb.5c}\end{equation}
In particular, if near each $H\in \cM_1(M)$, $f= \phi_H^* f_H+ \cO(x_H)$ for some $f_H\in \cA_{\phg}(S_H)$, then \eqref{wqb.5b} holds for each $\xi\in\cV_e(M)$.  
\end{proof}

By the local description \eqref{wqb.4}, we see that $\cV_{\nQFB}(M)$ is a locally free sheaf of rank $m=\dim M$ over $\CI(M)$.  Thus, by the Serre-Swan theorem, there is a natural smooth vector bundle, the \textbf{$\mathfrak{n}$-weighted $\QFB$-tangent bundle} ($\nQFB$-tangent bundle for short), denoted ${}^{\mathfrak{n}}TM$, and a natural bundle map $\iota_{\mathfrak{n}}: {}^{\mathfrak{n}}TM\to TM$, restricting to an isomorphism on $M\setminus \pa M$, such that
\begin{equation}
     \cV_{\nQFB}(M)= (\iota_{\mathfrak{n}})_* \CI(M;{}^{\mathfrak{n}}TM).
\label{wqb.6}\end{equation}
In fact, at $p\in M$, the fiber of ${}^{\mathfrak{n}}TM$ above $p$ is given by ${}^{\mathfrak{n}}T_pM= \cV_{\nQFB}(M)/\cI_p\cdot \cV_{\nQFB}(M)$, where $\cI_p$ is the ideal of smooth functions vanishing at $p$.  The vector bundle dual to ${}^{\mathfrak{n}}TM$, denoted ${}^{\mathfrak{n}}T^*M$, will be called the $\mathfrak{n}$-weighted $\QFB$-cotangent bundle ($\nQFB$-cotangent bundle for short).  We see from \eqref{wqb.4} that in the coordinates \eqref{coord.1}, the $\nQFB$-cotangent bundle is locally spanned over $\CI(M)$ by
\begin{equation}
  \rho_1^{-\mathfrak{n}}d\rho_1, \frac{dy^1_1}{v_1},\ldots,\frac{dy^{\ell_1}_1}{v_1} , \rho_2^{-\mathfrak{n}}d\rho_2, \frac{dy^1_2}{v_2}, \ldots,  \frac{dy^{\ell_2}_2}{v_2}, \ldots, \rho_k^{-\mathfrak{n}}d\rho_k, \frac{dy^1_k}{v_k},\ldots, \frac{dy^{\ell_k}_k}{v_k}, dz^1,\ldots,dz^q,
\label{wqb.7}\end{equation}
where  $v_i= \prod_{j=i}^k x_j$, $\rho_i= \prod_{j=i}^k x_j^{-\frac{1}{1-\nu_j}}$ and $\rho_i^{-\mathfrak{n}}= \prod_{j=i}^k x_j^{\frac{\nu_j}{1-\nu_j}}$ with $\nu_j:=\mathfrak{n}(H_j)$.  

The natural map $\iota_{\mathfrak{n}}: {}^{\mathfrak{n}}TM\to TM$ gives ${}^{\mathfrak{n}}TM$ the structure of a Lie algebroid and indicates that $(M,\cV_{\nQFB}(M))$ is a Lie structure at infinity for $M\setminus \pa M$ in the sense of \cite[Definition~3.1]{ALN04}.  As such, it comes with the following natural class of metrics.
\begin{definition}
An \textbf{$\mathfrak{n}$-weighted quasi-fibered boundary metric} ($\nQFB$-metric for short) is a choice of Euclidean metric $g_{\nQFB}$ for the vector bundle ${}^{\mathfrak{n}}TM$.  A \textbf{smooth $\nQFB$-metric} is a Riemannian metric on $M\setminus \pa M$ induced by some $\nQFB$-metric $g_{\nQFB}$ via the map $\iota_{\mathfrak{n}}: {}^{\mathfrak{n}}TM\to TM$.  Trusting this will lead to no confusion, we will also denote by $g_{\nQFB}$ the smooth $\nQFB$-metric induced by $g_{\nQFB}\in \CI(M;{}^{\mathfrak{n}}TM\otimes {}^{\mathfrak{n}}TM)$. 
\label{wqb.8}\end{definition}
In terms of a choice of $\nQFB$-metric, notice that the Lie algebra of $\nQFB$-vector fields can alternatively be defined by
\begin{equation}
  \cV_{\nQFB}(M)= \{ \xi\in \CI(M;TM) \; | \; \sup_{M\setminus \pa M} g_{\nQFB}(\xi,\xi)<\infty\}.
\label{wqb.8b}\end{equation}
Since $\nQFB$-metrics are induced by a Lie structure at infinity, they come with the following geometric properties.
\begin{lemma}
Any smooth $\nQFB$-metric is complete of infinite volume with bounded geometry.
\label{wqb.9}\end{lemma}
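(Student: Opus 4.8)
The plan is to verify the three assertions --- completeness, infinite volume, bounded geometry --- by working in the local coordinates \eqref{coord.1} and using the explicit generating set \eqref{wqb.4} for $\cV_{\nQFB}(M)$, together with the observation that near each boundary hypersurface the $\nQFB$-structure is essentially a product of lower-depth models, so that one may argue by induction on the depth of $(M,\phi)$. The base case, $\depth(M)=0$, is trivial: then $M$ is closed and $g_{\nQFB}$ is an ordinary smooth Riemannian metric on a compact manifold.

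First I would establish completeness and infinite volume. Since any two $\nQFB$-metrics are uniformly equivalent on $M$ (being Euclidean metrics on the same vector bundle ${}^{\mathfrak{n}}TM$ over the compact manifold $M$), it suffices to check these properties for one convenient choice, e.g.\ the metric whose associated $\nQFB$-coframe \eqref{wqb.7} is orthonormal. In the coordinates \eqref{coord.1} near a codimension-$k$ corner, this model metric has the form
\begin{equation}
  \sum_{i=1}^{k}\left( \rho_i^{-2\mathfrak{n}}\, d\rho_i^2 + \frac{|dy_i|^2}{v_i^2}\right) + |dz|^2,
\label{wqb.9a}\end{equation}
with $v_i = \prod_{j\ge i} x_j$, $\rho_i = \prod_{j\ge i} x_j^{-1/(1-\nu_j)}$. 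To see completeness, take a curve escaping to $\pa M$; at least one $x_H\to 0$, so the corresponding $\rho_i\to\infty$, and since $\rho_i^{-\mathfrak n}d\rho_i = d(\text{something unbounded})$ up to bounded factors (more precisely $\rho_i^{-\mathfrak n}d\rho_i$ integrates to a function tending to $+\infty$ as $x_H\to 0$, because the exponent $\frac{\nu_j}{1-\nu_j} - \frac{1}{1-\nu_j} = -1$ produces a logarithmic or power divergence depending on whether $\nu_j=0$), the length of any such curve is infinite; hence Cauchy sequences stay in a fixed compact subset of $M\setminus\pa M$ and converge. For infinite volume, the volume density of \eqref{wqb.9a} is $\prod_i \rho_i^{-\mathfrak n} v_i^{-\ell_i}\,|d\rho_i\,dy_i\,dz|$, which after changing to $x_H, y_i, z$ coordinates is a power of the $x_H$ that integrates to $+\infty$ near any boundary hypersurface; alternatively, completeness plus the existence of a uniform lower Ricci bound (from bounded geometry below) and maximal volume growth of balls already forces infinite total volume.

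Next I would treat bounded geometry, meaning positive injectivity radius and uniform bounds $|\nabla^j \Rm|_{g_{\nQFB}}\le C_j$ for all $j$. Because $(M,\cV_{\nQFB}(M))$ is a Lie structure at infinity in the sense of \cite{ALN04}, this is not something one needs to prove from scratch: it is a general theorem (see \cite[Theorem~3.8]{ALN04} and its proof, or the discussion of Lie manifolds therein) that any metric compatible with a Lie structure at infinity has bounded geometry. Concretely, the curvature bounds follow because $\Rm$ and all its covariant derivatives, expressed in a local $\nQFB$-frame, have coefficients that are smooth functions on the compact manifold $M$ hence bounded; and positivity of the injectivity radius follows by exhibiting, near each point of $\pa M$, a ball of fixed $\nQFB$-radius on which the metric is uniformly controlled --- this uses the product-type structure \eqref{wqb.9a} and an induction on depth, pulling back the lower-depth estimate along $\phi_H$ and combining with the one-dimensional model in the $\rho_k$ direction, which itself has positive injectivity radius after the substitution that linearizes $\rho_k^{-\mathfrak n}d\rho_k$. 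I expect the main obstacle to be the injectivity radius estimate near the deepest corners: one must rule out short geodesic loops by a careful analysis of the warped/weighted product metric \eqref{wqb.9a}, showing that the warping factors $v_i, \rho_i^{-\mathfrak n}$ are slowly varying on $\nQFB$-unit scales, and this is where the inductive hypothesis on the fibers of $\phi_H$ and the base $S_H$ (both manifolds with fibered corners of smaller depth, each carrying an induced $\nQFB$-type structure) must be invoked cleanly. The completeness and infinite-volume parts are essentially bookkeeping with the coordinate formulas and present no real difficulty.
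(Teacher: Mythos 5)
Your completeness, infinite-volume and curvature-bound arguments are fine in outline, and in fact the paper does none of this by hand: its entire proof of Lemma~\ref{wqb.9} is the observation that a smooth $\nQFB$-metric is a compatible metric for the Lie structure at infinity $(M,\cV_{\nQFB}(M))$, so that completeness, infinite volume and bounded geometry follow from the general results of \cite{ALN04} and \cite{Bui}. Your coordinate computations near the corners (divergence of the radial length and of the volume density) are consistent with that and would go through with minor bookkeeping corrections (e.g.\ $\rho_i^{-\mathfrak{n}}d\rho_i=\frac{d\rho_i}{v_i\rho_i}$ gives a power, not logarithmic, divergence when all $\nu_j=\nu>0$, and ``maximal volume growth'' is neither available nor needed for infinite volume).

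The genuine gap is the injectivity radius. You attribute full bounded geometry, including positivity of the injectivity radius, to \cite{ALN04}; but what is proved there for a general Lie structure at infinity is completeness, infinite volume and uniform bounds on $\nabla^j\Rm$ (these do follow, as you say, because the curvature has coefficients that are smooth on the compact manifold $M$ when expressed in a local $\nQFB$-frame). The lower bound on the injectivity radius is precisely the delicate point that is not established there in general, which is why the paper cites \cite{Bui} in addition to \cite{ALN04}. Your fallback is a hands-on inductive argument ruling out short geodesic loops near the deepest corners using the warped product structure \eqref{w.5}, but this is only announced, and you yourself flag it as ``the main obstacle''; controlling loops when several warping factors $v_i$, $\rho_i^{-\mathfrak{n}}$ degenerate simultaneously is exactly the nontrivial content, and no mechanism for it is given. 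As written, the bounded-geometry claim is therefore not justified: either carry out that corner analysis in detail, or replace the citation by the correct one (the general injectivity radius bound for metrics compatible with a Lie structure at infinity, i.e.\ \cite{Bui}), which is what the paper does.
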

\begin{proof}
Since these metrics come from the Lie structure at infinity $(M,\cV_{\nQFB}(M))$ in the sense of \cite[Definition~3.3]{ALN04}, the result follows from \cite{ALN04} and \cite{Bui}.
\end{proof}
As in \cite{CDR2016}, we will be mostly interested in the case where the  manifold with fibered corners $(M,\phi)$ is such that for each maximal hypersurface $H$, $S_H=H$ and $\phi_H:H\to S_H$ is the identity map.  We say in this case that $(M,\phi)$ is a \textbf{$\QAC$-manifold with fibered corners} and  that an $\mathfrak{n}$-weighted quasi-fibered boundary metric is an  \textbf{$\mathfrak{n}$-weighted quasi-asymptotically conical metric} ($\nQAC$-metric for short).   More generally, we will replace $\QFB$ by $\QAC$ and quasi-fibered boundary by quasi-asymptotically conical whenever $(M,\phi)$ is a $\QAC$-manifold with fibered corners.  In fact, from now on, unless otherwise specified, we will assume that $(M,\phi)$ is a $\QAC$-manifold with fibered corners.  Now, on such a manifold, the class of metrics we are interested in is not quite $\nQAC$-metrics, but a conformally related one.
\begin{definition}
Let $(M,\phi)$ be a $\QAC$-manifold with fibered corners together with some Lie algebra of $\nQAC$-vector fields associated to some weight function $\mathfrak{n}$ and some $\mathfrak{n}$-weighted total boundary defining function 
$$
\rho^{-1}= \prod_{H\in \cM_1(M)} x_H^{\frac{1}{1-\nu_H}}.
$$  In such a setting, a \textbf{smooth $\mathfrak{n}$-warped $\QAC$-metric}  is a Riemannian metric $g_w$ on $(M\setminus \pa M)$ of the form 
\begin{equation}
  g_w= \rho^{2\mathfrak{n}}g_{\nQAC}
\label{wqb.10a}\end{equation} 
for some smooth $\nQAC$-metric $g_{\nQAC}$, where 
$$
    \rho^{\mathfrak{n}}:= \prod_{H\in\cM_1(M)} x^{-\frac{\nu_H}{1-\nu_H}}.
$$
\label{wqb.10}\end{definition}
\begin{example}
In the local basis \eqref{wqb.7}, an example of an $\mathfrak{n}$-warped $\QAC$-metric is given by 
$$
   g_w= \sum_{i=1}^k \rho^{2\mathfrak{n}} \lrp{ \rho_i^{-2\mathfrak{n}} d\rho_i^2 + \sum_{j=i}^{\ell_i} \frac{d(y_i^j)^2}{v_i^2}   } + \rho^{2\mathfrak{n}} \sum_{j=1}^q d(z^j)^2,
$$
where $\rho_i^{2\mathfrak{n}}= (\rho_i^{\mathfrak{n}})^2= \prod_{j=i}^k x_j^{\frac{-2\nu_j}{1-\nu_j}}$.
\label{wqb.11}\end{example}
In fact, $\mathfrak{n}$-warped $\QAC$-metrics have a nice iterative structure as illustrated by the next example.
\begin{example}
Near $H\in \cM_1(M)$ and a local trivialization of $\phi_H: H\to S_H$ over $\cU\subset S_H$, but away from $G$ for $G<H$, a local model of an $\mathfrak{n}$-warped $\QAC$-metric is given by 
\begin{equation}
   d\rho_H^2+ \rho_H^2g_{\cU}+ \rho_H^{2\nu_H}\kappa_H,
\label{w.5}\end{equation}
where $\rho_H:= \prod_{G\ge H}x_G^{-\frac{1}{1-\nu_G}}$,  $g_{\cU}$ is a smooth metric in $\cU$ and $\kappa_H$ is an $\mathfrak{n}_{Z_H}$-warped $\QAC$-metric on the fiber $Z_H= \phi_H^{-1}(s)$ for some $s\in \cU$ with weight function $\mathfrak{n}_{Z_H}$ given by
$$
      \mathfrak{n}_{Z_H}(Z_H\cap G):= \frac{\nu_G-\nu_H}{1-\nu_H} \quad \mbox{for} \quad G>H.
$$  
With respect to a fixed point on $Z_H$, the $\mathfrak{n}_{Z_H}$-weighted distance function of $\kappa_H$ in \eqref{w.5} is given by
$$
       \rho_{Z_H}:=  \prod_{G>H} x_G^{-\frac{1}{1-\mathfrak{n}_{z_H}(G)}}.
$$
In particular, in the model \eqref{w.5}, to be close to $H$ means that $x_H<c$ for some small constant $c>0$, which in terms of the functions $\rho_H$ and $\rho_{Z_H}$ corresponds to the inequality
\begin{equation}
    \rho_{Z_H}<c\rho_H^{1-\nu_H}.
\label{wg.4}\end{equation}
\label{w.1}\end{example}
\begin{remark}
The definition of $\nQAC$-metrics and $\mathfrak{n}$-warped $\QAC$-metrics would still make sense for a weight function $\mathfrak{n}: \cM_1(M)\to [0,1)$ satisfying  \eqref{wqb.1b}, but we need to restrict the values of $\mathfrak{n}$ to $\{0,\nu\}$ to have a nice iterative structure as in Example~\ref{w.1}.
\label{wqb.11b}\end{remark}
We can formally define the $\mathfrak{n}$-warped $\QAC$-tangent bundle by
\begin{equation}
        {}^{w}TM:= (\rho^{\mathfrak{-n}})({}^{\mathfrak{n}}TM) \quad \mbox{with space of sections} \quad \CI(M;{}^{w}TM):= \rho^{-\mathfrak{n}}\CI(M;{}^{\mathfrak{n}}TM)
\label{wtb.1}\end{equation}
and denote by ${}^{w}T^*M$ its dual, so that smooth $\mathfrak{n}$-warped $\QAC$-metrics correspond to elements of \linebreak $\CI(M;S^2({}^{w}T^*M))$.

As in \cite{CDR2016} and \cite{CR2021}, it will be useful to introduce yet another class of metrics to describe the weighted H\"older spaces that we will use.  
\begin{definition}
Let $(M,\phi)$ be a $\QAC$-manifold with fibered corners and let $x_{\max}$ be a product of boundary defining functions associated to all the maximal boundary hypersurfaces of $M$.   Let $\mathfrak{n}$ be a weight function and let $\rho$ be an $\mathfrak{n}$-weighted distance.  In such a setting, a \textbf{smooth $\mathfrak{n}$-weighted quasi $b$-metric} ($\nQb$-metric for short) on $M$ is a Riemannian metric $g_{\nQb}$ of the form
$$
      g_{\nQb}= x_{\max}^2 g_{\nQAC}
$$
for some smooth $\mathfrak{n}$-weighted $\QAC$-metric $g_{\nQAC}$.  
\label{wqb.12}\end{definition} 
\begin{remark}
When $\mathfrak{n}$ is the trivial weight function given by $\mathfrak{n}(H)=0$ for all $H\in\cM_1(M)$, an $\nQb$-metric corresponds to an $\Qb$-metric in the sense of \cite{CDR2016}.
\label{wqb.13}\end{remark}
As for $\nQAC$-metrics, $\nQb$-metrics can be defined in terms of a Lie structure at infinity.  To see this, consider the space
\begin{equation}
  \cV_{\nQb}(M):= \{ \xi\in\CI(M;TM)\; | \; \sup_{M\setminus \pa M} g_{\nQb}(\xi,\xi)<\infty\}
\label{wqb.14}\end{equation}
of smooth vector fields on $M$ uniformly bounded with respect to some choice of $\nQb$-metric $g_{\nQb}$.  From Definition~\ref{wqb.2}, Definition~\ref{wqb.12} and \eqref{wqb.8b}, the space \eqref{wqb.14} can alternatively be defined as the space of $b$-vector fields $\xi$ on $M$ such that
\begin{enumerate}
\item $\xi|_{H}$ is tangent to the fibers of $\phi_H:H\to S_H$ for all $H\in\cM_1(M)$ not maximal with respect to the partial order;
\item $\xi\rho^{-1}\in \frac{v\rho^{-1}}{x_{\max}}\CI(M)$.
\end{enumerate}
As for $\nQAC$-vector fields, one can check that these two conditions are closed under taking the Lie bracket, so that $\cV_{\nQb}(M)$ is a Lie subalgebra of the Lie algebra of $b$-vector fields.
By \eqref{wqb.14} and \eqref{wqb.4}, $\nQb$-vector fields are locally generated over $\CI(M)$ by \eqref{wqb.4} when $H_k$ is not maximal and by 
\begin{multline}
\frac{v_1x_1}{x_k}\frac{\pa}{\pa x_1}, \frac{v_1}{x_k}\frac{\pa}{\pa y_1}, \frac{v_2}{x_k}\left( (1-\nu_2)x_2\frac{\pa}{\pa x_2}- (1-\nu_1)x_1\frac{\pa}{\pa x_1} \right), \frac{v_2}{x_k}\frac{\pa}{\pa y_2},\ldots,  \\ \left( (1-\nu_k)x_k\frac{\pa}{\pa x_k}- (1-\nu_{k-1})x_{k-1}\frac{\pa}{\pa x_{k-1}} \right) \quad \mbox{and} \quad \frac{\pa}{\pa y_k}
\label{wqb.15}\end{multline}
otherwise.  Thus, $\cV_{\nQb}(M)$ is a locally free sheaf of rank $m$ over $\CI(M)$.  By the Serre-Swan theorem, there is a corresponding vector bundle ${}^{\nQb}TM= x_{\max}^{-2}{}^{\mathfrak{n}}TM$ and a map $\iota_{\nQb}: {}^{\nQb}TM\to TM$ such that
$$
     \cV_{\nQb}(M)= (\iota_{\nQb})_* \CI(M;{}^{\nQb}TM).
$$
In other words, the map $\iota_{\nQb}$ gives ${}^{\nQb}TM$ the structure of a Lie algebroid over $M$ and $(M,\cV_{\nQb}(M))$ is a Lie structure at infinity for $M\setminus \pa M$.  In particular, by \cite{ALN04} and \cite{Bui}, smooth $\nQb$-metrics have the following geometric properties.
\begin{lemma}
Any smooth $\nQb$-metric is automatically complete of infinite volume with bounded geometry.
\label{wqb.16}\end{lemma}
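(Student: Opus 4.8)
The plan is to treat this as the $\nQb$-analogue of Lemma~\ref{wqb.9}. The key observation, already recorded in the discussion preceding the statement, is that $\cV_{\nQb}(M)$ is a locally free sheaf of rank $m=\dim M$ over $\CI(M)$, is closed under the Lie bracket, and that the anchor $\iota_{\nQb}\colon {}^{\nQb}TM\to TM$ of the associated Lie algebroid is an isomorphism over the interior $M\setminus\pa M$. In other words, $(M,\cV_{\nQb}(M))$ is a Lie structure at infinity for $M\setminus\pa M$ in the sense of \cite[Definition~3.1]{ALN04}, and a smooth $\nQb$-metric is, by Definition~\ref{wqb.12}, precisely a metric on $M\setminus\pa M$ compatible with it in the sense of \cite[Definition~3.3]{ALN04}.

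Granting this, completeness is immediate: for a metric compatible with a Lie structure at infinity the local generators have uniformly bounded length near $\pa M$ while spanning the tangent space there, so any curve escaping every compact set has infinite length, and the Hopf--Rinow theorem applies. Bounded geometry --- uniformly bounded sectional curvature together with all of its covariant derivatives, and a uniform positive lower bound on the injectivity radius --- is exactly what is established for such metrics in \cite{ALN04} and \cite{Bui}, so here I would simply invoke those references, as in the proof of Lemma~\ref{wqb.9}.

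The only point not subsumed by this formalism is infinite volume, and for it I would give a short local computation near a maximal boundary hypersurface $H_k$. From the local generators \eqref{wqb.15}, the dual coframe of $g_{\nQb}$ contains a one-form equal to $\tfrac{dx_k}{x_k}$ up to lower-order corrections coming from the difference terms, while the remaining coframe elements are, near $H_k$, comparable to those of a fixed smooth density on the corner. Hence on a collar $\{x_k<\delta\}$ the Riemannian volume form of $g_{\nQb}$ dominates $c\,\tfrac{dx_k}{x_k}\wedge\mu$ for some smooth positive density $\mu$ and a constant $c>0$, and since $\int_0^\delta \tfrac{dx_k}{x_k}=\infty$ we get $\Vol(M\setminus\pa M,g_{\nQb})=\infty$; equivalently, one may note that an $\nQb$-metric is quasi-isometric to a $b$-metric near $H_k$, and $b$-metrics have cylindrical ends of infinite volume. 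I do not anticipate any genuine obstacle: completeness and bounded geometry are a direct appeal to \cite{ALN04,Bui} through the already-verified Lie structure at infinity, and the infinite-volume statement reduces to the elementary divergence of $\int dx_k/x_k$.
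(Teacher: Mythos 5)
Your proposal is correct and follows essentially the same route as the paper: the statement is justified exactly as in Lemma~\ref{wqb.9}, by observing that smooth $\nQb$-metrics are compatible metrics for the Lie structure at infinity $(M,\cV_{\nQb}(M))$ and then invoking \cite{ALN04} and \cite{Bui}. Your supplementary local computation of the infinite volume near a maximal hypersurface (the $dx_k/x_k$ divergence, i.e.\ quasi-isometry to a $b$-metric there) is a correct but optional addition, since this conclusion is already part of what the cited references provide.
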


We will need various function spaces associated to these metrics.  First, recall that if $(X,g)$ is a Riemannian manifold and $E\to X$ is a Euclidean vector bundle over $X$ together with a connection $\nabla$ compatible with the Euclidean structure, we can for each $\ell\in \bbN_0$ associate the space $\cC^{\ell}(X;E)$ of continuous sections $\sigma: X\to E$ such that
$$
       \nabla^j\sigma\in \cC^0(X;T^0_jX\otimes E) \quad \mbox{and} \quad \sup_{p\in X} |\nabla^j\sigma|_g<\infty \quad \forall j\in\{0,\ldots,\ell\},
$$ 
where $\nabla$ denotes as well the connection induced by the Levi-Civita connection of $g$ and the connection on $E$, while $|\cdot|_g$ is the norm induced by $g$ and the Euclidean structure on $E$.  This is a Banach space with norm
$$
     \| \sigma\|_{g,\ell}:= \sum_{j=0}^\ell \sup_{p\in X} |\nabla^j\sigma(p)|_g.
$$
The intersection of these spaces yields the Fr\'echet space 
$$
    \CI_g(X;E):= \bigcap_{\ell\in\bbN_0} \cC^{\ell}_g(X;E).  
$$
For $\ell\in\bbN_0$ and $\alpha\in (0,1]$, there is also the H\"older space $\cC^{\ell,\alpha}_g(X;E)$ consisting of sections $\sigma\in \cC^{\ell}(X;E)$ such that
$$
   [\nabla^{\ell}\sigma]_{g,\alpha}:= \sup \left\{  \left.\frac{|P_{\gamma}(\nabla^{\ell}\sigma(\gamma(0)))- \nabla^{\ell}\sigma(\gamma(1))|}{\ell(\gamma)^{\alpha}} \; \right| \; 
   \gamma\in \CI([0,1];X), \; \gamma(0)\ne \gamma(1) \right\}<\infty,
$$
where $\ell(\gamma)$ is the length of $\gamma$ with respect to $g$ and $P_{\gamma}: T^0_{\ell}X\otimes E|_{\gamma(0)}\to T^0_{\ell}X\otimes E|_{\gamma(1)}$ is parallel transport along $\gamma$.  Again, this is a Banach space with norm given by 
$$
       \|\sigma\|_{g,\ell,\alpha}:= \|\sigma\|_{g,\ell}+ [\nabla^{\ell}\sigma]_{g,\alpha}.
$$
For $\mu\in\CI(X)$ a positive function, there are corresponding weighted versions
$$
      \mu\cC^{\ell,\alpha}_g(X;E):= \left\{ \sigma \; \left| \; \frac{\sigma}{\mu}\in \cC^{\ell,\alpha}_g(X;E) \right.\right\} \quad \mbox{with norm} \quad \|\sigma\|_{\mu\cC^{\ell,\alpha}_g}:= \left\| \frac{\sigma}{\mu}\right\|_{g,\ell,\alpha}.
$$

When $X=M\setminus \pa M$ and $g=g_{\nQb}$ is an $\nQb$-metric, we obtain the $\nQb$-H\"older space $C^{\ell,\alpha}_{\nQb}(M\setminus \pa M;E)$, as well as the space $\cC^{\ell}_{\nQb}(M\setminus \pa M;E)$.  Similarly, if $g=g_w$ is an $\mathfrak{n}$-warped $\QAC$-metric, we can define the $\mathfrak{n}$-warped $\QAC$-H\"older space $\cC^{\ell,\alpha}_w(M\setminus \pa M;E)$.  Since an $\nQb$-metric is conformally related to an $\mathfrak{n}$-warped $\QAC$-metric via
\begin{equation}
   g_w= \frac{g_{\nQb}}{\chi^2} \quad \mbox{with}  \quad \chi:= \frac{x_{\max}}{\rho^{\mathfrak{n}}}\ge 0 \; \mbox{bounded},
\label{hol.2b}\end{equation}
there is an obvious continuous inclusion 
\begin{equation}
\cC^{\ell,\alpha}_{\nQb}(M\setminus \pa M;E)\subset \cC^{\ell,\alpha}_w(M\setminus \pa M;E).
\label{hol.2}\end{equation}  
Conversely, there is the following partial counterpart.  
\begin{lemma}
For $0<\delta<1$, there is a continuous inclusion $\chi^{\delta}\cC^{0,1}_w(M\setminus \pa M;E)\subset \cC^{0,\alpha}_{\nQb}(M\setminus\pa M;E)$ for $\alpha\le \delta$.
\label{hol.1}\end{lemma}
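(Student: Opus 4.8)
The plan is to reduce the statement to a single estimate on the conformal factor $\chi$ and then run a case analysis over curves. Write $u:=\sigma/\chi^{\delta}$, so that $\sigma\in\chi^{\delta}\cC^{0,1}_{w}(M\setminus\pa M;E)$ means exactly $u\in\cC^{0,1}_{w}(M\setminus\pa M;E)$, with $\|\sigma\|_{\chi^{\delta}\cC^{0,1}_{w}}=\|u\|_{g_w,0,1}=\sup|u|+[u]_{g_w,1}$. After rescaling $x_{\max}$ by a constant — which changes none of the spaces involved, only the norms by fixed factors — we may assume $\chi\le 1$; then the $\cC^0$ part is immediate, $\sup|\sigma|=\sup\chi^{\delta}|u|\le\sup|u|$. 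So everything comes down to bounding the Hölder seminorm $[\sigma]_{g_{\nQb},\alpha}$ by a constant multiple of $\|u\|_{g_w,0,1}$, with the constant independent of $\sigma$.

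The geometric input I would establish first is that $\chi$ has bounded logarithmic derivative for the warped metric: there is a constant $C_1>0$ with $|d\chi|_{g_w}\le C_1\chi$ on $M\setminus\pa M$. Since $\chi=x_{\max}/\rho^{\mathfrak n}$ is a finite product of nonnegative real powers of compatible boundary defining functions, $d\log\chi=\sum_H a_H\,\tfrac{dx_H}{x_H}$ with constant $a_H\ge 0$; a direct computation in the local coordinates \eqref{coord.1} with $g_w$ as in Example~\ref{wqb.11} gives $\bigl|\tfrac{d\rho_j}{\rho_j}\bigr|_{g_w}=\rho^{-\mathfrak n}\prod_{l\ge j}x_l$, which is bounded, and each $\tfrac{dx_i}{x_i}$ is a combination of the $\tfrac{d\rho_j}{\rho_j}$ with bounded coefficients, so $|d\log\chi|_{g_w}\le C$ and hence $|d\chi|_{g_w}=\chi|d\log\chi|_{g_w}\le C_1\chi$. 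Integrating along a smooth curve $\gamma\colon[0,1]\to M\setminus\pa M$, using $\chi\,|\dot\gamma|_{g_w}=|\dot\gamma|_{g_{\nQb}}$ (because $g_{\nQb}=\chi^2g_w$ by \eqref{hol.2b}), gives for $L:=\ell_{g_{\nQb}}(\gamma)$ and all $t_1,t_2$
$$
 |\chi(\gamma(t_1))-\chi(\gamma(t_2))|\le\int|d\chi|_{g_w}\,|\dot\gamma|_{g_w}\le C_1\int\chi\,|\dot\gamma|_{g_w}=C_1\int|\dot\gamma|_{g_{\nQb}}\le C_1L;
$$
in particular, if $\chi(\gamma(0))\ge 2C_1L$ then $\chi\ge\tfrac12\chi(\gamma(0))$ along $\gamma$, whence $\ell_{g_w}(\gamma)=\int\chi^{-1}|\dot\gamma|_{g_{\nQb}}\le 2L/\chi(\gamma(0))$.

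With these facts the estimate is a direct computation, to be done for an \emph{arbitrary} smooth curve $\gamma$ from $p=\gamma(0)$ to $q=\gamma(1)$. Since the parallel transport $P_\gamma$ of the metric connection on $E$ is fibrewise $\bbR$-linear and isometric,
$$
 P_\gamma\sigma(p)-\sigma(q)=\chi(p)^{\delta}\bigl(P_\gamma u(p)-u(q)\bigr)+\bigl(\chi(p)^{\delta}-\chi(q)^{\delta}\bigr)u(q).
$$
If $L\ge 1$, then $|P_\gamma\sigma(p)-\sigma(q)|\le 2\sup|\sigma|\le 2\sup|u|$ and $L^{\alpha}\ge 1$, so the ratio is $\le 2\sup|u|$. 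If $L<1$: by the elementary inequality $|a^{\delta}-b^{\delta}|\le|a-b|^{\delta}$ and the curve estimate, the second term is $\le(C_1L)^{\delta}\sup|u|$; for the first term, if $\chi(p)\ge 2C_1L$ then
$$
 \chi(p)^{\delta}|P_\gamma u(p)-u(q)|\le\chi(p)^{\delta}[u]_{g_w,1}\,\ell_{g_w}(\gamma)\le 2[u]_{g_w,1}\,\chi(p)^{\delta-1}L\le 2(2C_1)^{\delta-1}[u]_{g_w,1}\,L^{\delta},
$$
using $\chi(p)^{\delta-1}\le(2C_1L)^{\delta-1}$ since $\delta-1<0$; while if $\chi(p)<2C_1L$ then $\chi(p)^{\delta}|P_\gamma u(p)-u(q)|\le(2C_1L)^{\delta}\cdot 2\sup|u|$. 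In every case $|P_\gamma\sigma(p)-\sigma(q)|\le C\,\|u\|_{g_w,0,1}\,L^{\delta}$ with $C=C(C_1,\delta)$, hence
$$
 \frac{|P_\gamma\sigma(p)-\sigma(q)|}{L^{\alpha}}\le C\,\|u\|_{g_w,0,1}\,L^{\delta-\alpha}\le C\,\|u\|_{g_w,0,1},
$$
the last step being exactly where $\alpha\le\delta$ (together with $L<1$) is used. Taking the supremum over $\gamma$ gives $[\sigma]_{g_{\nQb},\alpha}\le C\|u\|_{g_w,0,1}$, hence $\|\sigma\|_{g_{\nQb},0,\alpha}\le C\|\sigma\|_{\chi^{\delta}\cC^{0,1}_w}$, i.e. the continuous inclusion.

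The one step that is not pure bookkeeping is the bound $|d\chi|_{g_w}\le C_1\chi$, which encodes that $\chi$ is comparable to a product of boundary defining functions and that $g_w$ dominates a $b$-metric; I expect this to be the main (minor) obstacle, and it can also be derived abstractly from $\cV_{\nQb}(M)\subset\cV_b(M)$ together with $\rho^{\mathfrak n}\ge 1$. The secondary subtlety, which dictates the structure of the computation, is that the Hölder seminorms are suprema over \emph{all} smooth curves rather than geodesics, so the curve estimate $|\chi(\gamma(t_1))-\chi(\gamma(t_2))|\le C_1L$ — valid for every curve — is what makes the argument uniform, and one must split on whether $\chi(p)$ is large or small relative to $L$ in order to control curves that run into the region where $\chi$, and hence $\sigma=\chi^{\delta}u$, is small.
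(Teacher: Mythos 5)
Your proof is correct and takes essentially the same route as the paper: your key estimate $|d\chi|_{g_w}\le C_1\chi$ is exactly the observation that $\frac{d\chi}{\chi}$ is a bounded section of ${}^{\nQb}T^*M$, which is the paper's stated starting point, and the curve-splitting comparison of the two H\"older seminorms that you then carry out is the argument of \cite[Lemma~3.9]{CR2021} to which the paper defers, here written out in full.
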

\begin{proof}
Since $\chi$ is a product of powers of boundary defining functions, its logarithmic differential is automatically a $b$-differential in the sense of \cite{MelroseAPS}, so
$$
     \frac{d\chi}{\chi}\in \cA_{\phg}(M; {}^{b} T^*M)\subset \cA_{\phg}(M;{}^{\nQb}T^*M)\subset \CI_{\nQb}(M\setminus \pa M;{}^{\nQb}T^*M).
$$
Using this observation, we can run the same argument as in the proof of \cite[Lemma~3.9]{CR2021} to obtain the result, the starting point of this proof being a similar observation, namely \cite[(3.38)]{CR2021}.

\end{proof}

We can also consider the Sobolev space $H^{\ell}_{\nQb}(M\setminus\pa M)$ associated to an $\nQb$-metric.  For an $\mathfrak{n}$-warped $\QAC$-metric, instead of the natural Sobolev space associated to such a metric, we will consider the weighted version of the $\nQb$-Sobolev space
\begin{equation}
     H^{\ell}_{w}(M\setminus \pa M):= \chi^{\frac{\dim M}2}H^{\ell}_{\nQb}(M\setminus\pa M)),
\label{Sob.1}\end{equation}
where the factor $\chi^{\frac{\dim M}2}$ ensures that we integrate with respect to the volume density of an $\mathfrak{n}$-warped $\QAC$-metric, but with pointwise norms of the derivatives measured with respect to an $\nQb$-metric instead of an $\mathfrak{n}$-warped $\QAC$-metric.

So far, we have only considered \textit{smooth} $\nQb$-metrics and $\mathfrak{n}$-warped $\QAC$-metrics, that is, metrics corresponding to elements of $\CI(M;S^2({}^{\nQb}T^*M))$ and $\CI(M;S^2({}^{w}T^*M))$,  but to look for Calabi-Yau examples, it will be important to be less restrictive on the regularity of these metrics at the boundary.  More precisely, we will look at $\nQb$-metrics corresponding to sections of $\CI_{\nQb}(M\setminus\pa M;S^2({{}^{\nQb}T^*M}))$ (quasi-isometric to some fixed smooth $\nQb$-metric).    For $\mathfrak{n}$-warped $\QAC$-metrics, we could look at those corresponding to sections of $\CI_{w}(M\setminus\pa M;S^2({}^{w}T^*M))$ (quasi-isometric to some smooth $\mathfrak{n}$-warped $\QAC$ metric), but keeping in mind the continuous inclusion \eqref{hol.2}, we will in fact be stricter and consider $\mathfrak{n}$-warped $\QAC$-metrics corresponding to sections of $\CI_{\nQb}(M\setminus\pa M;S^2({}^{w}T^*M))$, that is, $g_w\in \CI_{w}(M\setminus\pa M;S^2({}^{w}T^*M))$ of the form
$$
g_w= \frac{g_{\nQb}}{\chi^2} 
$$
for some $\nQb$-metric $g_{\nQb}\in \CI_{\Qb}(M\setminus\pa M;S^2({{}^{\nQb}T^*M}))$.  We will say that such an $\mathfrak{n}$-warped $\QAC$-metric is $\nQb$-smooth.    Clearly, Lemma~\ref{wqb.16} still holds for $\nQb$-metrics in $\CI_{\nQb}(M\setminus \pa M;S^2({{}^{\nQb}T^*M}) )$, since by assumption  we control the curvature and its derivatives.  For $\mathfrak{n}$-warped $\QAC$-metrics, we also have such a result. 
\begin{proposition}
Any $\mathfrak{n}$-warped $\QAC$-metric $g_w\in\CI_{\nQb}(M\setminus \pa M;S^2({}^{w}T^*M))$ is complete of infinite volume with bounded geometry.
\label{w.2}\end{proposition}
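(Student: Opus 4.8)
The plan is to reduce the claim to the already-established Lemma~\ref{wqb.16} by exploiting the conformal relation \eqref{hol.2b} between $g_w$ and a $\nQb$-metric, together with the fact that the conformal factor $\chi = x_{\max}/\rho^{\mathfrak{n}}$ is a product of powers of boundary defining functions. First I would observe that, by the very definition of an $\nQb$-smooth $\mathfrak{n}$-warped $\QAC$-metric adopted just above the statement, we may write $g_w = \chi^{-2} g_{\nQb}$ for some $g_{\nQb} \in \CI_{\nQb}(M\setminus \pa M; S^2({}^{\nQb}T^*M))$. By Lemma~\ref{wqb.16} (which, as remarked in the excerpt, still holds for such non-smooth $\nQb$-metrics since their curvature and its covariant derivatives are controlled), $g_{\nQb}$ is complete of infinite volume with bounded geometry.

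The next step is to transfer each of the three properties across the conformal change. \emph{Completeness}: since $\chi$ is bounded above by a constant (it is a product of nonnegative powers of boundary defining functions, all of which are bounded), we have $g_w \geq c\, g_{\nQb}$ for some $c>0$, so $g_w$-lengths of curves are bounded below by $c^{1/2}$ times their $g_{\nQb}$-lengths; hence any $g_w$-Cauchy sequence going to the boundary would give a finite-length $g_{\nQb}$-path to the boundary, contradicting completeness of $g_{\nQb}$. \emph{Infinite volume}: the volume density of $g_w$ is $\chi^{-\dim M}$ times that of $g_{\nQb}$, and $\chi^{-\dim M} \geq c^{-\dim M} > 0$ is bounded below, so $\Vol_{g_w}(M\setminus \pa M) \geq c^{-\dim M}\Vol_{g_{\nQb}}(M\setminus\pa M) = \infty$. \emph{Bounded geometry}: this is the part requiring actual work. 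One shows that a conformal change by $\chi$ which is a product of powers of boundary defining functions preserves bounded geometry of metrics in the $\nQb$ (or more generally Lie-structure-at-infinity) setting. The key point is that $d\chi/\chi = d\log\chi$ is a $b$-differential, hence lies in $\cA_{\phg}(M; {}^bT^*M) \subset \CI_{\nQb}(M\setminus\pa M; {}^{\nQb}T^*M)$ — exactly the observation used in the proof of Lemma~\ref{hol.1} — and likewise all its higher $\nQb$-covariant derivatives are bounded. Therefore the Levi-Civita connection of $g_w$ differs from that of $g_{\nQb}$ by a tensor built from $\nabla^j(d\log\chi)$, all of whose $g_{\nQb}$-norms (equivalently, up to the bounded factor $\chi$, their $g_w$-norms) are uniformly bounded; by the standard Weitzenb\"ock-type formula for the curvature under a conformal change, $\Rm_{g_w}$ and all its covariant derivatives are then uniformly bounded in $g_w$. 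Combined with the positive lower bound on the $g_w$-injectivity radius — which follows from the $g_{\nQb}$ lower bound since the two metrics are uniformly equivalent and their connections are uniformly close — this gives bounded geometry for $g_w$.

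I expect the main obstacle to be the bounded geometry assertion, specifically controlling \emph{all} covariant derivatives of the curvature uniformly after the conformal change, rather than just the curvature itself. The mechanism is clear in principle — everything is governed by the $\nQb$-regularity of $\log\chi$ — but making it rigorous requires care in choosing the right bundle ($^{w}T^*M$ versus $^{\nQb}T^*M$) in which to measure each tensor and keeping track of the bounded factors $\chi^{\pm k}$ that appear when passing between the two. Fortunately, an essentially identical computation underlies the proof of Proposition~\ref{wqb.9}/Lemma~\ref{wqb.16} via \cite{ALN04} and \cite{Bui}, and the only genuinely new ingredient here is the conformal factor; so I would structure the proof by citing the Lie-structure-at-infinity framework for $g_{\nQb}$ and then appealing to the closure of bounded geometry under $\nQb$-regular conformal rescalings, with the $b$-differential property of $d\log\chi$ as the one explicit lemma that makes this go through. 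The completeness and infinite-volume parts are genuinely routine and can be dispatched in one or two lines each using the two-sided bound $c\,g_{\nQb} \leq g_w \leq C g_{\nQb}$ implied by the boundedness of $\chi$ away from $0$ and $\infty$ on compact subsets together with its behavior at the boundary.
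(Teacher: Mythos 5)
Your overall architecture is exactly the paper's: write $g_w=\chi^{-2}g_{\nQb}$ as in \eqref{hol.2b}, invoke Lemma~\ref{wqb.16} (extended to $\nQb$-metrics in $\CI_{\nQb}$) for $g_{\nQb}$, deduce bounds on the curvature of $g_w$ and its covariant derivatives from the conformal change formula together with the $\nQb$-regularity of $\log\chi$ (the paper does this by citing the inclusion \eqref{hol.2} and \cite[Theorem~1.159]{Besse}), and then transfer completeness, infinite volume and the injectivity radius. So in spirit you have reproduced the paper's proof. The completeness and infinite-volume steps are fine as you argue them, since they use only the one-sided comparison $g_w\ge c\,g_{\nQb}$ coming from $\chi\le C$.

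There is, however, one genuine flaw: the metrics are \emph{not} uniformly equivalent. Since $\chi=x_{\max}\rho^{-\mathfrak{n}}=x_{\max}\prod_{H}x_H^{\nu_H/(1-\nu_H)}$ vanishes on $\pa M$, the ratio $g_w/g_{\nQb}=\chi^{-2}$ blows up at the boundary, so the two-sided bound $c\,g_{\nQb}\le g_w\le C\,g_{\nQb}$ you invoke at the end is false, and your justification of the injectivity-radius lower bound (``the two metrics are uniformly equivalent and their connections are uniformly close'') does not stand as written. A correct way to close this step, consistent with the curvature bounds you already have, is to note that $|d\log\chi|_{g_w}=\chi\,|d\log\chi|_{g_{\nQb}}$ is bounded because $d\log\chi$ is a $b$-form, so $\chi$ varies only by a bounded factor on unit $g_w$-balls; hence $B_{g_w}(p,1)\supset B_{g_{\nQb}}\!\left(p,c\,\chi(p)\right)$ and $\vol_{g_w}\!\left(B_{g_w}(p,1)\right)\ge c'\,\chi(p)^{-\dim M}\chi(p)^{\dim M}=c'>0$ by the bounded geometry of $g_{\nQb}$, and then bounded curvature plus this uniform volume lower bound gives a positive injectivity radius (Cheeger--Gromov--Taylor). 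The paper's own treatment of this point is admittedly just as brief, but it rests only on the boundedness of $\chi$ and on the positivity of the injectivity radius of $g_{\nQb}$, without asserting the false two-sided comparison; you should either supply an argument along the lines above or at least avoid claiming uniform equivalence.
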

\begin{proof}
By assumption,
$$
g_w= \frac{g_{\nQb}}{\chi^2} 
$$
for some $\nQb$-metric $g_{\nQb}\in \CI_{\Qb}(M\setminus\pa M;S^2({{}^{\nQb}T^*M}))$. By Lemma~\ref{wqb.16} and the comment above, we know that $g_{\nQb}$ is complete of infinite volume with bounded geometry.  On the other hand, $\chi$ is a bounded positive function, so $g_w$ is automatically complete of infinite volume.  By the continuous inclusion \eqref{hol.2} and \cite[Theorem~1.159]{Besse}, we see that the curvature of $g_w$ is bounded, as well as its covariant derivatives.  To see that it is of bounded geometry, it suffices then to show that the injectivity radius is positive, which follows from the facts that $g_{\nQb}$ has positive injectivity radius and that $\chi$ is a bounded positive function on $M\setminus \pa M$.  
\end{proof}

\section{Mapping properties of the Laplacian} \label{mpl.0}

The mapping properties of the Laplacian of a $\QAC$-metric obtained in \cite{DM2014} have a natural analogue for warped $\QAC$-metrics.  Indeed, such results were already obtained for some specific examples of warped $\QAC$-metrics of depth one in \cite{CR2021}; see also \cite{YangLi, Szekelyhidi,Firester} for different approaches.  In this section, we will combine the arguments \cite{DM2014} and \cite{CR2021} to obtain mapping properties of the Laplacian for warped $\QAC$-metrics of arbitrary depth.  It would presumably be possible to obtain such mapping properties using barrier functions as in \cite{Joyce}, though probably only ensuring invertibility of the Laplacian for a smaller range of weights.    Thus, let 
$$
      \mathfrak{n}: \cM_1(M)\to \{0,\nu\}
$$
be a weight function for some $\nu\in [0,1)$.  The set of boundary hypersurfaces of $M$ therefore decomposes as
$$
    \cM_{1}(M)= \cM_{1,0}(M)\cup \cM_{1,\nu}
$$
with $\cM_{1,a}(M)= \mathfrak{n}^{-1}(a)$.  This is a disjoint union if $\nu>0$, and otherwise $\cM_{1}(M)=\cM_{1,0}(M)=\cM_{1,\nu}(M)$.  

For the convenience of the reader, let us first recall the general strategy of \cite{DM2014}.  If $g$ is a complete Riemannian metric on a manifold $Z$ and $h$ is a positive smooth function on it, then we can introduce a measure $d\mu=h^2dg$ using the volume density $dg$ of $g$.  The triple $(Z,g,\mu)$ is then a complete weighted Riemannian manifold in the sense of \cite{GSC2005}.  On such manifolds, the Riemannian metric $g$ induces a distance function $d(p,q)$ between two points $p,q\in Z$.  We will denote by 
$$
    B(p,r):= \{ q\in Z \; | \; d(p,q)<r\}
$$  
the geodesic ball of radius $r$ centered at $p\in Z$.  To measure the volume of such balls, we use however the measure $\mu$, not the volume density of $g$.  Similarly, the natural $L^2$-inner product of two functions is the one induced by the measure $\mu$, namely
\begin{equation}
  \langle u,v\rangle_{\mu}:= \int_Z uv d\mu.
\label{w.6}\end{equation}
If $\nabla$ is the Levi-Civita connection of $g$, we will be interested in studying the mapping properties of the corresponding Laplacian $\Delta=\operatorname{div}\circ\nabla$ of $g$.  More generally, for $\cR$ a function, we will consider the operator $\cL:=-\Delta+\cR$, as well as its Doob transform with respect to $h$,
\begin{equation}
\widetilde{\cL}:= h^{-1}\circ \cL \circ h= -\Delta_{\mu}+V+\cR,
\label{w.7}\end{equation}
where $V:=-\frac{\Delta h}h$ and $-\Delta_{\mu}=\nabla^{*,\mu}\nabla$ with $\nabla^{*,\mu}$ the adjoint of $\nabla$ with respect to the $L^2$-inner product \eqref{w.6} and the $L^2$-inner product on forms given by
\begin{equation}
\langle \eta_1,\eta_2\rangle_{\mu}:= \int_Z (\eta_1,\eta_2)_g(z) d\mu(z).
\label{w.8}\end{equation}
Denote by $H_{\cL}(t,z,z')$ and $H_{-\Delta+V}(t,z,z')$ the heat kernels of $\cL$ and $-\Delta+V$ with respect to the volume density $g$ and let 
\begin{equation}
G_{\cL}(z,z')=\int_0^{\infty} H_{\cL}(t,z,z')dt \quad \mbox{and} \quad G_{-\Delta+V}(z,z')=\int_0^{\infty} H_{-\Delta+V}(t,z,z')dt
\label{w.9}\end{equation}
be the corresponding Green's functions.  Similarly, let $H_{-\Delta_{\mu}}(t,z,z')$ be the heat kernel of $-\Delta_{\mu}$ with respect to the measure $\mu$ with corresponding Green's function
$$
  G_{-\Delta_{\mu}}(z,z')=\int_0^{\infty} H_{-\Delta_{\mu}}(t,z,z')dt.
$$
By a result of \cite{DM2014}, those heat kernels and Green's functions are related as follows.

\begin{lemma}[Theorem~3.12 in \cite{DM2014}]  If $\cR\ge V$, then 
$$
      |H_{\cL}(t,z,z')|\le H_{-\Delta+V}(t,z,z')\le h(z)h(z')H_{-\Delta_{\mu}}(t,z,z')
$$
and 
$$
   |G_{\cL}(z,z')|\le G_{-\Delta+V}(z,z')\le h(z)h(z')G_{-\Delta_{\mu}}.
$$

\label{w.10}\end{lemma}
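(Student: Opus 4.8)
The plan is to follow \cite[Theorem~3.12]{DM2014}, splitting the statement into its two comparisons and noting that the hypothesis $\cR\ge V$ is used only for the first one; both reduce to standard facts about Schr\"odinger-type semigroups on the complete weighted manifold $(Z,g,\mu)$. For the bound $|H_{\cL}(t,z,z')|\le H_{-\Delta+V}(t,z,z')$, I would compare the two Schr\"odinger operators $\cL=-\Delta+\cR$ and $-\Delta+V$, which differ by the nonnegative multiplication operator $\cR-V$. Since $e^{t\Delta}$ is positivity preserving on $(Z,g)$, the Trotter product formula
$$
 e^{-t\cL}=\lim_{n\to\infty}\bigl(e^{\frac{t}{n}\Delta}e^{-\frac{t}{n}\cR}\bigr)^{n},\qquad e^{-t(-\Delta+V)}=\lim_{n\to\infty}\bigl(e^{\frac{t}{n}\Delta}e^{-\frac{t}{n}V}\bigr)^{n}
$$
together with the pointwise inequality $0<e^{-\frac{t}{n}\cR}\le e^{-\frac{t}{n}V}$ between multiplication operators gives the domination $|e^{-t\cL}f|\le e^{-t(-\Delta+V)}|f|$ for all $f$; pairing with nonnegative test functions and using continuity of the kernels on $Z\times Z$ then yields the stated pointwise bound. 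Equivalently, one could argue by Feynman--Kac, writing $H_{\cL}(t,z,z')$ as $H_{-\Delta}(t,z,z')$ times a normalized Brownian-bridge expectation $\bbE\bigl[\exp(-\int_{0}^{t}\cR(X_{s})\,ds)\bigr]$ and using $\cR\ge V$ path by path.

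For the bound $H_{-\Delta+V}(t,z,z')\le h(z)h(z')H_{-\Delta_{\mu}}(t,z,z')$, the mechanism is the ground-state (Doob) transform recorded in \eqref{w.7}: with $d\mu=h^{2}\,dg$, a direct computation shows that conjugation by $h$ carries $-\Delta+V$ on $L^{2}(Z,dg)$ onto $-\Delta_{\mu}$ on $L^{2}(Z,\mu)$, and the multiplication map $U\colon L^{2}(Z,\mu)\to L^{2}(Z,dg)$, $Uf=hf$, is unitary because $d\mu=h^{2}\,dg$. Since $h$ is smooth and positive, $U$ and $U^{-1}$ preserve $C^{\infty}_{c}(Z)$, so $U$ intertwines the Friedrichs extensions of the two operators, hence their heat semigroups, $e^{-t(-\Delta+V)}=U\,e^{t\Delta_{\mu}}\,U^{-1}$. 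Unwinding this at the level of integral kernels, and keeping in mind that $H_{-\Delta_{\mu}}$ is taken with respect to $\mu$ while $H_{-\Delta+V}$ is taken with respect to $dg$, produces the identity $H_{-\Delta+V}(t,z,z')=h(z)h(z')H_{-\Delta_{\mu}}(t,z,z')$, which in particular gives the asserted inequality; as a byproduct this also shows $-\Delta+V\ge 0$, legitimizing the heat kernel used in the first step. Finally, all three heat kernels are nonnegative, so integrating both chains of inequalities over $t\in(0,\infty)$ and applying Tonelli's theorem turns them into
$$
 |G_{\cL}(z,z')|\le G_{-\Delta+V}(z,z')\le h(z)h(z')G_{-\Delta_{\mu}}(z,z'),
$$
under the standing non-parabolicity assumption ensuring these Green's functions are finite.

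I expect the main obstacle to be functional-analytic bookkeeping rather than the formal computations: one must check that $\cL$, $-\Delta+V$ and $-\Delta_{\mu}$ are all self-adjoint with well-defined, finite heat kernels and Green's functions on the noncompact $Z$ — in particular that the possibly unbounded potential $V=\Delta h/h$ is form-bounded so that the Friedrichs extensions and the Trotter (or Feynman--Kac) representations are valid — and that $U$ genuinely intertwines the chosen extensions rather than two different self-adjoint realizations. This is precisely what \cite[\S 3]{DM2014} handles, and I would cite it for those points.
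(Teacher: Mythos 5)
The paper offers no proof of this lemma---it is quoted directly from \cite[Theorem~3.12]{DM2014}---and your argument is a correct reconstruction of the standard one: semigroup domination (Trotter or Feynman--Kac) using $\cR\ge V$, the exact Doob-transform identity $h^{-1}(-\Delta+V)h=-\Delta_{\mu}$, which indeed makes the second comparison an equality $H_{-\Delta+V}(t,z,z')=h(z)h(z')H_{-\Delta_{\mu}}(t,z,z')$, and integration in $t$ for the Green's functions. This is essentially the same route as the cited source (which conjugates first and then dominates, the point of $\cR\ge V$ being that the transformed potential $\cR-V$ is nonnegative), so nothing further is needed beyond the self-adjointness and extension caveats you already flag and defer to \cite[\S 3]{DM2014}.
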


Thus, to obtain control on $H_{\cL}(t,z,z')$ and $G_{\cL}(z,z')$, it suffices to obtain control on $H_{-\Delta_{\mu}}$.  This can be achieved by the method of Grigor'yan and Saloff-Coste invoking the following notions.  
\begin{definition}
The complete weighted Riemannian manifold $(Z,g,\mu)$ satisfies
\begin{itemize}

\item[$(VD)_{\mu}$] \textbf{the weighted volume doubling property} if there exists $C_D>0$ such that 
 $$
        \mu(B(p,2r))\le C_D\mu(B(p,r)) \quad \forall p\in Z, \; \forall r>0;
 $$
\item[$(PI)_{\mu,\delta}$] \textbf{the uniform weighted Poincar\'e inequality with parameter} $\delta\in (0,1]$ if there exists a constant $C_P>0$ such that 
$$
     \int_{B(p,r)} (f-\overline{f})^2d\mu \le C_P r^2 \int_{B(p,\delta^{-1}r)} |df|^2_g d\mu \quad \forall f\in W^{1,2}_{loc}(Z), \; \forall p\in Z, \; \forall r>0;
$$
\item[$(PI)_{\mu}$] \textbf{the uniform weighted Poincar\'e inequality} if we can take $\delta=1$ in the previous statement.
\end{itemize}
\label{w.11}\end{definition}
In terms of these conditions, one of the main results of \cite{GSC2005} is the following.
\begin{theorem}[Theorem~2.7 in \cite{GSC2005}]  Let $(Z,g,\mu)$ be a complete weighted Riemannian manifold satisfying $(VD)_{\mu}$ and $(PI)_{\mu}$.  Then there are positive constants $C$ and $c$ such that
$$
  c(\mu(B(z,\sqrt{t})),\mu(B(z',\sqrt{t})))^{-\frac12}e^{-C\frac{d(z,z')^2}t}\le H_{-\Delta_{\mu}}(t,z,z')\le C(\mu(B(z,\sqrt{t})),\mu(B(z',\sqrt{t})))^{-\frac12}e^{-c\frac{d(z,z')^2}t}
$$
for all $(t,z,z')\in (0,\infty)\times Z\times Z$.
\label{w.12}\end{theorem}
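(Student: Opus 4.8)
The statement is Theorem~2.7 of \cite{GSC2005}, invoked here as a black box; should one wish to reprove it, the natural route is the Grigor'yan--Saloff-Coste equivalence between $(VD)_{\mu}+(PI)_{\mu}$ and the parabolic Harnack inequality, followed by the standard derivation of Gaussian bounds from such a Harnack inequality. Concretely, the plan is to: (i) upgrade $(VD)_{\mu}+(PI)_{\mu}$ to a scale-invariant family of local Sobolev inequalities; (ii) run Moser iteration to establish the parabolic Harnack inequality for nonnegative solutions of the heat equation $(\pa_t-\Delta_{\mu})u=0$; (iii) deduce the matching on-diagonal bounds; (iv) pass from the on-diagonal upper bound to the full Gaussian upper bound via an integrated maximum principle; and (v) chain the Harnack inequality to obtain the Gaussian lower bound.

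\textbf{Sobolev inequality and Harnack.} Iterating $(VD)_{\mu}$ produces a dimensional exponent $\nu>0$ with $\mu(B(z,R))\le C_D(R/r)^{\nu}\mu(B(z,r))$ for $R\ge r$. Combining this with $(PI)_{\mu}$ — and, where needed, with a chaining argument converting the ball-Poincar\'e inequality into a pseudo-Poincar\'e inequality — one obtains for every ball $B=B(z,r)$ a Sobolev--Poincar\'e inequality
\[
   \lrp{\frac{1}{\mu(B)}\int_B |f-f_B|^{\frac{2\nu}{\nu-2}}\, d\mu}^{\frac{\nu-2}{\nu}}\le \frac{Cr^2}{\mu(B)}\int_B |df|^2_g\, d\mu
\]
(with the usual modification when $\nu\le 2$). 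Feeding this Sobolev inequality together with the Caccioppoli inequality for (sub/super)solutions of the heat equation into the De Giorgi--Nash--Moser iteration yields a local boundedness estimate for nonnegative subsolutions and a reverse estimate for nonnegative supersolutions; bridging the gap between the positive and negative Moser exponents by the abstract lemma of Bombieri--Giusti (equivalently, a parabolic weighted log-Sobolev / John--Nirenberg argument), one obtains the parabolic Harnack inequality $\sup_{Q^-}u\le C_H\inf_{Q^+}u$ on parabolic cylinders, with $C_H$ depending only on $C_D$ and $C_P$.

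\textbf{From Harnack to Gaussian bounds.} Applying the Harnack inequality to $u(s,\cdot)=H_{-\Delta_{\mu}}(s,\cdot,z')$, together with the reproducing identity $H_{-\Delta_{\mu}}(2s,z,z)=\int_Z H_{-\Delta_{\mu}}(s,z,w)^2\, d\mu(w)$ and the stochastic completeness of $(Z,g,\mu)$ (which holds because $(VD)_{\mu}$ on a complete manifold forces at most polynomial volume growth, so Grigor'yan's volume criterion applies), one gets the two-sided on-diagonal estimate $H_{-\Delta_{\mu}}(t,z,z)\asymp \mu(B(z,\sqrt t))^{-1}$. For the off-diagonal upper bound one runs Davies' method: for a bounded Lipschitz $\xi$ with $|\nabla\xi|_g\le\lambda$, the weighted energy $E_{\xi}(t)=\int_Z H_{-\Delta_{\mu}}(t,z,w)^2 e^{2\xi(w)}\, d\mu(w)$ satisfies $E_{\xi}'(t)\le 2\lambda^2 E_{\xi}(t)$, which after optimizing over $\xi\approx\lambda\, d(z,\cdot)$ and using $(VD)_{\mu}$ to trade $\mu(B(z,\sqrt t))$ against $\mu(B(z',\sqrt t))$ gives
\[
   H_{-\Delta_{\mu}}(t,z,z')\le \frac{C}{\sqrt{\mu(B(z,\sqrt t))\,\mu(B(z',\sqrt t))}}\, e^{-c\frac{d(z,z')^2}{t}}.
\]
For the lower bound, set $n\sim\max\{1,d(z,z')^2/t\}$, pick a chain $z=x_0,\ldots,x_n=z'$ with $d(x_i,x_{i+1})\lesssim\sqrt{t/n}$, apply the Harnack inequality $n$ times at time scale $t/n$, and use $(VD)_{\mu}$ to compare the volume factors; the resulting factor $c^n=e^{-Cd(z,z')^2/t}$ (the polynomial volume corrections being absorbed into the exponential after adjusting $c$) yields the matching Gaussian lower bound.

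\textbf{Main obstacle.} The technical heart is the implication $(VD)_{\mu}+(PI)_{\mu}\Rightarrow$ parabolic Harnack inequality. Two points there are genuinely delicate: deriving the scale-invariant Sobolev inequality purely from doubling and a ball-Poincar\'e inequality (since the Poincar\'e inequality controls oscillation only on a single ball, one must chain across overlapping balls, which is precisely where $(VD)_{\mu}$ enters), and closing the gap in Moser's iteration between the $\sup$-bound for subsolutions and the $\inf$-bound for supersolutions, for which the Bombieri--Giusti lemma (or an equivalent parabolic BMO estimate) is required. Everything downstream — the on-diagonal bounds, Davies' integrated maximum principle, and the chaining argument for the lower bound — is then essentially formal given the Harnack inequality and $(VD)_{\mu}$.
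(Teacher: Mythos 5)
This statement is quoted in the paper as Theorem~2.7 of the cited work of Grigor'yan and Saloff-Coste and is not proved there, so there is no internal argument to compare against; your outline reproduces the standard proof underlying that citation, namely $(VD)_{\mu}+(PI)_{\mu}\Rightarrow$ scale-invariant Sobolev/Poincar\'e on balls $\Rightarrow$ Moser iteration and Bombieri--Giusti $\Rightarrow$ parabolic Harnack $\Rightarrow$ on-diagonal bounds, Davies-type integrated estimates for the Gaussian upper bound, and Harnack chaining for the lower bound. This is correct in outline and is exactly the route taken in the literature the paper relies on.
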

By Lemma~\ref{w.10}, this yields the following.
\begin{corollary}
If the complete weighted Riemannian manifold $(Z,g,\mu)$ satisfies $(VD)_{\mu}$ and $(PI)_{\mu}$, then there are positive constants $c$ and $C$ such that 
$$
  c(\mu(B(z,\sqrt{t})),\mu(B(z',\sqrt{t})))^{-\frac12}e^{-C\frac{d(z,z')^2}t}\le \frac{H_{-\Delta+V}(t,z,z')}{h(z)h(z')}\le C(\mu(B(z,\sqrt{t})),\mu(B(z',\sqrt{t})))^{-\frac12}e^{-c\frac{d(z,z')^2}t}
$$
for all $(t,z,z')\in (0,\infty)\times Z\times Z$.
\label{w.13}\end{corollary}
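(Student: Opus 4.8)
The plan is to deduce Corollary~\ref{w.13} by combining the two results immediately preceding it, namely the heat kernel bounds of Theorem~\ref{w.12} and the comparison estimates of Lemma~\ref{w.10}. The hypothesis that $(Z,g,\mu)$ satisfies $(VD)_{\mu}$ and $(PI)_{\mu}$ is exactly what is needed to invoke Theorem~\ref{w.12}, which supplies the two-sided Gaussian bounds
$$
 c(\mu(B(z,\sqrt{t}))\,\mu(B(z',\sqrt{t})))^{-\frac12}e^{-C\frac{d(z,z')^2}{t}}\le H_{-\Delta_{\mu}}(t,z,z')\le C(\mu(B(z,\sqrt{t}))\,\mu(B(z',\sqrt{t})))^{-\frac12}e^{-c\frac{d(z,z')^2}{t}}
$$
for all $(t,z,z')\in(0,\infty)\times Z\times Z$, with positive constants $c$ and $C$ (which may be adjusted from line to line below).

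Next I would relate $H_{-\Delta+V}$ to $H_{-\Delta_{\mu}}$. Here there is a small technical point: Lemma~\ref{w.10} was stated under the assumption $\cR\ge V$, and the inequality it produces is the \emph{upper} bound $H_{-\Delta+V}(t,z,z')\le h(z)h(z')H_{-\Delta_{\mu}}(t,z,z')$. For Corollary~\ref{w.13} we are applying this with $\cR$ replaced by the function $V$ itself (so that $\cL=-\Delta+V$ and its Doob transform is precisely $-\Delta_{\mu}$ with no extra potential, \ie $V+\cR$ in \eqref{w.7} becomes just $V$ when one Doob-transforms $-\Delta+V$; more directly, $\widetilde{\cL}=h^{-1}(-\Delta+V)h=-\Delta_{\mu}$). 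With $\cR=V$ the hypothesis $\cR\ge V$ holds trivially, and the heat kernel comparison of Lemma~\ref{w.10} degenerates into the exact identity
$$
 H_{-\Delta+V}(t,z,z')=h(z)h(z')\,H_{-\Delta_{\mu}}(t,z,z'),
$$
the standard conjugation relation between a Schr\"odinger operator and its Doob-transformed weighted Laplacian. Dividing both sides of the Gaussian bounds above by $h(z)h(z')$ and using this identity then yields precisely the asserted two-sided estimate for $H_{-\Delta+V}(t,z,z')/(h(z)h(z'))$.

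The step I expect to require the most care is the bookkeeping around which operator plays the role of $\cL$ and whether Lemma~\ref{w.10} is being used as a genuine inequality or as an identity. If one prefers to treat it purely as an inequality, one gets the upper bound for free from Lemma~\ref{w.10} and Theorem~\ref{w.12}, while for the lower bound one notes that when $\cR=V$ the same conjugation makes $H_{-\Delta+V}$ literally equal to $h(z)h(z')H_{-\Delta_{\mu}}$, so the lower Gaussian bound in Theorem~\ref{w.12} transfers verbatim. Either way, no further analysis is needed: the corollary is a formal consequence of Theorem~\ref{w.12} via the Doob transform, and the only thing to verify is that the constants $c,C$ remain positive and independent of $(t,z,z')$, which is immediate since $h$ has been absorbed into the left-hand side. (An analogous statement for the Green's functions follows by integrating in $t$, exactly as in \eqref{w.9}, though that is not part of the stated corollary.)
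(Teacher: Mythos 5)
Your argument is correct and is exactly the route the paper intends (the paper gives no separate proof: it simply combines the two-sided Gaussian bounds of Theorem~\ref{w.12} with the Doob-transform relation behind Lemma~\ref{w.10}). Your handling of the only subtle point is right: the stated inequality $H_{-\Delta+V}\le h(z)h(z')H_{-\Delta_{\mu}}$ is in fact the conjugation identity $H_{-\Delta+V}(t,z,z')=h(z)h(z')H_{-\Delta_{\mu}}(t,z,z')$ (since $h^{-1}\circ(-\Delta+V)\circ h=-\Delta_{\mu}$ with $V=\Delta h/h$), which is what transfers the lower Gaussian bound as well.
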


We want to apply this result when $Z=M\setminus \pa M$ is the interior of a manifold with fibered corners $M$ of dimension $m$ and $g$ is a warped $\QAC$-metric with weight $\mathfrak{n}$ for some choice of $\nu\in [0,1)$.  The type of measure $\mu$ we will consider will be one of the form
\begin{equation}
   d\mu_a= x^adg \quad \mbox{with} \quad x^a=\prod_{H\in\cM_1(M)} x_H^{a_H}  
\label{w.14}\end{equation}
for some 
$$
\begin{array}{lccl} a: & \cM_1(M) & \to & \bbR \\ & H & \mapsto & a_H.    \end{array}
$$
It will be convenient to distinguish between maximal and non-maximal boundary hypersurfaces.  To this end, we will use the notation
$$
    \cM_{\max}(M):= \{H\in\cM_1(M)\; | \; H \; \mbox{is maximal}\} \quad \mbox{and}  \quad \cM_{\nm}(M):=\cM_1(M)\setminus \cM_{\max}(M).
$$
We will denote by 
$$
   v:= \prod_{H\in\cM_1(M)} x_H
$$
a total boundary defining function for $M$.
The distance function of $g$ with respect to a fixed point $o\in M\setminus \pa M$ is comparable to
\begin{equation}
     \rho= \prod_{H\in\cM_1(M)} x_H^{-\frac{1}{1-\nu_H}}.
\label{w.15}\end{equation} 
Near $H$, but away from $G$ for $G<H$, the distance function with respect to a fixed point is also comparable to $\rho_H= \prod_{G\ge H}x_G^{-\frac{1}{1-\nu_G}}$.  It will be convenient also to have the following weighted version of the function $v_H$,  namely
\begin{equation}
 \widetilde{v}_H:= \rho_H^{-(1-\nu_H)}= \left\{  \begin{array}{ll} \prod_{G\ge H} x_G, & \nu_H=\nu, \\ \prod_{G\ge H} x_G^{\frac{1}{1-\nu_G}}, & \nu_H=0,  \end{array}   \right. 
\label{wg.1a}\end{equation}  
as well as the function
\begin{equation}
    \sigma=\left\{  \begin{array}{ll} \prod_{H\in\cM_{1,\nu}(M)} x_H^{-\frac{1}{1-\nu}}, & \nu>0, \\ 1, & \nu=0, \end{array}  \right.
\label{wg.1z}\end{equation}
such that $\sigma^{\nu}=\rho^{\mathfrak{n}}$.

We will also denote by
$$
      m_H= \dim M-\dim S_H-1
$$
the dimension of the fibers of $\phi_H: H\to S_H$ and by $b_H=\dim S_H$ the dimension of the base.  

To apply the previous result to this setting, we will need to check that $(VD)_{\mu}$ and $(PI)_{\mu}$ hold for suitable choices of weight $a: \cM_1(M)\to \bbR$.  To that end, recall the following notation used in \cite{DM2014}.
\begin{definition}
Fix once and for all a basepoint $o\in M\setminus \pa M$.  A ball of radius $R$ at $o$ is called \textbf{anchored} and we denote its volume by 
$$
     \cA(R;a)= \mu_a(B(o,R)).
$$
Fix $c\in (0,1)$.  With respect to this choice, a ball $B(p,r)$ is said to be \textbf{remote} if $r<cd(o,p)$, in which case we use the notation
$$
    \cR(p,r;a):= \mu_a(B(p,r)).
$$
If $B(p,r)$ is any ball, possibly neither anchored nor remote, we use the notation 
$$
      \cV(p,r;a)= \mu_a(B(p,r)).
$$
\label{w.16}\end{definition}

Following the strategy of \cite{DM2014}, we will derive an estimate of the volume of anchored balls in terms of the volume of remote balls.  We start with the following estimate.
\begin{proposition}
Provided $(1-\nu_H)(a_H+m_H)\ne m$ for all $H\in \cM_1(M)$ and $(1-\nu_H)(a_H+m_H)\ne (1-\nu_G)(a_G+m_G)$ for all $G,H \in \cM_1(M)$ with $G<H$, we have that for $R\ge 1$,
\begin{equation}
     \cA(R;a) \asymp 1+ R^m \sum_{H\in\cM_1(M)} R^{(\nu_H-1)(a_H+m_H)},
\label{w.17a}\end{equation}
where the notation $f_1\asymp f_2$ means that there exists  positive constants $c$ and $C$ such that $cf_2\le f_1\le Cf_2$.
In particular, in the $\QAC$ setting, that is, when $\nu=0$, this gives
\begin{equation}
    \cA(R;a)\asymp 1+ R^m \sum_{H\in \cM_1(M)} R^{-a_H-m_H}.
\label{w.17b}\end{equation}
\label{w.17}\end{proposition}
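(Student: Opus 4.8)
The plan is to induct on the depth of $(M,\phi)$, using the iterative warped‑cone structure of Example~\ref{w.1}. When the depth is $0$, $M$ is a closed manifold, so for $R\ge 1$ we have $\cA(R;a)=\mu_a(M)\asymp 1$ and the right‑hand side of \eqref{w.17a} equals $1$; the case $\nu=0$ is then immediate. For the inductive step, fix compatible boundary defining functions and a small $\epsilon>0$, and split $M$ into the \textbf{core} $K:=\{x_H\ge\epsilon\ \text{for all}\ H\in\cM_1(M)\}$ and, for each $H\in\cM_1(M)$, the \textbf{$H$‑neck}
\[
   \cN_H:=\{x_H<\epsilon\}\cap\{x_G\ge\epsilon\ \text{for every}\ G\ \text{with}\ G<H\ \text{or}\ G\ \text{incomparable to}\ H\}.
\]
Because every corner of a manifold with fibered corners is totally ordered, one checks that $M=K\sqcup\bigsqcup_{H}\cN_H$ up to sets of measure zero. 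Since $K$ is compact with nonempty interior and $x^a$ is bounded above and below on it, $\mu_a(K\cap B(o,R))\asymp 1$, so it remains to estimate $\mu_a(\cN_H\cap B(o,R))$ for each $H$.

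On $\cN_H$ one is away from all $G<H$, so after covering the relevant (compact) part of the base by finitely many trivializations of $\phi_H$, the metric $g_w$ is comparable to the model \eqref{w.5}, $d\rho_H^2+\rho_H^2 g_{\cU}+\rho_H^{2\nu_H}\kappa_H$, with volume density comparable to $\rho_H^{b_H}\,d\rho_H\,dg_{\cU}\,d(\rho_H^{2\nu_H}\kappa_H)$; moreover $x_H<\epsilon$ becomes, via \eqref{wg.4}, $\rho_{Z_H}\lesssim\rho_H^{1-\nu_H}$, and on $\cN_H$ one has the identity $x_H=\rho_{Z_H}\,\rho_H^{-(1-\nu_H)}=\rho_{Z_H}\,\widetilde v_H$. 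One then checks that $\cN_H\cap B(o,R)$ is comparable to $\{1\lesssim\rho_H\lesssim R\}$ times the fixed compact part of $S_H$ times the anchored fiber ball $\{\rho_{Z_H}\lesssim\rho_H^{1-\nu_H}\}\subset Z_H$ (the constraint from $B(o,R)$ on the fiber is never binding, since $\rho_H\le R$ forces $\rho_H^{1-\nu_H}\le R\rho_H^{-\nu_H}$). Absorbing $x_H^{a_H}=\rho_{Z_H}^{a_H}\rho_H^{-(1-\nu_H)a_H}$ and noting $\rho_{Z_H}^{a_H}\prod_{G>H}x_G^{a_G}=\prod_{G>H}x_G^{\tilde a_G}$ with $\tilde a_G:=a_G-a_H\frac{1-\nu_H}{1-\nu_G}$, this yields
\[
   \mu_a(\cN_H\cap B(o,R))\asymp\int_1^R\rho_H^{\,b_H+\nu_H m_H-(1-\nu_H)a_H}\;\cA_{Z_H}\!\big(\rho_H^{1-\nu_H};\tilde a\big)\,d\rho_H ,
\]
where $\cA_{Z_H}(\,\cdot\,;\tilde a)$ is the anchored‑ball volume of $(Z_H,\kappa_H)$ with weight $\prod_{G>H}x_G^{\tilde a_G}$ and weight function $\mathfrak{n}_{Z_H}$.

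Next one verifies that the hypotheses descend to $(Z_H,\tilde a)$: using $1-\mathfrak{n}_{Z_H}(Z_H\cap G)=\frac{1-\nu_G}{1-\nu_H}$ one computes $(1-\mathfrak{n}_{Z_H}(Z_H\cap G))(\tilde a_G+m_G)=\frac{(1-\nu_G)(a_G+m_G)-(1-\nu_H)a_H}{1-\nu_H}$, so the two non‑resonance conditions for $(M,a)$ restricted to pairs involving $H$ become exactly the two non‑resonance conditions for $(Z_H,\tilde a)$ (here one also uses $\dim Z_H=m_H$ and that the fibers of $\phi_{Z_H\cap G}$ still have dimension $m_G$). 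The inductive hypothesis then gives $\cA_{Z_H}(t;\tilde a)\asymp 1+t^{m_H}\sum_{G>H}t^{(\nu'_G-1)(\tilde a_G+m_G)}$ with $\nu'_G-1=\frac{\nu_G-1}{1-\nu_H}$; substituting $t=\rho_H^{1-\nu_H}$, using $b_H+m_H+1=m$, and evaluating each radial integral $\int_1^R\rho_H^{\,e}\,d\rho_H$ — which is comparable to $1+R^{\,e+1}$ precisely because $e+1\ne 0$, again by non‑resonance — one obtains $\mu_a(\cN_H\cap B(o,R))\asymp 1+\sum_{G\ge H}R^{\,m+(\nu_G-1)(a_G+m_G)}$. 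Summing over $H$ and then over $G\ge H$ (each $G$ occurring finitely often) and absorbing the bounded terms into the leading $1$ gives \eqref{w.17a}, and setting $\nu=0$ gives \eqref{w.17b}.

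I expect the main obstacle to be the passage, with constants uniform in $R$, from the genuine metric $g_w$ to the model \eqref{w.5} on each neck, together with the control of the transition region $\rho_H\sim 1$ between a neck and the core: this is where the polyhomogeneity of $g_w$ and the explicit behaviour of $\rho_H,\rho_{Z_H},\widetilde v_H$ must be used, and where one must confirm that the iterated‑integral estimate above really captures $\mu_a(\cN_H\cap B(o,R))$ up to fixed constants. The other delicate point, around which the argument is organized, is the self‑consistency of the induction: the weight induced on each fiber must again be of the product‑of‑powers form (it is, with the explicit $\tilde a_G$), both non‑resonance conditions must descend (they do, by the computation above), and the exponents must collapse to exactly $(\nu_H-1)(a_H+m_H)$ (which is where $b_H+m_H+1=m$ enters).
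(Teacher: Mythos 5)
Your proposal is correct and takes essentially the same route as the paper: induction on the depth, localization near each boundary hypersurface where the model \eqref{w.5} holds, reduction to the anchored volume of the fiber $Z_H$ with the modified weight $\tilde a$, the inductive hypothesis on $Z_H$, and a radial integration in $\rho_H$ in which the non-resonance hypotheses rule out logarithmic terms. The only differences are cosmetic: you treat the cases $\nu_H=\nu$ and $\nu_H=0$ in the single formula $\tilde a_G=a_G-a_H\frac{1-\nu_H}{1-\nu_G}$ (which reduces to the paper's \eqref{w.18}), you start the induction at depth $0$ instead of the depth-one AC case, and you make explicit both the core/neck decomposition and the verification that the non-resonance conditions descend to $(Z_H,\tilde a)$, which the paper leaves implicit.
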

\begin{remark}
When we drop the condition that $(1-\nu_H)(a_H+m_H)\ne m$ for all $H\in \cM_1(M)$ and $(1-\nu_H)(a_H+m_H)\ne (1-\nu_G)(a_G+m_G)$ for all $G,H \in \cM_1(M)$ with $G<H$, a similar result holds, but with some powers of $R$ multiplied by some positive integer power of $\log R$.
\label{w.19}\end{remark}

\begin{proof}
When $M$ is of depth $1$, we are in the $\AC$-setting with $v\asymp \rho^{\nu-1}$, so we have
$$
     \cA(R;a)\asymp 1+ \sum_{H\in \cM_1(M)} R^{m+ (\nu-1)(a_H+m_H)}
$$
by taking into account the contribution of each asymptotically conical end and taking into account that $m_H=0$ in this case.  We can therefore proceed by induction on the depth of $M$ to obtain the result.  More precisely,
in an open set $\cV$ where the local model \eqref{w.5} is valid, we need to show that 
$$
   \mu_a(B(o,R)\cap \cV) \asymp 1+ R^m \sum_{G\ge H} R^{(\nu_G-1)(a_G+m_G)}.
$$
Now, using that $\rho\asymp \rho_H$ in this region, we see that
$$
\mu_a(B(o,R)\cap \cV) \asymp 1+ \int_1^R\rho^{(\nu_H-1)a_H}\rho^{b_H}\rho^{\nu_H m_H}\cA(\rho^{1-\nu_H}; \widetilde{a}) d\rho,
$$
where $\widetilde{a}: \{ G\in \cM_1(M) \; | \; G>H\} \to \bbR$ is given by
\begin{equation}
  \widetilde{a}(G)= \widetilde{a}_G:= \left\{ \begin{array}{ll}a_G-a_H, & \nu_H=\nu, \\ a_G- \frac{a_H}{1-\nu_G}, & \nu_H=0. \end{array} \right.
\label{w.18}\end{equation}

On the fibers of $\phi_H: H\to S_H$, recall that the model metrics are actually $\QAC$-metrics (even if $g$ is a warped $\QAC$-metric with factor $\nu>0$) when $\nu_H=\nu$.  Thus assuming by induction on the depth that \eqref{w.17b} holds for these metrics, we compute that when $\nu_H=\nu$, 
$$
\begin{aligned}
 \mu_a(B(o,R)\cap \cV) &\asymp 1+ \int_1^R\rho^{(\nu-1)a_H}\rho^{b_H}\rho^{\nu m_H}\cA(\rho^{1-\nu}; \widetilde{a}) d\rho \\
  & \asymp 1+ \int_1^R \rho^{(\nu-1)a_H+b_H+\nu m_H}\lrp{ 1+ \rho^{(1-\nu)m_H} \sum_{G>H} \rho^{(1-\nu)(-(a_G-a_H)-m_G)}   }d\rho \\
  & \asymp 1+ R^{(\nu-1)a_H+b_H+1+\nu m_H}\lrp{ 1+ R^{(1-\nu)m_H}  \sum_{G>H} R^{(1-\nu)(-(a_G-a_H)-m_G)}  } \\
  & \asymp 1+ R^{m} \sum_{G\ge H} R^{(\nu-1)(a_G+m_G)} \\
  &\asymp 1+ R^{m} \sum_{G\ge H} R^{(\nu_G-1)(a_G+m_G)}.
\end{aligned}
$$
If instead $\nu_H=0$, then
$$
\begin{aligned}
\mu_a(B(o,R)\cap \cV) &\asymp 1+\int_1^R \rho_H^{-a_H+b_H} \cA(\rho_H;\widetilde{a})d\rho  \\
 &\asymp 1+ \int_1^R \rho_H^{-a_H+b_H} \lrp{ 1+\rho_H^{m_H} \sum_{G>H} \rho_H^{(\nu_G-1)(a_G- \frac{a_H}{1-\nu_G}+m_G)}  }d\rho_H \\
 & \asymp 1+ R^{-a_H+b_H+1} \lrp{1+ R^{m_H}\sum_{G>H} R^{(\nu_G-1)(a_G+m_G)+a_H}} \\
 & \asymp 1+ R^m \lrp{ \sum_{G\ge H}R^{(\nu_G-1)(a_G+m_G)}}.
\end{aligned}
$$
In both computations, the conditions on the weights $a_H$ and $a_G$ have been used to ensure there are no logarithmic terms when we integrate in $\rho$.

\end{proof}

\begin{corollary}
Suppose that $a$ is such that $a_H= a_{\max}$ for all $H\in \cM_{\max}(M)$, that $a_{\max}<\frac{m}{1-\nu}$ and that $(1-\nu)a_{\max}<(1-\nu_H)(a_H+m_H)$ for all $H\in \cM_{\nm}(M)$.  In this case,  
$$
          \cA(R;a)\asymp R^{m+(\nu-1)a_{\max}} \quad \mbox{for} \; R>1.
$$
\label{w.20}\end{corollary}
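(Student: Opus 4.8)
The plan is to read the result off the asymptotic expansion of $\cA(R;a)$ from Proposition~\ref{w.17} by isolating its dominant term. I would first record the features of the setup that matter: since $(M,\phi)$ is a $\QAC$-manifold with fibered corners, the fibers of $\phi_H$ are points for every $H\in\cM_{\max}(M)$, so $m_H=0$ there; and $\nu_H=\nu$ for such $H$, which is the case reflected in the way the hypotheses are phrased. Hence the summand of \eqref{w.17a} indexed by a maximal hypersurface carries the exponent $m+(\nu-1)a_{\max}$, and the hypothesis $a_{\max}<\frac{m}{1-\nu}$ says exactly that this number is strictly positive.

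Next I would apply Proposition~\ref{w.17}, or rather its extension recorded in Remark~\ref{w.19} since the non-resonance conditions are not assumed here, to write, for $R\ge1$,
\begin{equation*}
\cA(R;a)\asymp 1+R^m\sum_{H\in\cM_1(M)}(\log R)^{k_H}\,R^{(\nu_H-1)(a_H+m_H)}
\end{equation*}
for suitable nonnegative integers $k_H$, where a logarithmic factor is present only when $(1-\nu_H)(a_H+m_H)$ equals $m$ or some $(1-\nu_G)(a_G+m_G)$ with $G<H$. For a maximal $H$ this quantity is $(1-\nu)a_{\max}$, which is $\ne m$ because $a_{\max}<\frac{m}{1-\nu}$, and is $\ne(1-\nu_G)(a_G+m_G)$ for every $G<H$ (necessarily non-maximal) because of the strict inequality $(1-\nu)a_{\max}<(1-\nu_G)(a_G+m_G)$; thus $k_H=0$ for maximal $H$, and since there are finitely many such $H$ their combined contribution is $\asymp R^{m+(\nu-1)a_{\max}}$. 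This already gives the lower bound $\cA(R;a)\ge cR^{m+(\nu-1)a_{\max}}$. (Alternatively one can see this directly from the proof of Proposition~\ref{w.17}: near a maximal hypersurface there is no fiber to integrate over, so the relevant integral is $\int_1^R\rho^{m-1+(\nu-1)a_{\max}}\,d\rho\asymp R^{m+(\nu-1)a_{\max}}$, with no logarithm because $m+(\nu-1)a_{\max}\ne0$.)

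For the matching upper bound I would show every remaining term is bounded by $R^{m+(\nu-1)a_{\max}}$ on $[1,\infty)$. The constant $1$ is, because $m+(\nu-1)a_{\max}>0$. For a non-maximal $H$, the hypothesis $(1-\nu)a_{\max}<(1-\nu_H)(a_H+m_H)$ forces $m+(\nu_H-1)(a_H+m_H)=m-(1-\nu_H)(a_H+m_H)$ to be \emph{strictly} smaller than $m+(\nu-1)a_{\max}$; so the term $(\log R)^{k_H}R^{m+(\nu_H-1)(a_H+m_H)}$ is a power of $\log R$ times a strictly smaller power of $R$, and is therefore dominated by a fixed multiple of $R^{m+(\nu-1)a_{\max}}$ for $R\ge1$. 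Summing these finitely many estimates with the maximal contribution gives $\cA(R;a)\le CR^{m+(\nu-1)a_{\max}}$, and together with the lower bound this is the assertion.

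I do not anticipate a real obstacle; the content is entirely in the two strict inequalities of the hypotheses, the first making the maximal exponent positive (so the maximal term beats the constant and carries no logarithm) and the second making every non-maximal exponent strictly smaller (so any logarithmic correction allowed by Remark~\ref{w.19} is harmlessly absorbed). The one point to be careful about is precisely this: that the possible logarithms land only on the strictly subdominant summands, which is why strictness in the second hypothesis is used.
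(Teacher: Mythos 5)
Your argument is correct and is essentially the paper's (implicit) proof: Corollary~\ref{w.20} is read off directly from Proposition~\ref{w.17} by noting that $m_H=0$ and $\nu_H=\nu$ for maximal $H$, that $a_{\max}<\frac{m}{1-\nu}$ makes the maximal term dominate the constant, and that the strict inequality for non-maximal $H$ makes every other term strictly subdominant. Your extra care in invoking Remark~\ref{w.19} to check that possible logarithmic factors can only attach to the strictly subdominant summands (since the hypotheses do not guarantee the non-resonance conditions of Proposition~\ref{w.17}) is a welcome refinement of a point the paper leaves tacit.
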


For remote balls, we have the following preliminary estimate.
\begin{proposition}
Suppose that $a_H=a_{\max}$ for all $H\in\cM_{\max}(M)$.  Fix $p\in M\setminus \pa M$ and suppose that we have chosen a remote parameter $c\in(0,1)$ so that in fact $c\in (0,\frac15)$.  If $x_G(p)\ge 1-3c$ for all $G\in \cM_{\nm}(M)$, then for $r\in (0,c\rho(p))$, 
\begin{equation}
 \cR(p,r;a)\asymp \rho(p)^{(\nu-1)a_{\max}}r^m.
\label{w.24a}\end{equation} 
If instead $x_H(p)<1-3c$ for some $H\in \cM_{\nm}(M)$ and $x_G(p)\ge 1-3c$ for $G<H$, then 
\begin{equation}
 \cR(p,r;a)\asymp \rho(p)^{(\nu_H-1)a_H+\nu_H m_H}r^{b_H+1}\cV\lrp{p_{Z_H}, \frac{r}{\rho(p)^{\nu_H}}; \widetilde{a}},
\label{w.24b}\end{equation}
where $p_{Z_H}$ is the projection of $p$ onto the factor $Z_H$ in the decomposition \eqref{w.5}  and $\widetilde{a}$ is given by \eqref{w.18}.
\label{w.24}\end{proposition}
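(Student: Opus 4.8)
The plan is to adapt the remote-ball analysis of \cite{DM2014}, incorporating the warping exactly as in the depth-one case of \cite{CR2021}: we replace $B(p,r)$ by one of the model regions of Example~\ref{w.1} and compute the weighted volume $\cR(p,r;a)$ there directly. The essential preliminary observation is that, because $r<c\rho(p)$ with $c\in(0,\tfrac{1}{5})$, the ball $B(p,r)$ is contained in a single coordinate chart in which the warped $\QAC$-metric $g$ is uniformly comparable to a model metric of the form \eqref{w.5}, and on $B(p,r)$ one has $\rho\asymp\rho_H\asymp\rho_H(p)$ together with $x_G\asymp 1$ for every $G<H$. These comparisons follow from the same elementary control on how the boundary defining functions $x_G$ and the distance-to-infinity functions $\rho,\rho_H$ vary along $g$-geodesics of length $<c\rho(p)$ that is used in \cite{DM2014}, and this is exactly where the precise requirement $c\in(0,\tfrac{1}{5})$ enters. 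In contrast with Proposition~\ref{w.17}, after rescaling the model by $\rho_H(p)^{-1}$ the ball $B(p,r)$ has radius of order one, so the integrations below involve no large range, produce no logarithmic terms, and require no genericity hypothesis on the weight $a$.

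Consider first the case $x_G(p)\ge 1-3c$ for all $G\in\cM_{\nm}(M)$. Then $p$ is remote from every non-maximal boundary hypersurface; since two distinct maximal hypersurfaces are incomparable, hence disjoint (Definition~\ref{mwfc.1}), $p$ is near at most one maximal hypersurface $H$, and on $B(p,r)$ the metric $g$ is comparable to the cone metric $d\rho_H^2+\rho_H^2 g_{\cU}$ with $\cU\subseteq S_H=H$ of dimension $m-1$. On the scale $r\ll\rho_H(p)$ this metric is comparable to a flat one, so $\vol_g(B(p,r))\asymp r^m$, while $x^a$ coincides on $B(p,r)$ with $x_H^{a_{\max}}$ up to bounded factors and is therefore comparable there to the constant $\rho_H(p)^{-(1-\nu)a_{\max}}\asymp\rho(p)^{(\nu-1)a_{\max}}$; multiplying the two estimates gives \eqref{w.24a}. (This is precisely the special case $H$ maximal of the computation below, for which $Z_H$ is a point, $m_H=0$ and $b_H=m-1$.)

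In the second case, with $H$ as in the statement (the minimal non-maximal hypersurface near $p$ with $x_H(p)<1-3c$), I would work in the model $d\rho_H^2+\rho_H^2 g_{\cU}+\rho_H^{2\nu_H}\kappa_H$ of \eqref{w.5}, valid away from every $G<H$, in which the hypersurfaces $G>H$ near $p$ are recorded inside the fiber $Z_H$ carrying its own warped $\QAC$-metric $\kappa_H$. Up to uniformly bounded factors this model is a Riemannian product of the base factor $d\rho_H^2+\rho_H^2 g_{\cU}$, of dimension $1+b_H$, and the warped fiber factor $\rho_H^{2\nu_H}\kappa_H$, of dimension $m_H$; a $g$-ball of radius $r$ about $p$ then has extent $\asymp r$ in $\rho_H$, Euclidean radius $\asymp r/\rho_H(p)$ in the $\cU$-directions, and $\kappa_H$-radius $\asymp r/\rho_H(p)^{\nu_H}$ in the fiber. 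The volume density of the model is $\rho_H^{b_H+\nu_H m_H}\,d\rho_H\,\dvol_{g_{\cU}}\,\dvol_{\kappa_H}$, and on $B(p,r)$, using $\rho_H\asymp\rho_H(p)$ and $x_G\asymp 1$ for $G<H$, one has $x^a\asymp\rho_H(p)^{(\nu_H-1)a_H}\prod_{G>H}x_G^{\widetilde{a}_G}$ with $\widetilde{a}_G$ given by \eqref{w.18}; this is the same rearrangement already carried out in the proof of Proposition~\ref{w.17}. Integrating $x^a$ against the volume density over $B(p,r)$: the weight contributes the constant factor $\rho_H(p)^{(\nu_H-1)a_H}$; the $\rho_H$-interval, the $\cU$-ball and the powers of $\rho_H$ in the density combine to $r\cdot(r/\rho_H(p))^{b_H}\cdot\rho_H(p)^{b_H+\nu_H m_H}=r^{b_H+1}\rho_H(p)^{\nu_H m_H}$; and the remaining integral of $\bigl(\prod_{G>H}x_G^{\widetilde{a}_G}\bigr)\dvol_{\kappa_H}$ over the $\kappa_H$-ball of radius $r/\rho_H(p)^{\nu_H}$ about $p_{Z_H}$ is by definition $\cV\bigl(p_{Z_H},r/\rho_H(p)^{\nu_H};\widetilde{a}\bigr)$, which is comparable to $\cV\bigl(p_{Z_H},r/\rho(p)^{\nu_H};\widetilde{a}\bigr)$ since $\rho\asymp\rho_H$ near $p$ (using volume doubling on the lower-depth space $Z_H$). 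Collecting these three contributions with $\rho_H(p)\asymp\rho(p)$ yields \eqref{w.24b}.

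The genuinely delicate point, and the main obstacle, is the preliminary step: proving rigorously that $B(p,r)$ is confined to a single model chart and that $g$ is uniformly comparable there to a product model of the type \eqref{w.5} with $\rho\asymp\rho_H\asymp\rho_H(p)$ and $x_G\asymp 1$ for $G<H$. This is a quantitative study of the distance function of a warped $\QAC$-metric near the corners of $M$, entirely analogous to the corresponding estimates in \cite{DM2014} and, in depth one, in \cite{CR2021}; granting it, the two volume computations above are routine, and the fact that the twisted-product model differs from an honest Riemannian product only by uniformly bounded factors on so small a ball suffices for the asserted equivalences $\asymp$.
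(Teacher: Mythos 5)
Your proposal is correct and follows essentially the same route as the paper: confine $B(p,r)$ to the region where the model \eqref{w.5} is valid, compare it with the product of a ball of radius $r$ in the cone $\bbR^+\times S_H$ and a ball of radius $\asymp r/\rho(p)^{\nu_H}$ in $(Z_H,\kappa_H)$, and integrate $x^a\asymp\rho(p)^{(\nu_H-1)a_H}x^{\widetilde{a}}$ against the model volume density, the first case being exactly the $\AC$ rescaling argument of \cite[Proposition~4.3]{DM2014}. The one step you defer as the ``main obstacle'' --- that $B(p,r)$ stays in a single model chart with $\rho\asymp\rho(p)$, $x_H<1-c$ and $x_G\ge 1-5c>0$ for $G<H$ --- is precisely what the paper proves, and it is shorter than you fear: one sets $r_2=\rho x_H$ (respectively $\rho x_G$) and applies the triangle inequality for the model metric \eqref{w.5} to get $\rho(q)\ge(1-c)\rho(p)$ and $r_2(q)\le r_2(p)+c\rho(p)$ (respectively $\rho(q)\le(1+c)\rho(p)$ and $r_2(q)\ge r_2(p)-c\rho(p)$), which is exactly where the hypothesis $c<\tfrac15$ enters.
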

\begin{proof}
First, notice that $\rho(z)\asymp \rho(p)$ for $z\in B(p,c\rho(p))$.  If $x_G(p)\ge 1-3c$ for all $G\in \cM_{\nm}(M)$, we are in a region where the metric behaves like an $\AC$-metric with $x_H\asymp \rho^{\nu-1}$ for $H\in\cM_{\max}(M)$, so the result follows by a simple rescaling argument as in \cite[Proposition~4.3]{DM2014}.  If instead $x_H(p)<1-3c$ for some $H\in \cM_{\nm}(M)$, but $x_G(p)\ge 1-3c$ for $G<H$, then $B(p,r)\subset V_H$ with 
$$
     V_H= \{ p\in M\setminus \pa M \; | \; x_H(p)<1-c\}.
$$
Indeed, if we set $r_2:= \rho^{\nu_H} \rho_{Z_H}=\rho x_H$, then using the triangle inequality in terms of the model metric \eqref{w.5}, we see that if $q\in B(p,r)$, then
\begin{equation}
\rho(q)\ge \rho(p)-d(p,q)\ge \rho(p)-r\ge \rho(p)-c\rho(p)= (1-c)\rho(p)
\label{w.25}\end{equation}
and
\begin{equation}
r_2(q)\le r_2(p)+d(p,q)\le r_2(p)+ c\rho(p),
\label{w.26}\end{equation}
so that
$$
   x_H(q)= \frac{r_2(q)}{\rho(q)}\le \frac{r_2(p)+c\rho(p)}{(1-c)\rho(p)}\le \frac{(1-3c)\rho(p)+ c\rho(p)}{(1-c)\rho(p)}= \frac{1-2c}{1-c}=1-\frac{c}{1-c}< 1-c.  
$$
Similarly, for $G<H$, setting instead $r_2=\rho^{\nu_G}\rho_{Z_G}=\rho x_G$, we have the inequalities
$$
   \rho(q)\le \rho(p)+ d(p,q)\le (1+c)\rho(p)
$$
and 
$$
     r_2(q)\ge r_2(p)-d(p,q)\ge r_2(p)-c\rho(p)
$$
that we can use to show that $x_G(q)\ge 1-5c>0$ for $q\in B(p,r)$.  So we can use the model metric \eqref{w.5} with $\kappa_H$ a warped $\QAC$-metric on the fiber $Z_H$ of $\phi_H: H\to S_H$ to estimate $\mu_a(B(p,r))$ with $r\in (0,c\rho(p))$.  If $p$ corresponds to the point $(p_1,p_2)\in (\bbR^+\times S_H)\times Z_H$, let $B_1(p_1,r)$ be the geodesic ball in $C_H= \bbR^+\times S_H$ and $B_2(p_2,r)$ denote the geodesic ball in $(Z_H,g_{Z_H})$.  Since $\rho\asymp \rho(p)$ on $B(p,r)$, notice that the weighted volume of $B(p,r)$ is comparable to that of the product of balls
$$
        B_1(p_1,r)\times B_2\lrp{p_2, \frac{r}{\rho(p)^{\nu_H}}}.
$$ 
Now, we compute that
$$
\begin{aligned}
\mu_a\lrp{B_1(p_1,r)\times B_2\lrp{p_2,\frac{r}{\rho(p)^{\nu_H}}}} &=  \int_{B_1(p,r)} \lrp{  \int_{B_2(p_2,\frac{r}{\rho(p)^{\nu_H}})} x^{\widetilde{a}}d\kappa_H    } \rho^{(\nu_H-1)a_H+\nu_H m_H} dg_{C_H} \\
   & \asymp \rho(p)^{(\nu_H-1)a_H+\nu_H m_H}r^{b_H+1}\cV\lrp{p_2, \frac{r}{\rho(p)^{\nu_H}};\widetilde{a}}
\end{aligned}
$$
as claimed, where $g_{C_H}= d\rho^2+ \rho^2g_{S_{H}}$ is the natural cone metric on the cone $C_H= \bbR^+\times S_H$.

\end{proof}

On the other hand, for the volume of non-remote balls, we have the following estimate.
\begin{proposition}
Suppose that $a_H=a_{\max}$ for all $H\in\cM_{\max}(M)$, that $a_{\max}<\frac{m}{1-\nu}$ and that $(1-\nu)a_{\max}< (1-\nu_H)(a_H+m_H)$ for all $H\in \cM_{nm}(M)$.  If $c\in (0,\frac15)$ is a remote parameter, then for $p\in M\setminus \pa M$,
\begin{equation}
      \cV(p,r,a)\asymp r^{m+ (\nu-1)a_{\max}}
\label{w.27a}\end{equation}
for $r\ge c\rho(p)$.
\label{w.27}
\end{proposition}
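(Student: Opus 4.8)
The plan is to argue by induction on the depth of $M$, distinguishing the regime where $r$ is much larger than $d(o,p)$ from the regime $r\asymp d(o,p)$; in the latter one compares $B(p,r)$ with a remote ball at $p$ of radius $\asymp\rho(p)$ and invokes Proposition~\ref{w.24}. For the upper bound, note that since $d(o,\cdot)$ is comparable to $\rho$ there is $C>0$ with $d(o,p)\le C\rho(p)\le (C/c)\,r$ whenever $r\ge c\rho(p)$, so $B(p,r)\subset B\big(o,(1+C/c)r\big)$ and hence $\cV(p,r;a)\le\cA\big((1+C/c)r;a\big)\asymp r^{m+(\nu-1)a_{\max}}$ by Corollary~\ref{w.20}, whose hypotheses are precisely those assumed here.

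For the lower bound, suppose first that $r\ge 2d(o,p)$; then $B(o,r/2)\subset B(p,r)$, so $\cV(p,r;a)\ge\cA(r/2;a)\asymp r^{m+(\nu-1)a_{\max}}$, again by Corollary~\ref{w.20}. Suppose instead that $c\rho(p)\le r<2d(o,p)$, so that $r\asymp\rho(p)$; since $\frac{c}{2}\rho(p)<c\rho(p)\le r$, it suffices to bound $\cV(p,r;a)$ below by $\cR\big(p,\frac{c}{2}\rho(p);a\big)$, which we estimate using Proposition~\ref{w.24}. In case \eqref{w.24a} one obtains directly $\cR\big(p,\frac{c}{2}\rho(p);a\big)\asymp\rho(p)^{(\nu-1)a_{\max}}\rho(p)^{m}\asymp r^{m+(\nu-1)a_{\max}}$. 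In case \eqref{w.24b}, with $H$ the relevant non-maximal boundary hypersurface, the estimate reduces to controlling $\cV\big(p_{Z_H},\frac{c}{2}\rho(p)^{1-\nu_H};\widetilde a\big)$, a ball in the fiber $Z_H$ with respect to the measure $\mu_{\widetilde a}$ attached to the weight \eqref{w.18}. Here $x_G(p)$ is bounded away from $0$ for $G<H$, so $\rho(p)\asymp\rho_H(p)$ and this ball has radius $\asymp\rho_H(p)^{1-\nu_H}$, which is at least a fixed multiple of $\rho_{Z_H}(p_{Z_H})\asymp d(o_{Z_H},p_{Z_H})$; hence it is a non-remote ball in $Z_H$, and applying the inductive hypothesis to the warped $\QAC$-manifold $(Z_H,\kappa_H)$ of strictly smaller depth gives $\cV\big(p_{Z_H},\frac{c}{2}\rho(p)^{1-\nu_H};\widetilde a\big)\asymp\big(\rho(p)^{1-\nu_H}\big)^{\dim Z_H+(\nu'-1)\widetilde a_{\max}}$, where $\nu'$ and $\widetilde a_{\max}$ are the corresponding parameters for $(Z_H,\kappa_H,\mu_{\widetilde a})$. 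Substituting this back and using $\dim Z_H=m_H$, $m_H+b_H+1=m$, the definition \eqref{w.18}, and the relation $\mathfrak{n}_{Z_H}(Z_H\cap G)=\frac{\nu_G-\nu_H}{1-\nu_H}$ of Example~\ref{w.1}, a short computation collapses the total exponent of $\rho(p)$ to $m+(\nu-1)a_{\max}$. The base case is the depth-one ($\AC$) setting, where the statement is the elementary computation of the volume of a truncated cone, the hypothesis $a_{\max}<m/(1-\nu)$ guaranteeing that the radial integral is dominated by its upper endpoint.

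The step I expect to be the main obstacle is the bookkeeping in this last reduction: one must verify that the hypotheses of the proposition genuinely descend to $(Z_H,\kappa_H,\mu_{\widetilde a})$ — that $\widetilde a$ is again constant on the maximal boundary hypersurfaces of $Z_H$, that $\widetilde a_{\max}<\dim Z_H/(1-\nu')$, and that $(1-\nu')\widetilde a_{\max}<\big(1-\mathfrak{n}_{Z_H}(Z_H\cap G)\big)(\widetilde a_G+m_G)$ for the non-maximal $G$ — and then check that the various exponents recombine exactly as claimed. This uses the precise form of \eqref{w.18} and of $\mathfrak{n}_{Z_H}$ and requires separating the cases $\nu_H=\nu$ and $\nu_H=0$; it is routine but somewhat lengthy, in the same spirit as the computations in the proof of Proposition~\ref{w.17}.
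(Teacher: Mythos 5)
Your argument is correct and follows essentially the same route as the paper: induction on the depth, with the anchored-ball asymptotics of Corollary~\ref{w.20} handling both the large-radius regime and the upper bound, and the remote-ball estimate of Proposition~\ref{w.24} combined with the inductive use of \eqref{w.27a} on the fiber $Z_H$ (after checking the fiber ball of radius $\asymp\rho(p)^{1-\nu_H}\ge c\rho_{Z_H}(p_{Z_H})$ is non-remote) handling the lower bound when $r\asymp\rho(p)$. The bookkeeping you flag does go through exactly as you anticipate: by \eqref{w.18} the induced multiweight $\widetilde a$ is constant on the maximal hypersurfaces of $Z_H$ and the hypotheses descend in both cases $\nu_H=\nu$ (fiber weight $0$, $\widetilde a_{\max}=a_{\max}-a_H$) and $\nu_H=0$ (fiber weight $\nu$, $\widetilde a_{\max}=a_{\max}-\tfrac{a_H}{1-\nu}$), and in each case the exponent collapses to $m+(\nu-1)a_{\max}$, which is precisely the two-case computation written out in the paper's proof.
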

\begin{proof}
We can proceed by induction on the depth of $M$ using the previous proposition.  If $C>2$ is a fixed constant, then for $r>C\rho(p)$, 
$$
   \{ q\; | \; \rho(q)\le (1-C^{-1})r\} \subset B(p,r)\subset \{ q\; | \; \rho(q)\le (C^{-1}+1)r\},
$$
so for such $r$, 
$$
   \cV(p,r;a)\asymp \cA(r;a)\asymp r^{m+ (\nu-1)a_{\max}}
$$
by Corollary~\ref{w.20}.  If instead $c\rho(p)\le r \le C\rho(p)$, then 
$$
        \cR(p,c\rho(p);a)\le \cV(p,r;a)\le \cA((1+C)\rho(p);a).
$$
By Corollary~\ref{w.20}, the right hand side behaves like $r^{m+(\nu-1)a_{\max}}$.  For the left hand side, notice that since $r\asymp \rho(p)$, it behaves like $r^{m+(\nu-1)a_{\max}}$ by Proposition~\ref{w.24}.  Indeed, if $x_G(p)\ge 1-3c$ for all $G\in \cM_{\nm}(M)$, this follows from \eqref{w.24a}.  If instead $x_H(p)<1-3c$ for some $H\in \cM_{\nm}(M)$ and $x_{G}(p)\ge 1-3c$ for all $G<H$, then in the notation of the proof of Proposition~\ref{w.24},
$$
 \rho(p)\ge \rho_{Z_H}(p) \quad \Longrightarrow \quad \frac{r}{\rho(p)^{\nu_H}}\ge c\rho(p)^{1-\nu_H}= \frac{c\rho_{Z_H}(p_{Z_H})}{x_H}\ge c\rho_{Z_H}(p_{Z_H}),
$$
so we can apply \eqref{w.27a} to $Z_H$.  By \eqref{w.24b}, we know that
$$
\cR(p,c\rho(p);a) \asymp \rho(p)^{(\nu_H-1)a_H+ \nu_H m_H}r^{b_H+1}\cV\lrp{p_{Z_H},\frac{r}{\rho(p)^{\nu_H}};\widetilde{a}},
$$
so if $\nu_H=\nu$, this gives
$$
\begin{aligned}
\cR(p,c\rho(p);a)  & \asymp \rho(p)^{(\nu-1)a_H+ \nu m_H}r^{b_H+1} \lrp{\frac{r}{\rho(p)^{\nu}} }^{m_H-(a_{\max}-a_H)} \quad \mbox{by \eqref{w.27a} on $Z_H$ with $\nu=0$,} \\
 & \asymp r^{m+ (\nu-1)a_{\max}}, \quad \mbox{since $r\asymp \rho(p)$},
\end{aligned}
$$
while if $\nu_H=0$, we obtain that
$$
\begin{aligned}
\cR(p,c\rho(p);a) 
 & \asymp \rho(p)^{-a_H}r^{b_H+1} \lrp{r }^{m_H-(1-\nu)(a_{\max}-\frac{a_H}{1-\nu})} \quad \mbox{by \eqref{w.27a} on $Z_H$,} \\
 & \asymp r^{m+ (\nu-1)a_{\max}}, \quad \mbox{since $r\asymp \rho(p)$}.
\end{aligned}
$$

\end{proof}

The last two propositions can be used to obtain a sharper estimate of the volume of remote balls, a technical result needed later.  We need however to take into account how close the point $p$ is to a corner of $M$.  In \cite{DM2014}, this is achieved with the notion of remote chains, but we will proceed differently, taking advantage of the fact that we have a compactification by a manifold with fibered corners.
\begin{definition}
Fix a remote parameter $c\in(0,\frac13)$ and boundary hypersurfaces  $H_1,\ldots, H_k\in \cM_{\nm}(M)$ with $H_1<\cdots < H_k$.  For such a choice, we say that a point $p\in M\setminus \pa M$ is \textbf{close to the non-maximal corner} $H_1\cap\cdots \cap H_k$ if $x_{H_i}(p)\le 1-3c$ for all $i\in\{1,\ldots,k\}$, but $x_{H}(p)>1-3c$ for $H\in \cM_{\nm}(M)\setminus\{H_1,\ldots, H_k\}$.  On the other hand, if $x_{H}(p)>1-3c$ for all $H\in \cM_{\nm}(M)$, we say that $p$ is \textbf{far from all non-maximal corners}.  Close to a non-maximal corner $H_1\cap\cdots H_k$, we will consider the function 
\begin{equation}
\sigma:= x_{\max}^{-\frac{1}{1-\nu}}\prod_{\nu_{H_i}=\nu} x_i^{-\frac{1}{1-\nu}}
\label{cc.1a}\end{equation}
corresponding to \eqref{wg.1z}.
\label{cc.1}\end{definition}

In terms of this notion, we have the following finer estimate on the volume of remote balls.

\begin{proposition}
Suppose that $a_H=a_{\max}$ for all $H\in\cM_{\max}(M)$ and that $(1-\nu)a_{\max}< (1-\nu_H)(a_H+m_H)$ for all $H\in \cM_{nm}(M)$.  If $c\in(0,\frac15)$ is a remote parameter and $p\in M\setminus \pa M$ is close to the non-maximal corner $H_1\cap\cdots H_k$ and
$$
     \frac{c\sigma^{\nu_{i+1}}(p)}{ \widetilde{v}_{i+1}(p)}\le r\le \frac{c\sigma^{\nu_i}(p)}{ \widetilde{v}_{i}(p)}
$$     
for some $i\in \{1,\ldots, k+1\}$,   where $ \widetilde{v}_i= \widetilde{v}_{H_i}$ in terms of \eqref{wg.1a} and $\nu_i=\nu_{H_i}$ with the conventions that $ \widetilde{v}_{k+1}=x_{\max}$, $\nu_{k+1}=\nu$ and $\frac{c\sigma^{\nu_{k+2}}(p)}{v_{k+2}(p)}:=0$.   Then, 
\begin{equation}
  \cR(p,r;a)\asymp \lrp{ \prod_{j=1}^{i-1}x_j^{a_j}(p) }  \widetilde{v}_i^{a_i}(p) \sigma^{\nu_i(a_{\max}-a_i)}(p) r^{m-(a_{\max}-a_i)+ (\nu-\nu_i)a_{\max}}
\label{w.30b}\end{equation}
with the conventions that $a_{k+1}=a_{\max}$ and $a_0=0$.  With these conventions, \eqref{w.30b} also holds with $k=0$ and $i=1$ when $p$ is far from all non-maximal corners and $r\le \frac{c\sigma(p)^{\nu_1}}{ \widetilde{v}_1(p)}=cx_{\max}^{-\frac{1}{1-\nu}}$.
\label{w.30}\end{proposition}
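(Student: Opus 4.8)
The idea is to prove \eqref{w.30b} by induction on the depth $k$ of the non-maximal corner that $p$ is close to, bootstrapping from Proposition~\ref{w.24} and Proposition~\ref{w.27} and carefully tracking how the radial variable of the cone factor and the rescaled radial variable of the fiber factor behave over the various scale windows $\left[\frac{c\sigma^{\nu_{i+1}}(p)}{\widetilde{v}_{i+1}(p)},\frac{c\sigma^{\nu_i}(p)}{\widetilde{v}_i(p)}\right]$. First I would dispatch the base case: when $p$ is far from all non-maximal corners (so $k=0$, $i=1$), we are in the region where $g$ looks like an $\AC$-metric with $x_{\max}\asymp\rho^{\nu-1}$, and for $r\le c x_{\max}^{-1/(1-\nu)}=c\rho^{1-\nu}$ one is in a remote ball where the metric is essentially Euclidean after rescaling; then \eqref{w.24a} of Proposition~\ref{w.24} gives $\cR(p,r;a)\asymp\rho(p)^{(\nu-1)a_{\max}}r^m$, which is exactly \eqref{w.30b} with the stated conventions $a_0=0$, $\widetilde v_1=x_{\max}$, $\nu_1=\nu$ (note $\rho=\sigma$ on that region, up to comparability, so $\sigma^{\nu_1(a_{\max}-a_1)}=1$ and the exponent $m-(a_{\max}-a_1)+(\nu-\nu_1)a_{\max}=m$).

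For the inductive step, suppose $p$ is close to the non-maximal corner $H_1\cap\cdots\cap H_k$. I would split according to which window $r$ lies in. If $r$ lies in the \emph{largest} window, $\frac{c\sigma^{\nu_1}(p)}{\widetilde v_1(p)}\le r$, the ball $B(p,r)$ is large enough to escape the neighborhood of $H_1$ (by the same triangle-inequality estimates as in the proof of Proposition~\ref{w.24}, using the local model \eqref{w.5} for $H_1$ one checks $x_{H_1}$ stays bounded away from $0$ on the relevant region), so one reduces via \eqref{w.24b} to estimating $\cV(p_{Z_{H_1}},r/\rho(p)^{\nu_1};\widetilde a)$ on the fiber $Z_{H_1}$, which is a warped $\QAC$-metric of strictly smaller depth, with the fiber now being close to the corner $H_2\cap\cdots\cap H_k$ (of depth $k-1$) and $r/\rho(p)^{\nu_1}$ falling in the corresponding window for that smaller problem; apply the inductive hypothesis on $Z_{H_1}$ and then re-expand $\rho_{Z_{H_1}}$, $\widetilde v_j$ restricted to $Z_{H_1}$, and $\sigma$ on $Z_{H_1}$ in terms of the ambient boundary defining functions — here the recursion \eqref{w.18} for $\widetilde a$ and the formula $\mathfrak n_{Z_H}(Z_H\cap G)=\frac{\nu_G-\nu_H}{1-\nu_H}$ from Example~\ref{w.1} must be fed in, and one keeps track of the factor $\rho(p)^{(\nu_{H_1}-1)a_{H_1}+\nu_{H_1}m_{H_1}}r^{b_{H_1}+1}$ picked up from \eqref{w.24b}. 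If instead $r$ is small enough to lie in the window $i=1$, i.e. $r\le\frac{c\sigma^{\nu_1}(p)}{\widetilde v_1(p)}$ with the convention making the previous bound vacuous only if $k\ge1$ — more precisely $\frac{c\sigma^{\nu_2}(p)}{\widetilde v_2(p)}\le r\le\frac{c\sigma^{\nu_1}(p)}{\widetilde v_1(p)}$ — then $B(p,r)$ stays in a region where the product structure of \eqref{w.5} at $H_1$ is valid but the ball does \emph{not} see the fiber direction beyond a rescaled Euclidean ball, and one argues directly as in Proposition~\ref{w.24} that $\cR(p,r;a)\asymp\widetilde v_1^{a_1}(p)\sigma^{\nu_1(a_{\max}-a_1)}(p)r^{m-(a_{\max}-a_1)+(\nu-\nu_1)a_{\max}}$; the product over $j=1,\ldots,i-1$ is empty here, consistent with the statement. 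The intermediate windows $2\le i\le k$ are handled uniformly by the inductive step applied to $Z_{H_1}$ as above, since on $Z_{H_1}$ the point $p_{Z_{H_1}}$ is close to the corner $H_2\cap\cdots\cap H_k$ and the relevant window for $Z_{H_1}$ is precisely $\left[\frac{c\sigma_{Z_{H_1}}^{\nu_{i+1}}}{\widetilde v_{i+1}^{Z_{H_1}}},\frac{c\sigma_{Z_{H_1}}^{\nu_i}}{\widetilde v_i^{Z_{H_1}}}\right]$ after rescaling by $\rho(p)^{-\nu_1}$; the claimed exponents then match after reassembling, and the product $\prod_{j=1}^{i-1}x_j^{a_j}(p)$ accumulates exactly the boundary defining functions of the corners $H_1,\ldots,H_{i-1}$ that the ball has "passed through."

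The main obstacle I anticipate is purely bookkeeping rather than conceptual: one must verify with care that the breakpoints $\frac{c\sigma^{\nu_i}(p)}{\widetilde v_i(p)}$ are compatible with the rescaled breakpoints for the fiber problem under the substitution $r\mapsto r/\rho(p)^{\nu_1}$, $x_{H_i}\mapsto$ (restriction to $Z_{H_1}$), and that the exponents $m-(a_{\max}-a_i)+(\nu-\nu_i)a_{\max}$, the prefactors $\prod_{j<i}x_j^{a_j}$, $\widetilde v_i^{a_i}$, $\sigma^{\nu_i(a_{\max}-a_i)}$ transform correctly under the recursion \eqref{w.18} and the shift $\widetilde v_i^{Z_{H_1}}$ versus $\widetilde v_i$; in particular the dimension count $m_{Z_{H_1}}=m_{H_1}$, $b_j^{Z_{H_1}}$, $m_j^{Z_{H_1}}$ must be reconciled with the ambient ones, and one must check, as in the proofs of Propositions~\ref{w.24} and \ref{w.27}, that the hypothesis $(1-\nu)a_{\max}<(1-\nu_H)(a_H+m_H)$ survives the passage to the fiber (it becomes the analogous inequality for $\widetilde a$, which is where it is needed to kill logarithmic terms and to ensure the estimate of Corollary~\ref{w.20} applies on $Z_{H_1}$). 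I would organize the computation so that each window reduces in one step to either a direct Euclidean rescaling (smallest window) or to the inductive hypothesis on a depth-$(k-1)$ fiber, and I would carry the constants $c$, $C$ only symbolically, since all estimates are up to the comparability relation $\asymp$.
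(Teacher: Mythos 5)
Your overall skeleton (induction on the depth, base case via \eqref{w.24a}, splitting off the fiber direction via \eqref{w.24b}, and then invoking either Proposition~\ref{w.27} or the inductive hypothesis on $Z_{H_1}$) is the same as the paper's, but you have assigned the two mechanisms to the wrong regimes, and the step you propose for the window $i=1$ would fail. Your ``largest window'' $r\ge \frac{c\sigma^{\nu_1}(p)}{\widetilde{v}_1(p)}$ is not part of the proposition at all: near the corner one has $\frac{\sigma^{\nu_1}(p)}{\widetilde{v}_1(p)}\asymp \rho(p)$, so that range is the non-remote regime already covered by Proposition~\ref{w.27}. Your treatment of it is also internally inconsistent: \eqref{w.24b} is proved precisely for balls that remain in the region $x_{H_1}<1-c$, so it cannot be combined with the assertion that the ball ``escapes the neighborhood of $H_1$''; and for such $r$ the rescaled fiber radius $r/\rho(p)^{\nu_1}$ is at least comparable to $\rho_{Z_{H_1}}(p_{Z_{H_1}})$, so the fiber ball is not remote in $Z_{H_1}$ and the inductive hypothesis (a statement about remote balls) does not apply there.

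The genuine largest window of the proposition is $i=1$, i.e.\ $\frac{c\sigma^{\nu_2}(p)}{\widetilde{v}_2(p)}\le r\le \frac{c\sigma^{\nu_1}(p)}{\widetilde{v}_1(p)}$, and there your claim that the ball ``does not see the fiber direction beyond a rescaled Euclidean ball'' is false: in this window the rescaled radius $r/\rho(p)^{\nu_1}$ lies between a constant times $\rho_{Z_{H_1}}(p_{Z_{H_1}})$ and a constant times $\rho_{Z_{H_1}}(p_{Z_{H_1}})/x_1(p)$, so the fiber ball is \emph{non-remote} in $Z_{H_1}$ and sees the full QAC structure of the fiber, in particular the decay of the weight $x^{\widetilde{a}}$ at the fiber's infinity. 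It is exactly Proposition~\ref{w.27} applied to $(Z_{H_1},\kappa_{H_1},\mu_{\widetilde{a}})$, e.g.\ $\cV\lrp{p_{Z_{H_1}},\frac{r}{\rho(p)^{\nu}};\widetilde{a}}\asymp \lrp{\frac{r}{\rho(p)^{\nu}}}^{m_{H_1}-(a_{\max}-a_1)}$ when $\nu_1=\nu$, that produces the corrections $-(a_{\max}-a_1)$ and $(\nu-\nu_i)a_{\max}$ in the exponent of \eqref{w.30b}; treating the fiber ball as Euclidean and ``arguing directly as in Proposition~\ref{w.24}'' would give the exponent $m$ as in \eqref{w.24a}, which is wrong unless $a_1=a_{\max}$ and $\nu_1=\nu$. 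The correct assignment, which is the paper's, is therefore the reverse of yours: window $i=1$ is handled by \eqref{w.24b} together with Proposition~\ref{w.27} on the fiber, while the windows $i\in\{2,\ldots,k+1\}$ (note that you also omit $i=k+1$) are handled by \eqref{w.24b} together with the inductive hypothesis \eqref{w.30b} on the fiber; your description of the ``intermediate windows'' and of the bookkeeping through \eqref{w.18} is correct once the cases are reassigned in this way.
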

\begin{proof}
We can proceed by induction on the depth of $M$.  If $M$ is of depth 1, we are in the $\AC$ setting, so $k=0$ and $i=1$.  For a remote ball, 
$
      r\le c\rho(p) = c\rho_1(p)=c\rho_{\max}(p),
$
so that \eqref{w.30b} follows from \eqref{w.24a}.  If $M$ is of higher depth and $k\ge 1$, we can then assume that \eqref{w.30b} holds for $\QAC$-manifolds of lower depth.  We need to distinguish two cases.  If 
$$
         \frac{c\sigma^{\nu_2}(p)}{ \widetilde{v}_2(p)}\le r \le \frac{c\sigma^{\nu_1}(p)}{ \widetilde{v}_1(p)},
$$
then we can apply Proposition~\ref{w.24} together with Proposition~\ref{w.27} to obtain the claimed estimate, namely, with the notation that $m_i= m_{H_i}$, if $\nu_1=\nu$, then $\sigma\asymp \rho$ and
$$
\begin{aligned}
\cR(p,r;a) & \asymp \rho(p)^{(\nu-1)a_1+ \nu m_1}r^{m-m_1}\cV(p_{Z_{H_1}},\frac{r}{\rho(p)^\nu};\widetilde{a}) \quad \mbox{by Proposition~\ref{w.24}}, \\
   & \asymp \rho(p)^{a_1(\nu-1)+\nu m_1} r^{m-m_1} \lrp{ \frac{r}{\rho^{\nu}}  }^{m_1-(a_{\max}-a_1)} \quad \mbox{by Proposition~\ref{w.27} with $\nu=0$,} \\
   & \asymp \rho(p)^{\nu a_{\max}-a_1}r^{m-(a_{\max}-a_1)}= \rho(p)^{(\nu-1)a_1}\rho(p)^{\nu(a_{\max}-a_1)}r^{m-(a_{\max}-a_1)} \\
   & \asymp  \widetilde{v}_1^{a_1} \rho(p)^{\nu(a_{\max}-a_1)}r^{m- (a_{\max}-a_1)}  \asymp  \widetilde{v}_1^{a_1} \rho(p)^{\nu_1(a_{\max}-a_1)}r^{m- (a_{\max}-a_1)+ (\nu-\nu_1)a_{\max}},
\end{aligned}
$$
while if $\nu_1=0$, then
$$
\begin{aligned}
\cR(p,r;a) & \asymp \rho(p)^{-a_1}r^{m-m_1}\cV(p_{Z_{H_1}},r;\widetilde{a}) \quad \mbox{by Proposition~\ref{w.24}}, \\
   & \asymp \rho(p)^{-a_1} r^{m-m_1} r^{m_1+(\nu-1)(a_{\max}-\frac{a_1}{1-\nu})} \quad \mbox{by Proposition~\ref{w.27},} \\
   & \asymp \rho(p)^{-a_1}r^{m+(\nu-1)a_{\max}+a_1} \\
   & \asymp  \widetilde{v}(p)^{a_1}r^{m-(a_{\max}-a_1)+ (\nu-\nu_1)a_{\max}},
\end{aligned}
$$
yielding the claimed result.

If instead $\frac{c\sigma^{\nu_{i+1}}(p)}{ \widetilde{v}_{i+1}(p)}\le r\le \frac{c\sigma^{\nu_i}(p)}{ \widetilde{v}_i(p)}$ with $i\ge 2$, we can combine Proposition~\ref{w.24} with \eqref{w.30b} on $Z_{H_1}$ to obtain the estimate. First, for $\nu_1=\nu$, $\sigma\asymp \rho$ and
$$
\begin{aligned}
\cR(p,r;a) & \asymp \rho(p)^{(\nu-1)a_1+ \nu m_1}r^{m-m_1}\cV\lrp{p_{Z_{H_1}},\frac{r}{\rho(p)^{\nu}};\widetilde{a}} \quad \mbox{by Proposition~\ref{w.24}}, \\
 &\asymp \rho(p)^{(\nu-1)a_1+ \nu m_1}r^{m-m_1} \lrp{ \prod_{j=2}^{i-1}x_j^{a_j-a_1} } \widetilde{v}_i^{a_i-a_1} \lrp{\frac{r}{\rho(p)^\nu}}^{m_1-(a_{\max}-a_i)} \quad \mbox{by \eqref{w.30b} with $\nu=0$,} \\
 & \asymp \rho(p)^{(\nu-1)a_1+ \nu(a_{\max}-a_i)}r^{m-(a_{\max}-a_i)} \lrp{ \prod_{j=2}^{i-1} x_j^{a_j-a_1} }\widetilde{v}_i^{a_i-a_1} \\
 & \asymp  \widetilde{v}_1^{a_1} \rho(p)^{\nu(a_{\max}-a_i)}r^{m-(a_{\max}-a_i)}  \lrp{ \prod_{j=2}^{i-1} x_j^{a_j-a_1} } \widetilde{v}_i^{a_i-a_1} \\
& \asymp  \lrp{ \lrp{\prod_{j=1}^{i-1}x_j}  \widetilde{v}_i}^{a_1}    \rho(p)^{\nu(a_{\max}-a_i)}r^{m-(a_{\max}-a_i)}  \lrp{ \prod_{j=2}^{i-1} x_j^{a_j-a_1} } \widetilde{v}_i^{a_i-a_1} \\
& \asymp \lrp{ \prod_{j=1}^{i-1} x_j^{a_j} } \widetilde{v}_i^{a_i} \rho(p)^{\nu_i(a_{\max}-a_i)} r^{m-(a_{\max}-a_i)+ (\nu-\nu_1)a_{\max}}.
\end{aligned}
$$
If instead $\nu_1=0$, then
$$
\begin{aligned}
\cR(p,r;a) & \asymp \rho(p)^{-a_1} r^{m-m_1} \cV(p,r;\widetilde{a}) \quad \mbox{by Proposition~\ref{w.24}}, \\
  & \asymp \rho(p)^{-a_1} r^{m-m_1}  \lrp{ \prod_{j=2}^{i-1} x_j^{\widetilde{a}_j} } \widetilde{v}_i^{\widetilde{a}_i} \sigma^{\nu_i(\widetilde{a}_{\max}-\widetilde{a}_i)} r^{m_1-(\widetilde{a}_{\max}-\widetilde{a}_{i})+ (\nu-\nu_i)\widetilde{a}_{\max}} \quad \mbox{by \eqref{w.30b}},
\end{aligned}
$$
and we need to distinguish two cases.  If $\nu_i=\nu$, then in fact
$$
\begin{aligned}
\cR(p,r;a) &\asymp \rho(p)^{-a_1} \lrp{ \prod_{j=2}^{i-1} x_j^{a_j- \frac{a_1}{1-\nu_j}}  }  \widetilde{v}_i^{a_i-\frac{a_1}{1-\nu}} \sigma^{\nu(a_{\max}-a_i)}r^{m-(a_{\max}-a_i)}  \\
 &\asymp \lrp{ \prod_{j=2}^{i-1} x_j^{a_j- \frac{a_1}{1-\nu_j}}  } \lrp{\prod_{j=1}^{i-1} x_j^{\frac{a_1}{1-\nu_j}}}  \widetilde{v}_i^{a_i}\sigma(p)^{\nu(a_{\max}-a_i)}r^{m-(a_{\max}-a_i)} \\
 & \asymp \lrp{\prod_{j=1}^{i-1} x_j^{a_j} }  \widetilde{v}_i^{a_i} \sigma^{\nu_i(a_{\max}-a_i)}r^{m-(a_{\max}-a_i)},
\end{aligned}
$$
yielding the claimed result.  If instead $\nu_i=0$, then
$$
\begin{aligned}
\cR(p,r;a) & \asymp \rho(p)^{-a_1} \lrp{ \prod_{j=2}^{i-1} x_j^{a_j-a_1}  }  \widetilde{v}_i^{a_i-a_1} r^{m-(a_{\max}-\frac{a_1}{1-\nu}-a_i+a_1)+\nu(a_{\max}-\frac{a_1}{1-\nu})} \\
& \asymp \lrp{ \prod_{j=1}^{i-1} x_j^{a_1} } \lrp{ \prod_{j=2}^{i-1} x_j^{a_j-a_1}  }  \widetilde{v}_i^{a_i} r^{m-(a_{\max}-a_i)+\nu a_{\max}}\\
& \asymp \lrp{ \prod_{j=1}^{i-1} x_j^{a_j}  }  \widetilde{v}_i^{a_i} r^{m-(a_{\max}-a_i)+\nu a_{\max}},
\end{aligned}
$$
again yielding the claimed result.
\end{proof}

For the moment however, just using Propositions~\ref{w.24} and \ref{w.27}, we can deduce the volume doubling property.
\begin{corollary}
Suppose that $a_H=a_{\max}$ for all $H\in\cM_{\max}(M)$, that $a_{\max}<\frac{m}{1-\nu}$ and that $(1-\nu)a_{\max}< (1-\nu_H)(a_H+m_H)$ for all $H\in \cM_{nm}(M)$.  Then $(VD)_{\mu}$ holds.  
\label{w.28}\end{corollary}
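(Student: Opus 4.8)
The plan is to induct on the depth of $M$, using Propositions~\ref{w.24} and~\ref{w.27} to reduce $(VD)_\mu$ for $M$ to $(VD)_\mu$ on the fibers $Z_H$, which have strictly smaller depth. In the base case $M$ has depth one, so $\cM_{\nm}(M)=\emptyset$ and we are in the asymptotically conical setting; there $(VD)_\mu$ follows from \eqref{w.24a} and \eqref{w.27a} by the same argument as below with the ``close to a corner'' sub-case removed (and is also classical, \cf \cite{DM2014,CR2021}). For the inductive step I fix $p\in M\setminus\pa M$ and $r>0$, let $c\in(0,\tfrac15)$ be the remote parameter, and aim to bound $\cV(p,2r;a)/\cV(p,r;a)$ by a constant independent of $p$ and $r$.

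I would split into two regimes, the point being to use a smaller remote parameter in Proposition~\ref{w.27} so as to avoid a separate ``mixed'' case. If $r\ge\tfrac{c}{2}\rho(p)$, then since $\tfrac{c}{2}\in(0,\tfrac15)$ is itself an admissible remote parameter, Proposition~\ref{w.27} (whose hypotheses $a_{\max}<\tfrac{m}{1-\nu}$ and $(1-\nu)a_{\max}<(1-\nu_H)(a_H+m_H)$ are exactly those of the corollary) applies to both $B(p,r)$ and $B(p,2r)$, giving $\cV(p,r;a)\asymp r^{m+(\nu-1)a_{\max}}$ and $\cV(p,2r;a)\asymp(2r)^{m+(\nu-1)a_{\max}}$, so the quotient is $\asymp 2^{m+(\nu-1)a_{\max}}$. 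If instead $r<\tfrac{c}{2}\rho(p)$, then $2r<c\rho(p)$ and both balls are remote, so I invoke Proposition~\ref{w.24}. When $x_G(p)\ge 1-3c$ for all $G\in\cM_{\nm}(M)$, \eqref{w.24a} gives $\cR(p,r;a)\asymp\rho(p)^{(\nu-1)a_{\max}}r^m$ and likewise for $2r$, so the quotient is $\asymp 2^m$. Otherwise, by Definition~\ref{mwfc.1} the non-maximal hypersurfaces through $p$ with $x_H(p)<1-3c$ are totally ordered, so there is a minimal such one, call it $H$; then $x_G(p)\ge 1-3c$ for $G<H$, and \eqref{w.24b} applies to both radii and, after the factor $\rho(p)^{(\nu_H-1)a_H+\nu_H m_H}$ cancels, yields
\begin{equation*}
\frac{\cR(p,2r;a)}{\cR(p,r;a)}\asymp 2^{b_H+1}\,\frac{\cV\!\big(p_{Z_H},\tfrac{2r}{\rho(p)^{\nu_H}};\widetilde{a}\big)}{\cV\!\big(p_{Z_H},\tfrac{r}{\rho(p)^{\nu_H}};\widetilde{a}\big)},
\end{equation*}
which is exactly $2^{b_H+1}$ times a volume-doubling quotient on $Z_H$.

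To close the induction I then need $(VD)_\mu$ on $(Z_H,\widetilde{a})$, and for this I must check that $\widetilde{a}$, together with the induced weight function $\mathfrak{n}_{Z_H}$ of Example~\ref{w.1}, satisfies the hypotheses of the corollary on the lower-depth $\QAC$-manifold with fibered corners $Z_H$; this is the main technical point. The maximal boundary hypersurfaces of $Z_H$ are precisely $Z_H\cap G$ for the maximal $G\in\cM_1(M)$ with $G>H$ (and there is at least one such $G$ since $H$ is not maximal), on which $a_G=a_{\max}$ and $m_G=0$; feeding this into the explicit formula \eqref{w.18} shows $\widetilde{a}$ is constant on these, equal to some $\widetilde{a}_{\max}$, and then the inequality $\widetilde{a}_{\max}<\tfrac{m_H}{1-\nu_{Z_H}}$ and the non-maximal inequalities on $Z_H$ follow from $a_{\max}<a_H+m_H$ (which is $(1-\nu)a_{\max}<(1-\nu_H)(a_H+m_H)$ rewritten, using $\nu_H\in\{0,\nu\}$) together with $m_H=\dim M-\dim S_H-1$, the analogous identity on $Z_H$, and $\mathfrak{n}_{Z_H}(Z_H\cap G)=\tfrac{\nu_G-\nu_H}{1-\nu_H}$. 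I expect the bookkeeping here --- in particular tracking the warping factor correctly through the cases $\nu_H=\nu$ (where $Z_H$ is an honest, unwarped $\QAC$-manifold, so $\nu_{Z_H}=0$) and $\nu_H=0$ (where $Z_H$ is again $\mathfrak{n}$-warped with the same $\nu$) --- to be the one place demanding care. Granting it, the inductive hypothesis supplies a doubling constant $C_D(Z_H)$ for each non-maximal $H$, and since $M$ has finitely many boundary hypersurfaces and finite depth, the maximum over all of them of the bounds produced above ($2^{m+(\nu-1)a_{\max}}$, $2^m$, and $2^{b_H+1}C_D(Z_H)$, each enlarged by the implicit constants in Propositions~\ref{w.24} and~\ref{w.27}) gives a single finite $C_D$ valid for all $p$ and $r$, proving $(VD)_\mu$.
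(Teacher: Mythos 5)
Your proposal is correct and follows essentially the same route as the paper: induction on the depth of $M$, using Proposition~\ref{w.27} for non-remote balls and Proposition~\ref{w.24} together with the inductive doubling property on $(Z_H,\kappa_H,\mu_{\widetilde{a}})$ for remote balls. Your extra care about the mixed regime (shrinking the remote parameter so that $B(p,r)$ and $B(p,2r)$ fall in the same case) and about checking that $\widetilde{a}$ inherits the hypotheses on $Z_H$ only makes explicit what the paper leaves implicit, and that verification does go through exactly as you sketch.
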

\begin{proof}
For non-remote balls, this follows from Proposition~\ref{w.27}.  For remote balls, we can assume by induction that the result holds for $\QAC$-manifolds of lower depth.  By Proposition~\ref{w.24}, $(VD)_{\mu}$ holds when we have \eqref{w.24a}, while when \eqref{w.24b} holds,
$$
 \cR(p,r;a) \asymp \rho(p)^{(\nu_H-1)a_H+\nu_H m_H} r^{b_H+1}\cV\lrp{p_{Z_H},\frac{r}{\rho(p)^{\nu_H}};\widetilde{a}}
$$
and the result follows by $(VD)_{\mu}$ on $(Z_H, \kappa_H, \mu_{\widetilde{a}})$.
\end{proof}

We can also estimate the volume of anchored balls in terms of the volume of remote balls as follows.

\begin{corollary}
Suppose that $a_H=a_{\max}$ for all $H\in\cM_{\max}(M)$, that $a_{\max}<\frac{m}{1-\nu}$ and that $(1-\nu)a_{\max}< (1-\nu_H)(a_H+m_H)$ for all $H\in \cM_{nm}(M)$.  For $c\in (0,\frac15)$ a choice of remote parameter, there exists a constant $C_V$ such that
$$
   \cA(\rho(p);a)\le C_V \cR(p,c\rho(p);a) \quad \forall \; p\in M\setminus \pa M.
$$
\label{w.29}\end{corollary}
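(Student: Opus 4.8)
The plan is to observe that Corollary~\ref{w.29} is little more than a repackaging of estimates already in hand: both $\cA(\rho(p);a)$ and $\cR(p,c\rho(p);a)$ are comparable to the single quantity $\rho(p)^{m+(\nu-1)a_{\max}}$, so the inequality follows by transitivity of $\asymp$ with an explicit constant $C_V$.

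First I would invoke Corollary~\ref{w.20} to get $\cA(\rho(p);a)\asymp \rho(p)^{m+(\nu-1)a_{\max}}$ for $\rho(p)>1$ (this also gives $\cA(1;a)<\infty$). The regime $\rho(p)\le 1$ is disposed of immediately: $\{\rho\le 1\}$ is a compact subset of $M\setminus\pa M$ on which, by completeness and bounded geometry, $B(p,c\rho(p))$ contains a geodesic ball of radius bounded below by a fixed positive constant, so $\cR(p,c\rho(p);a)$ is bounded below uniformly, while $\cA(\rho(p);a)\le \cA(1;a)$ is bounded above; the claimed inequality is then trivial there.

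Next, for $\rho(p)>1$, I would produce the matching lower bound $\cR(p,c\rho(p);a)\gtrsim \rho(p)^{m+(\nu-1)a_{\max}}$ by the two cases of Proposition~\ref{w.24}, proceeding by induction on the depth of $M$. If $x_G(p)\ge 1-3c$ for all $G\in\cM_{\nm}(M)$ (i.e. $p$ far from all non-maximal corners), then \eqref{w.24a} gives $\cR(p,c\rho(p);a)\asymp \rho(p)^{(\nu-1)a_{\max}}(c\rho(p))^m$ directly. If instead $x_H(p)<1-3c$ for some non-maximal $H$ while $x_G(p)\ge 1-3c$ for $G<H$, I would feed \eqref{w.24b} into the induction: the inner radius $c\rho(p)/\rho(p)^{\nu_H}=c\rho(p)^{1-\nu_H}$ dominates $\rho_{Z_H}(p_{Z_H})$ — exactly the observation made inside the proof of Proposition~\ref{w.27} — so \eqref{w.27a} applies on the lower-depth $\QAC$-manifold $(Z_H,\kappa_H,\mu_{\widetilde a})$ with shifted weight $\widetilde a$ from \eqref{w.18}, and the resulting elementary computation (identical to the two-subcase calculation $\nu_H=\nu$ / $\nu_H=0$ at the end of the proof of Proposition~\ref{w.27}) again yields $\cR(p,c\rho(p);a)\asymp \rho(p)^{m+(\nu-1)a_{\max}}$. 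Combining with Corollary~\ref{w.20} then gives a constant $C_V$ with $\cA(\rho(p);a)\le C_V\,\cR(p,c\rho(p);a)$ for all $p\in M\setminus\pa M$.

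The one delicate point — and what I'd flag as the main obstacle — is the bookkeeping in the near-a-corner case: checking that $r=c\rho(p)$ lands in the range where Proposition~\ref{w.27} may be applied to the fibre $Z_H$, and that the weight shift $a\mapsto\widetilde a$ is handled consistently with the conventions $a_{\max}<\frac{m}{1-\nu}$ and $(1-\nu)a_{\max}<(1-\nu_H)(a_H+m_H)$ that propagate to $\widetilde a$. Since this is precisely the computation already carried out within the proof of Proposition~\ref{w.27}, no new idea is required, only care with the exponents.
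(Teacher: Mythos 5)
Your proposal is correct and follows essentially the same route as the paper, which simply combines Corollary~\ref{w.20} at $R=\rho(p)$ with Proposition~\ref{w.27} at $r=c\rho(p)$. The only difference is that you re-derive the lower bound $\cR(p,c\rho(p);a)\asymp\rho(p)^{m+(\nu-1)a_{\max}}$ from Proposition~\ref{w.24} and induction, whereas this is already the content of Proposition~\ref{w.27} evaluated at the endpoint $r=c\rho(p)$ (its proof contains exactly the two-case computation you describe), so no extra work is needed.
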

\begin{proof}
  This follows from Corollary~\ref{w.20} with $R=\rho(p)$ and Proposition~\ref{w.27} with $r=c\rho(p)$.
\end{proof}
These estimates will help obtain the weighted Poincar\'e inequality via the following result.
\begin{corollary}
Suppose that $a_H=a_{\max}$ for all $H\in\cM_{\max}(M)$, that $a_{\max}<\frac{m}{1-\nu}$ and that $(1-\nu)a_{\max}<(1-\nu_H)(a_H+m_H)$ for all $H\in \cM_{nm}(M)$.  Suppose moreover that $\pa M$ is connected.  If the complete weighted Riemannian manifold $(M\setminus\pa M, g, \mu_a)$ satisfies $(VD)_{\mu}$ and $(PI)_{\mu,\delta}$ with parameter $\delta\in (0,1]$ for all remote balls, then $(VD)_{\mu}$ and $(PI)_{\mu,\delta}$ hold for all balls.
\label{w.31}\end{corollary}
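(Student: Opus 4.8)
The plan is to follow the gluing scheme of Grigor'yan--Saloff-Coste, in the form carried out in \cite[\S4]{DM2014}. Note first that $(VD)_{\mu}$ for \emph{all} balls has already been established in Corollary~\ref{w.28} under the present hypotheses, so the content of the statement is to promote the weighted Poincar\'e inequality $(PI)_{\mu,\delta}$ from remote balls to arbitrary balls. A non-remote ball $B(p,r)$ (one with $r\ge c\,\rho(p)$) satisfies $\rho(p)\lesssim r$, so it lies at distance comparable to $r$ from the basepoint $o$ and is covered by finitely many remote balls in the same way as an anchored ball; thus it is enough to treat anchored balls $B(o,R)$ with $R\ge 1$, and I will describe the argument in that case, the general case being identical modulo notation. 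Finally, once a \emph{weak} Poincar\'e inequality is proved for all balls, the passage to the strong form $(PI)_{\mu,1}$, hence to $(PI)_{\mu,\delta}$ for every $\delta$, is the standard argument of Jerison valid under $(VD)_{\mu}$, so it suffices to produce $(PI)_{\mu,\delta}$ on anchored balls for \emph{some} fixed $\delta$.

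Fix $R\ge 1$. The first step is to produce a Whitney-type covering of $B(o,R)$ adapted to the distance to $o$: a finite family of remote balls $\{B_i=B(x_i,r_i)\}$ with $r_i=\eta\, d(o,x_i)$ for a small fixed $\eta\in(0,c)$, together with a ``core'' ball $B_0$ of fixed radius around $o$, such that the dilated balls $\{2B_i\}\cup\{2B_0\}$ cover $B(o,R)$ while the concentric half-balls $\{\tfrac12 B_i\}$ have bounded overlap. Using $(VD)_{\mu}$ together with the volume estimates of Propositions~\ref{w.24} and~\ref{w.27} one arranges that the centers $x_i$ lie at finitely many dyadic scales $2^j\lesssim R$ and that two $B_i$'s at a common scale whose half-balls meet have comparable $\mu_a$-volume. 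One then forms a graph on the index set by joining $B_i$ and $B_j$ when $\tfrac12 B_i\cap\tfrac12 B_j\ne\emptyset$; $(VD)_{\mu}$ bounds the vertex degree. The crucial point is connectivity of this graph \emph{at each scale}: here the hypothesis that $\pa M$ is connected is used to see that the region $\{\rho>t\}$, and more precisely each annular region $\{t<\rho<2t\}$, is connected --- an assertion for which the explicit form of $\rho$ provided by the compactification by a manifold with fibered corners is convenient --- so that two remote balls at scale $2^j$ can be joined inside $\{\rho\asymp 2^j\}$, without detouring through $B_0$, by a chain whose edges all live at scale $2^j$.

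Granting such a covering, $(PI)_{\mu,\delta}$ on $B(o,R)$ follows from the now-standard telescoping argument: with $\bar f$ the $\mu_a$-average of $f$ over $B_0$, one bounds $\int_{B(o,R)}|f-\bar f|^2\,d\mu_a$ by the sum of the local oscillations $\sum_i\int_{B_i}|f-f_{B_i}|^2\,d\mu_a$, handled by applying $(PI)_{\mu,\delta}$ on each remote ball $B_i$ and summing with the bounded overlap, plus the term $\sum_i\mu_a(B_i)\,|f_{B_i}-\bar f|^2$, handled by chaining $f_{B_i}$ to $\bar f$ through the graph, bounding each step between adjacent balls by a Poincar\'e integral over their union (again a remote ball up to dilation), and reorganizing the resulting double sum into a single sum over edges. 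The multiplicity with which a given edge is used in this reorganization is controlled, scale by scale, precisely because the chains do not funnel through $B_0$; and that the $\mu_a$-mass sitting above an edge at scale $2^j$ is comparable to $\mu_a(B_i)$ for the $B_i$'s at that scale --- rather than to $\mu_a(B(o,R))$ --- is the quantitative manifestation of one-endedness, which combined with the comparison $\cA(\rho(p);a)\lesssim\cR(p,c\,\rho(p);a)$ of Corollary~\ref{w.29} keeps the final constant independent of $R$.

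I expect the main obstacle to be exactly this connectivity-and-chaining step: translating the topological hypothesis ``$\pa M$ connected'' into the quantitative statement that the annular regions $\{\rho\asymp 2^j\}$ are connected uniformly across all scales $2^j\lesssim R$, and verifying that the Whitney graph then has the structure needed for the telescoping to run with $R$-independent constants. The hypothesis is genuinely needed --- if $\pa M$ were disconnected the manifold $M\setminus\pa M$ would have several ends and, as for a connected sum of Euclidean spaces, the global Poincar\'e inequality would fail even though $(VD)_{\mu}$ and $(PI)_{\mu,\delta}$ persist on remote balls. Once the covering and its graph are under control, everything else is the routine bookkeeping of the telescoping argument, using only $(VD)_{\mu}$, the remote-ball hypothesis $(PI)_{\mu,\delta}$, and Corollary~\ref{w.29}.
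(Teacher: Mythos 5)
Your proposal is correct and is essentially the paper's argument: the paper simply invokes \cite[Theorem~5.2]{GSC2005} (whose hypotheses are exactly $(VD)_{\mu}$ and $(PI)_{\mu,\delta}$ on remote balls together with the anchored-to-remote volume comparison of Corollary~\ref{w.29}) and then verifies the relatively connected annuli property, which ``clearly holds'' because $\pa M$ is connected. What you do is unpack the Whitney-covering and chaining proof of that cited gluing theorem by hand, with the connectedness of $\pa M$ entering in precisely the same place, namely the uniform connectivity of the annular shells $\{\rho\asymp 2^j\}$.
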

\begin{proof}
By the previous corollaries and \cite[Theorem~5.2]{GSC2005}, it suffices to check that $(M\setminus \pa M,g)$ satisfies the property of relatively connected annuli (RCA) with respect to the base point $o$, that is, there exists $C_A>1$ such that for all $r>C^2_A$ and for all $p,q\in M\setminus \pa M$ with $d(o,p)=d(o,q)=r$, there exists a continuous path $\gamma: [0,1]\to M\setminus \pa M$ starting at $p$ and ending at $q$ with image contained in the shell $B(o,C_Ar)\setminus B(o, C_A^{-1}r).$  But since $\pa M$ is assumed to be connected, the RCA property clearly holds.   
\end{proof}

Thus, assuming $\pa M$ is connected, it remains to check that $(PI)_{\mu,\delta}$ holds on remote balls to conclude it holds on every ball.  To be able to run an argument by induction on the depth of $M$, we need to assume as well that the boundary $\pa Z_H$ of the fibers of $\phi_H: H\to S_H$ is connected for all $H\in \cM_{\nm}(M)$.  
\begin{theorem}
Suppose that $\pa M$ is connected as well as $\pa Z_H$ for each $H\in \cM_{\nm}(M)$.  Suppose also that $a_H=a_{\max}$ for all $H\in\cM_{\max}(M)$, that $a_{\max}<\frac{m}{1-\nu}$ and that $(1-\nu)a_{\max}<(1-\nu_H)( a_H+m_H)$ for all $H\in \cM_{nm}(M)$.  Then the properties $(VD)_{\mu}$ and $(PI)_{\mu}$ hold on $(M\setminus \pa M,g,\mu_a)$.
\label{w.32}\end{theorem}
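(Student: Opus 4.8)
Weighted volume doubling $(VD)_\mu$ is already contained in Corollary~\ref{w.28}, so only the Poincar\'e inequality has to be established. Since $\pa M$ is connected, Corollary~\ref{w.31} reduces the task to proving the weak inequality $(PI)_{\mu,\delta}$ on \emph{remote} balls for some $\delta\in(0,1]$; one then obtains $(PI)_{\mu,\delta}$ on all balls, and the passage to $\delta=1$ is the standard self-improvement of weak Poincar\'e inequalities on a doubling metric measure space. The plan is to prove $(PI)_{\mu,\delta}$ on remote balls by induction on the depth of $(M,\phi)$.

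For the base of the induction, if $M$ has depth $1$ then every boundary hypersurface is maximal, $g$ is an $\AC$-metric with closed cross-sections, and on a remote ball $B(p,r)$ one has $\rho\asymp\rho(p)$, so $\mu_a$ is comparable to a fixed constant times the Riemannian volume and, after rescaling by $\rho(p)^{-1}$, $B(p,r)$ is uniformly bi-Lipschitz to a Euclidean ball of radius $r/\rho(p)<c$; the classical Euclidean Poincar\'e inequality then gives $(PI)_\mu$ on $B(p,r)$. The same argument handles, in arbitrary depth, every remote ball centred at a point far from all non-maximal corners in the sense of Definition~\ref{cc.1}: this is precisely the regime \eqref{w.24a} of Proposition~\ref{w.24}, where the metric is modelled near $p$ on an $\AC$-metric.

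For the inductive step, suppose $M$ has depth $\ge 2$ and $p\in M\setminus\pa M$ satisfies $x_H(p)<1-3c$ for some $H\in\cM_{\nm}(M)$, with $H$ chosen minimal (for the partial order) among such; then $x_G(p)\ge 1-3c$ for all $G<H$. As in the proof of Proposition~\ref{w.24}, for $r<c\rho(p)$ the ball $B(p,r)$ lies in the region where the local model \eqref{w.5} applies, $\rho_H\asymp\rho(p)$ on it, and, as a weighted metric measure space, $B(p,r)$ is comparable to a product
$$
 B_1(p_1,r)\times B_2\!\left(p_2,\tfrac{r}{\rho(p)^{\nu_H}}\right),
$$
where $B_1$ is a geodesic ball in the cone $C_H=\bbR^+\times S_H$ equipped with $d\rho_H^2+\rho_H^2 g_{\cU}$ and an essentially constant weight, and $B_2$ is a geodesic ball in $(Z_H,\kappa_H,\mu_{\widetilde a})$ with $\widetilde a$ given by \eqref{w.18}; that the measure respects this product structure is exactly \eqref{w.24b}. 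The factor $B_1$ is again of Euclidean type, hence supports a Poincar\'e inequality. The factor $B_2$ lives in the lower-depth $\QAC$-manifold $Z_H$, on which $\kappa_H$ is a warped $\QAC$-metric with weight function valued in $\{0,\nu'\}$ for the relevant $\nu'$; using \eqref{w.18} together with the bookkeeping already carried out in Propositions~\ref{w.17}, \ref{w.24} and \ref{w.27}, one checks that $\widetilde a$ satisfies the weight hypotheses of the theorem on $Z_H$, while the connectedness of $\pa M$ and of the $\pa Z_H$ is inherited by $Z_H$. Hence the induction hypothesis applies and gives $(VD)_{\mu_{\widetilde a}}$ and $(PI)_{\mu_{\widetilde a}}$ on \emph{all} balls of $Z_H$ --- including non-remote ones, which is essential since $r/\rho(p)^{\nu_H}$ need not be small relative to $\rho_{Z_H}(p_2)$. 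A Poincar\'e inequality on a product of balls follows from Poincar\'e inequalities on the two factors (with a possibly smaller parameter $\delta$), and transferring back through the comparison yields $(PI)_{\mu,\delta}$ on $B(p,r)$, closing the induction.

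The crux is this inductive step: one must verify that the induced weighted measure $\mu_{\widetilde a}$ on the fibre $Z_H$ still falls within the hypotheses of the theorem, so that the recursion genuinely closes, and one must keep track of the fact that the product construction and the various comparisons only deliver the weak inequality $(PI)_{\mu,\delta}$, to be upgraded afterwards. Both connectedness hypotheses in the statement enter exactly here: $\pa M$ connected feeds Corollary~\ref{w.31} at the top level, and $\pa Z_H$ connected supplies the same input at each level of the recursion.
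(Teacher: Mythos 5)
Your proposal is correct and follows essentially the same route as the paper: reduction to weak Poincaré on remote balls via Corollary~\ref{w.31} and Jerison's self-improvement, an $\AC$-type rescaling far from non-maximal corners, and, near a corner, the comparison of $B(p,r)$ with the product of balls $B_1\times B_2$ in $C_H\times Z_H$, combining the cone-factor Poincaré inequality with the full inductive $(PI)_{\mu_{\widetilde a}}$ on the lower-depth fibre $Z_H$ (the paper carries out the product combination explicitly via partial averages and Cauchy--Schwarz, and the ball comparison yields $\delta=\tfrac13$). The points you flag as the crux — that $\widetilde a$ satisfies the hypotheses on $Z_H$ and that the fibre inequality is needed on all balls, not just remote ones — are exactly the implicit ingredients of the paper's inductive step.
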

\begin{proof}
We will closely follow the proof of \cite[Theorem~5.15]{CR2021}.
By the argument of Jerison \cite{Jerison1986},  if $(VD)_{\mu}$ and $(PI)_{\mu,\delta}$ hold for all balls for some $\delta\in (0,1]$, then $(PI)_{\mu}$ also holds for all balls.  Hence, by the discussion above, it suffices to check that $(PI)_{\mu,\delta}$ holds for all remote balls.  Let $c\in (0,\frac16)$ be our remote parameter and let $B(p,r)$ be a remote ball.  If $x_G(p)\ge 1-4c$ for all $G\in \cM_{\nm}(M)$, we are in a region where the metric behaves like an $\AC$-metric, so we can apply the rescaling argument of \cite[Proposition~4.20]{DM2014} to conclude that $(PI)_{\mu}$ holds on $B(p,r)$.   In particular, proceeding by induction on the depth of $M$, we can now assume that the statement of the theorem holds for warped $\QAC$-metrics on manifolds with corners of lower depth.  

If instead $x_H(p)< 1-4c$ for some $H\in \cM_{\nm}(M)$, but $x_G(p)\ge 1-4c$ for $G<H$, then as discussed in the proof of Proposition~\ref{w.24}, we can assume that $B(p,r)$ is included in a region where $g$ is of the form \eqref{w.5}.  Regarding $B(p,r)$ as a subset of $C_H\times Z_H$ with $p$ corresponding to the point $(p_1,p_2)\in C_H\times Z_H$ where $C_H=\bbR^+\times S_H$, notice that it is contained in the product of balls
\begin{equation}
  Q(r)= B_1\times B_2:= B_1(p_1,r)\times B_2\lrp{p_2, \frac{r}{((1-c)\rho(p))^{\nu_H}}},
\label{w.33}\end{equation}
where $B_2(p_2,r)$ is a geodesic ball in $(Z_H,\kappa_H)$.  Let us first prove the uniform weighted Poincar\'e inequality on $Q(r)$ by writing  $d\mu_a= d\mu_1 d\mu_2$  with
$$
     d\mu_1= \rho^{(\nu_H-1)a_H+\nu m_H}dg_{C_H} \quad \mbox{and} \quad d\mu_2= x^{\widetilde{a}}d\kappa_H.
$$
Given a function $f$ on $Q(r)$, we define the partial averages by
$$
      \overline{f}_i:= \frac{1}{\mu_i(B_i)}\int_{B_i} fd\mu_i,  \quad i\in\{1,2\}, \quad \overline{f}_Q:= \frac{1}{\mu_a(Q(r))}\int_{Q(r)}f d\mu_{a},
$$
so that in particular $\overline{f}_Q= \overline{(\overline{f}_1)}_2= \overline{(\overline{f}_2)}_1$.  Since $Z_H$ is of lower depth, we see by induction that $(PI)_{\mu_2}$ already holds on $(Z_H\setminus \pa Z_H,\kappa_H,\mu_2)$, so there is a constant $C_2>0$ such that
\begin{equation}
\begin{aligned}
 \int_{Q(r)}|f- \overline{f}_Q|^2 d\mu_1d\mu_2 &\le 2 \int_{Q(r)} (|f-\overline{f}_2|^2+ |\overline{f}_2-\overline{f}_Q|^2)d\mu_1 d\mu_2  \\
   &\le 2 \int_{B_1} \lrp{  C_2 \lrp{\frac{r}{((1-c)\rho(p))^{\nu_H}}}^2 \int_{B_2} |d_2f|^2_{\kappa_H} d\mu_2 + \int_{B_2} |\overline{f}_2-\overline{f}_Q|^2 d\mu_2           }d\mu_1  \\
   &= 2 \int_{B_1} \lrp{  C_2r^2 \int_{B_2} |d_2 f|^2_{((1-c)\rho(p))^{\nu_H} \kappa_H}d\mu_2 + \int_{B_2} |\overline{f}_2-\overline{f}_Q|^2 d\mu_2       }d\mu_1,
 \end{aligned}
\label{w.34}\end{equation}
where $d_i$ is the exterior differential taken on the factor $B_i$.  Since $(1-c)\rho(p)\le \rho(q) \le (1+c)\rho(p)$ for $q\in Q(r)$,  the first term on the right-hand side of \eqref{w.34} is bounded by
$$
       2C_2 \lrp{\frac{1+c}{1-c}}^{2\nu_H} r^2 \int_{Q(r)} |df|^2_g d\mu_a.
$$
For the second term, since $(C_H, g_{C_H})$ is a cone, we can apply $(PI)_{\mu_1}$ on $B_1$ so that there is a constant $C_1>0$ such that
\begin{equation}
\int_{Q(r)} |\overline{f}_2-\overline{f}_Q|^2 d\mu_1 d\mu_2\le \int_{B_2} \lrp{ C_1r^2 \int_{B_1} |d_1 \overline{f}_2|^2_{g_1} d\mu_1       }d\mu_2,
\label{w.35}\end{equation}
where $g_1= g_{C_H}$.  On the other hand, using the fact that
$$
       d_1 \overline{f}_2= \frac{1}{\mu_2(B_2)}\int_{B_2} (d_1 f) d\mu_2,
$$
we deduce from the Cauchy-Schwarz inequality that 
\begin{equation}
  |d_1 \overline{f}_2|^2_{g_1}\le \frac{1}{\mu_2(B_2)}\int_{B_2} |d_1 f|^2_{g_1} d\mu_2.
\label{w.36}\end{equation}
Inserting \eqref{w.36} into \eqref{w.35}, we thus obtain that
\begin{equation}
\begin{aligned}
  \int_{Q(r)} |\overline{f}_2-\overline{f}_Q|^2 d\mu_1 d\mu_2 & \le \int_{B_2}\lrp{ C_1r^2 \int_{B_1}  \lrp{ \frac{1}{\mu_2(B_2)}\int_{B_2} |d_1 f|^2_{g_1}d\mu_2  }d\mu_1  }   d\mu_2 \\
   &\le C_1 r^2 \int_{Q(r)} |d_1 f|^2_{g_1}d\mu_a  \le C_1 r^2 \int_{Q(r)} |df|^2_g d\mu_a,
\end{aligned}
\label{w.37}\end{equation}
showing that $(PI)_{\mu_a}$ holds on $Q(r)$.  For $B(p,r)$ now, notice that we have the sequence of inclusions
\begin{equation}
  B(p,r) \subset Q(r) \subset B\lrp{p, r + \lrp{\frac{1+c}{1-c}}^{\nu_H}r }\subset B(p,3r)
\label{w.38}\end{equation}
for $c\in (0,\frac16)$ sufficiently small, in which case
\begin{equation}
\begin{aligned}
\int_{B(p,r)} |f-\overline{f}_{B(p,r)}|^2d\mu_a &= \inf_c \int_{B(p,r)} |f-c|^{2}d\mu_a  \le \int_{B(p,r)} |f-\overline{f}_Q|^2 d\mu_{a} \le \int_{Q(r)} |f-\overline{f}_Q|^2 d\mu_a \\
  &\le Cr^2 \int_{Q(r)} |df|^2_g d\mu, \quad \mbox{by $(PI)_{\mu_a}$ on $Q(r)$,} \\
  &\le Cr^2 \int_{B(p,\delta^{-1}r)} |df|^2_g d\mu_a, \quad \mbox{with} \; \delta=\frac13.
\end{aligned}
\label{w.39}\end{equation}

\end{proof}
By the results of \cite{GSC2005}, this gives the following bound on the heat kernel.  
\begin{corollary}
Let $g$ be a warped $\QAC$-metric on the interior of a manifold with fibered corners $M$.  Suppose that $\pa M$ is connected as well as $\pa Z_H$ for each $H\in \cM_{\nm}(M)$.  Suppose also that $a_H=a_{\max}$ for all $H\in\cM_{\max}(M)$, that $a_{\max}<\frac{m}{1-\nu}$ and that $(1-\nu)a_{\max}< (1-\nu_H)(a_H+m_H)$ for all $H\in \cM_{nm}(M)$.  Then the heat kernel $H_{\cL}$ of $\cL=-\Delta+ \cR$ with $\cR\ge V:= \frac{\Delta(x^{\frac{a}2})}{x^{\frac{a}2}}$ satisfies the estimate 
$$
    |H_{\cL}(t,z,z')|\le H_{-\Delta+V}(t,z,z')\preceq \frac{x^{\frac{a}2}(z)x^{\frac{a}2}(z')e^{-\frac{cd(z,z')^2}t}}{  \lrp{ \mu_a(B(z,\sqrt{t}))\mu_a(B(z',\sqrt{t}))   }^{\frac12}   }
$$ 
for a positive constant $c$, where $f\preceq g$ if there exists a positive constant $C$ such that $f\le Cg$.
\label{w.40}\end{corollary}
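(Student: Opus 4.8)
The plan is to deduce this from Theorem~\ref{w.32} together with the abstract heat-kernel comparison machinery assembled earlier in this section. Set $h:= x^{\frac{a}{2}}= \prod_{H\in\cM_1(M)} x_H^{a_H/2}$, a smooth positive function on $M\setminus \pa M$, so that $d\mu_a= h^2\,dg$ and $(M\setminus\pa M, g,\mu_a)$ is a complete (by Proposition~\ref{w.2}) weighted Riemannian manifold of exactly the type considered in Lemma~\ref{w.10}, Theorem~\ref{w.12} and Corollary~\ref{w.13}, with $V=\frac{\Delta h}{h}= \frac{\Delta(x^{a/2})}{x^{a/2}}$.

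First I would observe that the hypotheses of the corollary --- that $\pa M$ and each $\pa Z_H$, $H\in\cM_{\nm}(M)$, are connected, that $a_H=a_{\max}$ on $\cM_{\max}(M)$, that $a_{\max}<\frac{m}{1-\nu}$, and that $(1-\nu)a_{\max}<(1-\nu_H)(a_H+m_H)$ for $H\in\cM_{\nm}(M)$ --- are precisely those of Theorem~\ref{w.32}. Applying that theorem gives $(VD)_{\mu_a}$ and $(PI)_{\mu_a}$ on $(M\setminus\pa M,g,\mu_a)$.

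Next, with $(VD)_{\mu_a}$ and $(PI)_{\mu_a}$ available, Corollary~\ref{w.13} --- which is Theorem~\ref{w.12} of \cite{GSC2005} passed through the comparison Lemma~\ref{w.10} --- applies directly and yields a positive constant $c$ with
\[
H_{-\Delta+V}(t,z,z')\preceq h(z)h(z')\bigl(\mu_a(B(z,\sqrt t))\,\mu_a(B(z',\sqrt t))\bigr)^{-\frac12} e^{-c\frac{d(z,z')^2}{t}}.
\]
Since by assumption $\cR\ge V$, Lemma~\ref{w.10} gives $|H_{\cL}(t,z,z')|\le H_{-\Delta+V}(t,z,z')$, and substituting $h=x^{a/2}$ into the displayed estimate produces exactly the claimed bound.

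The steps above are essentially bookkeeping; all of the analytic substance has already been absorbed into Theorem~\ref{w.32} (and, behind it, Corollaries~\ref{w.28}, \ref{w.29} and \ref{w.31}, which handle volume doubling, the comparison of anchored and remote ball volumes, and the reduction of the Poincar\'e inequality to remote balls). The only small points that deserve a line of verification are that the weight conditions of the corollary match those of Theorem~\ref{w.32} verbatim, and that the abstract data $(h,\mu,V)$ of the Grigor'yan--Saloff-Coste framework are being specialized correctly as $h=x^{a/2}$, $\mu=\mu_a=h^2\,dg$, $V=\Delta h/h$. I do not expect any genuine obstacle; the one place robustness would be lost is the $(RCA)$ argument inside Theorem~\ref{w.32}, which requires the connectedness hypotheses and without which the clean Gaussian upper bound would have to be weakened.
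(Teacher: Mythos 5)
Your proposal is correct and follows essentially the same route as the paper: Theorem~\ref{w.32} supplies $(VD)_{\mu_a}$ and $(PI)_{\mu_a}$ under exactly the stated weight and connectedness hypotheses, and then Corollary~\ref{w.13} together with Lemma~\ref{w.10}, specialized to $h=x^{a/2}$, $\mu=\mu_a$, $V=\Delta h/h$, gives the Gaussian upper bound and the comparison $|H_{\cL}|\le H_{-\Delta+V}$. Nothing further is needed.
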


This implies in particular the Sobolev inequality for warped $\QAC$-metrics.

\begin{corollary}[Sobolev inequality]
If $g$ is a warped $\QAC$-metric as in Corollary~\ref{w.40}, then there is a constant $C_S>0$ such that 
\begin{equation}
   \lrp{ \int_{M\setminus \pa M} |u|^{\frac{m}{\frac{m}2-1}}dg}^{1-\frac{2}m}\le C_S\int |df|^2_g dg, \quad \forall u\in \cC^{\infty}_c(M\setminus \pa M).
\label{w.41a}\end{equation}
\label{w.41}\end{corollary}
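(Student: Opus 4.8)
The plan is to combine the on-diagonal heat kernel bound of Corollary~\ref{w.40} with a uniform two-sided estimate for the volume of balls, and then to invoke the classical equivalence between ultracontractivity of the heat semigroup and the Sobolev inequality.

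First I would apply Corollary~\ref{w.40} with the trivial weight $a\equiv 0$. Then $x^{a/2}\equiv 1$, so $V=\frac{\Delta 1}{1}=0$, and taking $\cR=0$ we get $\cL=-\Delta$ and $\mu_a=dg$. The hypotheses of Corollary~\ref{w.40} are met in this case: $a_H=a_{\max}=0$ for $H\in\cM_{\max}(M)$, $a_{\max}=0<\frac{m}{1-\nu}$, and for each non-maximal $H$ we have $(1-\nu)a_{\max}=0<(1-\nu_H)m_H=(1-\nu_H)(a_H+m_H)$, since the fibres of $\phi_H\colon H\to S_H$ are positive dimensional and $\nu_H<1$. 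Hence there is $c>0$ with
$$
   H_{-\Delta}(t,z,z)\preceq \frac{1}{\vol(B(z,\sqrt t))}\qquad\text{for all }t>0,\ z\in M\setminus\pa M,
$$
where $H_{-\Delta}$ is the heat kernel of the Laplace--Beltrami operator of $g$, which is well defined since $g$ is complete by Proposition~\ref{w.2}.

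Second, I claim $\vol(B(z,r))\asymp r^m$ uniformly in $z\in M\setminus\pa M$ and $r>0$. For $r\ge c\rho(z)$ this is exactly Proposition~\ref{w.27} with $a\equiv 0$. For a remote ball, $r<c\rho(z)$, it follows from Proposition~\ref{w.24} with $a\equiv 0$: if $z$ is far from all non-maximal corners, \eqref{w.24a} gives $\vol(B(z,r))\asymp\rho(z)^{0}r^m=r^m$ directly; if instead $x_H(z)<1-3c$ for some $H\in\cM_{\nm}(M)$ (and $x_G(z)\ge 1-3c$ for $G<H$), then \eqref{w.24b} reads $\vol(B(z,r))\asymp\rho(z)^{\nu_H m_H}r^{b_H+1}\cV(z_{Z_H},r/\rho(z)^{\nu_H};\widetilde a)$ with $\widetilde a\equiv 0$ by \eqref{w.18}, and since $\dim Z_H=m_H<\dim M$ we may assume by induction on the depth of $M$ that $\cV(z_{Z_H},s;0)\asymp s^{m_H}$, whence $\vol(B(z,r))\asymp r^{b_H+1+m_H}=r^m$ because $b_H+1+m_H=\dim S_H+1+\dim Z_H=\dim H+1=m$. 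Substituting $\vol(B(z,\sqrt t))\asymp t^{m/2}$ into the previous display gives the uniform bound
$$
   H_{-\Delta}(t,z,z)\preceq t^{-m/2}\qquad\text{for all }t>0,\ z\in M\setminus\pa M.
$$

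Finally, since $m=\dim M>2$, this uniform on-diagonal estimate is equivalent to ultracontractivity of the heat semigroup of $g$, namely $\|e^{t\Delta}\|_{L^1(dg)\to L^\infty(dg)}\preceq t^{-m/2}$ for all $t>0$ (using $\|e^{t\Delta}\|_{1\to\infty}=\sup_z H_{-\Delta}(t,z,z)$), and this in turn is equivalent to the Sobolev inequality
$$
   \left(\int_{M\setminus\pa M}|u|^{\frac{m}{\frac m2-1}}\,dg\right)^{1-\frac{2}m}\le C_S\int_{M\setminus\pa M}|du|^2_g\,dg,\qquad u\in\cC^\infty_c(M\setminus\pa M),
$$
by the classical Nash--Sobolev--ultracontractivity equivalence (Varopoulos; see also the monographs of Davies and of Saloff-Coste). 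This is \eqref{w.41a}, completing the proof. The only genuinely delicate point is the uniform two-sided volume estimate $\vol(B(z,r))\asymp r^m$ valid at all scales and all base points, which is where the inductive use of Propositions~\ref{w.24} and~\ref{w.27} enters; everything else is a black-box application of the heat-kernel machinery already assembled.
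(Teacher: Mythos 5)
Your proof is correct and takes essentially the same route as the paper: the paper's proof simply applies Corollary~\ref{w.40} with $a=0$, $V=0$, $\cR=0$ and cites the well-known equivalence between the resulting heat-kernel bound and the Sobolev inequality. The only difference is that you spell out the uniform volume estimate $\vol(B(z,r))\asymp r^m$ (via Propositions~\ref{w.24} and \ref{w.27} with $a=0$) needed to convert the Gaussian bound into the on-diagonal bound $t^{-m/2}$ before invoking the ultracontractivity--Sobolev equivalence, a step the paper leaves implicit in its reference.
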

\begin{proof}
When $a=0$ and $V=0$, we can take $\cR=0$ in Corollary~\ref{w.40}, yielding a Gaussian bound for $H_{-\Delta}$, which is well-known to be equivalent to the Sobolev inequality \eqref{w.41a}; see for instance \cite{Grigoryan}.
\end{proof}

Through Lemma~\ref{w.10}, \eqref{w.9} and \cite[Theorem~3.22]{DM2014}, Corollary~\ref{w.40} yields a corresponding estimate for the Green's function, namely
\begin{equation}
  |G_{\cL}(z,z'|\le G_{-\Delta+V}(z,z')\le G_{-\Delta+V}(z,z') \asymp x(z)^{\frac{a}2}x(z')^{\frac{a}2}\int_{d(z,z')}^{\infty} \frac{sds}{\sqrt{\cV(z,s;a)\cV(z',s;a)}}.
\label{w.42}\end{equation}
This is the estimate, together with our estimates on the volume of balls, that will allow us to determine spaces on which the operator $\cL= -\Delta+\cR$ can be inverted.  In fact, what we really need is to estimate the integral
$$
  \int_{d(z,z')}^{\infty} \frac{sds}{\sqrt{\cV(z,s;a)\cV(z',s;a)}}.
$$
Since $s\ge d(z,z')$, we can use the volume doubling property as in \cite[\S~5.2]{DM2014} to conclude that this integral is equivalent to the slightly simpler integral
$$
    \bbI(z,z')= \int_{d(z,z')}^{\infty} \frac{sds}{\cV(z,s;a)}.
$$
Using Propositions~\ref{w.27} and \ref{w.30}, this simpler integral can be estimated as follows.
\begin{lemma}
Let $M$, $g$ and $a$ be as in Corollary~\ref{w.40} and let $c\in(0,\frac15)$ be a choice of remote parameter.  Suppose also that $(1-\nu)a_{\max}<m-2$ and $m>2$.  Let $z,z'\in M\setminus \pa M$ be given.  If $d(z,z')>c\rho(z)$, then 
$$
     \bbI(z,z')\asymp d(z,z')^{2-m-(\nu-1)a_{\max}}.
$$
If instead $d(z,z')<c\rho(z)$ and $z$ is far from all non-maximal corners, then 
$$
   \bbI(z,z') \asymp \rho(z)^{-(\nu-1)a_{\max}} d(z,z')^{2-m}.
$$
Finally, if  $d(z,z')<c\rho(z)$ and $z$ is close to the non-maximal corner $H_1\cap \ldots \cap H_k$ with $\frac{c\sigma^{\nu_{i+1}(z)}}{v_{i+1}(z)}\le d(z,z')\le \frac{\sigma^{\nu_i}(z)}{v_i(z)}$ for some $i\in \{1,\ldots,k+1\}$ (using the conventions of Proposition~\ref{w.30}), then
$$
\bbI(z,z')\asymp \lrp{ \prod_{j=1}^{i-1} x_j(z)^{-a_j}  } \widetilde{v}_i(z)^{-a_i} \sigma(z)^{\nu_i(a_i-a_{\max})} d(z,z')^{2-m+a_{\max}-a_i-(\nu-\nu_i)a_{\max}}.
$$
\label{w.43}\end{lemma}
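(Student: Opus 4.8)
The plan is to estimate the integral
\[
\bbI(z,z')=\int_{d(z,z')}^{\infty}\frac{s\,ds}{\cV(z,s;a)}
\]
directly, by partitioning its range at the finitely many scales where the warped $\QAC$ volume estimate changes form, and then applying Propositions~\ref{w.27} and \ref{w.30} piece by piece; the reduction of the Green's-function integral to $\bbI(z,z')$ via the volume doubling property (Corollary~\ref{w.28}, as in \cite[\S~5.2]{DM2014}) has already been recorded above. Throughout, the inequalities $m>2$ and $(1-\nu)a_{\max}<m-2$ — together with the hypothesis $(1-\nu)a_{\max}<(1-\nu_H)(a_H+m_H)$ on non-maximal $H$ — are precisely what guarantee that each sub-integral converges at its outer end and is comparable, up to constants, to its value at its inner end.

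When $d(z,z')>c\rho(z)$, every ball $B(z,s)$ with $s\ge d(z,z')$ is non-remote, so Proposition~\ref{w.27} gives $\cV(z,s;a)\asymp s^{m+(\nu-1)a_{\max}}$ and hence $\bbI(z,z')\asymp\int_{d(z,z')}^{\infty}s^{1-m-(\nu-1)a_{\max}}\,ds\asymp d(z,z')^{2-m-(\nu-1)a_{\max}}$, the convergence and the domination by the lower limit being exactly the content of $(1-\nu)a_{\max}<m-2$. When instead $d(z,z')<c\rho(z)$ and $z$ is far from all non-maximal corners, I would split $\bbI(z,z')$ at $s=c\rho(z)$: on $[c\rho(z),\infty)$ the previous estimate applies, while on $[d(z,z'),c\rho(z)]$ the ball is remote and Proposition~\ref{w.30} with $k=0$ — equivalently \eqref{w.24a} — gives $\cV(z,s;a)\asymp\rho(z)^{(\nu-1)a_{\max}}s^{m}$, so that piece contributes $\asymp\rho(z)^{-(\nu-1)a_{\max}}d(z,z')^{2-m}$ since $m>2$. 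As $2-m<0$ and $d(z,z')<c\rho(z)$, the inner piece dominates the outer one, giving $\bbI(z,z')\asymp\rho(z)^{-(\nu-1)a_{\max}}d(z,z')^{2-m}$.

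The substantive case is $d(z,z')<c\rho(z)$ with $z$ close to the non-maximal corner $H_1\cap\cdots\cap H_k$, so that $r_{i+1}\le d(z,z')\le r_i$ for the scales $r_j:=c\,\sigma^{\nu_j}(z)\,\widetilde v_j(z)^{-1}$ with the conventions $r_{k+1}=0$, $\widetilde v_{k+1}=x_{\max}$, $\nu_{k+1}=\nu$ of Proposition~\ref{w.30}. Here I would partition $[d(z,z'),\infty)$ as $[d(z,z'),r_i]$, the shells $[r_{j+1},r_j]$ for $j=i-1,\dots,1$, and the tail $[r_1,\infty)$; note that $r_1\asymp c\rho(z)$, because the monotonicity $\nu_1\le\cdots\le\nu_k$ forced by \eqref{wqb.1b} makes $\sigma\asymp\rho$ near this corner when $\nu_1=\nu$ and forces $r_1=cx_{\max}^{-1/(1-\nu)}=c\rho(z)$ when $\nu_1=0$, so the tail is governed by Proposition~\ref{w.27}. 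On the shell $[r_{j+1},r_j]$ Proposition~\ref{w.30} with index $j$ gives $\cV(z,s;a)\asymp C_j(z)\,s^{d_j}$ with $d_j=m-(a_{\max}-a_j)+(\nu-\nu_j)a_{\max}$ and prefactor $C_j(z)=\bigl(\prod_{l<j}x_l^{a_l}(z)\bigr)\widetilde v_j^{a_j}(z)\,\sigma^{\nu_j(a_{\max}-a_j)}(z)$; these estimates for consecutive indices agree at the common threshold $r_j$, and that with index $1$ matches Proposition~\ref{w.27} at $r_1$, so $\cV(z,\cdot\,;a)$ is, up to constants, a continuous increasing function whose local growth exponent is $m$ on the innermost shell and $m-(1-\nu)a_{\max}$ on the tail. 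Combining $(1-\nu)a_{\max}<m-2$ with $(1-\nu)a_{\max}<(1-\nu_{H_j})(a_{H_j}+m_{H_j})$ shows $d_j>2$ on every shell, so $s\mapsto s^{2}/\cV(z,s;a)$ is decreasing up to constants on $[d(z,z'),\infty)$; therefore $\bbI(z,z')$ is comparable to the contribution of the shell $[r_{i+1},r_i]$ evaluated at its inner endpoint $d(z,z')$, \ie to $d(z,z')^{2}/\cV(z,d(z,z');a)$. Substituting the formula of Proposition~\ref{w.30} for $\cV(z,d(z,z');a)$ and simplifying the exponent yields the claimed expression.

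I expect the main obstacle to be the bookkeeping in this last case: one must track the prefactors $C_j(z)$ and the thresholds $r_j$ carefully enough to see that the contributions of all the outer shells and of the non-remote tail are strictly subdominant compared to the innermost one — which amounts to checking the chain of inequalities $d_j>2$ for $j=1,\dots,k+1$ together with the matching of the volume estimates across thresholds — and then to verify that $C_i(z)^{-1}d(z,z')^{2-d_i}$ collapses to exactly $\bigl(\prod_{j<i}x_j(z)^{-a_j}\bigr)\widetilde v_i(z)^{-a_i}\sigma(z)^{\nu_i(a_i-a_{\max})}d(z,z')^{2-m+a_{\max}-a_i-(\nu-\nu_i)a_{\max}}$. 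The genuinely routine steps are the reduction to $\bbI(z,z')$ and the convergence of the various displayed integrals, both of which rest only on $m>2$ and $(1-\nu)a_{\max}<m-2$.
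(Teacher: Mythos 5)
Your argument is exactly the computation the paper intends but never writes out: Lemma~\ref{w.43} is stated as a direct consequence of Propositions~\ref{w.27} and \ref{w.30}, and your shell-by-shell partition of $[d(z,z'),\infty)$ at the thresholds $r_j=c\sigma^{\nu_j}(z)/\widetilde v_j(z)$, with the matching of the volume estimates across thresholds and the domination of the innermost contribution, is precisely that intended proof, including the correct simplification of the prefactor and exponent in the third case. One small point to make explicit: your claim that $d_j>2$ on every shell follows from $(1-\nu)a_{\max}<m-2$ together with $(1-\nu)a_{\max}<(1-\nu_{H_j})(a_{H_j}+m_{H_j})$ in fact yields $d_j-2>m-2-m_{H_j}$, so it additionally uses $m_{H_j}\le m-2$ (equivalently $\dim S_{H_j}\ge 1$), which holds throughout the paper's setting and is implicitly required for the lemma itself, but is worth stating.
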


In terms of \eqref{w.42}, this gives the following estimate.

\begin{proposition}
Let $M$, $g$ and $a$ be as in Lemma~\ref{w.43} and let $c\in(0,\frac15)$ be a choice of remote parameter.  Let $z,z'\in M\setminus \pa M$ be given.  If $d(z,z')>c\rho(z)$, then 
\begin{equation}
     G_{-\Delta+V}(z,z')\asymp x(z)^{\frac{a}2} x(z')^{\frac{a}2}  d(z,z')^{2-m-(\nu-1)a_{\max}}.
\label{w.44a}\end{equation}
If instead $d(z,z')<c\rho(z)$ and $z$ is far from all non-maximal corners, then 
\begin{equation}
   G_{-\Delta+V}(z,z') \asymp x(z)^{\frac{a}2} x(z')^{\frac{a}2} \rho(z)^{-(\nu-1)a_{\max}} d(z,z')^{2-m}.
\label{w.44b}\end{equation}
Finally, if  $d(z,z')<c\rho(z)$ and $z$ is close to the non-maximal corner $H_1\cap \ldots \cap H_k$ with $\frac{c\sigma^{\nu_{i+1}}(z)}{v_{i+1}(z)}\le d(z,z')\le \frac{c\sigma^{\nu_i}(z)}{v_i(z)}$ for some $i\in \{1,\ldots,k+1\}$ (using the conventions of Proposition~\ref{w.30}), then
\begin{equation}
G_{-\Delta+V}(z,z')\asymp  x(z)^{\frac{a}2} x(z')^{\frac{a}2}  \lrp{ \prod_{j=1}^{i-1} x_j(z)^{-a_j}  } \widetilde{v}_i(z)^{-a_i} \sigma(z)^{\nu_i(a_i-a_{\max})} d(z,z')^{2-m+a_{\max}-a_i-(\nu-\nu_i)a_{\max}}.
\label{w.44c}\end{equation}

\label{w.44}\end{proposition}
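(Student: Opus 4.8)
The plan is to read off Proposition~\ref{w.44} from the Green's function bound \eqref{w.42} together with the integral estimate of Lemma~\ref{w.43}, so that the argument is essentially a reduction followed by bookkeeping. The estimate \eqref{w.42} already gives
$G_{-\Delta+V}(z,z')\asymp x(z)^{\frac a2}x(z')^{\frac a2}\int_{d(z,z')}^{\infty} s\,ds\big/\sqrt{\cV(z,s;a)\cV(z',s;a)}$,
so it suffices to show that the symmetric integral on the right is comparable to $\bbI(z,z')=\int_{d(z,z')}^{\infty} s\,ds/\cV(z,s;a)$.

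First I would dispose of this symmetrization. Whenever $s\ge d(z,z')$ one has $B(z',s)\subset B(z,2s)$ and $B(z,s)\subset B(z',2s)$, so the volume doubling property $(VD)_{\mu}$ of Corollary~\ref{w.28} — whose hypotheses $a_H=a_{\max}$ on $\cM_{\max}(M)$, $a_{\max}<\frac{m}{1-\nu}$ and $(1-\nu)a_{\max}<(1-\nu_H)(a_H+m_H)$ are all in force here — yields $\cV(z,s;a)\asymp \cV(z',s;a)$ uniformly over the range of integration. Hence $\sqrt{\cV(z,s;a)\cV(z',s;a)}\asymp \cV(z,s;a)$ there, and the two integrals agree up to a constant, exactly as in \cite[\S~5.2]{DM2014}. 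Together with \eqref{w.42} this reduces Proposition~\ref{w.44} to the identity $G_{-\Delta+V}(z,z')\asymp x(z)^{\frac a2}x(z')^{\frac a2}\,\bbI(z,z')$.

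It then remains only to insert the three estimates for $\bbI(z,z')$ from Lemma~\ref{w.43}, whose standing hypotheses $(1-\nu)a_{\max}<m-2$ and $m>2$ are precisely those imposed in Proposition~\ref{w.44}: the regime $d(z,z')>c\rho(z)$ gives \eqref{w.44a}, the regime $d(z,z')<c\rho(z)$ with $z$ far from all non-maximal corners gives \eqref{w.44b}, and the regime $d(z,z')<c\rho(z)$ with $z$ close to the non-maximal corner $H_1\cap\cdots\cap H_k$ and $\frac{c\sigma^{\nu_{i+1}}(z)}{v_{i+1}(z)}\le d(z,z')\le\frac{c\sigma^{\nu_i}(z)}{v_i(z)}$ gives \eqref{w.44c}. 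The one point I would be careful about is that these three cases exhaust all pairs $(z,z')$ and that the right-hand sides, although written using only the coordinates of $z$, are symmetric in $z$ and $z'$ up to multiplicative constants: when $d(z,z')>c\rho(z)$ the triangle inequality for $\rho$ forces $\rho(z')\lesssim d(z,z')$ as well, so $z$ and $z'$ play symmetric roles; and when $d(z,z')<c\rho(z)$ one has $\rho(z')\asymp\rho(z)$ and $x_H(z')\asymp x_H(z)$ for every $H\in\cM_1(M)$, so $z'$ lies in the same region as $z$ and the factors $\prod_{j} x_j^{-a_j}$, $\widetilde{v}_i^{-a_i}$, $\sigma^{\nu_i(a_i-a_{\max})}$ are unchanged up to constants when evaluated at $z'$.

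Since the genuine analytic input — the ball-volume estimates of Propositions~\ref{w.27} and \ref{w.30} and their assembly into Lemma~\ref{w.43} — is already available, I do not expect a serious obstacle here; the only mildly delicate step is the volume doubling reduction of the symmetric integral to $\bbI(z,z')$, which is why I would isolate it and carry it out first.
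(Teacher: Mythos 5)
Your proposal is correct and follows exactly the route the paper takes: the paper states Proposition~\ref{w.44} as an immediate consequence of the Green's function bound \eqref{w.42} and Lemma~\ref{w.43}, with the reduction of the symmetric integral to $\bbI(z,z')$ via volume doubling (as in \cite[\S~5.2]{DM2014}) already noted in the text preceding the lemma. Your extra remarks on the comparability $\cV(z,s;a)\asymp\cV(z',s;a)$ for $s\ge d(z,z')$ and on the symmetry of the regimes simply make explicit what the paper leaves implicit.
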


As in \cite{DM2014}, we can extract mapping properties from these estimates using the Schur test.  This amounts to obtaining the following estimates.
\begin{proposition}
Let $M$, $g$ and $a$ be as in Lemma~\ref{w.43}.  Let $b$ be a multiweight with $b_H=b_{\max}$ for all $H\in \cM_{\max}(M)$.  Suppose that 
$$
       \frac{2}{1-\nu}+ \frac{a_{\max}}{2} < b_{\max} < \frac{m}{1-\nu} -\frac{a_{\max}}2
$$
and that
$$
           \lrp{\frac{1-\nu}{1-\nu_H} } \lrp{\frac{a_{\max}}2+ b_{\max}} -\frac{a_H}2-m_H< b_H < \frac{a_H}2+ \lrp{\frac{1-\nu}{1-\nu_H}}\lrp{b_{\max}-\frac{a_{\max}}2}-2
$$
for all $H\in\cM_{\nm}(M)$.  Then
\begin{equation}
   \int_{M\setminus \pa M} G_{-\Delta+V}(z,z') x(z')^{b}dg(z') \preceq x(z)^b(\rho(z)w(z))^2,
\label{w.45a}\end{equation}
where $w= \prod_{H\in \cM_{\nm}(M)} x_H$.
\label{w.45}\end{proposition}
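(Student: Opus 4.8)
The plan is to integrate the pointwise Green's function bounds of Proposition~\ref{w.44} against $x^b\,dg$, organizing the integral over $z'$ according to the three regimes of that proposition and proceeding by induction on the depth of $M$; the base case of depth one is the asymptotically conical setting treated in \cite[\S~5]{DM2014}. Fix $z\in M\setminus\pa M$; by Proposition~\ref{w.44} the dependence of $G_{-\Delta+V}(z,z')$ on $z'$ enters only through $x(z')^{a/2}$ and a negative power of $d(z,z')$ (negative since $(1-\nu)a_{\max}<m-2$). Assume $z$ is close to a non-maximal corner $H_1<\cdots<H_k$ (the case $k=0$, with $z$ far from all non-maximal corners, is identical with the obvious conventions). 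Write
$$
\int_{M\setminus\pa M} G_{-\Delta+V}(z,z')\,x(z')^b\,dg(z') = I_{\mathrm{far}}(z)+\sum_{i=1}^{k+1} I_i(z),
$$
where $I_{\mathrm{far}}$ integrates over $\{d(z,z')>c\rho(z)\}$ and $I_i$ over the region where $\frac{c\sigma^{\nu_{i+1}}(z)}{\widetilde{v}_{i+1}(z)}\le d(z,z')\le\frac{c\sigma^{\nu_i}(z)}{\widetilde{v}_i(z)}$, with the conventions of Proposition~\ref{w.30}; since $\frac{c\sigma^{\nu_1}(z)}{\widetilde{v}_1(z)}\asymp c\rho(z)$, these regions cover $M\setminus\pa M$ up to overlaps at $d(z,z')\asymp c\rho(z)$ that are harmless for a $\preceq$-estimate.

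For $I_{\mathrm{far}}$ I would decompose $\{d(z,z')>c\rho(z)\}$ dyadically according to $d(z,z')\sim R$, $R\gtrsim\rho(z)$; on such a shell $\int x(z')^{a/2+b}\,dg(z')$ is at most $\cA(3R;a/2+b)\asymp R^{m+(\nu-1)(a_{\max}/2+b_{\max})}$ by Corollary~\ref{w.20}, with no logarithmic factor. This use of Corollary~\ref{w.20} with the multiweight $a/2+b$ requires exactly $a_{\max}/2+b_{\max}<\frac{m}{1-\nu}$ and $(1-\nu)(a_{\max}/2+b_{\max})<(1-\nu_H)(a_H/2+b_H+m_H)$ for $H\in\cM_{\nm}(M)$, which are precisely the upper bound on $b_{\max}$ and the lower bounds on the $b_H$ in the statement (the remaining hypothesis $(a/2+b)_H=(a/2+b)_{\max}$ on maximal $H$ holding since $a_H=a_{\max}$ and $b_H=b_{\max}$ there). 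Combining this with \eqref{w.44a}, the radial series $\sum_R R^{2+(1-\nu)(a_{\max}/2-b_{\max})}$ converges precisely because $b_{\max}>\frac{2}{1-\nu}+\frac{a_{\max}}{2}$, and sums to $\asymp\rho(z)^{2+(1-\nu)(a_{\max}/2-b_{\max})}$. Multiplying by the surviving factor $x(z)^{a/2}$ and rewriting $\rho(z)$, $\sigma(z)$ and $x_{\max}(z)$ in terms of the $x_H(z)$, a short exponent count—in which the upper bounds on the $b_H$ reappear—gives $I_{\mathrm{far}}(z)\preceq x(z)^b(\rho(z)w(z))^2$.

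For the $I_i$ I would likewise slice each annulus dyadically in $d(z,z')\sim s$, estimate the $\mu_{a/2+b}$-measure of each slice by Proposition~\ref{w.30} with the multiweight $a/2+b$ (legitimate under the same bounds on $b_{\max}$ and the $b_H$), and, for the $Z_{H_i}$-directions within a slice, invoke the inductive hypothesis on the lower-depth fibers $Z_{H_i}$ with the rescaled weights $\widetilde{a},\widetilde{b}$ of \eqref{w.18}. Via \eqref{w.44c}, $I_i(z)$ then reduces to $x(z)^{a/2}$ times explicit powers of the $x_j(z)$, $\widetilde{v}_i(z)$ and $\sigma(z)$, times a one-dimensional integral $\int s^{E_i}\,ds$ over the annulus; a direct computation shows the upper bound on $b_{H_i}$ is equivalent to $E_i<-1$ (using that $\frac{1-\nu}{1-\nu_{H_i}}=1-\nu+\nu_{H_i}$ when $\nu_{H_i}\in\{0,\nu\}$), so for $i\le k$ the inner endpoint of the $s$-integral dominates, while for $i=k+1$ one has $E_{k+1}=1$ and the bookkeeping yields $I_{k+1}(z)\asymp x(z)^b(\rho(z)w(z))^2$ exactly, the remaining $I_i$ being then dominated by $I_{k+1}$. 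Summing the finitely many pieces gives \eqref{w.45a}.

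The step I expect to be the main obstacle is the exponent bookkeeping through the induction: one must verify that, after all cancellations, the two target exponents—the power $b_H$ of each $x_H(z)$ and the exponent $2$ of $\rho(z)w(z)$—come out exactly, and that the stated strict inequalities are precisely what rule out logarithmic corrections in the volume asymptotics and fix both the convergence and the dominant endpoint of each one-dimensional integral. As in the proof of Proposition~\ref{w.30}, the cases $\nu_{H_i}=0$ and $\nu_{H_i}=\nu$ must be carried separately throughout.
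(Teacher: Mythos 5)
Your proposal is correct and follows essentially the same route as the paper: both integrate the pointwise bounds of Proposition~\ref{w.44} against $x^b\,dg$, splitting into a far region (handled with the anchored-volume estimate of Corollary~\ref{w.20} for the shifted multiweight $\tfrac{a}{2}+b$, where $b_{\max}>\tfrac{2}{1-\nu}+\tfrac{a_{\max}}{2}$ gives convergence) and the annuli $B_i\setminus B_{i+1}$ of the maximal remote ball (handled with Proposition~\ref{w.30} for $\tfrac{a}{2}+b$, the upper bounds on the $b_H$ producing the negative exponents that make the inner endpoint dominate), which is exactly the paper's decomposition into $\cD_1$, $\cD_2$, $\cD_3$. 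The only differences are cosmetic: you merge $\cD_1$ and $\cD_2$ into a single dyadic sum in $d(z,z')$, and where you assert that the remaining $I_i$ are ``dominated by $I_{k+1}$'', the paper instead bounds each $I_i$ directly by $x(z)^b(\rho(z)w(z))^2$ via the endpoint estimate followed by the exponent bookkeeping with \eqref{w.48}, carried out separately for $\nu_i=\nu$ and $\nu_i=0$ --- precisely the verification you correctly single out as the remaining work.
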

\begin{proof}
We follow the strategy of the proof of \cite[Lemma~6.5]{DM2014}.  Thus, fix a remote parameter $c\in (0,\frac15)$ and decompose the region of integration into three subdomains: $\cD_1:=(M\setminus \pa M)\setminus B(o,2\rho(z))$, $\cD_2:=B(o,2\rho(z))\setminus B(z,c\rho(z))$ and $\cD_3:=B(z,c\rho(z))$, the latter being a maximal remote ball.  On the first region, $d(z,z')\asymp \rho(z')$, so by \eqref{w.44a},
\begin{equation}
\begin{aligned}
\int_{\cD_1} G_{-\Delta+V}(z,z')x(z')^b dg(z') &\preceq x(z)^{\frac{a}2} \int_{\cD_1} x(z')^{\frac{a}2+b}d(z,z')^{2-m-(\nu-1)a_{\max}}dg(z') \\
  &\preceq x(z)^{\frac{a}2}\int_{\cD_1} x(z')^{\frac{a}2+b} \rho(z')^{2-m-(\nu-1)a_{\max}}dg(z') \\
   & \preceq x(z)^{\frac{a}2} \int_{2\rho(z)}^{\infty} d\cA\lrp{s; \frac{a}2+b+ \frac{m-2-(1-\nu)a_{\max}}{1-\mathfrak{n}}}  \\
   & \preceq x(z)^{\frac{a}2} \int_{2\rho(z)}^{\infty} s^{m-1} \lrp{ \sum_{H\in \cM_1(M)} s^{(\nu_H-1)(\frac{a_H}2+b_H+ \frac{m-2-(1-\nu)a_{\max}}{1-\nu_H}+ m_H)} }ds,
\end{aligned}
\label{w.46}\end{equation}
where in the last line we have used Proposition~\ref{w.17}.  By our assumption on $b$, we see that 
$$
    m-1+ (\nu-1)\lrp{b_{\max}+ \frac{m-2}{1-\nu}-\frac{a_{\max}}2}<-1
$$
and
$$
      m-1+ (\nu_H-1)\lrp{ \frac{a_H}2+ b_H+ \frac{m-2-(1-\nu)a_{\max}}{1-\nu_H}+m_H  }< m-1+ (\nu-1)\lrp{b_{\max}+ \frac{m-2}{1-\nu}-\frac{a_{\max}}2}<-1,
$$
so the integral at the very end of \eqref{w.46} does not diverge and
\begin{equation}
\begin{aligned}
\int_{\cD_1} G_{-\Delta+V}(z,z')x(z')^b dg(z') &\preceq x(z)^{\frac{a}2}\rho(z)^{m+(\nu-1)(b_{\max}-\frac{a_{\max}}2+\frac{m-2}{1-\nu})} \\
              &\preceq x(z)^{\frac{a}2}\rho(z)^{2+ (\nu-1)(b_{\max}-\frac{a_{\max}}2)} \\
                & \preceq x(z)^{\frac{\widehat{a}}2} \rho(z)^{2+(\nu-1)b_{\max}} , \quad \mbox{where} \; \widehat{a}_H= a_H-\lrp{\frac{1-\nu}{1-\nu_H}}a_{\max}.
\end{aligned}
\label{w.47}\end{equation}
But if $\widehat{b}$ is the multiweight such that $\widehat{b}_H=b_H-\lrp{\frac{1-\nu}{1-\nu_H}}b_{\max}$, then we see from our assumption on $b$ that
\begin{equation}
       \widehat{b}_H+2< \frac{\widehat{a}_H}2 \quad \mbox{for} \quad H\in \cM_{\nm}(M),
\label{w.48}\end{equation}
so that
$$
\begin{aligned}
\int_{\cD_1} G_{-\Delta+V}(z,z')x(z')^b dg(z') & \preceq  x(z)^{\widehat{b}}w^2\rho(z)^{2+(\nu-1)b_{\max}}= x(z)^{b} (\rho(z)w(z))^2
\end{aligned}
$$
as claimed.  In the second region, $d(z,z')\asymp \rho(z)$, so again, by \eqref{w.44a}, 
$$
\begin{aligned}
\int_{\cD_2} G_{-\Delta+V}(z,z') x(z')^{b}dg(z') &\preceq x(z)^{\frac{a}2}\int_{\cD_2} d(z,z')^{2-m-(\nu-1)a_{\max}} x(z')^{\frac{a}2+b}dg(z') \\
     &\preceq x^{\frac{a}2}(z) \rho(z)^{2-m-(\nu-1)a_{\max}} \int_{B(o,2\rho(z))} x(z')^{\frac{a}2+b}dg(z') \\
     &\preceq x(z)^{\frac{a}2}\rho(z)^{2-m-(\nu-1)a_{\max}}\cA\lrp{\rho(z), \frac{a}2+b}.
\end{aligned}
$$
Now, by our assumption on $b$, the multiweight $\frac{a}2+b$ satisfies the assumption of Corollary~\ref{w.20}, so
$$
\begin{aligned}
\int_{\cD_2} G_{-\Delta+V}(z,z') x(z')^{b}dg(z') &\preceq x(z)^{\frac{a}2}\rho(z)^{2-m-(\nu-1)a_{\max}} \rho(z)^{m+(\nu-1)(\frac{a_{\max}}2+b_{\max})} \\
  & \preceq  x(z)^{\frac{a}2}\rho(z)^{2+(\nu-1)(b_{\max}-\frac{a_{\max}}2)} \\
  & \preceq x(z)^{\frac{\widehat{a}}2}\rho(z)^{2+ (\nu-1)b_{\max}}  \\
  & \preceq x(z)^{\widehat{b}}w(z)^2\rho(z)^{2+ (\nu-1)b_{\max}} \quad \mbox{by \eqref{w.48}}, \\
  & \preceq x(z)^{b} (\rho(z)w(z))^2.
\end{aligned}
$$
Finally, in the last region, suppose that $z$ is close to the non-maximal corner $H_1\cap\ldots\cap H_k$.  Then by \eqref{w.44c}, setting $B_i:= B(z,\frac{c\sigma(z)^{\nu_i}}{v_i(z)})$, we see that
\begin{equation}
\int_{\cD_3} G_{-\Delta+V}(z,z')dg(z')  \asymp \sum_{i=1}^{k+1} I_i 
\label{w.49}\end{equation}
with
$$
   I_{k+1}= \int_{B_{k+1}} x(z)^{\frac{a}2}x(z')^{\frac{a}2} \lrp{ \prod_{j=1}^k x_j(z)^{-a_j}   } x_{\max}(z)^{-a_{\max}} d(z,z')^{2-m} x(z')^{b}dg(z')
$$
and
$$
    I_i= \int_{B_i\setminus B_{i+1}} x(z)^{\frac{a}2}x(z')^{\frac{a}2} \lrp{\prod_{j=1}^{i-1}x_j(z)^{-a_j}}  \widetilde{v}_i(z)^{-a_i}\sigma(z)^{\nu_i(a_i-a_{\max})} d(z,z')^{2-m+a_{\max}-a_i-(\nu-\nu_i)a_{\max}} x(z')^b dg(z')
$$
for $i\le k$.  For $I_{k+1}$, we can use our assumption on $b$ and \eqref{w.30b} with $i=k+1$ to obtain
$$
\begin{aligned}
I_{k+1} & \preceq x(z)^{\frac{a}2} \lrp{ \prod_{j=1}^k x_{j}(z)^{-a_j}     } x_{\max}(z)^{-a_{\max}}  \int_0^{\frac{c\sigma(z)^{\nu}}{x_{\max}(z)}}  r^{2-m}d\cR(z,r; \frac{a}2+b)  \\
    & \preceq x(z)^{\frac{a}2} \lrp{ \prod_{j=1}^k x_{j}(z)^{-a_j}     } x_{\max}(z)^{-a_{\max}} \int_0^{\frac{c\sigma(z)^{\nu} }{x_{\max}(z)}} r^{1-m} \lrp{ \prod_{j=1}^{k} x_j(z)^{\frac{a_j}2+b_j}  } x_{\max}(z)^{\frac{a_{\max}}2+ b_{\max}} r^m dr  \\
    & \preceq x(z)^{\frac{a}2} \lrp{ \prod_{j=1}^{k} x_j(z)^{-\frac{a_j}2+b_j}  } x_{\max}(z)^{-\frac{a_{\max}}2+b_{\max}} \int_0^{\frac{c\sigma(z)^{\nu}}{x_{\max}(z)}} r dr \\
    & \preceq x(z)^{\frac{a}2} \lrp{ \prod_{j=1}^{k} x_j(z)^{-\frac{a_j}2+b_j}  } x_{\max}(z)^{-\frac{a_{\max}}2+b_{\max}}  \lrp{\frac{\sigma(z)^{\nu}}{x_{\max}(z)}}^2 \asymp x(z)^b\sigma(z)^2\lrp{ \frac{\sigma(z)^{\nu-1}}{x_{\max}(z)} }^2 \\
    & \preceq x(z)^{b} (\rho(z)w(z))^2.
\end{aligned}
$$

For $i\le k$, again using our hypothesis on $b$, we see from Proposition~\ref{w.30} that 
$$
\begin{aligned}
I_i & \preceq x^{\frac{a}2}(z) \lrp{ \prod_{j=1}^{i-1} x_j(z)^{-a_j}   }  \widetilde{v}_i(z)^{-a_i}\sigma(z)^{\nu_i(a_i-a_{\max})} \int_{\frac{c\sigma(z)^{\nu_{i+1}}}{v_{i+1}}}^{\frac{c\sigma(z)^{\nu_i}}{v_{i}}} r^{2-m+a_{\max}-a_i-(\nu-\nu_i)a_{\max}}d\cR\lrp{z,r,b+\frac{a}2}  \\
     & \preceq  x^{\frac{a}2}(z) \lrp{ \prod_{j=1}^{i-1} x_j(z)^{-a_j}   }  \widetilde{v}_i(z)^{-a_i}\sigma(z)^{\nu_i(a_i-a_{\max})} 
       \int_{\frac{c\sigma(z)^{\nu_{i+1}}}{v_{i+1}}}^{\frac{c\sigma(z)^{\nu_i}}{v_{i}}}  r^{1-m+a_{\max}-a_i-(\nu-\nu_i)a_{\max}}  \cdot \\ 
       & \hspace{1cm}\lrp{  \lrp{ \prod_{j=1}^{i-1}x_j(z)^{\frac{a_j}2+b_j} }  \widetilde{v}_i(z)^{\frac{a_i}2+b_i}\sigma(z)^{\nu_i(\frac{a_{\max}}2 + b_{\max}- \frac{a_i}2-b_i)} r^{m- (\frac{a_{\max}}2+b_{\max}-\frac{a_i}2-b_i)+ (\nu-\nu_i)(\frac{a_{\max}}2+b_{\max})}  }dr \\
     & \preceq  x^{\frac{a}2}(z) \lrp{ \prod_{j=1}^{i-1} x_j(z)^{-\frac{a_j}2+ b_j}  } \widetilde{v}_i(z)^{-\frac{a_i}2+b_i}\sigma(z)^{\nu_i(\frac{a_i}2- \frac{a_{\max}}2+ b_{\max}-b_i)}  \cdot \\
     & \hspace{1cm} \int_{\frac{c\sigma(z)^{\nu_{i+1}}}{v_{i+1}}}^{\frac{c\sigma(z)^{\nu_i}}{v_{i}}}
        r^{1+ \frac{a_{\max}}2-\frac{a_i}2-b_{\max}+ b_i+(\nu-\nu_i)(b_{\max}-\frac{a_{\max}}2)} dr  \\
       & \preceq x^{\frac{a}2}(z) \lrp{ \prod_{j=1}^{i-1} x_j(z)^{-\frac{a_j}2+ b_j}  } \widetilde{v}_i(z)^{-\frac{a_i}2+b_i}\sigma(z)^{\nu_i(\frac{a_i}2- \frac{a_{\max}}2+ b_{\max}-b_i)} \; \cdot \\
          &  \hspace{1cm} \left| \lrp{ \frac{\sigma(z)^{\nu_i}}{ \widetilde{v}_i(z)}  }^{2+\frac{a_{\max}}2 -\frac{a_i}2-b_{\max} + b_i +(\nu-\nu_i)(b_{\max}-\frac{a_{\max}}2) }  -   \lrp{ \frac{\sigma(z)^{\nu_{i+1}}}{ \widetilde{v}_{i+1}(z)}  }^{2+\frac{a_{\max}}2 -\frac{a_i}2-b_{\max} + b_i +(\nu-\nu_i)(b_{\max}-\frac{a_{\max}}2) }  \right|.        
\end{aligned}
$$
Using the facts that $\frac{c\sigma^{\nu_{i+1}}(z)}{ \widetilde{v}_{i+1}(z)}\le \frac{c\sigma^{\nu_i}(z)}{ \widetilde{v}_i(z)}$ and that $ 2+\frac{a_{\max}}2 -\frac{a_i}2-b_{\max} + b_i +(\nu-\nu_i)(b_{\max}-\frac{a_{\max}}2) <0$, this implies that
\begin{multline}
I_i   \preceq  x^{\frac{a}2}(z) \lrp{ \prod_{j=1}^{i-1} x_j(z)^{-\frac{a_j}2+ b_j}  } \widetilde{v}_i(z)^{-\frac{a_i}2+b_i}\sigma(z)^{\nu_i(\frac{a_i}2- \frac{a_{\max}}2+ b_{\max}-b_i)}\;\cdot \\ \lrp{ \frac{\sigma(z)^{\nu_{i+1}}}{ \widetilde{v}_{i+1}(z)}  }^{2+\frac{a_{\max}}2 -\frac{a_i}2-b_{\max} + b_i +(\nu-\nu_i)(b_{\max}-\frac{a_{\max}}2)  }.
\end{multline}
If $\nu_i=\nu$, this gives
$$
\begin{aligned}
I_i & \preceq x(z)^{\frac{a}2} \lrp{ \prod_{j=1}^{i-1} x_j(z)^{-\frac{a_j}2+b_j}   }  \widetilde{v}_i(z)^{-\frac{a_i}2+b_i} \sigma(z)^{2\nu}  \widetilde{v}_{i+1}(z)^{-2-\frac{a_{\max}}2+ \frac{a_i}2+ b_{\max}-b_i} \\
 & \preceq  x(z)^{\frac{a}2} \lrp{ \prod_{j=1}^{i-1} x_j(z)^{-\frac{a_j}2+ b_j}  } x_i(z)^{-\frac{a_i}2+ b_i}  \sigma(z)^{2\nu}  \widetilde{v}_{i+1}(z)^{-2-\frac{a_{\max}}2+ b_{\max}} \\
 & \preceq x(z)^{\frac{a}2} \lrp{ \prod_{j=1}^i x_j(z)^{-\frac{a_j}2+ b_j}   }\sigma(z)^{2\nu}  \widetilde{v}_{i+1}(z)^{-2-\frac{a_{\max}}2+b_{\max}} \\
 & \preceq \lrp{ \prod_{j=i+1}^{k+1} x_j(z)^{\frac{a_j}2-b_j}  }x(z)^{b} \sigma(z)^2 \lrp{ \frac{\sigma(z)^{\nu-1}}{x_{\max}}  }^2 x_{\max}(z)^2  \widetilde{v}_{i+1}(z)^{-2-\frac{a_{\max}}2+b_{\max}} \\
 & \asymp x(z)^b (\rho(z)w(z))^2    \lrp{ \prod_{j=i+1}^{k+1} x_j(z)^{\frac{a_j}2-b_j}  }   x_{\max}(z)^2  \widetilde{v}_{i+1}(z)^{-2-\frac{a_{\max}}2+b_{\max}} \\
 & \preceq x(z)^b (\rho(z)w(z))^2   \lrp{  \prod_{j=i+1}^{k+1} x_j(z)^{\frac{a_j}2-b_j-2-\frac{a_{\max}}2+ b_{\max}}  } x(z)^2_{\max}  \\
 & \preceq x(z)^b (\rho(z)w(z))^2, \quad \mbox{by \eqref{w.48}},
\end{aligned}
$$
completing the proof in this case.  If instead $\nu_i=0$ and $\nu_{i+1}=\nu$, then
$$
\begin{aligned}
I_i &\preceq x(z)^{\frac{a}{2}}\lrp{ \prod_{j=1}^{i-1} x_j(z)^{-\frac{a_j}2+b_j} }  \widetilde{v}_i^{-\frac{a_i}2+b_i} \lrp{\frac{\sigma(z)^{\nu}}{ \widetilde{v}_{i+1}(z)}}^{2+ \frac{a_{\max}}2-\frac{a_i}2-b_{\max}+b_i+ \nu\lrp{b_{\max}-\frac{a_{\max}}2}} \\
    & \asymp x(z)^{\frac{a}{2}}\lrp{ \prod_{j=1}^{i-1} x_j(z)^{-\frac{a_j}2+b_j} } \lrp{ x_i^{-\frac{a_i}2+b_i}  ( \widetilde{v}_{i+1}^{\frac{1}{1-\nu}})^{-\frac{a_i}2+b_i}  }   \lrp{ \widetilde{v}_{i+1}^{-\frac{1}{1-\nu}}}^{2+ \frac{a_{\max}}2-\frac{a_i}2-b_{\max}+b_i+ \nu\lrp{b_{\max}-\frac{a_{\max}}2}}  \\
    &\asymp x(z)^{\frac{a}{2}}\lrp{ \prod_{j=1}^{i} x_j(z)^{-\frac{a_j}2+b_j} } ( \widetilde{v}_{i+1}^{-\frac{1}{1-\nu}})^{2+ (\nu-1)(b_{\max}-\frac{a_{\max}}2)} \\
    &\asymp x(z)^{b} \lrp{ \prod_{j=i+1}^{k+1} x_j(z)^{\frac{a_j}2-b_j} }  \widetilde{v}_{i+1}(z)^{b_{\max}-\frac{a_{\max}}{2}-\frac{2}{1-\nu}}  \asymp x(z)^{b} \lrp{ \prod_{j=i+1}^{k+1} x_j(z)^{\frac{\widehat{a}_j}2-\widehat{b}_j} }  \widetilde{v}_{i+1}(z)^{-\frac{2}{1-\nu}} \\
    &\preceq  x(z)^{b}  \widetilde{v}_{i+1}(z)^{2}    \widetilde{v}_{i+1}(z)^{-\frac{2}{1-\nu}} \quad \mbox{by \eqref{w.48}}, \\
    & \asymp x(z)^b ( \widetilde{v}_{i+1}(z)^{-\frac{\nu}{1-\nu}})^2 \preceq x(z)^b (x_{\max}(z)^{-1} \widetilde{v}_{i+1}(z)^{-\frac{\nu}{1-\nu}})^2 \asymp x(z)^b (\rho(z) w(z))^2,
\end{aligned}
$$
while if $\nu_i=\nu_{i+1}=0$, then
$$
\begin{aligned}
I_i &\preceq x(z)^{\frac{a}{2}}\lrp{ \prod_{j=1}^{i-1} x_j(z)^{-\frac{a_j}2+b_j} }  \widetilde{v}_i(z)^{-\frac{a_i}2+b_i} \lrp{\frac{1}{ \widetilde{v}_{i+1}(z)}}^{2+ \frac{a_{\max}}2-\frac{a_i}2-b_{\max}+b_i+ \nu\lrp{b_{\max}-\frac{a_{\max}}2}} \\
    & \asymp x(z)^{\frac{a}{2}}\lrp{ \prod_{j=1}^{i-1} x_j(z)^{-\frac{a_j}2+b_j} } x_i(z)^{-\frac{a_i}2+b_i}  \widetilde{v}_{i+1}(z)^{-\frac{a_i}2+b_i}  \widetilde{v}_{i+1}(z)^{-2- \frac{a_{\max}}2+\frac{a_i}2+b_{\max}-b_i- \nu\lrp{b_{\max}-\frac{a_{\max}}2}} \\
    &\asymp x(z)^{\frac{a}{2}}\lrp{ \prod_{j=1}^{i} x_j(z)^{-\frac{a_j}2+b_j} }  \widetilde{v}_{i+1}(z)^{-2+(1-\nu)\lrp{b_{\max}-\frac{a_{\max}}2}} \asymp x(z)^b \lrp{ \prod_{j=i+1}^{k+1} x_j(z)^{\frac{\widehat{a}_j}2-\widehat{b}_j} }  \widetilde{v}_{i+1}^{-2} \\
    & \preceq x(z)^b \lrp{ \prod_{j=i+1}^{k+1} x_j(z)^{2} }  \widetilde{v}_{i+1}^{-2} \quad \mbox{by \eqref{w.48},} \\
    & \asymp  x(z)^b \lrp{ \prod_{j=i+1}^{k+1} x_j(z)^{2- \frac{2}{1-\nu_j}} } \asymp  x(z)^b \lrp{ \prod_{j=i+1}^{k+1} x_j(z)^{-\frac{\nu_j}{1-\nu_j}} }^2 \preceq x(z)^b \lrp{x_{\max}^{-1}(z) \prod_{j=i+1}^{k+1} x_j(z)^{-\frac{\nu_j}{1-\nu_j}} }^2 \\
    &\asymp x(z)^b(\rho(z) w(z))^2.
\end{aligned}
$$
Finally, if $z$ is away from all non-maximal corners, we can apply the estimate for $I_{k+1}$ with $k=0$.

\end{proof}

Relying on this estimate, we can now apply the Schur test to obtain the main result of this section.

\begin{theorem}
Let $g$ be an $\mathfrak{n}$-warped $\QAC$-metric of weight function $\mathfrak{n}:\cM_1(M)\to \{0,\nu\}$ for $\nu\in [0,1)$ on the interior of a manifold with fibered corners $M$ of dimension $m>2$.  Suppose that $\pa M$ is connected, as well as $\pa Z_H$ for each $H\in \cM_{\nm}(M)$.  Suppose that $a$ is a multiweight such that $a_{H}= a_{\max}$ for $H\in \cM_{\max}(M)$, that $a_{\max}<\frac{m-2}{1-\nu}$ and that $(1-\nu)a_{\max}< (1-\nu_H)(a_H+ m_H)$ for all $H\in \cM_{\nm}(M)$.  Let also $\delta$ be a multiweight such that $\delta_H=\delta_{\max}$ for all $H\in \cM_{\max}(M)$ with 
\begin{equation}
   \frac{a_{\max}}2< \delta_{\max}< \frac{m-2}{1-\nu}- \frac{a_{\max}}2
\label{w.50a}\end{equation}
and
\begin{equation}
  \lrp{\frac{1-\nu}{1-\nu_H}}\lrp{\delta_{\max}+ \frac{a_{\max}}2} - \frac{a_H}2+ 2-m_H< \delta_H < \frac{a_H}2 + \lrp{ \frac{1-\nu}{1-\nu_H} } \lrp{-\frac{a_{\max}}2+ \delta_{\max}}.
\label{w.50b}\end{equation}
If $\cR\in \rho^{-2}w^{-2}\CI_{\nQb}(M\setminus \pa M)$ is such that $\cR\ge V:= -\frac{-\Delta x^a}{x^a}$, then for all $\ell\in\bbN_0$ and $\alpha\in (0,1)$, the mappings
\begin{equation}
\begin{gathered}
  -\Delta+\cR: x^{\delta+\mathfrak{w}}\sigma^{\frac{\nu m}2}H^{\ell+2}_w(M)\to (\rho w)^{-2}x^{\delta+\mathfrak{w}}\sigma^{\frac{\nu m}2}H^{\ell}_w(M), \\
   -\Delta+\cR: x^{\delta}\cC^{\ell+2,\alpha}_{\nQb}(M\setminus \pa M)\to (\rho w)^{-2} x^{\delta}\cC^{\ell,\alpha}_{\nQb}(M\setminus \pa M),
\end{gathered}
\label{w.50c}\end{equation}
are isomorphisms, where  $H^{\ell}_w(M)$ was introduced in \eqref{Sob.1} and $\mathfrak{w}(H):=\frac{m_H-m}2$ is the multiweight such that $x^{\mathfrak{w}}\sigma^{\frac{\nu m}2}L^2_w(M)=L^2_b(M)$.
\label{w.50}\end{theorem}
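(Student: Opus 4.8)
The plan is to follow the scheme of \cite[\S6]{DM2014} and \cite[\S6]{CR2021}: deduce surjectivity from a Schur test fed by the Green's function bounds already obtained, deduce regularity from interior elliptic estimates (available thanks to bounded geometry), and deduce injectivity from a duality/indicial argument. As a first step one checks that $\cL:=-\Delta+\cR$ has a well-defined Green's function. Since the $g$-volume of anchored balls grows of degree $m>2$, the metric $g$ is non-parabolic; moreover $\cR\ge V$ gives $\cL\ge-\Delta+V$, which by Lemma~\ref{w.10} is the Doob conjugate of the non-negative, non-parabolic operator $-\Delta_{\mu_a}$ (non-parabolicity here uses $a_{\max}<\tfrac{m-2}{1-\nu}$, so that $\mu_a$-anchored balls grow of degree $m+(\nu-1)a_{\max}>2$). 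Hence $\cL$ has a minimal positive Green's function $G_{\cL}$ with $|G_{\cL}(z,z')|\le G_{-\Delta+V}(z,z')$, so $G_{\cL}$ inherits the upper bounds of Proposition~\ref{w.44}, and it suffices to show that convolution with $G_{\cL}$ is a bounded two-sided inverse of $\cL$ on the spaces of \eqref{w.50c}.

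For surjectivity, let $b$ be the multiweight with $x^{b}=(\rho w)^{-2}x^{\delta}$, that is $b_{\max}=\delta_{\max}+\tfrac{2}{1-\nu}$ and $b_{H}=\delta_{H}+\tfrac{2\nu_{H}}{1-\nu_{H}}$ for $H\in\cM_{\nm}(M)$. A direct substitution shows that $b_{H}=b_{\max}$ for $H\in\cM_{\max}(M)$ and that the constraints \eqref{w.50a}--\eqref{w.50b} on $\delta$ are \emph{equivalent} to the hypotheses on $b$ in Proposition~\ref{w.45}. Therefore, for $f\in(\rho w)^{-2}x^{\delta}\cC^{0}_{\nQb}(M\setminus\pa M)=x^{b}\cC^{0}_{\nQb}(M\setminus\pa M)$, the function $u:=\int_{M\setminus\pa M}G_{\cL}(\,\cdot\,,z')f(z')\,dg(z')$ satisfies
$$
|u(z)|\;\le\;\Bigl(\sup_{M\setminus\pa M}x^{-b}|f|\Bigr)\int_{M\setminus\pa M}G_{-\Delta+V}(z,z')\,x(z')^{b}\,dg(z')\;\preceq\;x(z)^{b}(\rho(z)w(z))^{2}\sup_{M\setminus\pa M}x^{-b}|f|\;=\;x(z)^{\delta}\sup_{M\setminus\pa M}x^{-b}|f|,
$$
by Proposition~\ref{w.45}; so $G_{\cL}\colon(\rho w)^{-2}x^{\delta}\cC^{0}_{\nQb}\to x^{\delta}\cC^{0}_{\nQb}$ is bounded. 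The analogous boundedness of $G_{\cL}$ from $(\rho w)^{-2}x^{\delta+\mathfrak{w}}\sigma^{\frac{\nu m}2}L^{2}_{w}(M)$ to $x^{\delta+\mathfrak{w}}\sigma^{\frac{\nu m}2}L^{2}_{w}(M)$ follows the same way from the heat kernel bound of Corollary~\ref{w.40}, the estimate \eqref{w.42}, the symmetry of $G_{\cL}$, and a weighted (two-sided) Schur test, using the defining property $x^{\mathfrak{w}}\sigma^{\frac{\nu m}2}L^{2}_{w}(M)=L^{2}_{b}(M)$ of $\mathfrak{w}$ to reduce to the $b$-density.

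To upgrade these $\cC^{0}$/$L^{2}$ bounds to \eqref{w.50c} I would run the rescaling argument of \cite{DM2014,CR2021}. By Proposition~\ref{w.2}, $g$ and the conformal metric $g_{\nQb}=\chi^{2}g$ are of bounded geometry, and $\chi=(\rho w)^{-1}$ is a product of boundary defining functions; hence $\Delta=\Delta_{g}$ equals $(\rho w)^{-2}$ times a second-order differential operator with coefficients in $\CI_{\nQb}(M\setminus\pa M)$, and since $\cR\in\rho^{-2}w^{-2}\CI_{\nQb}(M\setminus\pa M)$, the operator $\cL$ does map $x^{\delta}\cC^{\ell+2,\alpha}_{\nQb}$ into $(\rho w)^{-2}x^{\delta}\cC^{\ell,\alpha}_{\nQb}$, with $\cL\circ G_{\cL}=\Id$ on the target. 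Rescaling each $g$-geodesic ball of radius comparable to the local scale to unit size, $\cL$ becomes a uniformly elliptic operator with uniformly bounded smooth coefficients on a uniform family of charts, so interior Schauder (and $L^{2}$) estimates combined with the pointwise bounds above and the definitions of the weighted spaces promote $G_{\cL}$ to a bounded map $(\rho w)^{-2}x^{\delta}\cC^{\ell,\alpha}_{\nQb}\to x^{\delta}\cC^{\ell+2,\alpha}_{\nQb}$ (and the corresponding map in the Sobolev scale) for every $\ell\in\bbN_0$ and $\alpha\in(0,1)$.

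Finally, one must show $\cL$ is injective on $x^{\delta}\cC^{\ell+2,\alpha}_{\nQb}$ (and on the Sobolev space), whereupon $G_{\cL}\circ\cL=\Id$ as well and \eqref{w.50c} follows. Since $\cL$ is formally self-adjoint with respect to $dg$ and the windows \eqref{w.50a}--\eqref{w.50b} are symmetric about their midpoints (about $\tfrac{m-2}{2(1-\nu)}$ at the maximal hypersurfaces, and correspondingly at the non-maximal ones), the surjectivity just proved — applied at the mirror weight, which again lies in the window — dualizes to the injectivity of $\cL$ at weight $\delta$; alternatively one invokes the indicial analysis of \cite{DM2014,CR2021}, by which a solution of $\cL u=0$ in $x^{\delta}\cC^{2,\alpha}_{\nQb}$ must vanish to infinite order along $\pa M$ (no indicial roots of $\cL$ lie strictly inside the window at any boundary hypersurface) and hence vanish identically. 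I expect the main obstacles to be exactly these last two steps: ensuring that the rescaled elliptic estimates are uniform across all scales and near the deep iterated corners of $M$, where the warped local geometry varies, and organizing the injectivity/duality argument with the correct boundary weights. The substantive analytic input — Propositions~\ref{w.17}--\ref{w.45} — is already in place, so the rest is a careful bookkeeping of weights in the manner of \cite{DM2014} and \cite{CR2021}.
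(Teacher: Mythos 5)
Your proposal follows essentially the paper's own route: the paper proves the theorem exactly by using Lemma~\ref{w.10}, Proposition~\ref{w.44} and a Schur test built on Proposition~\ref{w.45} (whose hypotheses, as you verify, translate precisely into \eqref{w.50a}--\eqref{w.50b} under $x^{b}=(\rho w)^{-2}x^{\delta}$, and are again satisfied at the dual multiweight appearing in the second Schur integral, which is where the symmetry of the window enters) to show that $G_{\cL}$ is bounded on the weighted $L^2_w$ and $\cC^{0}$ spaces, and then upgrades via Schauder estimates and the bounded geometry of the $\nQb$-metric to conclude that $G_{\cL}$ is a two-sided inverse on the spaces \eqref{w.50c}. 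One caution: your alternative ``indicial'' route to injectivity is not sound as stated (infinite-order vanishing at $\pa M$ does not by itself force $u\equiv 0$), but your primary argument — formal self-adjointness of $-\Delta+\cR$ together with surjectivity at the mirror weight inside the self-dual window — is fine and is in effect what underlies the paper's treatment.
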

\begin{proof}

For the mapping on Sobolev spaces, it suffices by local elliptic estimates to show that the Green's function $G_{\cL}$ defines a bounded map
\begin{equation}
  G_{\cL}: (\rho w)^{-2}x^{\delta+\mathfrak{w}}\sigma^{\frac{\nu m}2}L^2_w(M)\to x^{\delta+ \mathfrak{w}}\sigma^{\frac{\nu m}2}L^2_w(M).
\label{w.51}\end{equation}
This in turn corresponds to the boundedness of 
\begin{equation}
 \cK: L^2_w(M) \to L^2_w(M)
\label{w.52}\end{equation}
with
$$
  \cK(z,z')= x(z)^{-\delta-\mathfrak{w}} \sigma(z)^{-\frac{\nu m}2} G_{\cL}(z,z') (\rho(z')w(z'))^{-2} x(z')^{\delta+\mathfrak{w}}\sigma(z')^{\frac{\nu m}2}.
$$
By the Schur test, this will be the case provided we can find positive measurable functions $f_1$ and $f_2$ such that
$$
  \left|  \int_{M\setminus \pa M} \cK(z,z') f_1(z) dg(z)  \right| \preceq f_2(z') \quad \mbox{and}  \quad \left|  \int_{M\setminus \pa M} \cK(z,z') f_2(z') dg(z')  \right| \preceq f_1(z)
$$
for all $z,z'\in M\setminus \pa M$.  We will take $f_1=f_2= x^{-\mathfrak{w}}\sigma^{-\frac{m\nu}2}$.  Indeed, by Lemma~\ref{w.10}, we have that 
$$
\begin{aligned}
  \left| \int_{M\setminus \pa M}  \cK(z,z') x^{-\mathfrak{w}}(z') \sigma(z')^{-\frac{m\nu}2} dg(z') \right| & \le x(z)^{-\delta-\mathfrak{w}}\sigma(z)^{-\frac{\nu m}2} \int_{M\setminus \pa M} G_{-\Delta+ V} x(z')^{\delta} (\rho(z')w(z'))^{-2}dg(z') \\
      & \preceq  x(z)^{-\delta-\mathfrak{w}} \sigma(z)^{-\frac{\nu m}2} \int_{M\setminus \pa M} G_{-\Delta+V} x(z')^b dg(z')
\end{aligned}  
$$
with 
$$
  x^{b}:= x^{\delta} (\rho w)^{-2}= \lrp{ \prod_{H\in \cM_1(M)} (x_H^{\delta_H+ \frac{2}{1-\nu_H}}) } w^{-2},
$$
so in particular with multiweight $b$ such that $b_{\max}= \delta_{\max}+ \frac{2}{1-\nu}$ and $b_H= \delta_H+ \frac{2}{1-\nu_H}-2$.  In particular, by our assumption on $\delta$, we can apply Proposition~\ref{w.45} so that
$$
\begin{aligned}
 \left| \int_{M\setminus \pa M}  \cK(z,z') x^{-\mathfrak{w}}(z') \rho(z')^{-\frac{m\nu}2} dg(z') \right| & \preceq x(z)^{-\delta-\mathfrak{w}}\sigma(z)^{-\frac{\nu m}2}x(z)^{b} (\rho(z) w(z))^2 \\
   & \preceq x(z)^{-\mathfrak{w}}\sigma(z)^{-\frac{\nu m}2}.
\end{aligned}
$$ 
Similarly,
$$
\begin{aligned}
\left| \int_{M\setminus \pa M}  \cK(z,z') x^{-\mathfrak{w}}(z) \sigma(z)^{-\frac{m\nu}2} dg(z) \right| &\le (\rho(z')w(z'))^{-2}x(z')^{\delta+ \mathfrak{w}}\sigma(z')^{\frac{\nu m}2} \cdot \\
 & \hspace{3cm} \int_{M\setminus \pa M} G_{-\Delta+V}(z,z') x(z)^{-\delta-2\mathfrak{w}}\sigma(z)^{-m\nu} dg(z) \\
  &= (\rho(z')w(z'))^{-2}x(z')^{\delta+ \mathfrak{w}}\sigma(z')^{\frac{\nu m}2} \int_{M\setminus \pa M} G_{-\Delta+V}(z,z') x(z)^{b} dg(z)
\end{aligned}
$$
with this time multiweight $b$ such that
$$
     b_H:= m-m_H + \frac{m\nu_H}{1-\nu_H} -\delta_H= \frac{m}{1-\nu_H}-m_H-\delta_H,
$$
that is, such that
$$
   x^b= x^{-\delta-2\mathfrak{w}}\sigma(z)^{-m\nu} = \prod_H x_H^{-\delta_H-m_H+m + \frac{m\nu_H}{1-\nu_H}}.
$$
By our assumptions on $\delta$ and since $G_{-\Delta+V}(z,z')=G_{-\Delta+V}(z',z)$, we can apply Proposition~\ref{w.45} to conclude that
$$
\begin{aligned}
\left| \int_{M\setminus \pa M}  \cK(z,z') x^{-\mathfrak{w}}(z) \sigma(z)^{-\frac{m\nu}2} dg(z) \right| & \preceq (\rho(z')w(z'))^{-2} x(z')^{\delta+\mathfrak{w}}\sigma(z')^{\frac{\nu m}2} x(z')^{b} (\rho(z')w(z'))^{2} \\
 & \preceq x(z')^{-\mathfrak{w}}\sigma(z')^{-\frac{\nu m}2}.
\end{aligned}
$$
By the Schur test, the mapping \eqref{w.51} is indeed bounded and the result follows.  For the map on H\"older spaces, we can use our estimates on Green's functions essentially as above to show that the Green's function $G_{\cL}$ induces a bounded mapping 
$$
    G_{\cL}: x^{\delta} (\rho w)^{-2}\cC^{\ell,\alpha}_{\nQb}(M\setminus \pa M)\to x^{\delta}L^{\infty}(M\setminus \pa M).
$$
Using Schauder estimates and the fact that an $\nQb$-metric has bounded geometry, we then see that in fact it defines an inverse 
$$
G_{\cL}: x^{\delta} (\rho w)^{-2}\cC^{\ell,\alpha}_{\nQb}(M\setminus \pa M)\to  x^{\delta} \cC^{\ell+2,\alpha}_{\nQb}(M\setminus \pa M)
$$
for the map
$$
    -\Delta+ \cR: x^{\delta} \cC^{\ell+2,\alpha}_{\nQb}(M\setminus \pa M)\to (\rho w)^{-2} x^{\delta} \cC^{\ell,\alpha}_{\nQb}(M\setminus \pa M).
$$ \end{proof}

As in \cite{CR2021}, we will be mostly interested in the case where $a=0$ and $\cR=0$, in which case we have the following.
\begin{corollary}
Let $g$ be an $\mathfrak{n}$-warped $\QAC$-metric of weight function $\mathfrak{n}:\cM_1(M)\to \{0,\nu\}$ for some $\nu\in [0,1)$ on the interior of a manifold with fibered corners $M$.  Suppose that $\pa M$ is connected, as well as $\pa Z_H$ for each $H\in \cM_{\nm}(M)$.  Let $\delta$ be a multiweight such that $\delta_H=\delta_{\max}$ for all $H\in \cM_{\max}(M)$.  Then for all $\ell\in\bbN_0$ and $\alpha\in (0,1)$, the mappings 
\begin{equation}
\begin{gathered}
  \Delta: x^{\delta+\mathfrak{w}}\rho^{\frac{\nu m}2}H^{\ell+2}_w(M)\to (\rho w)^{-2}x^{\delta+\mathfrak{w}}\rho^{\frac{\nu m}2}H^{\ell}_w(M), \\
   \Delta: x^{\delta}\cC^{\ell+2,\alpha}_{\Qb}(M\setminus \pa M)\to (\rho w)^{-2} x^{\delta}\cC^{\ell,\alpha}_{\Qb}(M\setminus \pa M),
\end{gathered}
\label{w.53a}\end{equation}
are isomorphisms provided
$$
  0<\delta_{\max}< \frac{m-2}{1-\nu}\quad \mbox{and} \quad \lrp{\frac{1-\nu}{1-\nu_H}}\delta_{\max}+2-m_H < \delta_H < \lrp{\frac{1-\nu}{1-\nu_H}}\delta_{\max} \quad \forall \; H\in \cM_{\nm}(M).
$$
\label{w.53}\end{corollary}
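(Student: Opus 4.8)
The plan is to derive Corollary~\ref{w.53} as the special case $a\equiv 0$, $\cR\equiv 0$ of Theorem~\ref{w.50}. All of the analytic work---the anchored/remote volume comparisons of Propositions~\ref{w.27} and \ref{w.30}, the Green's function estimates of Proposition~\ref{w.44}, and the Schur-test bound of Proposition~\ref{w.45}---is already packaged inside that theorem, so the only task is to verify that, for this choice of $a$ and $\cR$, the hypotheses of Theorem~\ref{w.50} reduce exactly to those stated here and that the weighted spaces match up.

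With $a\equiv 0$ one has $x^{a}\equiv 1$, so the potential $V$ of Theorem~\ref{w.50} vanishes identically; hence the requirement $\cR\ge V$ is met by $\cR=0$, which also trivially lies in $\rho^{-2}w^{-2}\CI_{\nQb}(M\setminus\pa M)$. The connectedness assumptions on $\pa M$ and on $\pa Z_{H}$ for $H\in\cM_{\nm}(M)$ are carried over verbatim. The inequality $a_{\max}<\frac{m-2}{1-\nu}$ becomes $0<\frac{m-2}{1-\nu}$, which holds since $m=\dim M>2$ in the settings at hand, and the inequality $(1-\nu)a_{\max}<(1-\nu_{H})(a_{H}+m_{H})$ for $H\in\cM_{\nm}(M)$ becomes $0<(1-\nu_{H})m_{H}$; this is automatic because $\nu_{H}<1$ and $m_{H}\ge 1$, the latter since a non-maximal boundary hypersurface $H$ admits some $G>H$ with $H\cap G\ne\emptyset$, whence the fibre $Z_{H}$ of $\phi_{H}$ has non-empty boundary and positive dimension. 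Finally, substituting $a\equiv 0$ into \eqref{w.50a}--\eqref{w.50b} collapses the constraints on the multiweight $\delta$ to precisely
\begin{equation*}
  0<\delta_{\max}<\frac{m-2}{1-\nu},\qquad \lrp{\frac{1-\nu}{1-\nu_{H}}}\delta_{\max}+2-m_{H}<\delta_{H}<\lrp{\frac{1-\nu}{1-\nu_{H}}}\delta_{\max}\quad\text{for }H\in\cM_{\nm}(M),
\end{equation*}
which are the hypotheses of the corollary.

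It then remains only to observe that the function spaces in \eqref{w.53a} are the $a\equiv 0$ instances of those in \eqref{w.50c} (with $\mathfrak{w}(H)=\frac{m_{H}-m}{2}$ and Sobolev weight $\sigma^{\frac{\nu m}{2}}$ unchanged), and that, since $\cR=0$, the operator $-\Delta+\cR$ of Theorem~\ref{w.50} equals $-\Delta$, which is an isomorphism between the stated spaces exactly when $\Delta$ is. Invoking Theorem~\ref{w.50} then yields both isomorphisms in \eqref{w.53a}. I expect no substantive obstacle beyond this bookkeeping; the only points requiring a moment's care are the verification that $m_{H}\ge 1$ for non-maximal $H$---which is what makes the hypothesis $(1-\nu)a_{\max}<(1-\nu_{H})(a_{H}+m_{H})$ automatic once $a\equiv 0$---together with the standing assumption $m>2$ inherited from Theorem~\ref{w.50}.
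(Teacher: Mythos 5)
Your proposal is correct and is exactly the paper's route: Corollary~\ref{w.53} is stated as the immediate specialization $a\equiv 0$, $\cR\equiv 0$ of Theorem~\ref{w.50}, and your verification that the hypotheses \eqref{w.50a}--\eqref{w.50b} collapse to the stated conditions on $\delta$ (with $V=0$, $m_H\ge 1$ for non-maximal $H$, and $m>2$ forced by $0<\delta_{\max}<\frac{m-2}{1-\nu}$) is the only bookkeeping required. The minor mismatch between $\rho^{\frac{\nu m}{2}}$ in \eqref{w.53a} and $\sigma^{\frac{\nu m}{2}}$ in \eqref{w.50c} (and $\Qb$ versus $\nQb$) is a notational slip in the paper itself, not a gap in your argument.
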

\begin{remark}
In the case $x^{\delta}=\rho^sw^\tau$, this means that $\delta_{\max}= -\frac{s}{1-\nu}$ and $\delta_H= \tau -\frac{s}{1-\nu_H}$ for $H\in \cM_{\nm}(M)$, so the maps in Corollary~\ref{w.53} will be isomorphisms provided
$$
     2-m<s<0 \quad \mbox{and} \quad 2-m_H< \tau <0, \quad \forall \; H\in \cM_{\nm}(M),
$$
which is consistent with \cite[Theorem~8.4]{DM2014} and \cite[Corollary~5.19]{CR2021}.
\label{w.54}\end{remark}

\section{Weighted blow-ups}\label{wbu.0}

In our construction of examples of Calabi-Yau $\mathfrak{n}$-warped $\QAC$-metrics on smoothing of cones,  a key role will be played by weighted blow-ups.  For this reason, let us take some time to collect basic results.  First, recall that a $n$-tuple of positive integers $w=(w_1,\ldots, w_n)\in \bbN^n$ induces a $\bbC^*$-action on $\bbC^n$, namely the action defined by
\begin{equation}
t\cdot z := (t^{w_1}z_1,\ldots,t^{w_n}z_n)
\label{wbu.1}\end{equation}
for  $t\in\bbC^*$ and $z=(z_1,\ldots, z_n)\in\bbC^n$.  By definition, the \textbf{weighted projective space} $\bbC\bbP^{n-1}_{w}$ associated to $w$ is the quotient of $\bbC^n\setminus \{0\}$ by this action,
\begin{equation}
      \bbC\bbP^{n-1}_w:= (\bbC^n\setminus\{0\})/\bbC^*.
\label{wbu.2}\end{equation}
Unless $w=(1,\ldots,1)$, in which case $\bbC\bbP^{n-1}_w$ is just the usual complex projective space, the weighted projective space $\bbC\bbP^{n-1}_w$ is not smooth, but it is a complex orbifold if the greatest common divisor of $w_1,\ldots, w_n$ is $1$.  As for the usual projective space, one can blow up the origin in $\bbC^n$ by replacing $\{0\}$ by $\bbC\bbP^{n-1}_w$.  More precisely, with respect to the weight $w$, the weighted blow-up of $\bbC^n$ at the origin is the quotient
\begin{equation}
 \operatorname{Bl}_{\{0\}}(\bbC^n,w):=  \lrp{(\bbC\times\bbC^{n})\setminus(\bbC\times \{0\})} /\bbC^*
\label{wbu.3}\end{equation}
with $\bbC^*$ acting on $\bbC^{n+1}$ with weight $(-1,w)$, that is,
$$
       t\cdot(z_0,z)= (t^{-1}z_0,t^{w_1}z_1,\ldots, t^{w_n}z_n)
$$
for $t\in \bbC^*$ and $(z_0,z)\in \bbC\times \bbC^n=\bbC^{n+1}$.  When $w=(1,\ldots,1)$, one can readily check that \eqref{wbu.3} corresponds to the usual blow-up of a point.  If we pick $w_0\in \bbN$ and consider the more general $\bbC^*$-action on $\bbC^{n+1}$ given by 
$$
    t\cdot (z_0,z)= (t^{-w_0}z_0, t^{w_1}z_1,\ldots, t^{w_n}z_n),
$$
notice that the corresponding quotient $\bbC\bbP^{n}_{(-w_0,w)}:= \lrp{(\bbC\times \bbC^{n})\setminus(\bbC\times \{0\})}/ \bbC^*$, called the non-compact weighted projective space in \cite{Apostolov-Rollin}, can also be thought of as the blow-up of the origin in $\bbC^n/\Gamma$, where $\Gamma$ is the cyclic group of $w_0$-roots of unity with generator $e^{\frac{2\pi i}{w_0}}$ acting on $\bbC^n$ by 
$$
   e^{\frac{2\pi i}{w_{0}}}\cdot z:= (e^{\frac{2\pi i w_1}{w_0}}z_1, \ldots, e^{\frac{2\pi i w_n}{w_0}}z_n).
$$
Over the real numbers, the weighted blow-up admits two interesting versions.  The first one, the immediate algebraic analog, is to replace $\bbC$ by $\bbR$ and a weighted action of $\bbC^*$ by a weighted action of $\bbR^*$.  The second one, which is the one on which we will focus, consists of still replacing $\bbC$ by $\bbR$, but with a weighted action of $\bbC^*$ replaced with a weighted action of $\bbR^+=(0,\infty)$.  Only considering the action of $\bbR^+$ adds some flexibility, since we can then consider actions with weight $w\in (\bbR^+)^n$ given by an $n$-tuple of positive real numbers that are not necessarily integers.  Moreover, we can replace $\bbR^n$ by the manifold with corners
\begin{equation}
    \bbR^n_k:= [0,\infty)^k\times \bbR^{n-k}
\label{wbu.4}\end{equation} 
for some $k\in\{0,1,\ldots,n\}$.  On this space, a choice of weight $w\in(\bbR^+)^n$ specifies a $\bbR^+$-action given by
\begin{equation}
     t\cdot x:= (t^{w_1} x_1,\ldots, t^{w_n}x_n) 
\label{wbu.5}\end{equation}
for $t\in\bbR^+$ and $x=(x_1,\ldots,x_n)\in\bbR^n_k$.  Clearly, the corresponding quotient $(\bbR^n_k\setminus \{0\})/\bbR^+$ can be identified with the unit sphere 
\begin{equation}
 \bbS^{n-1}_k:= \{ x=(x_1,\ldots,x_n)\in\bbR^{n}_k \; | \; \sum_{i=1}^{n}x_i^2=1\}.
\label{wbu.6}\end{equation}
\begin{definition}
With respect to a choice of weight $w\in (\bbR^+)^n$, the \textbf{weighted blow-up} of $\{0\}$ in $\bbR^n_k$ is the quotient
$$
     [\bbR^n_k;\{0\}]_w:= (([0,\infty)\times \bbR^n_k)\setminus([0,\infty)\times \{0\}))/\bbR^+
$$ 
with respect to the $\bbR^+$-action given by 
$$
   t\cdot (x_0,x)=(t^{-1}x_0,t^{w_1}x_1,\ldots, t^{w_n}x_n)
$$
for $t\in \bbR^+$ and $(x_0,x)\in[0,\infty)\times \bbR^n_k$.  The corresponding blow-down map $\beta_{\{0\}}: [\bbR^n_k;\{0\}]_w\to \bbR^n_k$ is given by
$$
     \beta_{\{0\}}([x_0:x])= (x_0^{w_1}x_1,\ldots, x_0^{w_n} x_n),
$$
where $[x_0:x]_w$ denotes the class in $[\bbR^n_k;\{0\}]_w$ corresponding to $(x_0,x)\in ([0,\infty)\times \bbR^n_k)\setminus ([0,\infty)\times \{0\})$.
\label{wbu.6a}\end{definition}
Clearly, the weighted blow-up $[\bbR^n_k;\{0\}]_w$ is naturally a manifold with corners diffeomorphic to $\bbS^{n-1}_k\times [0,\infty)$ via the map
$$
     \begin{array}{lccc}
       F: & \bbS^{n-1}_k\times [0,\infty) &\to & [\bbR^n_k;\{0\}]_w \\
           & (\omega,t) & \mapsto & [t:\omega]_w.
     \end{array}
$$
Closely related to the notion of blow-up is the notion of radial compactification.  
\begin{definition}
For $w=(w_1,\ldots,w_n)\in (\bbR^+)^n$, the \textbf{weighted radial compactification} of $\bbR^n_k$ is the quotient
\begin{equation}
    \overline{\bbR^n_{k,w}}:= (([0,\infty)\times \bbR^n_k)\setminus\{0\})/\bbR^+
\label{wbu.6c}\end{equation}
with respect to the $\bbR^+$-action on $[0,\infty)\times \bbR^n_k$ given by 
$$
            t\cdot (x_0,x)= (tx_0,t^{w_1}x_1,\ldots,t^{w_n}x_n) \quad \mbox{for} \quad  t\in\bbR^+, \; (x_0,x)\in [0,\infty)\times \bbR^n_k.
$$
\label{wbu.6b}\end{definition}
Comparing Definitions~\ref{wbu.6a} and \ref{wbu.6b}, we see that the weighted radial compactification can alternatively be obtained by gluing $[\bbR^n_k;\{0\}]$ and $\bbR^n_k$ via the map
$$
  \begin{array}{lccc}
     \varphi: & [\bbR^n_k;\{0\}]\setminus H_{\{0\}} & \to & \bbR^n_k \setminus \{0\} \\
                  & [x_0:x] & \mapsto & (x_0^{-w_1}x_1,\ldots,x_0^{-w_n}x_n),
  \end{array}
$$
where $H_{\{0\}}\cong \bbS^{n-1}_k$ is the boundary hypersurface in $[\bbR^n_k;\{0\}]_w$ created by the weighted blow-up of the origin.  The weighted radial compactification is also naturally diffeomorphic to the new boundary hypersurface created by the weighted blow-up 
$$
    [[0,\infty)\times \bbR^n_k;\{0\}]_{(1,w)}
$$
of $\{0\}$ in $[0,\infty)\times \bbR^n_k$ with respect to the weight $(1,w)$.

More generally, if $M$ is a manifold with corners and $Y\subset M$ is a $p$-submanifold, we can do a weighted blow-up of $M$ along the $p$-submanifold $Y$ provided some extra data is given.  More precisely, suppose that the inner-pointing normal bundle $N^+Y$ of $Y$ admits a decomposition
\begin{equation}
  N^+Y= \bigoplus_{i=1}^{\ell} V_i^+ 
\label{wbu.7}\end{equation}
with $V_i^+$ a subbundle of $N^+Y$ with fiber $\bbR^{n_i}_{k_i}$.  For each $V_i^+$, let $w_i\in\bbR^+$ be a choice of weight.  Let $\cU$ be an open neighborhood of $Y$ inside $N^+Y$ and let 
\begin{equation}
    c: \cU\to \cV\subset M
\label{wbu.8}\end{equation}  
be a collar neighborhood of $Y$ in $M$, that is, $c$ is a diffeomorphism between $\cU$ and an open neighborhood $\cV$ of $Y$ in $M$.  On each fiber of $N^+Y$, we can consider the weighted blow-up of the origin with respect to the decomposition \eqref{wbu.7} and the weight $w=(w_1,\ldots,w_{\ell})\in (\bbR^+)^{\ell}$.  These naturally combine to give the weighted blow-up
$$
       [N^+Y; Y]_w
$$
of the zero section of $N^+Y$ with blow-down map $\beta_{N^+Y}: [N^+Y;Y]\to N^+Y$.  It comes with a natural fiber bundle $\pi: [NY^+;Y]_w\to Y$ with fiber $[\bbR^n_{k};\{0\}]_w$, where
$$
    n=\sum_{i=1}^\ell n_i, \quad k= \sum_{i=1}^{\ell} k_i.
$$  
Using the collar neighborhood \eqref{wbu.8}, one can then define the weighted blow-up of $Y$ in $M$ by 
\begin{equation}
  [M;Y]_{w,c}:= M\setminus Y \cup_c \widetilde{\cU},
\label{wbu.9}\end{equation}
where $\widetilde{\cU}= \beta^{-1}_{N^+Y}(\cU)$, that is, $[M;Y]_{w,c}$ is obtained by replacing $\cU$ by $\widetilde{\cU}$ in $M$ via the gluing map given by \eqref{wbu.8}.   Unfortunately, compared to the usual blow-up, the definition \eqref{wbu.9} seems to be sensitive to the choice of collar neighborhood \eqref{wbu.8} and we will need to be careful about what class of collar neighborhoods we will allow.
\begin{definition}
Two collar neighborhoods $c_i: \cU_i\to \cV_i$ of $Y$ in $M$ for $i\in\{1,2\}$ are \textbf{equivalent} with respect to the decomposition \eqref{wbu.7} provided there are  open neighborhoods $\cU_0$ and $\cV_0$ of $Y$ in $N^+Y$ contained in $\cU_1$ and $\cU_2$ such that 
\begin{equation}
    c_2^{-1}\circ c_1: \cU_0\to \cV_0\subset N^+Y
\label{wbu.10b}\end{equation}
preserves (not necessarily linearly) the fibers of $N^+Y$ and the decomposition \eqref{wbu.7}.  If moreover the map \eqref{wbu.10b} is linear in each fiber, we say that the collar neighborhoods $c_1$ and $c_2$ are \textbf{linearly equivalent}.
\label{wbu.10}\end{definition}

\begin{lemma}
If $c_1$ and $c_2$ are equivalent collar neighborhoods with respect to the decomposition \eqref{wbu.7}, then there is a natural diffeomorphism
$$
      [M;Y]_{w,c_1}\cong [M;Y]_{w,c_2}.
$$
\label{wbu.11}\end{lemma}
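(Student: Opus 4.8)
The plan is to construct the diffeomorphism explicitly by gluing the identity on $M\setminus Y$ to a canonical lift of the transition map $c_2^{-1}\circ c_1$ near the front face. Recall from \eqref{wbu.9} that $[M;Y]_{w,c}=(M\setminus Y)\cup_{c}\widetilde{\cU}$ with $\widetilde{\cU}=\beta_{N^+Y}^{-1}(\cU)\subset[N^+Y;Y]_w$, where $M\setminus Y$ and $\widetilde{\cU}$ are both open, the overlap $\widetilde{\cU}\setminus\beta_{N^+Y}^{-1}(Y)$ is identified with $\cV\setminus Y\subset M\setminus Y$ via $c\circ\beta_{N^+Y}$, and the front face $\ff:=\beta_{N^+Y}^{-1}(Y)$ is contained in $\widetilde{\cU}$ since $\cU$ is a neighbourhood of $Y$.

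The heart of the argument — and the step I expect to be the main obstacle — is a lifting lemma for the weighted blow-up of the zero section of a direct sum of vector bundles: if $\Phi\colon\cU'\to\cV'$ is a diffeomorphism between open neighbourhoods of the zero section $Y$ in $N^+Y$ that preserves the fibres of $N^+Y$ and the decomposition \eqref{wbu.7} (hence $\Phi|_Y=\mathrm{id}_Y$, being a transition map of collars of $Y$), then $\Phi$ admits a unique lift to a diffeomorphism $\tPhi\colon\beta_{N^+Y}^{-1}(\cU')\to\beta_{N^+Y}^{-1}(\cV')$ with $\beta_{N^+Y}\circ\tPhi=\Phi\circ\beta_{N^+Y}$; moreover $\tPhi$ maps $\ff$ onto $\ff$, inducing there the fibrewise diffeomorphism coming from the differential of $\Phi$ along $Y$ in each summand $V^+_i$ after passing to the quotient by the weighted $\bbR^+$-action. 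Away from $\ff$ the lift is forced (since $\beta_{N^+Y}$ is a diffeomorphism there) and obviously smooth, so the only real content is smoothness up to and including $\ff$. I would check this in the local model: over a trivialising chart of $N^+Y$, $\Phi$ becomes a family, smoothly parametrised by $y\in Y$, of diffeomorphisms of a neighbourhood of $0$ in $\bbR^n_k=\prod_i\bbR^{n_i}_{k_i}$ fixing the origin and respecting the product decomposition, and one verifies, using the coordinates $[x_0:x]_w$ near $\ff$ together with the explicit blow-down $\beta([x_0:x]_w)=(x_0^{w_1}x_1,\dots,x_0^{w_n}x_n)$, that the map forced by $\beta\circ\tPhi=\Phi\circ\beta$ extends smoothly across $\{x_0=0\}$, the compatibility of $\Phi$ with the block decomposition (matching the weighted scaling block by block) being exactly what controls the collapsing directions. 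The inverse is produced the same way from $\Phi^{-1}$, which is again fibre- and decomposition-preserving, and composition of lifts is the lift of the composition by uniqueness.

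Granting this, I would conclude as follows. Let $\cU_0,\cV_0\subset N^+Y$ and $\Phi:=c_2^{-1}\circ c_1\colon\cU_0\to\cV_0$ be as furnished by Definition~\ref{wbu.10}, with $\cU_0\subset\cU_1$ and $\cV_0\subset\cU_2$, and let $\tPhi\colon\widetilde{\cU}_0:=\beta_{N^+Y}^{-1}(\cU_0)\to\beta_{N^+Y}^{-1}(\cV_0)\subset\widetilde{\cU}_2$ be its lift. Since $\ff\subset\widetilde{\cU}_0$ and $\widetilde{\cU}_1\setminus\ff$ is already identified with a subset of $M\setminus Y$, the open sets $M\setminus Y$ and $\widetilde{\cU}_0$ cover $[M;Y]_{w,c_1}$. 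Define $F\colon[M;Y]_{w,c_1}\to[M;Y]_{w,c_2}$ by $F=\mathrm{id}$ on $M\setminus Y$ and $F=\tPhi$ on $\widetilde{\cU}_0$ (which lands in $\widetilde{\cU}_2\subset[M;Y]_{w,c_2}$ because $\cV_0\subset\cU_2$). On the overlap $\widetilde{\cU}_0\setminus\ff$ a point $\tilde p$ is identified in $[M;Y]_{w,c_1}$ with $c_1(\beta_{N^+Y}(\tilde p))\in M\setminus Y$, while $\tPhi(\tilde p)\notin\ff$ (as $\tPhi$ preserves $\ff$) is identified in $[M;Y]_{w,c_2}$ with $c_2(\beta_{N^+Y}(\tPhi(\tilde p)))=c_2(\Phi(\beta_{N^+Y}(\tilde p)))=c_1(\beta_{N^+Y}(\tilde p))$, using $\beta_{N^+Y}\circ\tPhi=\Phi\circ\beta_{N^+Y}$ and $\Phi=c_2^{-1}\circ c_1$; so the two prescriptions agree and $F$ is a well-defined smooth map. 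Running the same construction with $c_1$ and $c_2$ exchanged (and $\Phi^{-1}$ in place of $\Phi$) gives a smooth $G\colon[M;Y]_{w,c_2}\to[M;Y]_{w,c_1}$, and $G\circ F$ and $F\circ G$ are the identity because they restrict to the identity on $M\setminus Y$ and, near $\ff$, to $\widetilde{\Phi^{-1}}\circ\tPhi=\widetilde{\Phi^{-1}\circ\Phi}=\mathrm{id}$ by uniqueness of lifts. Hence $F$ is a diffeomorphism, and it is natural in that it depends only on $c_1$ and $c_2$: its restriction to $M\setminus Y$ is canonical, and near $\ff$ it is the lift of the germ along $Y$ of $c_2^{-1}\circ c_1$, which is independent of the auxiliary choice of $\cU_0$. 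This yields the asserted natural diffeomorphism $[M;Y]_{w,c_1}\cong[M;Y]_{w,c_2}$.
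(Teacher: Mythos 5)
Your proposal follows the paper's own proof essentially verbatim: the paper likewise reduces everything to checking in local coordinates that $c_2^{-1}\circ c_1$ lifts to a diffeomorphism $\beta_{N^+Y}^{-1}(\cU_0)\to\beta_{N^+Y}^{-1}(\cV_0)$ of the weighted blow-ups, and then extends the identity on $M\setminus Y$ by this lift. You correctly identify the lifting step as the crux (the paper also defers it to a local-coordinate verification) and merely spell out the gluing, overlap-consistency and uniqueness-of-lift details that the paper leaves implicit.
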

\begin{proof}
In this case, one can check in local coordinates that $c_2^{-1}\circ c_1$ extends to a diffeomorphism
$$
       \widetilde{c_2^{-1}\circ c_1}: \widetilde{\cU}_0\to \widetilde{\cV_0}
$$
with $\widetilde{\cU}_0= \beta^{-1}_{N^+Y}(\cU_0)$ and $\widetilde{\cV}_0= \beta^{-1}_{N^+Y}(\cV_0)$ the lifts of $\cU_0$ and $\cV_0$ to $[N^+Y;Y]_w$ respectively.  Using this diffeomorphism, we see that the identity map $\Id: M\setminus Y \to M\setminus Y$ extends uniquely to a diffeomorphism
$$
        [M;Y]_{w,c_1}\cong [M;Y]_{w,c_2}.
$$
\end{proof}

Thus, we will be able to define the weighted blow-up of $Y$ in $M$ provided we can specify in some natural way a class of equivalent collar neighborhoods for $Y$ in $M$.  This will be possible for specific choices of $M$ and $Y$.  In fact, we will even be able to pick a natural linear equivalence class.  Let us consider the simplest possible situation where $M=\bbR^n_k$  with weighted $\bbR^+$-action specified by a weight $w=(w_1,\ldots,w_n)$.  In terms of the canonical coordinates on $\bbR^n_k$, suppose that $Y$ is a \textbf{$p$-linear subspace} of $\bbR^n_k$, that is, a  $p$-submanifold of the form
$$
      Y=\{ (x_1,\ldots,x_n)\in \bbR^n_k \; | \; x_i=0 \quad \mbox{for} \; i\in \cI\}
$$
for some subset $\cI\subset \{1,\ldots,n\}$.  Using the linear structure on $\bbR^n_k$, we have a canonical decomposition 
\begin{equation}
    \bbR^{n}_k = N^+Y=  Y\times \bbR^m_\ell 
\label{wbu.12}\end{equation}
for some $m$ and $\ell$.  Moreover, the weight $w$ induces a weight on $N^+Y$, since the coordinates on the fiber $\bbR^{m}_{\ell}$ of $N^+Y$ are given by $x_i$ for $i\in\cI$ to which we can assign the weight $w_i$.  In this case, \eqref{wbu.12} naturally defines a linear equivalence class of collar neighborhoods with respect to the decomposition $\bbR^m_{\ell}= [0,\infty)^{\ell}\times \bbR^{m-\ell}$.  With respect to this choice of linear equivalence class of collar neighborhoods, we therefore have a well-defined weighted blow-up
\begin{equation}
            [\bbR^n_k; Y]_w= [\bbR^{m}_\ell; \{0\}]_w\times Y \quad \mbox{with blow-down map} \quad \beta_Y: [\bbR^{n}_k;Y]_w\to \bbR^{n}_k.
\label{wbu.13}\end{equation}

More importantly, let $Z\subset \bbR^n_k$ be another $p$-linear subspace of the form 
$$
       Z=\{ (x_1,\ldots,x_n)\in \bbR^{n}_k \; | \; x_i=0 \; \mbox{for} \; i\in \cJ\}
$$
for a subset $\cJ$ strictly contained in $\cI$, so that $Y$ is strictly included in $Z$.  Then the \textbf{lift} of $Z$ in $[\bbR^{n}_k;Y]_w$ is the $p$-submanifold
$$
    \widetilde{Z}= \overline{\beta^{-1}_Y(Z\setminus Y)}.
$$ 
We claim that the inner-pointing normal bundle $N^+\widetilde{Z}$ of $\widetilde{Z}$ has a natural decomposition induced by the $\bbR^+$-action on $\bbR^n_k$ and that there is a corresponding linear equivalence class of collar neighborhoods  for $\widetilde{Z}$ in $[\bbR^n_k;Y]_w$.  To see this, let us first relabel the coordinates $(x_1,\ldots, x_n)\in \bbR^n_k$ so that $\cJ=\{1,\ldots, m_{J}\}$ and $\cI=\{1,\ldots, m\}$ with $m>m_J$.  Then for $i\in \cI\setminus \cJ$, $x_i^{\frac{1}{w_i}}$ (and also $(-x_i)^{\frac{1}{w_i}}$ when $x_i$ can take negative values) defines a local boundary defining function for the boundary hypersurface $H_Y:=\beta_Y^{-1}(Y)$ created by the weighted blow-up of $Y$.  It does so in a region overlapping with $\widetilde{Z}\cap H_Y$, where we can use the coordinates
\begin{equation}
    \zeta_{ij,+}= \left\{ \begin{array}{ll} (x_i)^{\frac{1}{w_i}}, & j=i, \\
                                         x_i^{-\frac{w_j}{w_i}}x_j, & j\in \cI\setminus \{i\}, \\
                                        x_j, & \mbox{otherwise,}
   \end{array} \right.
\label{wbu.14}\end{equation} 
as well as
\begin{equation}
    \zeta_{ij,-}= \left\{ \begin{array}{ll} (-x_i)^{\frac{1}{w_i}}, & j=i, \\
                                         (-x_i)^{-\frac{w_j}{w_i}}x_j, & j\in \cI\setminus \{i\}, \\
                                        x_j, & \mbox{otherwise,}
   \end{array} \right.
\label{wbu.14b}\end{equation} 
when $x_i$ can take negative values.

Under the identification \eqref{wbu.13}, $(\zeta_{i1,\pm},\ldots,\zeta_{in,\pm})$ corresponds to the point 
\begin{equation}
  ([\zeta_{ii,\pm}:\zeta_{i1,\pm}: \ldots : \zeta_{i(i-1),\pm}: \pm 1: \zeta_{i(i+1),\pm}: \ldots: \zeta_{im,\pm}], (\zeta_{i(m+1),\pm},\ldots, \zeta_{in,\pm}))\in [\bbR^m_{\ell};\{0\}]_w\times Y,
\label{wbu.15}\end{equation}
where
$[x_0:\ldots: x_m] \in [\bbR^m_\ell;\{0\}]$ is the class corresponding to $(x_0,x_1,\ldots,x_m)\in [0,\infty)\times \bbR^m_{\ell}$.  In these coordinates, $\widetilde{Z}$ is locally given by 
$$
    \zeta_{i1,\pm}=\cdots= \zeta_{im_J,\pm}=0.
$$
For $i,j\in \cI\setminus \cJ$ with $i\ne j$ and $p,q\in\{+,-\}$, the change for the coordinates $\zeta_{i,p}$ to the coordinates $\zeta_{j,q}$ is given by 
\begin{equation}
   \zeta_{jk,q}= \left\{ \begin{array}{ll} p(q\zeta_{ij,p})^{-\frac{w_i}{w_j}}, & k=i, \\
                                        (q\zeta_{ij,p})^{\frac{1}{w_j}}\zeta_{ii,p}, & k=j, \\
                                        (q\zeta_{ij,p})^{-\frac{w_k}{w_j}} \zeta_{ik,p}, & \mbox{otherwise.}
                                      \end{array} \right.
\label{wbu.16}\end{equation}
\begin{lemma}
The coordinates $(x_1,\ldots,x_n)$ and $(\zeta_{i1,\pm},\ldots, \zeta_{in,\pm})$ for $\cI\setminus \cJ$ induce a decomposition of $N^+\widetilde{Z}$ and a linear equivalence class of collar neighborhoods for $\widetilde{Z}$ in $[\bbR^n_k;Y]_w$.
\label{wbu.17}\end{lemma}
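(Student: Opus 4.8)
The plan is to argue chart by chart, reducing everything to a transition-map computation in the coordinate systems already introduced. First I would record the two facts underlying the statement. Away from $H_Y:=\beta_Y^{-1}(Y)$ the map $\beta_Y$ is a diffeomorphism and $\widetilde Z\setminus H_Y$ is identified with the $p$-linear subspace $Z\setminus Y\subset\bbR^n_k\setminus Y$, for which the linear structure of $\bbR^n_k$ already provides a splitting of the normal bundle into the lines $\langle\pa/\pa x_j\rangle$, $j\in\cJ$, and a canonical linear collar. Near $\widetilde Z\cap H_Y$, a point of $H_Y$ has in its fibre $[\bbR^m_\ell;\{0\}]_w$ homogeneous coordinates $[0:\cdots:0:x_{m_J+1}:\cdots:x_m]$ with $(x_{m_J+1},\dots,x_m)\neq 0$, hence lies in the domain $x_i\neq 0$ of the chart \eqref{wbu.14} (or \eqref{wbu.14b}) for some $i\in\cI\setminus\cJ$; so the charts $\zeta_{i\bullet,\pm}$ with $i\in\cI\setminus\cJ$, together with the $x$-chart on $\widetilde Z\setminus H_Y$, cover a neighbourhood of $\widetilde Z$. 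In each of these charts $\widetilde Z$ is the coordinate $p$-submanifold $\{\zeta_{ij,\pm}=0:j\in\cJ\}$ (resp.\ $\{x_j=0:j\in\cJ\}$), so its inner-pointing normal bundle is spanned there by $\pa/\pa\zeta_{ij,\pm}$ (resp.\ $\pa/\pa x_j$), $j\in\cJ$; each such direction is an $[0,\infty)$-direction when $j$ indexes one of the first $k$ coordinates of $\bbR^n_k$ and a two-sided direction otherwise. This yields, in each chart, a splitting $N^+\widetilde Z=\bigoplus_{j\in\cJ}L_j$ into lines and a corresponding linear collar (the normal coordinate being the value of $\zeta_{ij,\pm}$, resp.\ $x_j$).

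The heart of the proof is to check that these chartwise splittings and collars agree up to linear equivalence on overlaps. For two charts $(i,p)$ and $(j,q)$ with $i,j\in\cI\setminus\cJ$ I would differentiate the transition formula \eqref{wbu.16} and restrict to $\widetilde Z=\{\zeta_{ik,p}=0:k\in\cJ\}$. Since $\cJ\cap\{i,j\}=\emptyset$, for $k\in\cJ$ formula \eqref{wbu.16} falls in its ``otherwise'' branch, $\zeta_{jk,q}=(q\zeta_{ij,p})^{-w_k/w_j}\zeta_{ik,p}$, where $\zeta_{ij,p}$ is a coordinate tangent to $\widetilde Z$; when one computes $d\zeta_{jk,q}$ along $\widetilde Z$ the term involving $d(q\zeta_{ij,p})^{-w_k/w_j}$ drops out because it is multiplied by $\zeta_{ik,p}=0$. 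Hence along $\widetilde Z$ the normal block of the Jacobian is the diagonal matrix $\mathrm{diag}\big((q\zeta_{ij,p})^{-w_k/w_j}\big)_{k\in\cJ}$, with entries that are smooth, strictly positive (the overlap region being exactly where $q\zeta_{ij,p}>0$), and functions of coordinates tangent to $\widetilde Z$ only; the normal-to-tangential block vanishes on $\widetilde Z$ for the same reason. The same computation using \eqref{wbu.14}/\eqref{wbu.14b} shows that across the interface with the $x$-chart the normal coordinates transition by $\zeta_{ik,p}=(\pm x_i)^{-w_k/w_i}x_k$, again diagonal, positive and base-dependent. It follows that each $L_k$, $k\in\cJ$, is a globally well-defined line subbundle of $N^+\widetilde Z$ (with positive transition functions, so the inward-pointing structure is preserved), and that any two of the coordinate collars satisfy Definition~\ref{wbu.10}: their change of collar preserves the fibres of $N^+\widetilde Z$, is linear in each fibre, and preserves the splitting $\bigoplus_{k\in\cJ}L_k$.

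To finish, I would patch the chartwise collars by a partition of unity subordinate to the cover above; by the previous paragraph every pairwise comparison is fibre-linear and splitting-preserving, so the resulting global collar lies in a single linear equivalence class independent of the patching. This produces the decomposition of $N^+\widetilde Z$ and the linear equivalence class of collar neighbourhoods asserted, and via Lemma~\ref{wbu.11} it makes the iterated weighted blow-up $[[\bbR^n_k;Y]_w;\widetilde Z]_{w'}$ well-defined for any choice of weight $w'$ on the pieces $L_k$.

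I expect the middle step to be the main obstacle: the transition-map bookkeeping in \eqref{wbu.16} and \eqref{wbu.14}--\eqref{wbu.14b} must be carried out carefully, in particular (a) keeping track of the two sign charts and their domains so that every fractional power $(q\zeta_{ij,p})^{\bullet}$ is taken of a positive quantity, (b) separating the normal directions $j\in\cJ$ from the tangential ones (including $j=i$, which cuts out $\widetilde Z\cap H_Y$ inside $\widetilde Z$) and verifying that the mixed Jacobian entries vanish on $\widetilde Z$, and (c) checking that no boundary hypersurface of $[\bbR^n_k;Y]_w$ other than $H_Y$ and the lifts of the original $\{x_i=0\}$ meets $\widetilde Z$, so that the $[0,\infty)$-versus-$\bbR$ structure of each $L_k$ is read off consistently from the original corner structure of $\bbR^n_k$.
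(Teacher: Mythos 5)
Your proof is correct and follows essentially the same route as the paper: read off the local trivializations of $N^+\widetilde{Z}$ and local collars from the $x$-chart and the $\zeta_{i\bullet,\pm}$-charts, then use the explicit transition formulas \eqref{wbu.14}, \eqref{wbu.14b} and \eqref{wbu.16} to see that on overlaps the normal coordinates transform by positive diagonal factors depending only on tangential variables, so the collars are fiber-preserving, splitting-preserving and linear in the fibers. The partition-of-unity patching you add at the end is harmless extra detail beyond what the paper records, but the substance is identical.
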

\begin{proof}
In the coordinates $(x_1,\ldots,x_n)$, the inner-pointing normal bundle is trivialized by the coordinates $x_1,\ldots, x_{m_J}$, while in the coordinates $(\zeta_{i1,\pm},\ldots,\zeta_{in,\pm})$ for $i\in \cI\setminus \cJ$, it is trivialized by $\zeta_{i1,\pm},\ldots, \zeta_{im_J,\pm}$.  In both cases, we have a corresponding  local collar neighborhood for $\widetilde{Z}$ in $[\bbR^{n}_k;Y]$ and an induced decomposition.  If we denote them by $c_0$ and $c_{i,\pm}$ respectively, then \eqref{wbu.14}, \eqref{wbu.14b} and \eqref{wbu.16} show that on their overlaps, they are linearly equivalent in the sense of Definition~\ref{wbu.10}.  Since these coordinates cover $\widetilde{Z}$ in $[\bbR^n_k;Y]_w$, this means that they induce a decomposition
$$
         N^+\widetilde{Z}= \bigoplus_{j=1}^{m_J} V_j
$$ 
with $V_j$ spanned locally by $x_j$ and $\zeta_{ij,\pm}$ for $i\in\cI\setminus \cJ$, as well as a corresponding natural linear equivalence class of collar neighborhoods for $\widetilde{Z}$ in $[\bbR^n_k;Y]_w$.  
\end{proof}
Of course, the weight $w$ naturally induces a weight on $V_j$, namely $w_j$.  This means that we can consider the weighted blow-up of $\widetilde{Z}$ in $[\bbR^n;Y]_w$, namely
$$
         [[\bbR^n_k;Y]_w; \widetilde{Z}]_w
$$
with the linear equivalence class of collar neighborhood that of Lemma~\ref{wbu.17}.  As for the usual blow-up, it is convenient to use the notation
$$
  [\bbR^n_k;Y,Z]_w:= [[\bbR^n_k;Y]_w; \widetilde{Z}]_w.
$$
Clearly, if $\cI_p\subsetneq \cdots\subsetneq \cI_1$ is a sequence of strictly embedded subsets of $\{1,\ldots,n\}$ and $Y_1\subsetneq \cdots\subsetneq Y_p$ the corresponding sequence of strictly embedded $p$-linear subspaces, then the above argument can be used to define the sequence of weighted blow-ups of $Y_1,\ldots, Y_p,$
\begin{equation}
  [\bbR^n_k;Y_1,\ldots,Y_p]_w:= [\cdots [ [\bbR^n_k;Y_1]_w;\widetilde{Y}^{(1)}_2]_w\cdots;\widetilde{Y}^{(p-2)}_{p-1}]_w;\widetilde{Y}^{(p-1)}_p]_w
\label{wbu.18}\end{equation}
with $\widetilde{Y}_{i}^{(i-1)}=\overline{\beta_{i-1}^{-1}(Y_i\setminus Y_{i-1})}$ the \textbf{lift} of $Y_i$ to $[\bbR^n_k;Y_1,\ldots, Y_{i-1}]_w$ with blow-down map 
$$
    \beta_{i-1}: [\bbR^n_k;Y_1,\ldots, Y_{i-1}]\to \bbR^n_k.
$$ 
More generally, let $Y_1,\ldots, Y_p$ be a sequence of distinct $p$-linear subspaces of $\bbR^n_k$ with $\cI_1,\ldots, \cI_p$ the corresponding subsets of $\{1,\ldots,n\}$.

\begin{lemma}Suppose that $\{Y_1,\ldots,Y_p\}$ is closed under taking intersections, or equivalently, $\{\cI_1,\ldots,\cI_p\}$ is closed under taking unions.  Then the weighted iterated blow-up 
$$
     [\bbR^n_k;Y_1,\ldots,Y_p]_w
$$
is well-defined provided 
\begin{equation}
     Y_i\subset Y_j\quad \Longrightarrow \quad i\le j.
\label{wbu.19}\end{equation}
Moreover, as long as \eqref{wbu.19} holds, the definition will not depend on the ordering of the $p$-submanifolds $Y_1,\ldots, Y_p$.  
\label{wbu.20}\end{lemma}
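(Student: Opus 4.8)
The plan is to run the classical argument for the iterated blow-up of a family of $p$-submanifolds closed under intersection --- analogous to \cite[\S~2.2]{HHM1995} --- while dragging along at each stage the two extra pieces of data that distinguish the \emph{weighted} blow-up from the ordinary one: the weight on each normal factor, and the decomposition of the inner-pointing normal bundle together with its natural linear equivalence class of collar neighbourhoods. The engine that produces these data across a single blow-up is Lemma~\ref{wbu.17}, so the proof really amounts to iterating that lemma and checking that the outputs match up.

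First I would reduce everything to a local statement. Since the weighted blow-up of a $p$-submanifold alters $\bbR^n_k$ only inside an arbitrarily small neighbourhood of that submanifold (see \eqref{wbu.9}), both the well-definedness of \eqref{wbu.18} and its independence of the ordering may be checked in the coordinate patches \eqref{wbu.14}, \eqref{wbu.14b}; in each such patch, after relabelling the coordinates, one is again blowing up a finite sequence of $p$-linear subspaces, and the transition formulas \eqref{wbu.16} are exactly of the form needed to glue the patchwise constructions. I would then induct on $p$. For the inductive step, pick a member $Y_1$ that is minimal with respect to inclusion; because the family is closed under intersection, every other member either contains $Y_1$ or meets it trivially (if $Y_i\cap Y_1$ were a member strictly smaller than $Y_1$, minimality would fail; in $\bbR^n_k$ itself this forces the minimal member to be unique, namely $\bigcap_i Y_i$, which by \eqref{wbu.19} must be $Y_1$). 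The members meeting $Y_1$ trivially are left untouched by the blow-up of $Y_1$, while for each member with $Y_1\subsetneq Y_i$ the charts \eqref{wbu.14}, \eqref{wbu.14b} realize the lift $\widetilde{Y}_i$ as a $p$-linear subspace meeting the new front face $H_{Y_1}$ cleanly, so Lemma~\ref{wbu.17} supplies a natural decomposition of $N^+\widetilde{Y}_i$ and a natural linear equivalence class of collars; one checks that $\{\widetilde{Y}_2,\ldots,\widetilde{Y}_p\}$ is again (locally) closed under intersection and ordered compatibly with inclusion, since lifting preserves inclusions. The inductive hypothesis then applies to $[\bbR^n_k;Y_1]_w$. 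The main technical burden here --- and the part I expect to be most delicate --- is verifying that the decompositions and collar classes produced by successive applications of Lemma~\ref{wbu.17} are mutually compatible across the overlapping charts \eqref{wbu.14}--\eqref{wbu.16}, so that the iterated blow-up genuinely glues to a manifold with corners; this is bookkeeping rather than a new idea, but it is where one must be careful.

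For the independence of the ordering, I would use the standard fact that any two linear extensions of a finite poset are related by a finite sequence of transpositions of adjacent \emph{incomparable} elements, which reduces the claim to swapping two adjacent incomparable members $Y$ and $Z$ for which both the ordering with $Y$ first and the one with $Z$ first respect \eqref{wbu.19}. Closure under intersection makes $Y\cap Z$ a member, strictly contained in each of $Y$ and $Z$ (by incomparability), so by \eqref{wbu.19} it has already been blown up by the time one reaches $Y$ and $Z$. A short computation in the weighted-sphere model of the front face $H_{Y\cap Z}$ finishes the argument: writing $Y=\{x_i=0:i\in\cI_Y\}$, $Z=\{x_i=0:i\in\cI_Z\}$, the lift of $Y$ meets $H_{Y\cap Z}$ only in the normal directions supported on $\cI_Z\setminus\cI_Y$ and the lift of $Z$ only in those supported on $\cI_Y\setminus\cI_Z$, and these index sets are disjoint, so the two lifts are disjoint after the blow-up of $Y\cap Z$; they remain disjoint after the intervening blow-ups, since blowing up never creates new intersections. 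Blowing up two disjoint $p$-submanifolds manifestly commutes and leaves the subsequent blow-ups unaffected (again by locality), so the two orderings produce the same space, the identification being the identity away from the blown-up locus and compatible with the blow-down maps. The base case $p\le 1$ being trivial, the induction closes and the lemma follows.
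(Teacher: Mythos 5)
Your proposal is correct and rests on the same key observation as the paper's proof: by \eqref{wbu.19} the common intersection of two incomparable members is blown up first, after which their lifts are disjoint, so those blow-ups commute and everything reduces to the nested-chain case \eqref{wbu.18} already handled by Lemma~\ref{wbu.17}. Your version just spells out the bookkeeping (induction on $p$, adjacent transpositions, the explicit index-set computation for disjointness) that the paper leaves implicit.
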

\begin{proof}
Let $Y_i$ and $Y_j$ be two of the $p$-submanifolds that are blown up.  If one is included in the other, then the order in which they are blown up is completely determined by \eqref{wbu.19}.  On the other hand, if $Y_i\cap Y_j=Y_q$ for some $q$ with $Y_q\subsetneq Y_i$ and $Y_q\subsetneq Y_j$, then by \eqref{wbu.19}, $q<i$ and $q<j$, so the blow-up of $Y_q$ is performed before those of $Y_i$ and $Y_j$.  But after the blow-up of $Y_q$, the lifts of $Y_i$ and $Y_j$ will be disjoint, so their weighted blow-ups commute, that is, the two orders in which we can blow-up $Y_i$ and $Y_j$ lead to the same space.  
The fact that they are disjoint also indicates more generally that we can essentially  reduce to the case of sequences of strictly included $p$-submanifolds of \eqref{wbu.18}.  

\end{proof}

The manifolds on which we want to perform weighted iterated blow-ups is not quite $\bbR^n_k$, but almost.  We want instead to apply this construction to the weighted radial compactification of $\overline{\bbR^n_{k,w}}$ with respect to some weight $w\in (\bbR^+)^n$.  Let $H_\infty$ be the boundary hypersurface of $\overline{\bbR^n_{k,w}}$ such that $\bbR^n_k= \overline{\bbR^n_{k,w}}\setminus H_{\infty}$.  

\begin{lemma}
Let $\{Y_1,\ldots,Y_p\}$ be a finite set of $p$-linear subspaces of $\bbR^n_k$ and $\cI_1,\ldots,\cI_p$ the corresponding subsets of $\{1,\ldots,n\}$.  Let $\overline{Y}_i$ be the closure of $Y_i$ in $\overline{\bbR^n_{k,w}}$ and consider the $p$-submanifolds $\overline{Y}_1\cap H_{\infty},\ldots, \overline{Y}_p\cap H_{\infty}$.  
Suppose that $\{\overline{Y}_1\cap H_{\infty},\ldots,\overline{Y}_p\cap H_{\infty}\}$ is closed under taking non-empty intersections and that 
$$
     \overline{Y}_i\cap H_{\infty}\subset \overline{Y}_j\cap H_{\infty}\; \Longrightarrow \;  i\le j.
$$ 
Then the weighted iterated blow-up 
\begin{equation}
 [\overline{\bbR^n_{k,w}}; \overline{Y}_1\cap H_{\infty},\ldots,\overline{Y}_p\cap H_{\infty}]_w
\label{wbu.22}\end{equation} 
of $\overline{Y}_1\cap H_{\infty},\ldots,\overline{Y}_p\cap H_{\infty}$ in $\overline{\bbR^n_{k,w}}$ is well-defined, namely, at each step, the lifts of the $p$-submanifolds that must be blown-up come with a natural decomposition of their inner-pointing normal bundle and a natural linear equivalence class of collar neighborhood in the sense of Definition~\ref{wbu.10}.
\label{wbu.21}\end{lemma}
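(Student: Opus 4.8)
The plan is to deduce the statement from Lemma~\ref{wbu.20} by realizing the iterated blow-up \eqref{wbu.22} as the restriction, to a single boundary hypersurface, of an iterated weighted blow-up of coordinate subspaces in the manifold with corners $\widehat{M}:=[0,\infty)\times\bbR^n_k=\bbR^{n+1}_{k+1}$. Following the discussion after Definition~\ref{wbu.6b}, I would equip $\widehat M$ with the weight $\widehat{w}:=(1,w)\in(\bbR^+)^{n+1}$, with the extra slot (coordinate $s$, say) carrying weight $1$, so that $[\widehat M;\{0\}]_{\widehat w}$ has front face $H_{\{0\}}$ canonically diffeomorphic to $\overline{\bbR^n_{k,w}}$, under which the lift of the boundary hypersurface $\widehat H_0:=\{s=0\}$ of $\widehat M$ meets $H_{\{0\}}$ exactly along $H_\infty$. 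For each $j$ put $\widehat Y_j:=\{0\}\times Y_j\subset\widehat M$, the coordinate subspace cut out by $s=0$ and $x_i=0$ for $i\in\cI_j$. Tracking limiting directions through the blow-down map, one checks that the lift $\widetilde{\widehat{Y}_{j}}$ of $\widehat Y_j$ to $[\widehat M;\{0\}]_{\widehat w}$ is transverse to $H_{\{0\}}$ and meets it precisely in $\overline{Y}_j\cap H_\infty$, under the identification $H_{\{0\}}\cong\overline{\bbR^n_{k,w}}$.

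Next I would verify the hypotheses of Lemma~\ref{wbu.20} for the collection $\{\{0\},\widehat Y_1,\ldots,\widehat Y_p\}$ of coordinate subspaces of $\widehat M$, with $\{0\}$ placed first. The crucial point is closedness under intersection. Each intersection $Y_i\cap Y_j$ is a coordinate subspace, equal either to $\{0\}$ or to a positive-dimensional coordinate subspace; in the latter case $\overline{Y_i\cap Y_j}\cap H_\infty=(\overline{Y}_i\cap H_\infty)\cap(\overline{Y}_j\cap H_\infty)$ is non-empty, hence equals some $\overline{Y}_q\cap H_\infty$ by hypothesis, and since the map $Y\mapsto\overline{Y}\cap H_\infty$ is injective on positive-dimensional coordinate subspaces (each such $Y$ being recovered as the closure of the $\bbR^+$-orbit of its trace together with the origin) this forces $Y_i\cap Y_j=Y_q$. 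Thus $\{Y_1,\ldots,Y_p\}\cup\{\{0\}\}$ is closed under intersection in $\bbR^n_k$, so $\widehat Y_i\cap\widehat Y_j=\{0\}\times(Y_i\cap Y_j)$ is again $\{0\}$ or some $\widehat Y_q$. The ordering condition $\widehat Y_i\subset\widehat Y_j\Rightarrow i\le j$ follows from $\widehat Y_i\subset\widehat Y_j\iff Y_i\subset Y_j\iff\overline{Y}_i\cap H_\infty\subset\overline{Y}_j\cap H_\infty$ together with the corresponding hypothesis (the degenerate cases $Y_j=\{0\}$, where $\overline{Y}_j\cap H_\infty=\emptyset$ and the blow-up is vacuous, and repeated subspaces, are simply discarded). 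By Lemma~\ref{wbu.20}, $[\widehat M;\{0\},\widehat Y_1,\ldots,\widehat Y_p]_{\widehat w}$ is well-defined, and by its proof (through Lemma~\ref{wbu.17}) each successive lift of $\widehat Y_j$ carries a natural decomposition of its inner-pointing normal bundle and a natural linear equivalence class of collar neighborhoods.

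Finally I would restrict everything to the boundary hypersurface $\cF$ of $[\widehat M;\{0\},\widehat Y_1,\ldots,\widehat Y_p]_{\widehat w}$ obtained as the iterated lift of $H_{\{0\}}$. Since $\{0\}\subset\widehat Y_j$ for each $j$, the blow-up of $\{0\}$ is performed first (creating $H_{\{0\}}\cong\overline{\bbR^n_{k,w}}$), and the lifts of the $\widehat Y_j$ remain transverse to the lift of $H_{\{0\}}$ at every stage, since transversality of a $p$-submanifold to a boundary hypersurface persists under subsequent blow-ups of $p$-submanifolds that are themselves transverse to it --- a fact read off from the local models of \S\ref{wbu.0}. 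Consequently, at each stage the weighted blow-up of the lift of $\widehat Y_j$ restricts on the lift of $H_{\{0\}}$ to the weighted blow-up of its trace there, namely the lift of $\overline{Y}_j\cap H_\infty$ in $[\overline{\bbR^n_{k,w}};\overline{Y}_1\cap H_\infty,\ldots,\overline{Y}_{j-1}\cap H_\infty]_w$, and the natural decomposition and linear equivalence class of collar neighborhoods restrict along with it. Because the weight-$1$ slot $s$ of $\widehat w$ becomes the defining direction of $H_\infty$ and the slots $w_i$ ($i\in\cI_j$) become the $x_i$-directions, the blow-up thus obtained on $\cF$ is exactly \eqref{wbu.22}. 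This simultaneously shows that \eqref{wbu.22} is well-defined and produces, at each step, the asserted natural decomposition and natural linear equivalence class of collar neighborhoods.

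The step I expect to be the main obstacle is the restriction principle invoked last: that blowing up a $p$-submanifold transverse to a boundary hypersurface $H$ restricts, on the lift of $H$, to the weighted blow-up of the trace of that submanifold on $H$, carrying the normal-bundle decomposition and the collar-equivalence class along with it, and that the needed transversality is preserved throughout the iteration. For the ordinary blow-up this is classical, but in the weighted setting it must be extracted from the explicit coordinate descriptions \eqref{wbu.14}--\eqref{wbu.16}; this local verification is the one genuinely technical ingredient, everything else being bookkeeping with the identification $\overline{\bbR^n_{k,w}}\cong H_{\{0\}}$.
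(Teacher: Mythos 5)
Your proposal is correct and follows essentially the same route as the paper: the paper's proof also realizes $\overline{\bbR^n_{k,w}}$ as the boundary hypersurface created by blowing up the origin in $[0,\infty)\times \bbR^n_k$ with weight $(1,w)$, and then identifies \eqref{wbu.22} with the trace, on that front face, of the iterated blow-up $[[0,\infty)\times \bbR^n_k;\{0\},\{0\}\times Y_1,\ldots,\{0\}\times Y_p]_{(1,w)}$, which is well-defined by Lemma~\ref{wbu.20}. Your write-up is in fact more explicit than the paper's on the two points it leaves implicit (closure under intersection of the ambient coordinate subspaces once $\{0\}$ is adjoined, and the persistence of transversality so that the blow-ups restrict to the front face), so no gap.
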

\begin{proof}
Regarding $\overline{\bbR^n_{k,w}}$ as the new boundary hypersurface in $[[0,\infty)\times \bbR^n_k;\{0\}]_{(1,w)}$ created by the blow-up of the origin, we see that this iterated blow-up essentially corresponds to the iterated blow-ups of the lifts of $\{0\}\times Y_1,\ldots, \{0\}\times Y_p$ in $[[0,\infty)\times \bbR^n_k;\{0\}]_{(1,w)}$.  More precisely, if we consider
\begin{equation}
 [ [0,\infty)\times \bbR^n_k;\{0\}, \{0\}\times Y_1,\ldots, \{0\}\times Y_p ]_{(1,w)},
\label{wbu.22}\end{equation}
which is well-defined by Lemma~\ref{wbu.20}, then the boundary hypersurface created by the first weighted blow-up corresponds to the weighted iterated blow-up \eqref{wbu.22} that we want to define.
\end{proof}

\section{Compactification of smoothings of Calabi-Yau cones}\label{sCY.0}

In this section, we will construct a suitable compactification of the smoothing \eqref{cyc.6} by a manifold with fibered corners.  To do this, we will suppose that Assumptions~\ref{smooth.1} and \ref{cyc.6c} hold.  When $\ell>0$ and $N>1$, the zero locus $V_{\{1,\ldots,N\}}$ of $[Q_{q,i}]_{\ell}(z_0)$ described in Assumption~\ref{smooth.1} will play an important role.  On the other hand,  if $\ell= 0$, there is no zero locus, while if $N=1$, there is no need to consider this zero locus, so let us set
\begin{equation}
   V_{\ell,\zero}= \left\{ \begin{array}{ll} \emptyset, & \ell= 0 \; \mbox{or} \; N=1, \\    V_{\{1,\ldots,N\}}, & \ell\ge1 \;\mbox{and} \; N>1.
     \end{array} \right.
\label{pr.10a}\end{equation}

 Let $\overline{\bbC^{m+n}_w}$ be the radial compactification of $\bbC^{m+n}$ with respect to the $\bbR^+$-action specified by the weight $w=(w_{0,1},\ldots,w_{0,m_0+n_0}, w_{1,1},\ldots, w_{1,m_1+n_1},\ldots, w_{N,1}, w_{N,m_N+n_N})$,
$$
         t\cdot (z_0,z_1,\ldots, z_{N})= (tz_0, t^{w_{1,1}}z_{1,1},\ldots, t^{w_{1,m_1+n_1}}z_{1,m_1+n_1},\ldots, t^{w_{N,1}}z_{N,1}, t^{w_{N,m_N+n_N}}z_{N,m_N+n_N}) \quad \mbox{for} \; t>0.
$$
The boundary of $\overline{\bbC^{m+n}_w}$ is the sphere
$$
       \bbS^{2m+2n-1}:= \left\{ z\in \bbC^{m+n} \; | \; \sum_{q=0}^{N}\sum_{j=1}^{m_q+n_q} |z_{q,j}|^2=1 \right\}.
$$
In terms of this sphere, $\overline{\bbC^{m+n}_w}$ is obtained by gluing $\bbS^{2m+2n-1}\times [0,\infty)$ and $\bbC^{m+n}$ via the map
\begin{equation}
\begin{array}{lccl}
\nu: & \bbS^{2m+2n-1} \times (0,\infty) & \to & \bbC^{m+n} \\
               &(\omega,\xi) & \mapsto & (\frac{\omega_{0}}{\xi^{w_{0}}},\frac{\omega_{1,1}}{\xi^{w_{1,1}}},\ldots, \frac{\omega_{N,m_N+n_N}}{\xi^{w_{N,m_N+n_N}}}).
\end{array}
\label{cyc.7}\end{equation} 
In fact, this map extends to give a tubular neighborhood
$$
     \nu:  \bbS^{2m+2n-1}\times [0,\infty)\to \overline{\bbC^{m+n}_w}
$$
of $\bbS^{2m+2n-1}$ in $\bbC^{m+n}$.  In terms of the coordinates induced by this tubular neighborhood, we see that the defining equations of $C_{\epsilon}$ are
\begin{equation}
    \xi^{-d}P_{q,i}(\omega_q)= \epsilon Q_{q,i}( \xi^{-1}\cdot \omega_0 ) \quad \mbox{for} \;q\in\{1,\ldots,N\}, \; i\in \{1,\ldots,n_q\},
\label{cyc.8}\end{equation}
that is, as $\xi\searrow 0$,
\begin{equation}
      P_{q,i}(\omega_q)= \xi^{d-\ell}[Q_{q,i}]_{\ell}(\omega_0)+ o(\xi^{d-\ell}) \quad \mbox{for} \; i\in \{1,\ldots, n_q\}.
\label{cyc.9}\end{equation}
The closure $\overline{C}_{\epsilon}$ of $C_{\epsilon}$ in $\overline{\bbC^{m+n}_w}$ is obtained by  taking also $\xi=0$ in \eqref{cyc.8}.  In particular, in light of \eqref{cyc.9}, on $\pa\overline{\bbC^{m+n}_w}$, we see that
$\pa\overline{C}_{\epsilon}= \overline{C}_{\epsilon}\cap \pa \overline{\bbC^{m+n}_w}$ is given by the equations
$$
         P_{q,i}(\omega_q)=0,  \quad \omega\in \bbS^{2m+2n-1}=\pa \overline{\bbC^{m+n}_w},  \quad q\in\{1,\ldots,N\}, \; i\in \{1,\ldots,n\}.
$$
This does not depend on $\epsilon$ and coincides with $\pa \overline{C}_0= \overline{C}_0\cap \pa \overline{\bbC^{m+n}_w}$, which corresponds to the fact that the polynomial $Q_{q,i}$ has degree strictly smaller than that of $P_{q,i}$ for each $i$.  In particular, if $C_0$ has a singular cross-section, that is, if $\pa \overline{C}_0$ is singular, then $\pa \overline{C}_{\epsilon}$ is also singular, even if $C_{\epsilon}$ itself is smooth.  To resolve these singularities, consider in $\bbC^{m+n}$ the subsets
\begin{equation}
       \bbC^{m+n}_{w,\sing}:= \bigcup_{\{0\}\subset \mathfrak{q}\subsetneq \{1,\ldots,N\}} V_{\mathfrak{q}} 
\label{singset.1}\end{equation}
and $V_{\ell,\zero}$ defined in \eqref{pr.10a}.
 Let $\overline{\bbC^{m+n}_{w,\sing}}$ and $\overline{V}_{\ell,\zero}$ be their closure in $\overline{\bbC^{m+n}_w}$.  If we set $\pa \overline{\bbC^{m+n}_{w,\sing}}= \overline{\bbC^{m+n}_{w,\sing}}\cap \pa \overline{\bbC^{m+n}_w}$ and  $\pa \overline{V}_{\ell,\zero}= \overline{V}_{\ell,\zero}\cap \pa \overline{\bbC^{m+n}_w}$, then $\pa \overline{\bbC^{m+n}_{w,\sing}}$ is naturally a stratified space in $\pa\overline{\bbC^{m+n}_w}$.  Our strategy to resolve the singularities of $\overline{C}_{\epsilon}$ will essentially be to blow up the strata of $\pa \overline{\bbC^{m+n}_{w,\sing}}$ in an order compatible with the partial ordering of the strata.  However, when $\pa \overline{V}_{\ell,\zero}$ is not empty, the part of $\pa \overline{V}_{\ell,\zero}$ inside $\pa \overline{\bbC^{m+n}_{w,\sing}}$ needs to be blown up differently.  

For this reason, the stratification we will consider on $\pa \bbC^{m+n}_{w,\sing}$ is as follows.  If $\pa \overline{V}_{\ell,\zero}=\emptyset$, the closed strata will be given by $\pa \overline{V}_{\mathfrak{q}}:= \bV_{\mathfrak{q}}\cap \pa\overline{\bbC^{m+n}_w}$ for $\{0\}\subset \mathfrak{q}\subsetneq\{0,\ldots,N\}$, where $\bV_{\mathfrak{q}}$ is the closure of $V_{\mathfrak{q}}$ in $\overline{\bbC^{m+n}_w}$.  If instead $\pa\overline{V}_{\ell,\zero}\ne \emptyset$, we will also consider for $\{0\}\subsetneq \mathfrak{q}\subsetneq \{0,\ldots,N\}$ the closed strata 
$$
      \pa\overline{V}_{\mathfrak{q}}\cap \pa \overline{V}_{\ell,\zero}= \pa \overline{V}_{\mathfrak{q}\setminus \{0\}}.
$$
In other words, when $\pa\overline{V}_{\ell,\zero}\ne\emptyset$, the closed strata of $\pa\overline{\bbC^{m+n}_{w,\sing}}$ are given by 
$$
      \pa \overline{V}_{\mathfrak{q}}\quad \mbox{for non-empty subsets } \quad \mathfrak{q}\subsetneq \{0,\ldots,N\}.
$$ 
In both cases, the partial order on strata is given by inclusion,
\begin{equation}
  \pa\overline{V}_{\mathfrak{q}} \le \pa\overline{V}_{\mathfrak{p}} \quad \Longleftrightarrow \quad \pa\overline{V}_{\mathfrak{q}} \subseteq \pa\overline{V}_{\mathfrak{p}}  \quad \Longleftrightarrow \quad \mathfrak{q} \subseteq \mathfrak{p}.
\label{pr.12}\end{equation}
If $\pa \overline{V}_{\ell,\zero}\ne \emptyset$, that is, if $\ell>0$ and $N>1$, then by Lemma~\ref{wbu.21}, we can unambiguously consider the weighted iterated blow-up 
\begin{equation}
      X:= [\overline{\bbC^{m+n}_w}; \{\pa \overline{V}_{\mathfrak{q}} \; | \; \mathfrak{q}\subsetneq \{1,\ldots,N\}, \; \mathfrak{q}\ne \emptyset\}]_w
\label{pr.13}\end{equation}
provided we blow up the subspaces $\pa \overline{V}_{\mathfrak{q}}$ in an order compatible with the partial order \eqref{pr.12}.  If instead $\pa V_{\ell,\zero}=\emptyset$, that is, if instead $\ell=0$, we simply set
\begin{equation}
  X:= \overline{\bbC^{m+n}_w}.
\label{pr.14}\end{equation}

  In either case, let $H_{\max}$ be the boundary hypersurface of $X$ corresponding to (the lift of) $\pa \overline{\bbC^{m+n}_w}$.  By Lemmas~\ref{wbu.17} and \ref{wbu.21}, $H_{\max}$ comes with a natural linear equivalence class of collar neighborhoods in the sense of Definition~\ref{wbu.10}.   Let $x_{\max}$ be a choice of boundary defining function for $H_{\max}$ compatible with this linear equivalence class of collar neighborhoods.  Let $\wX$ be the manifold with corners, which, as a topological space, is identified with $X$, but with smooth functions on $\wX$ corresponding to smooth functions on $X\setminus H_{\max}$ having a smooth expansion at $H_{\max}$ in integer powers of $x_{\max}^{\frac{d-\ell}d}$ (instead of integer powers of $x_{\max}$).  Since we require $x_{\max}$ to be compatible with the natural linear equivalence class of collar neighborhoods of $H_{\max}$, notice that $\wX$ is well-defined in that it does not depend on the choice of $x_{\max}$.  Let us denote by $\widetilde{H}_{\max}$ the boundary hypersurface $H_{\max}$ seen as a boundary hypersurface of $\tX$.  Let $\widetilde{x}_{\max}:=x_{\max}^{\frac{d-\ell}d}$ be the corresponding boundary defining function.  Let $\widetilde{\bbC^{m+n}_{w,\sing}}$ be the closure of $\bbC^{m+n}_{w,\sing}$ in $\tX$.  For $\{0\}\subsetneq \mathfrak{q}\subset \{0,\ldots,N\}$, let $\tV_{\mathfrak{q}}$ be the closure of $V_{\mathfrak{q}}$ in $\tX$.  Then the lift of the strata $\pa\overline{V}_{\mathfrak{q}}$ of $\pa\overline{\bbC^{m+n}_{w,\sing}}$ to $\tH_{\max}$ for $\{0\}\subsetneq\mathfrak{q}\subset \{0,\ldots,N\}$ is given by 
  $$
     \tV_{\mathfrak{q}}\cap \tH_{\max}.
  $$
Relying again on Lemma~\ref{wbu.21}, we can then consider the weighted iterated blow-up
$$
     \hX:= [\tX; \{\tV_{\mathfrak{q}}\cap\tH_{\max}\; | \; \{0\}\subset \mathfrak{q}\subsetneq \{0,\ldots,N\}\} ]_{\widetilde{w}}
$$
provided the blow-ups are performed in an order compatible with the partial ordering of the strata, 
where the lift of the coordinate $\omega_{q,j}$ has weight $\widetilde{w}_{q,j}=w_{q,j}$, while (the lift of) the boundary defining function $\widetilde{x}_{\max}$ has weight $1$.  When $\nu:= \frac{\ell}{d}$ is positive, the set $\cM_1(\hX)$ of boundary hypersurfaces of $\hX$ decomposes as 
\begin{equation}
\cM_1(\hX)= M_{1,0}(\hX)\cup M_{1,\nu}(\hX),
\label{cyc.10}\end{equation}
where $\cM_{1,0}(\hX)$ is the set of boundary hypersurfaces corresponding to the lift of boundary hypersurfaces of $\tX$ coming from blow-ups of the strata  $\pa\overline{V}_{\mathfrak{q}}\cap \pa \overline{V}_{\ell,\zero}$ for subsets $\{0\}\subsetneq \mathfrak{q}\subsetneq \{0,\ldots,N\}$, while $\cM_{1,\nu}(\hX)$ corresponds to the remaining boundary hypersurfaces.  Notice that $\cM_{1,0}(\hX)$ will be empty when $\pa \overline{V}_{\ell,\zero}$ is.  When $\nu=\frac{\ell}{d}=0$, $\pa\overline{V}_{\ell,\zero}$ is empty, but it will be convenient in this case to use the notation 
$$ 
              \cM_{1,\nu}(\hX)=\cM_{1,0}(\hX):= \cM_1(\hX) \quad \mbox{if} \; \nu=\frac{\ell}{d}=0.
$$

For each blow-up performed to construct $\hX$, notice that the corresponding blow-down map induces a fiber bundle on the corresponding boundary hypersurface.  On the other hand, on the lift $\hH_{\max}$ of the boundary hypersurface $\pa\overline{\bbC^{m+n}_w}$ to $\hX$, the natural fiber bundle we consider is that given by the identity map.  All these fiber bundles combine to confer $\hX$ with an iterated fibration structure.   In other words, $\hX$ is naturally a $\QAC$-manifold with fibered corners.  Given the order in which we blew up strata, one important feature of the induced partial order on $\cM_1(\hX)$ when $\nu>0$ is that if $H\in\cM_{1,0}(\hX)$ and $G\in \cM_{1,\nu}(\hX)$ are such that $H\cap G\ne \emptyset$, then $H<G$.  

Let $\hC_{\epsilon}$ be the closure of $C_{\epsilon}\subset \bbC^{m+n}_{w}$ in $\hX$.  For each $\hH\in \cM_1(\hX)$, there is a corresponding boundary hypersurface $\hH_{\epsilon}:= \hH\cap \hC_{\epsilon}$ of $\hC_{\epsilon}$.  For $\epsilon=0$, $C_0$ is a singular affine variety and $\hH_0$ will be singular as well.  Since the base $S_{\hH}$ of the fiber bundle $\phi_{\hH}: \hH\to S_{\hH}$ corresponds to a resolution of a singular stratum of $\pa \overline{C}_0$, notice that $\phi_H$ restricts to $\hC_0$ and to $\hC_{\epsilon}$ for $\epsilon\ne 0$ to induce a fiber bundle 
\begin{equation}
        \phi_{\hH_{\epsilon}}: \hH_{\epsilon}\to S_{\hH}.
\label{cyc.11}\end{equation}
We need to distinguish two situations.  First, if $\hH\in \cM_{1,0}(\hX)$ corresponds to the subspace $V_{\mathfrak{q}}$ for $\mathfrak{q}\subsetneq \{1,\ldots,N\}$ with $\mathfrak{q}\ne \emptyset$, then on the interior of $\hH$, the interior of the fibers of $\phi_{\hH}: \hH\to S_{\hH}$ are naturally identified with the complementary subspace
$$
  V_{\mathfrak{q}}^{\perp}= V_{\mathfrak{q}^c}
$$
 with natural coordinates given by $z_q$ for $q\in \mathfrak{q}^c$.  Intuitively, this corresponds to the fact that the weighted blow-up of the stratum $\mathfrak{s}_{\hH}$ corresponding to $\hH$ `undoes' the radial compactification along $V_{\mathfrak{q}}$ in the directions transverse to $V_{\mathfrak{q}}$.   For $\omega_{\mathfrak{q}}\in \pa \overline{V}_{\mathfrak{q}}$ corresponding to an interior point of $S_{\hH}$, the interior of the fiber $\phi_{\hH}(\omega_{\mathfrak{q}})$ corresponds to $V_{\mathfrak{q}^c}$, with coordinate $z_{\mathfrak{q}^c}$.  Such a fiber will have a non-empty intersection with $\hC_{\epsilon}$ provided
 \begin{equation}
          P_{q,i}(\omega_{q})=\xi^d \epsilon Q_{q,i}(z_0) \quad \mbox{for} \quad q\in\mathfrak{q}
\label{pr.15}\end{equation}
at $\xi=0$, that is, provided
\begin{equation}
          P_{q,i}(\omega_{q})=0 \quad \mbox{for} \quad q\in\mathfrak{q},
\label{pr.16}\end{equation}
where $\omega_q$ is the component of $\omega_{\mathfrak{q}}$ in $V_q\subset V_{\mathfrak{q}}$.  The interior of the fiber of $\phi^{-1}_{\hH_{\epsilon}}(\omega_{\mathfrak{q}})$ is then the affine variety $W_{\mathfrak{q}^c,\epsilon}\subset V_{\mathfrak{q}^c}$ of \eqref{pr.10} with  $V_{\mathfrak{q}^c}$ seen as the interior of $\phi^{-1}_{\hH}(\omega_{\mathfrak{q}})$.  In particular, by Assumption~\ref{cyc.6c}, the interior of $\phi^{-1}_{\hH_{\epsilon}}(\omega_{\mathfrak{q}})$ is smooth.

If instead $\hH\in \cM_{1,\nu}(\hX)$ (with $\nu=\frac{\ell}d >0$) corresponds to the subspace $V_{\mathfrak{q}}$ with subset $\{0\}\subsetneq \mathfrak{q}\subset \{0,\ldots,N\}$,  then, in the interior of $\hH$, the interiors of the fibers of $\phi_{\hH}:\hH\to S_{\hH}$ are again identified with $V_{\mathfrak{q}^c}$, this time however using the rescaled coordinates 
\begin{equation}
  \zeta_{q,j}:= \frac{\omega_{q,j}}{\xi^{\frac{d-\ell}d w_{q,j}}}=\frac{\xi^{w_{q,j}}z_{q,j}}{\xi^{\frac{d-\ell}d w_{q,j}}}= \xi^{\frac{\ell}dw_{q,j}}z_{q,j} \quad \mbox{for} \; q\in\mathfrak{q}^c, \; j\in \{1,\ldots,m_q+n_q\}.
\label{cyc.14}\end{equation} 
Again, such a fiber $\phi^{-1}_{\hH}(\omega_{\mathfrak{q}})$ will have a non-empty intersection with $\hC_{\epsilon}$ provided 
\begin{equation}
  P_{q,i}(\omega_{q})=0 \quad \mbox{for} \quad q\in\mathfrak{q}\setminus \{0\}.
\label{pr.17}\end{equation} 
Correspondingly, in terms of the coordinates \eqref{cyc.14}, the interior of $\phi_{\hH_{\epsilon}}^{-1}(\omega_{\mathfrak{q}})$ is the affine variety  
\begin{equation}
W^{\perp}_{\mathfrak{q},\omega_{\mathfrak{q}},\epsilon}= W_{\mathfrak{q}^c,\omega_{\mathfrak{q}},\epsilon}= \{ \zeta_{\mathfrak{q}^c}\in V_{\mathfrak{q}^c}\; | \; \forall q\in \mathfrak{q}^c, P_{q,i}(\zeta_q)=\epsilon[Q_{q,i}](\omega_{0}), \; i\le k_q, \quad P_{q,i}(\zeta_q)=0,  \; i>k_q\}.
\label{pr.18}\end{equation} 
corresponding to \eqref{pr.11}.
 Again, by Assumption~\ref{cyc.6c}, the interior of $\phi^{-1}_{\hH_{\epsilon}}(\omega_{\mathfrak{q}})$ will be smooth.  In fact, in both cases, the interior of $\phi^{-1}_{\hH_{\epsilon}}(\omega_{\mathfrak{q}})$ is a smoothing of the cone $W_{\mathfrak{q}^c}$ identified with the interior of $\phi^{-1}_{\hH_0}(\omega_{\mathfrak{q}})$ .

Now, in both cases, $\phi^{-1}_{\hH}(\omega_{\mathfrak{q}})$ provides a compactification for $V_{\mathfrak{q}}$ in the same way that $\hX$ provides a compactification for $\bbC^{m+n}_w$.  In the case where $\hH\in \cM_{1,\nu}(\hX)$, this corresponds to such a compactification for $\bbC^{m+n}_w$ when  $\nu=0$.  Moreover, the closure of the interior of $\phi^{-1}_{\hH_{\epsilon}}(\omega_{\mathfrak{q}})$ in $\phi_{\hH}^{-1}(\omega_{\mathfrak{q}})$ is precisely $\phi^{-1}_{\hH_{\epsilon}}(\omega_{\mathfrak{q}})$.   For $\epsilon\in\bbC\setminus \{0\}$ as in Assumption~\ref{cyc.6c}, this discussion can be extended to show that $\hC_{\epsilon}$ is a manifold with corners inheriting from $\hX$ a natural iterated fibration structure.  However, as a manifold with corners, it will typically not be of class $\CI$, but of class $\cC^{\lfloor d\rfloor}$ or lower.  For instance, if $d$ is not an integer and the polynomials $Q_{q,i}$ are all homogeneous, then it will be of class $\cC^{\lfloor d\rfloor}$.  Nevertheless, by restriction from $\hX$, there is on $C_{\epsilon}= \hC_{\epsilon}\setminus \pa\hC_{\epsilon}$ a natural ring of `smooth' functions $\CI(\hC_{\epsilon})$, a ring $\cA_{\phg}(\hC_{\epsilon})$ of bounded polyhomogeneous functions and a ring of $\nQb$-smooth functions $\CI_{\nQb}(\hC_{\epsilon})$.

Now, the set of boundary hypersurfaces of $\hC_{\epsilon}$ is in bijection with that  of $\hX$.  In particular, there is a decomposition
\begin{equation}
  \cM_1(\hC_{\epsilon})= \cM_{1,0}(\hC_{\epsilon})\cup \cM_{1,\nu}(\hC_{\epsilon}).
\label{cyc.16}\end{equation}
Again, when $\nu= \frac{\ell}d>0$, this is a partition, while when $\nu=0$, $\cM_{1,0}(\hC_{\epsilon})=\cM_{1,\nu}(\hC_{\epsilon})=\cM_{1}(\hC_{\epsilon})$.  This suggests to consider the function
\begin{equation}
\begin{array}{llcl} \mathfrak{n}:& \cM_1(\hC_{\epsilon}) & \to & \{0,\nu\} \\
     & \hH_{\epsilon} & \mapsto & \nu_{\hH_{\epsilon}}
\end{array}     
\label{cyc.17}\end{equation}
given by
$$
       \nu_{\hH_{\epsilon}}:= \left\{ \begin{array}{ll}  \nu, & \hH_{\epsilon}\in \cM_{1,\nu}(\hC_{\epsilon}), \\
           0, & \mbox{otherwise.} \end{array}  \right.
$$
By definition of \eqref{cyc.16},
$$
     \hH_{\epsilon}< \hG_{\epsilon} \quad \Longrightarrow \quad \nu_{\hH_{\epsilon}}\le \nu_{\hG_{\epsilon}},
$$
so $\mathfrak{n}$ is a weight function in the sense of Definition~\ref{wqb.1}.  This induces a corresponding weight function $\mathfrak{n}: \cM_1(\hX)\to \{0,\nu\}$.  Let $\beta: \hX\to \overline{\bbC^{m+n}_w}$ denote the blow-down map and let $u\in\CI(\overline{\bbC^{m+n}_w})$ be a choice of boundary defining function compatible with the $\bbR^+$-action near the boundary in the sense that $u(t\cdot z)=\frac{u(z)}{t}$ for $t\in\bbR^+$ and $z\in \bbC^{m+n}$ sufficiently large.  In other words, choose $u$ to be compatible with the natural linear equivalence class of collar neighborhood of $\overline{\bbC^{m+n}_w}$.   Then 
\begin{equation}
\rho^{-1}:=\beta^*u
\label{kw.17b}\end{equation}
 is an $\mathfrak{n}$-weighted total boundary defining function.  Using Lemma~\ref{wqb.5}, we readily see that the $\nQAC$-equivalence class of the $\mathfrak{n}$-weighted total boundary defining function does not depend on the choice of $u$, so  there is a well-defined notion of $\mathfrak{n}$-warped $\QAC$-metrics on $\overline{\bbC^{m+n}_w}$.  By restriction, this induces a class of metrics on $\hC_{\epsilon}$.  We will declare those to be \textbf{the class of $\mathfrak{n}$-warped $\QAC$-metrics on $\hC_{\epsilon}$}.  It will be smooth, polyhomogeneous or $\nQb$-smooth if the associated metric on $\hX$ is.  Proceeding in this way, we avoid having to deal with the fact that $\hC_{\epsilon}$ is possibly not of class $\CI$.  Still, since the differential of the polynomial $P_{q,i}-Q_{q,i}$, seen as a $b$-differential on $\hX$, is always at least of class $\cC^0$, we can also define ${}^{\mathfrak{n}}T\hC_{\epsilon}$ (respectively ${}^{w}T\hC_{\epsilon}$ and $^{\nQb}T\hC_{\epsilon}$) by considering the elements of ${}^{\mathfrak{n}}T\hX|_{\hC_{\epsilon}}$ (respectively ${}^{w}T\hX|_{\hC_{\epsilon}}$ and $^{\nQb}T\hX|_{\hC_{\epsilon}}$) that are in the kernels of the differentials of the polynomials $(P_{q,i}-Q_{q,i})$ for all $q$ and $i$.  
We say that $\hX$ and $\hC_{\epsilon}$ are \textbf{$\nQAC$-compactifications} for respectively $\bbC^{m+n}$ and $C_{\epsilon}$.  Similarly,  we have natural $\nQAC$-compactifications for $V_{q}$, $W_{\mathfrak{q},\epsilon}$ and $W^{\perp}_{\mathfrak{q}, \omega_{\mathfrak{q}},\epsilon}= W_{\mathfrak{q}^c,\omega_{\mathfrak{q}},\epsilon}$ in \eqref{pr.10} and \eqref{pr.11} that we denote by $\hV_{q}$,   $\hW_{\mathfrak{q},\epsilon}$ and $\hW^{\perp}_{\mathfrak{q}, \omega_{\mathfrak{q}},\epsilon}= \hW_{\mathfrak{q}^c,\omega_{\mathfrak{q}},\epsilon}$ respectively.

We are interested in examples of warped $\QAC$-metrics that are K\"ahler.  To study those and see in particular that they exist, it will be helpful to have complex coordinates adapted to the geometry.  Near $\hH\in\cM_{1,0}(\hX)$ corresponding to $V_{\mathfrak{q}}$, we see from the discussion above that we can take the coordinates 
$$
      z_q, \quad q\in \mathfrak{q}^c,
$$
in the fibers of $\phi_{\hH}: \hH\to S_{\hH}$, while on the base, instead of using the real coordinates $\omega_{q}$, we  can use the holomorphic coordinates
\begin{equation}
 \varpi_{q,j}= \xi^{w_{q,j}}z_{q,j}, \quad q\in \mathfrak{q}, \quad j\in \{1,\ldots,m_q+n_q\},
\label{kw.1}\end{equation}
with $(\xi, \varpi_{\mathfrak{q}})$ corresponding to $ z_{\mathfrak{q}}$ with $z_{q,j}= \xi^{-w_{q,j}}\varpi_{q,j}$,
where $\xi$ is now taking values in a sector of the complex plane, so that its logarithm and its complex powers can be well-defined.  To obtain a holomorphic coordinate chart near $\hH$, we can then take $z_q$ for $q\in\mathfrak{q}^c$, the coordinate $\xi$ and all the coordinates $\varpi_{q,j}$ for $q\in\mathfrak{q}$ and $j\in \{1,\ldots, m_q+n_q\}$ (except one such $\varpi_{q,j}$ that we declare to be equal to one).  In this case, away from the boundary of $\hH$, the $1$-forms 
\begin{equation}
\frac{d\xi}{\xi^2}, \frac{d\varpi_{q,i}}{\xi}, dz_{p,j},  \quad q\in \mathfrak{q}, \;i\in\{1,\ldots,m_q+n_q\}, \quad p\in\mathfrak{q}^c, \;j\in\{1,\ldots,m_p+n_p\},
\label{kw.2}\end{equation}    
except one of the $d\varpi_{q,i}$ omitted, form, together with their complex conjugates, a local basis of sections of ${}^{\mathfrak{n}}T^*\hX\otimes_{\bbR} \bbC$, giving a complex analog of \eqref{wqb.7} with $k=1$.  Since $\mathfrak{n}(\hH)=0$, we are also in the setting of \eqref{w.5} with $\nu_H=0$, so it is also a local basis of sections of ${}^{w}T^*\hX\otimes \bbC$.  

If instead $\hH\in \cM_{1,\nu}(\hX)$ corresponds to $V_{\mathfrak{q}}$ with $\{0\}\subsetneq \mathfrak{q}\subset \{0,1,\ldots,N\}$ and $\nu>0$, then again regarding $\xi$ as taking values in a sector of the complex plane where its logarithm can be well-defined, we can regard the coordinate \eqref{cyc.14} as holomorphic.  Using again the coordinates \eqref{kw.1} on the base with one $\varpi_{0,j}$ omitted, as well as \eqref{cyc.14}, we get holomorphic coordinates near $\hH$, and in this case the $1$-forms 
\begin{equation}
\xi^{\nu}\frac{d\xi}{\xi^2}, \xi^{\nu}\frac{d\varpi_{q,i}}{\xi}, d\zeta_{p,j}, \quad q\in \mathfrak{q}, \;i\in\{1,\ldots,m_q+n_q\}, \quad p\in\mathfrak{q}^c, \;j\in\{1,\ldots,m_p+n_p\},
\label{kw.3}\end{equation}
with one $\xi^{\nu}\frac{d\varpi_{0,i}}{\xi}$ omitted, combined with their complex conjugates, form a local basis of sections of ${}^{\mathfrak{n}}T^*\hX\otimes_{\bbR} \bbC$ and are a complex analog of \eqref{wqb.7} with $k=1$.  Correspondingly, 
\begin{equation}
\frac{d\xi}{\xi^2}, \frac{d\varpi_{q,i}}{\xi}, \xi^{-\nu}d\zeta_{p,j}, \quad q\in \mathfrak{q}, \;i\in\{1,\ldots,m_q+n_q\}, \quad p\in\mathfrak{q}^c, \;j\in\{1,\ldots,m_p+n_p\},
\label{kw.4}\end{equation}
with one $\frac{d\varpi_{0,i}}{\xi}$ omitted, combined with their complex conjugates, form a local basis of sections of ${}^{w}T^*\hX\otimes_{\bbR} \bbC$.

More generally, we can introduce holomorphic coordinates near an intersection of boundary hypersurfaces as follows.  Let $\mathfrak{q}_1\subset \cdots\subset \mathfrak{q}_k\subset \{0,\ldots,N\}$ be a sequence of embedded subsets and let $\hH_i\in \cM_1(\hX)$ be the boundary hypersurface corresponding to $V_{\mathfrak{q}_i}$ so that 
$$
      \hH_i < \hH_j \quad \Longrightarrow \quad i<j.
$$
In particular, the intersection $\cap_{i=1}^k \hH_i$ is not empty.  Pick as before $\xi_1=\xi$ to be the coordinate of the $\bbR^+$-action of $V_{\mathfrak{q}_1}$ seen as  taking values in some sector of the complex plane so that its logarithm can be well-defined.  For $q\in \mathfrak{q}_1$, consider again the homogeneous coordinates
\begin{equation}
  \varpi_{q,j}= \xi^{w_{q,j}}z_{q,j}, \quad q\in\mathfrak{q}_1, \; j\in\{1,\ldots,m_q+n_q\}.  
\label{kw.5}\end{equation}
In the fibers of $\hH_1$, we can initially consider the coordinates $z_{\mathfrak{q}^c_1}$ if $0\notin \mathfrak{q}_1$ or else the rescaled coordinates $\zeta_{\mathfrak{q}^c_1}$ of \eqref{cyc.14} if $0\in \mathfrak{q}_1$.  In terms of those, we can take $\xi_2$ to be the coordinates of the $\bbR^+$-action on $V_{\mathfrak{q}_2\setminus \mathfrak{q}_1}$ (seen as taking values in some sector of the complex plane) and define corresponding homogenous coordinates $\varpi_{q,j}$.  Iterating this construction, we obtain holomorphic coordinates
\begin{equation}
  \xi_1, \varpi_{\mathfrak{q}_1}, \ldots, \xi_k, \varpi_{\mathfrak{q}_k}, \zeta_k
\label{kw.6}\end{equation}
with $\zeta_k$ denoting possibly rescaled coordinates in the fibers of $\hH_k$, where for each $\varpi_{\mathfrak{q}_i}$, one $\varpi_{q,j}$ with $q\in \mathfrak{q}_i\setminus \mathfrak{q}_{i-1}$ is omitted, where we use the convention that $\mathfrak{q}_0=\emptyset$.  More precisely, if $0\in\mathfrak{q}_k$, let $k'$ be the smallest integer such that $0\in\mathfrak{q}_{k'}$, and otherwise set $k'=k+1$.  Then $\xi_i$ is the holomorphic coordinate corresponding to the $\bbR^+$-action on $V_{\mathfrak{q}_i\setminus \mathfrak{q}_{i-1}}$, while

\begin{equation}
  \varpi_{q,j}= \left\{ \begin{array}{ll} \xi_i^{w_{q,j}}z_{q,j}, & q\in (\mathfrak{q}_{i}\setminus \mathfrak{q}_{i-1})\subset \mathfrak{q}_{k'}, \\
                       \xi_i^{w_{q,j}}(\xi_{k'}^{\nu w_{q,j}} z_{q,j}), & q\in \mathfrak{q}_i\setminus \mathfrak{q}_{i-1}, \quad k'<i,  \end{array} \right.
\label{kw.7}\end{equation}
and $\zeta_k= \{z_{q,j} \; | \; q\in \mathfrak{q}_k^c, \; j\in \{1,\ldots,m_q+n_q\}  \}$ if $k'=k+1$, and otherwise corresponds to the rescaled coordinates
$$
  \xi_{k'}^{\nu w_{q,j}}z_{q,j} \quad \mbox{for} \; q \in \mathfrak{q}_k^c,  \quad j\in \{1,\ldots,m_q+n_q\}.
$$ 
When $k'\le k$, we will further assume that in \eqref{kw.6}, the coordinate in $\varpi_{\mathfrak{q}_{k'}}$ that is omitted is  $\varpi_{0}=\xi_{k'}z_0$.  This will ensure that $\xi_{k'}$ can be seen as a holomorphic coordinate coming from the $\bbR^+$-action on $V_0\subset V_{\mathfrak{q}_{k'}\setminus\mathfrak{q}_{k'-1}}$.

In terms of \eqref{kw.6}, when $k'\le k$, a local basis of sections of ${}^{\mathfrak{n}}T^*\hX\otimes_{\bbR} \bbC$ is given by 
\begin{equation}
\xi_{k'}^{\nu}\frac{d\xi_1}{\xi_1^2}, \xi_{k'}^{\nu}\frac{d\varpi_{\mathfrak{q}_1}}{\xi_1}, \ldots, \xi_{k'}^\nu \frac{d\xi_{k'}}{\xi_{k'}^2}, \xi_{k'}^{\nu}\frac{d\varpi_{\mathfrak{q}_{k'}}}{\xi_{k'}}, \frac{d\xi_{k'+1}}{\xi_{k'+1}^2}, \frac{d\varpi_{\mathfrak{q}_{k'+1}}}{\xi_{k'+1}}, \ldots, \frac{d\xi_k}{\xi_k^2}, \frac{d\varpi_{\mathfrak{q}_k}}{\xi_k}, d\zeta_k,
\label{kw.8}\end{equation}
and their complex conjugates, where to lighten notation, $d\varpi_{\mathfrak{q}_i}$ stands for
$$
    \{ d\varpi_{q,j} \; | \; q\in\mathfrak{q}_i\setminus \mathfrak{q}_{i-1}, \; j\in \{1,\ldots,m_q+n_q\}\}
$$
with one $d\varpi_{q,j}$ omitted for each $d\varpi_{\mathfrak{q}_i}$.  From \eqref{kw.8}, we see that the $1$-forms
\begin{equation}
\frac{d\xi_1}{\xi_1^2}, \frac{d\varpi_{\mathfrak{q}_1}}{\xi_1}, \ldots,  \frac{d\xi_{k'}}{\xi_{k'}^2}, \frac{d\varpi_{\mathfrak{q}_{k'}}}{\xi_{k'}}, \xi_{k'}^{-\nu} \frac{d\xi_{k'+1}}{\xi_{k'+1}^2},  \xi_{k'}^{-\nu}\frac{d\varpi_{\mathfrak{q}_{k'+1}}}{\xi_{k'+1}}, \ldots,  \xi_{k'}^{-\nu} \frac{d\xi_k}{\xi_k^2},  \xi_{k'}^{-\nu}\frac{d\varpi_{\mathfrak{q}_k}}{\xi_k},  \xi_{k'}^{-\nu}d\zeta_k,
\label{kw.10}\end{equation}
together with their complex conjugates, form a local basis of ${}^{w}T\hX\otimes_{\bbR} \bbC$. 

If instead $k'=k+1$, then instead of \eqref{kw.8} we see that
\begin{equation}
\frac{d\xi_1}{\xi_1^2}, \frac{d\varpi_{\mathfrak{q}_1}}{\xi_1}, \ldots,  \frac{d\xi_k}{\xi_k^2}, \frac{d\varpi_{\mathfrak{q}_k}}{\xi_k}, d\zeta_k,
\label{kw.9}\end{equation}
together with their complex conjugates, form a local basis of sections of ${}^{\mathfrak{n}}T^*\hX\otimes_{\bbR} \bbC$ and ${}^{w}T\hX\otimes_{\bbR} \bbC$.  Notice that the coordinates \eqref{kw.6} are only valid in the interior of $\hX$ and are not coordinates on $\hX$ as a manifold with corners. However, as in \eqref{wqb.7}, the sections \eqref{kw.8} and \eqref{kw.9} naturally extend to the boundary of $\hX$ as sections of ${}^{\mathfrak{n}}T^*\hX\otimes \bbC$.  
 
 \begin{lemma}
 The complex structure $J$ of $\bbC^{m+n}$ naturally extends to a section
 $$
        J\in \CI(\hX;\End({}^wT\hX)) \quad \mbox{with} \quad J^2=-\Id.  
 $$
 \label{kw.11}\end{lemma}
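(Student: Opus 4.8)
The plan is to reduce everything to a local statement near $\pa\hX$. On the interior $\hX\setminus\pa\hX=\bbC^{m+n}$ the anchor $\iota_w\colon{}^wT\hX\to T\hX$ is an isomorphism, so the complex structure $J$ of $\bbC^{m+n}$ already defines a section of $\End({}^wT\hX)$ over $\hX\setminus\pa\hX$ with $J^2=-\Id$ there; the only content of the lemma is that it extends smoothly across the boundary. I would therefore fix $p\in\pa\hX$, say in the interior of a corner $\hH_1\cap\cdots\cap\hH_k$ with $\hH_1<\cdots<\hH_k$ corresponding to an increasing chain $\mathfrak{q}_1\subset\cdots\subset\mathfrak{q}_k$, and work in the holomorphic coordinates \eqref{kw.6}, choosing sectors for the $\xi_i$'s so that their logarithms and complex powers are well defined.

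The key point is that the local frames of ${}^wT^*\hX\otimes_{\bbR}\bbC$ given in \eqref{kw.8}, \eqref{kw.9} and \eqref{kw.10} (and, for $k=1$, in \eqref{kw.2} and \eqref{kw.4}) consist of forms $\theta^1,\dots,\theta^{m+n}$ together with their complex conjugates, where each $\theta^a$ is one of $d\xi_i$, $d\varpi_{q,j}$, $d\zeta_k$ multiplied by a power of a holomorphic coordinate $\xi_i$; moreover these sections extend $\CI$-smoothly up to the boundary of $\hX$ (this is part of the surrounding discussion, via the analogy with \eqref{wqb.7} and the relation \eqref{wtb.1}). On the interior, the coordinates \eqref{kw.6} are holomorphic functions of the standard coordinates $z$ on $\bbC^{m+n}$ — each $\xi_i$, $\varpi_{q,j}$, $\zeta_k$ being, for the chosen branch, a product of complex powers of the $z_{q,j}$'s — so each $\theta^a$ is a $(1,0)$-form for $J$ there. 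Passing to the dual smooth frame $\{e_a,\overline{e_a}\}$ of ${}^wT\hX\otimes_{\bbR}\bbC$, this says that on $\hX\setminus\pa\hX$ one has $Je_a=i\,e_a$ and $J\overline{e_a}=-i\,\overline{e_a}$; that is, $J$ is represented by the constant matrix $\operatorname{diag}(i\,\mathrm{I}_{m+n},-i\,\mathrm{I}_{m+n})$ in a smooth frame that is defined up to and including the corner.

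From here the extension is formal: I would simply define $J$ near $p$ to be that constant matrix in the frame $\{e_a,\overline{e_a}\}$. It is then smooth on the whole coordinate neighbourhood, it commutes with complex conjugation and hence restricts to a real endomorphism of ${}^wT\hX$, and it agrees with the given complex structure on the dense open set $\hX\setminus\pa\hX$; by uniqueness of continuous extensions the local pieces (built with different branches of $\log$ in different charts) glue to a global section $J\in\CI(\hX;\End({}^wT\hX))$, and $J^2=-\Id$ holds on the dense interior, hence everywhere. The only genuine work is the bookkeeping hidden in the middle paragraph — checking case by case that the forms in \eqref{kw.2}, \eqref{kw.4} and \eqref{kw.8}--\eqref{kw.10} really do form a $\CI$ coframe of ${}^wT^*\hX\otimes_{\bbR}\bbC$ up to the boundary, and that they are of type $(1,0)$ on the interior — so I expect that to be the main (and essentially only) obstacle.
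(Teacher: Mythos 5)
Your proposal is correct and follows essentially the same route as the paper, whose proof simply observes that the extension is clear from the local frames \eqref{kw.9} and \eqref{kw.10} of ${}^{w}T^*\hX\otimes_{\bbR}\bbC$, which consist of $(1,0)$-forms and their conjugates and are smooth up to the boundary. You have merely spelled out the bookkeeping that the paper leaves implicit.
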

\begin{proof}
This is clear from the local descriptions \eqref{kw.10} and \eqref{kw.9}.
\end{proof}
\begin{remark}
Since $\nQb$-metrics and $\nQAC$-metrics are conformal to $\mathfrak{n}$-warped $\QAC$-metrics, notice that in fact
$$
    J\in \CI(\hX;\End({}^wT\hX))=\CI(\hX;\End({}^{\mathfrak{n}}T\hX))=\CI(\hX;\End({}^{\nQb}T\hX)).
$$
\label{kw.12}\end{remark}
By restriction to $\hC_{\epsilon}$, Lemma~\ref{kw.11} shows that the complex structure $J_{\epsilon}$ of $C_{\epsilon}$ extends to a section
\begin{equation}
    J_{\epsilon}\in \CI(\hC_{\epsilon},\End({}^{w}T\hC_{\epsilon}))
\label{kw.13}\end{equation} 
which is `smooth' up to the boundary in the sense that it comes from the restriction of $J$ to $\hC_{\epsilon}$.

To show that a K\"ahler metric on $C_{\epsilon}$ is a warped $\QAC$-metric, Lemma~\ref{kw.11} indicates that it suffices to check that its K\"ahler form is an element of $\CI_{\nQb}(\hC_{\epsilon}; \Lambda^{1,1}({}^{w}T^*\hC_{\epsilon}\otimes_{\bbR} \bbC))$.
In the remainder of this section, we will construct examples of K\"ahler warped $\QAC$-metrics with $\pa\db$-exact K\"ahler forms, that is, with K\"ahler form $\omega= \frac{\sqrt{-1}}2 \pa \db U$ for some potential $U$.  To do so, as in \cite[Corollary~4.4]{CR2021}, we will use a convexity argument of van Coevering \cite[Lemma~4.3]{vanC} to extend a model at infinity to a K\"ahler form on the entire space.  
\begin{lemma}
Let $\hZ_{\epsilon}$ denote $\hC_{\epsilon}$, the $\nQAC$-compactification $\hW_{\mathfrak{q},\epsilon}$ of \eqref{pr.10} or the $\nQAC$-compactification $\hW^{\perp}_{\mathfrak{q},\omega_{\mathfrak{q}},\epsilon}$ of \eqref{pr.11} for $\epsilon\ne 0$ as in Assumption~\ref{cyc.6c}.  Suppose that $u_{\epsilon}$ is a smooth positive proper function  on $Z_{\epsilon}:=\hZ_{\epsilon}\setminus \pa\hZ_{\epsilon}$ such that 
$$
       \omega_{\epsilon}= \frac{\sqrt{-1}}2 \pa \db u_{\epsilon}
$$
is the K\"ahler form of a K\"ahler $\mathfrak{n}$-warped $\QAC$-metric $g_{\epsilon}\in \CI(\overline{\hZ_{\epsilon}\setminus K_{\epsilon}}; S^2({}^wT^*\hZ_{\epsilon}))$ on the complement of a compact set $K_{\epsilon}\subset Z_{\epsilon}$.  Then there exists a potential $\widetilde{u}_{\epsilon}$ defined on $Z_{\epsilon}$ and agreeing with $u_{\epsilon}$ outside a compact set such that 
$$
     \widetilde{\omega}_{\epsilon}= \frac{\sqrt{-1}}2 \pa\db \widetilde{u}_{\epsilon}
$$
is the K\"ahler form of a global K\"ahler $\mathfrak{n}$-warped $\QAC$-metric $g_{\epsilon}\in\CI(\hZ_{\epsilon};S^2({}^w T^*\hZ_{\epsilon}))$ on $Z_{\epsilon}$. 
\label{kw.14}\end{lemma}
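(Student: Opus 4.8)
The plan is to adapt the convexity argument of van Coevering \cite[Lemma~4.3]{vanC}, used in the depth-one setting in \cite[Corollary~4.4]{CR2021}, the only new input being the choice of an auxiliary strictly plurisubharmonic exhaustion that grows slowly enough. The starting point is that in each of the three cases $Z_{\epsilon}=\hZ_{\epsilon}\setminus\pa\hZ_{\epsilon}$ is a smooth closed complex submanifold of its ambient affine space ($\bbC^{m+n}$ when $\hZ_{\epsilon}=\hC_{\epsilon}$, and $V_{\mathfrak{q}}$ or $V_{\mathfrak{q}^c}$ for the model compactifications), since $C_{\epsilon}$ is a smooth affine variety and, by Assumption~\ref{cyc.6c}, so are the fibres $W_{\mathfrak{q},\epsilon}$ and $W_{\mathfrak{q}^c,\omega_{\mathfrak{q}},\epsilon}$. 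In particular $Z_{\epsilon}$ is Stein. The auxiliary exhaustion we use is $\psi$, the restriction to $Z_{\epsilon}$ of $1+\log(1+|z|^2)$: it is smooth, satisfies $\psi\geq 1$, is a strictly plurisubharmonic exhaustion of $Z_{\epsilon}$ (the restriction of the Fubini--Study potential of the ambient space), and grows at infinity only like $\log\rho$, hence tends to infinity strictly more slowly than $u_{\epsilon}$, which by the (warped-)conical structure of $\omega_{\epsilon}$ at infinity grows like $\rho^2$ modulo lower-order terms (cf. Example~\ref{w.1}); in particular $u_{\epsilon}-A\psi$ is proper for every $A>0$.

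Next I would glue $u_{\epsilon}$ to $\psi$ by a regularized maximum. Fix $\eta>0$ small, a constant $B>0$, and then $A>0$ large enough that $A\psi-B>u_{\epsilon}+2\eta$ on $K_{\epsilon}$ (possible since $\psi\geq 1$ while $u_{\epsilon}$ is bounded on the compact set $K_{\epsilon}$), and set
\[
  \widetilde{u}_{\epsilon}:=\max\nolimits_{\eta}\lrp{u_{\epsilon},\,A\psi-B},
\]
where $\max_{\eta}$ is the standard regularized maximum: a smooth function equal to the larger of its two arguments wherever they differ by more than $2\eta$, convex and nondecreasing in each argument, plurisubharmonic whenever both arguments are, and strictly plurisubharmonic there as soon as one of them is. By the choice of $A$, the region where $u_{\epsilon}$ and $A\psi-B$ differ by at most $2\eta$ is disjoint from $K_{\epsilon}$, hence contained in $Z_{\epsilon}\setminus K_{\epsilon}$ where $u_{\epsilon}$ is already strictly plurisubharmonic; since $A\psi-B$ is strictly plurisubharmonic on all of $Z_{\epsilon}$, it follows that $\widetilde{u}_{\epsilon}$ is smooth and strictly plurisubharmonic everywhere, so $\widetilde{\omega}_{\epsilon}:=\frac{\sqrt{-1}}{2}\pa\db\widetilde{u}_{\epsilon}$ is a K\"ahler form on $Z_{\epsilon}$. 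On the other hand, since $u_{\epsilon}-A\psi$ is proper, the sublevel set $\{u_{\epsilon}\leq A\psi-B+2\eta\}$ is compact and contains $K_{\epsilon}$, so $\widetilde{u}_{\epsilon}$ agrees with $u_{\epsilon}$, and $\widetilde{\omega}_{\epsilon}$ with $\omega_{\epsilon}$, outside a compact set.

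To conclude, I would observe that $\widetilde{\omega}_{\epsilon}$ is the K\"ahler form of a K\"ahler $\mathfrak{n}$-warped $\QAC$-metric $g_{\epsilon}\in\CI(\hZ_{\epsilon};S^2({}^wT^*\hZ_{\epsilon}))$. Near $\pa\hZ_{\epsilon}$ one has $\widetilde{\omega}_{\epsilon}=\omega_{\epsilon}$, which by hypothesis is the K\"ahler form of the given $\mathfrak{n}$-warped $\QAC$-metric in $\CI(\overline{\hZ_{\epsilon}\setminus K_{\epsilon}};S^2({}^wT^*\hZ_{\epsilon}))$; over the compact core the bundle ${}^wT\hZ_{\epsilon}$ restricts to $TZ_{\epsilon}$, so the smooth positive $(1,1)$-form $\widetilde{\omega}_{\epsilon}$ trivially defines a smooth $\mathfrak{n}$-warped $\QAC$-metric there, and the two descriptions patch. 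Using that $J\in\CI(\hZ_{\epsilon};\End({}^wT\hZ_{\epsilon}))$ by Lemma~\ref{kw.11}, this shows $\widetilde{\omega}_{\epsilon}\in\CI_{\nQb}(\hZ_{\epsilon};\Lambda^{1,1}({}^wT^*\hZ_{\epsilon}\otimes_{\bbR}\bbC))$, as required.

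The point I expect to require the most care is the tension between the two demands on $\widetilde{u}_{\epsilon}$: it must be strictly plurisubharmonic on all of $Z_{\epsilon}$ yet unchanged near infinity. A local cut-off perturbation of $u_{\epsilon}$ on $K_{\epsilon}$ cannot achieve this, because plurisubharmonicity propagates through the maximum principle; the regularized maximum circumvents this, but only once one has a \emph{global} strictly plurisubharmonic exhaustion of $Z_{\epsilon}$ growing strictly more slowly than $u_{\epsilon}$ at infinity, which is precisely why the logarithmic exhaustion $\psi$ is used in place of the naive $|z|^2$ (whose growth rate $\rho^{2w_{\max}}$ typically exceeds that of $u_{\epsilon}$). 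The remaining verifications — smoothness of $\max_{\eta}$, the elementary choices of the constants $A,B,\eta$, and the patching of the two descriptions of $g_{\epsilon}$ — are routine.
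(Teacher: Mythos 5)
Your gluing mechanism (a regularized maximum against a slowly growing strictly plurisubharmonic exhaustion) is a legitimate alternative device, and the psh bookkeeping is done correctly: on $K_{\epsilon}$ the competitor $A\psi-B$ wins, while wherever the two competitors are within $2\eta$ of each other you are outside $K_{\epsilon}$, so both are strictly psh and so is the regularized maximum. The gap is exactly at the step you flag yourself: the compactness of the set where $\widetilde{u}_{\epsilon}\ne u_{\epsilon}$, which you deduce from the assertion that $u_{\epsilon}$ ``grows like $\rho^2$ modulo lower-order terms'' because of the warped-conical structure of $\omega_{\epsilon}$. This does not follow from the hypotheses of Lemma~\ref{kw.14}. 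The lemma assumes only that $u_{\epsilon}$ is smooth, positive and proper and that $\frac{\sqrt{-1}}2\pa\db u_{\epsilon}$ is a K\"ahler $\mathfrak{n}$-warped $\QAC$ form off a compact set; the form $\omega_{\epsilon}$ determines $u_{\epsilon}$ only up to a pluriharmonic function, and properness by itself allows $u_{\epsilon}$ to tend to infinity arbitrarily slowly. Example~\ref{w.1} describes the local model of the \emph{metric}, not of a potential, so no quadratic — or even super-logarithmic — lower bound on $u_{\epsilon}$ can be read off from it without an extra argument (comparability of $\omega_{\epsilon}$ with a model at best controls the maximum of $u_{\epsilon}$ on large spheres, not its minimum, which is what properness of $u_{\epsilon}-A\psi$ requires). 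Since your auxiliary $\psi$ necessarily grows at least like $\log$ of the ambient norm, and $A$ must be taken large to beat $u_{\epsilon}$ on $K_{\epsilon}$, your scheme intrinsically needs $u_{\epsilon}$ to dominate $A\log|z|$ at infinity: a quantitative growth hypothesis absent from the statement. Without it the gluing region may be noncompact, and where $A\psi-B$ wins near infinity the resulting form is the (restricted) Fubini--Study form, which is not an $\mathfrak{n}$-warped $\QAC$ metric, so the conclusion would fail there.

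The paper's proof is built precisely to use nothing beyond properness: it first replaces $u_{\epsilon}$ by $\eta\circ u_{\epsilon}$ for a convex non-decreasing $\eta$ that is constant on $\{u_{\epsilon}\le C+1\}$ and the identity on $\{u_{\epsilon}\ge C+2\}$, which is globally psh and agrees with $u_{\epsilon}$ near infinity, and then adds $\delta\phi_{\epsilon} w_{\epsilon}$, a small \emph{compactly supported} multiple of the ambient Euclidean potential; the lack of positivity of $\pa\db(\phi_{\epsilon} w_{\epsilon})$ on the transition annulus $\{C+2\le u_{\epsilon}\le C+3\}$ is absorbed there by the strict positivity of $\pa\db u_{\epsilon}$ for $\delta$ small. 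Your argument becomes correct either if you add (and verify in the applications, where the potentials of Theorem~\ref{kw.19} are comparable to $\widetilde{r}^2$) the hypothesis that $u_{\epsilon}$ grows super-logarithmically in the ambient coordinates, or if you replace the regularized-maximum step by this convex-composition-plus-cutoff step, which requires no growth information at all.
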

\begin{proof}
By assumption $\frac{\sqrt{-1}}2 \pa \db u_{\epsilon}>0$ outside the compact set $K_{\epsilon}$.  Since $u_{\epsilon}$ is proper, this can be formulated as saying that there exists a constant $C>0$ such that $\frac{\sqrt{-1}}2\pa\db u_{\epsilon}>0$ for all $p\in Z_{\epsilon}$ such that $u_{\epsilon}(p)>C$.  Let $\eta\in\CI(\bbR)$ be a non-decreasing convex function such that
\begin{equation}
   \eta(t)= \left\{   \begin{array}{ll} t, & \mbox{if} \; t\ge C+2, \\ C+\frac{3}2, & \mbox{if} \; t\le C+1.   \end{array}   \right.
\label{kw.15}\end{equation}
For such a choice,  $\eta\circ u_{\epsilon}$ agrees with $u_{\epsilon}$ outside a compact set.  Since $\eta',\eta''\ge 0$, we see also that
\begin{equation}
\frac{\sqrt{-1}}2 \pa \db \eta\circ u_{\epsilon}= \frac{\sqrt{-1}}2 \eta''(u_{\epsilon})\pa u_{\epsilon}\wedge \db u_{\epsilon}+ \frac{\sqrt{-1}}{2} \eta'(u_{\epsilon}) \pa\db u_{\epsilon}\ge \frac{\sqrt{-1}}{2}\eta'(u_{\epsilon})\pa\db u_{\epsilon}\ge 0.
\label{kw.16}\end{equation}
Now, $Z_{\epsilon}$ is an affine variety in $\bbC^k$ for some $k\in\bbN$.  Let $w_{\epsilon}$ be the restriction to $Z_{\epsilon}$ of the Euclidean potential $ \sum_{i=1}^k |z_i|^2$ on $\bbC^k$ and let $\phi_{\epsilon}\in\CI_c(Z_{\epsilon})$ be a nonnegative function such that 
$$
   \phi_{\epsilon}(p)= \left\{  \begin{array}{ll} 1, & \mbox{if} \; u_{\epsilon}(p)\le C+2, \\ 0, & \mbox{if} \; u_{\epsilon}\ge C+3.  \end{array}  \right.
$$ 
We claim that it suffices to take 
$$
     \widetilde{u}_{\epsilon}:= \eta\circ u_{\epsilon} + \delta \phi w_{\epsilon}
$$
with $\delta>0$ sufficiently small.  Indeed, by construction, $\widetilde{u}_{\epsilon}$ is equal to $u_{\epsilon}$ outside a compact set.  By \eqref{kw.16}, at a point $p$ where $u_{\epsilon}(p)<C+2$,
$$
     \frac{\sqrt{-1}}2 \pa\db \widetilde{u}_{\epsilon} \ge \delta \frac{\sqrt{-1}}2 \pa \db w_{\epsilon}>0,
$$
while at a point $p$ where $u_{\epsilon}>C+3$, 
$$
      \frac{\sqrt{-1}}{2}\pa\db \widetilde{u}_{\epsilon} = \frac{\sqrt{-1}}2 \pa\db u_{\epsilon}>0.
$$
On the other hand, in the compact region where 
$$
     C+2\le u_{\epsilon}(p)\le C+3,
$$
notice that 
$$
  \frac{\sqrt{-1}}2 \pa\db (\eta\circ u_{\epsilon})= \frac{\sqrt{-1}}2 \pa \db u_{\epsilon}>0,
$$
so taking $\delta>0$ sufficiently small, we can ensure that $\frac{\sqrt{-1}}2 \pa \db \widetilde{u}_{\epsilon}>0$ in this region as well.
\end{proof}

If we set 
$$
    r_{\mathfrak{q}}:= \sqrt{\sum_{q\in\mathfrak{q}} r_q^2},
$$
then
$$
     \omega_{W_{\mathfrak{q}}}:= \frac{\sqrt{-1}}2 \pa \db r^2_{\mathfrak{q}}
$$
is the K\"ahler form of the natural Calabi-Yau cone metric on $W_{\mathfrak{q}}$.  For each $q$, consider a smooth extension $r_q'$ of $r_q$ to $V_q\setminus\{0\}$ as a homogeneous positive function of degree $1$ with respect to the $\bbR^+$-action.  Cut it off near $\{0\}$ using a partition of unity and denote by $\widetilde{r}_q$ the resulting function.  This function is smooth, but no longer homogeneous.  However, it agrees with $r_q'$ outside a compact set. More generally, let
$$
     \widetilde{r}_{\mathfrak{q}}= \sqrt{\sum_{q\in \mathfrak{q}} \widetilde{r}_q^2}
$$ 
be the corresponding function on $V_{\mathfrak{q}}$.  For $\mathfrak{q}=\{0,\ldots,N\}$, we will also use the notation
$$
     \widetilde{r}= \sqrt{\sum_{q=0}^N \widetilde{r}_q^2}.
$$
\begin{lemma}
The function $\frac{1}{\widetilde{r}}$ is an $\mathfrak{n}$-weighted total boundary defining function $\nQAC$-equivalent to  that of \eqref{kw.17b}.
\label{kw.17}\end{lemma}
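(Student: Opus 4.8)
The plan is to exhibit $\tfrac1{\widetilde r}$ as a positive polyhomogeneous multiple of $\rho^{-1}=\beta^*u$ in a neighbourhood of $\pa\hX$, and then to conclude by Lemma~\ref{wqb.5}. Both assertions to be proved — that $\tfrac1{\widetilde r}$ is an $\mathfrak n$-weighted total boundary defining function, and that it is $\nQAC$-equivalent to the one in \eqref{kw.17b} — concern only a neighbourhood of $\pa\hX$, so I would work there throughout; in that region $\widetilde r$ coincides with the positive function $r':=\sqrt{\sum_{q=0}^N(r_q')^2}$ on $\bbC^{m+n}\setminus\{0\}$, which is homogeneous of degree $1$ with respect to the $\bbR^+$-action.

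First I would unwind the homogeneity using the radial compactification. In the tubular neighbourhood coordinates $\nu(\omega,\xi)$ of \eqref{cyc.7} one has $\nu(\omega,\xi)=\xi^{-1}\cdot\omega$ for the weighted $\bbR^+$-action, so $r'(\nu(\omega,\xi))=\xi^{-1}p(\omega)$ where $p:=r'|_{\bbS^{2m+2n-1}}$. The function $p$ is smooth and bounded below by a positive constant, since $p(\omega)=0$ would force $r_q'(\omega_q)=0$, hence $\omega_q=0$, for every $q$, contradicting $\sum_{q,j}|\omega_{q,j}|^2=1$. Therefore, near $\pa\overline{\bbC^{m+n}_w}$, $\widetilde r^{-1}=(r')^{-1}=\xi\,p^{-1}$ is a positive smooth multiple of the boundary defining function $\xi$. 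Since $u$ is also a boundary defining function for the (codimension one) boundary of $\overline{\bbC^{m+n}_w}$, we have $u=\xi\,g$ near $\pa\overline{\bbC^{m+n}_w}$ with $g>0$ smooth. Moreover $h_0:=u\,\widetilde r$ is $\bbR^+$-invariant in a conic neighbourhood of $\pa\overline{\bbC^{m+n}_w}$ — because there $\widetilde r=r'$ is homogeneous of degree $1$ while $u$ is homogeneous of degree $-1$ — so it descends to a positive smooth function on $\bbS^{2m+2n-1}$, hence $h_0\in\CI(\overline{\bbC^{m+n}_w})$ with $h_0>0$, and near the boundary $h_0=g\,p^{-1}$. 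Consequently $\beta^*h_0$ is positive and bounded away from $0$ and $\infty$ on $\hX$; near $\hH_{\max}$ it has trivial $\widetilde x_{\max}$-expansion (being pulled back from $\bbS^{2m+2n-1}$), so it is smooth on $\wX$ there, and it is polyhomogeneous in general, with
$$
   \rho^{-1}=\beta^*u=(\beta^*h_0)\,\widetilde r^{-1}\qquad\text{near }\pa\hX.
$$

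Next I would check that $\widetilde r^{-1}$ is an $\mathfrak n$-weighted total boundary defining function. By the definition of $\rho^{-1}=\beta^*u$ recalled after \eqref{kw.17b}, there are compatible boundary defining functions $x_H$, $H\in\cM_1(\hX)$, with $\rho^{-1}=\prod_H x_H^{\frac1{1-\nu_H}}$, and we may take $x_{\hH_{\max}}=\widetilde x_{\max}$, so that $\nu_{\hH_{\max}}=\nu$ and the $\hH_{\max}$-factor is $\widetilde x_{\max}^{\frac1{1-\nu}}$. Since $\beta^*h_0$ is smooth and positive near $\hH_{\max}$, so is $(\beta^*h_0)^{1-\nu}$; replacing $x_{\hH_{\max}}$ by $(\beta^*h_0)^{1-\nu}\widetilde x_{\max}$ and keeping the other $x_H$ produces a new system of compatible boundary defining functions (compatibility at $\hH_{\max}$ being vacuous, as $\hH_{\max}$ is maximal) whose $\mathfrak n$-weighted product is $(\beta^*h_0)\rho^{-1}=\widetilde r^{-1}$. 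Finally I would apply Lemma~\ref{wqb.5} with $\rho'=\widetilde r$: here $f=\log(\widetilde r/\rho)=\beta^*\log h_0\in\cA_{\phg}(\hX)$, and for each $H\in\cM_1(\hX)$ the restriction $\beta|_H$ is constant along the fibres of $\phi_H\colon H\to S_H$ — for $\hH_{\max}$ the fibration is the identity, and for the remaining $H$ the fibration $\phi_H$ is the one induced by the blow-down of the corresponding weighted blow-up, through which $\beta|_H$ factors — so $(\beta^*h_0)|_H=\phi_H^* g_H$ for some positive $g_H\in\cA_{\phg}(S_H)$, whence $f=\phi_H^*\log g_H+\mathcal O(x_H)$ near $H$. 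Lemma~\ref{wqb.5} then gives that $\widetilde r^{-1}$ and $\rho^{-1}$ are $\nQAC$-equivalent, which is the claim.

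I expect the one point genuinely needing care is the assertion that $\beta|_H$ is fibrewise constant for every $H\in\cM_1(\hX)$: this has to be read off from the explicit local models of the weighted iterated blow-up (Lemmas~\ref{wbu.17} and \ref{wbu.21}), where the blow-down collapses precisely the weighted normal-sphere directions that become the fibres of $\phi_H$; the same local description is what guarantees that $\beta^*h_0$ is smooth on $\wX$ near $\hH_{\max}$ and polyhomogeneous in general. The homogeneity computation, the positivity of $p$, and the observation that $h_0=u\widetilde r$ is $\bbR^+$-invariant near infinity are all routine.
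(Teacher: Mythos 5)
There is a genuine gap, and it sits at the very first reduction. It is not true that $\widetilde r$ coincides with the homogeneous function $r'$ on a neighbourhood of $\pa\hX$: the non-maximal boundary hypersurfaces of $\hX$ (and the corners of $\hH_{\max}$ with them) lie over the strata $\pa\overline{V}_{\mathfrak{q}}$, and the interiors of the fibres of $\phi_{\hH}$ are identified with all of $V_{\mathfrak{q}^c}$ (possibly after rescaling), including the region where $z_{\mathfrak{q}^c}$ is bounded. That is exactly where the cut-offs defining $\widetilde r_q$ for $q\in\mathfrak{q}^c$ are active, and where $r'_q$ is not even smooth (at $z_q=0$). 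Consequently $h_0=u\,\widetilde r$ is \emph{not} $\bbR^+$-invariant on any conic neighbourhood of $\pa\overline{\bbC^{m+n}_w}$, does not descend to a smooth positive function on $\bbS^{2m+2n-1}$, and $\beta^*h_0$ is not ``pulled back from the sphere'' near the non-maximal faces. This undercuts the three things you extract from it: the smoothness/polyhomogeneity of $\beta^*h_0$ on $\hX$, its use as a modification of $x_{\hH_{\max}}$ to exhibit $\widetilde r^{-1}$ as an $\mathfrak{n}$-weighted total boundary defining function, and above all the verification of the hypothesis of Lemma~\ref{wqb.5} at the faces $\hH\in\cM_1(\hX)\setminus\{\hH_{\max}\}$ via the factorization of $\beta|_{\hH}$ through $\phi_{\hH}$ — the function being restricted there is not a pullback of something smooth downstairs, so that factorization gives you nothing. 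Your computation is correct only near the interior of $\hH_{\max}$, away from the other faces, where all $|z_q|$ are indeed large and $\widetilde r=r'$ is genuinely homogeneous; but the whole content of the lemma is at the other faces and corners.

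What is actually true, and what the paper proves, is a corner-by-corner statement: near the interior of a corner $\hH_1\cap\cdots\cap\hH_k$ associated with a chain $\mathfrak{q}_1\subset\cdots\subset\mathfrak{q}_k$, the function $\widetilde r_{\mathfrak{q}_k}$ \emph{is} homogeneous of degree $1$ there (its arguments are the large variables), so $1/\widetilde r_{\mathfrak{q}_k}$ is $\nQAC$-equivalent to $\beta^*u$ in that region; and then one writes $\widetilde r^{-1}=\widetilde r_{\mathfrak{q}_k}^{-1}\bigl(1+\widetilde r^2_{\mathfrak{q}_k^c}/\widetilde r^2_{\mathfrak{q}_k}\bigr)^{-1/2}$ and observes that the ratio $\widetilde r^2_{\mathfrak{q}_k^c}/\widetilde r^2_{\mathfrak{q}_k}$ vanishes at $\hH_1,\ldots,\hH_k$, so Lemma~\ref{wqb.5} applies. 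In particular, the fibrewise constancy of $\log(\widetilde r\rho^{-1}\cdot\,)$ along $\phi_{\hH}$ that you want is true, but for this reason — the transverse contribution $\widetilde r^2_{\mathfrak{q}^c}$ is suppressed by the size of $\widetilde r^2_{\mathfrak{q}}$ at the face — and establishing it, together with the $\mathcal{O}(x_{\hH})$ error and the polyhomogeneity of the conformal factor, requires precisely this local estimate, which your global descent-to-the-sphere device was meant to avoid. If you insert that corner analysis, your argument essentially collapses onto the paper's proof, and the descent construction becomes unnecessary.
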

\begin{proof}
Let $\mathfrak{q}_1\subset \cdots \subset \mathfrak{q}_k\subset \{0,\ldots,N\}$ be a sequence of embedded subsets and let $\hH_i\in\cM_1(\hX)$ be the boundary hypersurface corresponding to $V_{\mathfrak{q}_i}$.  Then near a compact region of the interior of $\hH_1\cap\ldots\cap \hH_k$, the function $\widetilde{r}_{\mathfrak{q}_k}$ is homogeneous of degree $1$, so $\frac{1}{\widetilde{r}_{\mathfrak{q}_k}}$ is clearly $\nQAC$-equivalent to \eqref{kw.17b}.  On the other hand, 
$$
   \frac{1}{\widetilde{r}}= \frac{1}{\sqrt{\widetilde{r}^2_{\mathfrak{q}_k}+  \widetilde{r}^2_{\mathfrak{q}^c_k}}}= \frac{1}{\widetilde{r}_{\mathfrak{q}_k}} \frac{1}{\sqrt{1+ \frac{\widetilde{r}^2_{\mathfrak{q}^c_k}}{\widetilde{r}^2_{\mathfrak{q}_k}}}  }
$$
and clearly $\frac{\widetilde{r}^2_{\mathfrak{q}^c_k}}{r^2_{\mathfrak{q}}}=0$ on $\hH_1, \ldots, \hH_{k-1}$ and $\hH_k$.  By Lemma~\ref{wqb.5}, $\frac{1}{\widetilde{r}}$ is $\nQAC$-equivalent to $\frac{1}{\widetilde{r}_{\mathfrak{q}_k}}$ and \eqref{kw.17b} near a compact region of the interior of $\hH_1\cap\ldots \cap \hH_{k}$.  Since $\hH_1\cap\ldots \cap \hH_{k}$ was an arbitrary corner of $\hX$, the result follows. 
\end{proof}

On $\hC_0$, notice that $\widetilde{r}^2$ agrees with $r^2$ near the maximal boundary hypersurface $\hH_{\max}$ of $\hX$, so
$$
        \frac{ \sqrt{-1}}{2} \pa\db \widetilde{r}^2= \omega_{C_0}
$$
on $\hC_0$ near $\hH_{\max}$.  In fact, in the coordinates \eqref{kw.6} near $H_{\max}$, so with $H_k=H_{\max}$ and with no coordinate $\zeta_k$, the potential $\widetilde{r}^2$ takes the form 
\begin{equation}
  \widetilde{r}^2= \sum_{i=1}^{k'} |\xi_i|^{-2}f_i(\varpi_{\mathfrak{q}_i})+ |\xi_{k'}|^{-2\nu} \sum_{i=k'+1}^k f_i(\varpi_{\mathfrak{q}_i})
\label{kw.18}\end{equation}
for some smooth functions $f_i$.  Clearly, in terms of \eqref{kw.10}, using the fact that $1-\nu>0$, we see that 
$$
    \frac{\sqrt{-1}}2 \pa\db\widetilde{r}^2\in \CI(\hX; \Lambda^{1,1}({}^{w}T^*\hX\otimes_{\bbR} \bbC)).
$$
Since $\hC_{\epsilon}$ and $\hC_0$ are tangent to order $\widehat{x}_{\max}^d$ at $\hH_{\max}$, we see also that $\frac{\sqrt{-1}}2 \pa\db \widetilde{r}^2$ remains positive definite on $\hC_{\epsilon}$ near $\hH_{\max}$, hence defines there a K\"ahler $\mathfrak{n}$-warped $\QAC$-metric.  However, there is no reason a priori for $\frac{\sqrt{-1}}{2}\pa\db \widetilde{r}^2$ to be positive definite everywhere on $\hC_{\epsilon}$.  Nevertheless, using Lemma~\ref{kw.14}, we will modify $\widetilde{r}^2$ to obtain the potential of a K\"ahler $\mathfrak{n}$-warped $\QAC$-metric on $\hC_{\epsilon}$.  
\begin{theorem}
There exists a smooth positive function $\phi_{\epsilon}$ on $\hC_{\epsilon}$ agreeing with $\widetilde{r}^2$ near $\hH_{\max}$ such that $\frac{\sqrt{-1}}2\pa\db \phi_{\epsilon}$ is the K\"ahler form of an $\mathfrak{n}$-warped $\QAC$-metric on $\hC_{\epsilon}$.  Moreover, if $\hH$ corresponds to the subset $\mathfrak{q}$, then in terms of the decomposition $\bbC^{m+n}= V_{\mathfrak{q}}\times V_{\mathfrak{q}^c}$, near $\hH$, 
\begin{equation}
            \phi_{\epsilon}= \widetilde{r}_{\mathfrak{q}}^2+ \phi_{\mathfrak{q}^c}(z_{\mathfrak{q}^c},\overline{z}_{\mathfrak{q}^c} )
\label{kw.19a}\end{equation}
if $H\in\cM_{1,0}(\hX)$, where $\frac{\sqrt{-1}}{2}\pa \db \phi_{\mathfrak{q}^c}(z_{\mathfrak{q}^c},\overline{z}_{\mathfrak{q}^c} )$ is the K\"ahler form of an $\mathfrak{n}$-warped $\QAC$-metric on $W_{\mathfrak{q}^c,\epsilon}$.  If instead $H\in \cM_{1,\nu}(\hX)$ with $\nu>0$, then 
\begin{equation}
 \phi_{\epsilon}= \widetilde{r}_{\mathfrak{q}}^2+ |z_0|^{2\nu}\phi_{\mathfrak{q}^c}(\zeta_{\mathfrak{q}^c},\overline{\zeta}_{\mathfrak{q}^c} ),
\label{kw.19b}\end{equation}
where $\zeta_{\mathfrak{q}^c}$ are the rescaled coordinates of \eqref{cyc.14} in $V_{\mathfrak{q}^c}$ and $\frac{\sqrt{-1}}{2}\pa \db \phi_{\mathfrak{q}^c}(\zeta_{\mathfrak{q}^c},\overline{\zeta}_{\mathfrak{q}^c} )$ is the K\"ahler form of a $\QAC$-metric on $W_{\mathfrak{q}^c, \omega_{\mathfrak{q}},\epsilon}$. 

\label{kw.19}\end{theorem}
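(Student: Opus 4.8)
The plan is to prove, by induction on the depth of $\hX$, a statement slightly more general than Theorem~\ref{kw.19}: that the conclusion holds not only for $\hC_{\epsilon}$ but simultaneously for each of the $\nQAC$-compactifications $\hW_{\mathfrak{q},\epsilon}$ of \eqref{pr.10} and $\hW^{\perp}_{\mathfrak{q},\omega_{\mathfrak{q}},\epsilon}$ of \eqref{pr.11} associated to the smoothings allowed by Assumption~\ref{cyc.6c}. This strengthening is forced on us because, by the description of the fibers of $\phi_{\hH_{\epsilon}}$ in \S~\ref{sCY.0}, over an interior point $\omega_{\mathfrak{q}}\in S_{\hH}$ the fiber is precisely one of these smoothings of a Cartesian product of fewer cones $W_q$, hence carries a compactification of strictly smaller depth. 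In the base case $\hZ_{\epsilon}$ has smooth cross-section and $Z_{\epsilon}$ is asymptotically conical; by \eqref{kw.18} and the fact that $\hZ_{\epsilon}$ is tangent to the cone compactification to high order at $\hH_{\max}$, the restriction of $\widetilde{r}^2$ is, near $\hH_{\max}$, the potential of a K\"ahler $\AC$-metric, and Lemma~\ref{kw.14} upgrades it to a global one, exactly as in \cite[Corollary~4.4]{CR2021}.

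For the inductive step, fix $\epsilon$ as in Assumption~\ref{cyc.6c} and let $\hH\in\cM_1(\hX)$ correspond to the subset $\mathfrak{q}$. By Assumption~\ref{cyc.6c} and Remark~\ref{pr.11b}, the fiber models $W_{\mathfrak{q}^c,\epsilon}$ (when $\hH\in\cM_{1,0}(\hX)$) and $W_{\mathfrak{q}^c,\omega_{\mathfrak{q}},\epsilon}$ (when $\hH\in\cM_{1,\nu}(\hX)$) are smoothings of $W_{\mathfrak{q}^c}$ to which the inductive hypothesis applies, yielding potentials $\phi_{\mathfrak{q}^c}$ for K\"ahler $\mathfrak{n}$-warped $\QAC$-metrics ($\QAC$-metrics in the second case, since the weight function induced on such a fiber is trivial by the partial order feature recorded in \S~\ref{sCY.0}). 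In the first case the fiber does not move with $\omega_{\mathfrak{q}}$, and in the second the fibers over different base points are mutually biholomorphic by rescaling, so $\phi_{\mathfrak{q}^c}$ can be chosen to depend smoothly on the base point (patching over $S_{\hH}$ with a partition of unity and invoking Lemma~\ref{kw.14} fiberwise). I then set, near the interior of $\hH$, the local model potential to be $\widetilde{r}_{\mathfrak{q}}^2+\phi_{\mathfrak{q}^c}(z_{\mathfrak{q}^c},\overline{z}_{\mathfrak{q}^c})$ if $\nu_{\hH}=0$, and $\widetilde{r}_{\mathfrak{q}}^2+|z_0|^{2\nu}\phi_{\mathfrak{q}^c}(\zeta_{\mathfrak{q}^c},\overline{\zeta}_{\mathfrak{q}^c})$ if $\nu_{\hH}=\nu>0$. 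Using the analogue of \eqref{kw.18} for $\widetilde{r}_{\mathfrak{q}}^2$ (so that $\frac{\sqrt{-1}}2\pa\db\widetilde{r}_{\mathfrak{q}}^2$ is a smooth $(1,1)$-form on ${}^{w}T^*\hX$, positive in the base directions), the plurisubharmonicity of $|z_0|^{2\nu}$ for $\nu\in(0,1)$, and the fiberwise positivity of $\frac{\sqrt{-1}}2\pa\db\phi_{\mathfrak{q}^c}$, one checks that the $\frac{\sqrt{-1}}2\pa\db$ of each such local potential is, near the interior of $\hH$, the K\"ahler form of an $\mathfrak{n}$-warped $\QAC$-metric with precisely the iterative structure of Example~\ref{w.1} (here $\rho_{\hH}\asymp|z_0|$ when $\nu_{\hH}=\nu$).

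Next I would glue these local potentials, together with $\widetilde{r}^2$ near $\hH_{\max}$, into a single potential $u_{\epsilon}$ on $\overline{\hC_{\epsilon}\setminus K_{\epsilon}}$ for a suitable compact $K_{\epsilon}\subset C_{\epsilon}$, using a partition of unity subordinate to a cover of a neighbourhood of $\pa\hC_{\epsilon}$ by collar neighbourhoods of the boundary hypersurfaces, arranged with respect to the partial order on $\cM_1(\hX)$. On an overlap $\hH\cap\hG$ with $\hH<\hG$, the hypersurface $\hH$ traces out a boundary model of the warped $\QAC$ fiber $Z_{\hG}$, and the inductive construction already performed on $Z_{\hG}$ makes the two competing local potentials agree to leading order; their difference, and likewise the failure of $\hC_{\epsilon}$ to be exactly the product $V_{\mathfrak{q}}\times(\text{fiber})$ near $\hH$, contribute only terms vanishing to a positive power of the relevant boundary defining functions. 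Hence $\frac{\sqrt{-1}}2\pa\db u_{\epsilon}$ is a positive-definite element of $\CI(\hC_{\epsilon};\Lambda^{1,1}({}^{w}T^*\hC_{\epsilon}\otimes_{\bbR}\bbC))$ near $\pa\hC_{\epsilon}$, so (Lemma~\ref{kw.11}) the K\"ahler form of an $\mathfrak{n}$-warped $\QAC$-metric there; applying Lemma~\ref{kw.14} to $u_{\epsilon}$ produces the desired global potential $\phi_{\epsilon}$, which by construction agrees with $u_{\epsilon}$ off a compact set and therefore exhibits the forms \eqref{kw.19a} and \eqref{kw.19b} near each $\hH$ and equals $\widetilde{r}^2$ near $\hH_{\max}$.

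The main obstacle is the positivity and regularity bookkeeping in the gluing step: one must show that, once the error terms from (i) differentiating the warping factor $|z_0|^{2\nu}$ against $\phi_{\mathfrak{q}^c}$, (ii) the $\epsilon$-deformation destroying the exact product structure near each $\hH$, and (iii) the partition of unity are all accounted for, the leading part of $\frac{\sqrt{-1}}2\pa\db u_{\epsilon}$ remains positive-definite as a section of $\Lambda^{1,1}({}^{w}T^*\hC_{\epsilon}\otimes_{\bbR}\bbC)$. The leverage for this is the high-order tangency of $\hC_{\epsilon}$ to $\hC_0$ along each boundary hypersurface (the same tool that shows $\frac{\sqrt{-1}}2\pa\db\widetilde{r}^2$ stays positive on $\hC_{\epsilon}$ near $\hH_{\max}$): all three classes of error are of strictly higher order than the model, hence dominated, while any residual failure of positivity over the remaining compact core is absorbed by the van Coevering modification $\delta\phi_{\epsilon}w_{\epsilon}$ supplied by Lemma~\ref{kw.14}.
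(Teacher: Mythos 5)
Your overall skeleton matches the paper's: induct on depth through the fiber smoothings of Assumption~\ref{cyc.6c}, use local model potentials of the form $\widetilde{r}_{\mathfrak{q}}^2+|z_0|^{2\nu}\phi_{\mathfrak{q}^c}(\zeta_{\mathfrak{q}^c},\overline{\zeta}_{\mathfrak{q}^c})$ (resp.\ $\widetilde{r}_{\mathfrak{q}}^2+\phi_{\mathfrak{q}^c}(z_{\mathfrak{q}^c},\overline z_{\mathfrak{q}^c})$), and invoke the van Coevering convexity argument of Lemma~\ref{kw.14} both fiberwise and at the end on a compact core. Where you diverge is the step that actually carries the difficulty, and there your argument has a genuine gap. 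You propose to glue the local potentials over a boundary cover by collar neighbourhoods using a partition of unity, and you defer the positivity of $\frac{\sqrt{-1}}2\pa\db u_\epsilon$ to ``bookkeeping'' controlled by high-order tangency. But the partition-of-unity scheme produces cross terms of the schematic form $\pa\chi_H\wedge\db(u_H-u_G)$, $\pa\db\bigl(\chi_H(u_H-u_G)\bigr)$ on overlaps, and the local potentials are unbounded (they grow like $\rho^2$, resp.\ $\rho^{2\nu}$ times a fiber radius squared); so ``agreement to leading order'' of $u_H$ and $u_G$ is not enough — you need the difference, together with two derivatives measured in the ${}^{w}T^*\hC_\epsilon$ frame, to decay at a definite rate relative to the positive model terms, and your inductive hypothesis (``the conclusion of the theorem holds for the fiber compactifications'') does not supply such a quantitative estimate. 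The same issue appears already in your positivity check for a single local model: plurisubharmonicity of $|z_0|^{2\nu}$ does not control the mixed terms $\pa|z_0|^{2\nu}\wedge\db\phi_{\mathfrak{q}^c}$; one has to expand $\pa\db\bigl(|\xi_{k'}|^{-2\nu}\phi_{\mathfrak{q}^c}\bigr)$ and verify that the extra terms lie in $x_{\hH}\cA_{\phg}(\hX;\Lambda^{1,1}({}^{w}T^*\hX\otimes_{\bbR}\bbC))$, which is exactly the computation \eqref{kw.21}.

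The paper avoids the gluing problem altogether by a different organization that you should adopt: instead of cutting off and summing local potentials, it modifies a single global potential successively, running through $\cM_1(\hX)$ in a non-increasing order for the partial order. At the step for $\hH$ one only replaces the fiberwise piece $\psi_{\epsilon,\mathfrak{q}^c}$ by a potential $\phi_{\epsilon,\mathfrak{q}^c}$ that differs from it on a \emph{compact} subset of each fiber (Lemma~\ref{kw.14} applied fiberwise, uniformly in the base), so the new potential is literally equal to the old one near $\bigcup_{\hG>\hH}\hG$ — no cutoff ever multiplies an unbounded potential, hence no overlap cross terms arise — and positivity near $\hH$ follows from the explicit expansion \eqref{kw.21}/\eqref{kw.22b} showing the discrepancy is $O(x_{\hH})$ in the warped frame. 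This ordered successive-replacement also removes the need for your auxiliary claims about base-point dependence of $\phi_{\mathfrak{q}^c}$ (biholomorphy of fibers under rescaling, patching over $S_{\hH}$), since one fixed fiber potential, constant near infinity, works uniformly over the compact base. If you want to keep your partition-of-unity route, you must strengthen the induction to record weighted decay of the potential discrepancies with two ${}^{w}$-derivatives on corners and then estimate the cutoff cross terms; as written, that step is asserted rather than proved.
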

\begin{proof}
Let $\hH\in\cM_1(\hX)$ be a boundary hypersurface such that $\hH<\hH_{\max}$ and 
$$
    \hH<\hG\; \Longrightarrow \; \hG=\hH_{\max}.  
$$
The fibers of $\phi_{\hH}:  \hH\to S_{\hH}$ are then manifolds with boundary and $\hH\in\cM_{1,\nu}(\hX)$.  If $\hH$ corresponds to the subset $\mathfrak{q}\subset \{0,\ldots,N\}$, then in terms of the decomposition 
$$
   \bbC^{m+n}= V_{\mathfrak{q}}\times V_{\mathfrak{q}^c},
$$
we have that 
\begin{equation}
         \widetilde{r}^2 = \widetilde{r}^2_{\mathfrak{q}}+ \widetilde{r}^2_{\mathfrak{q}^c}.
\label{kw.20}\end{equation}
Now, the function $\widetilde{r}_{\mathfrak{q}_c}$ agrees with $r_{\mathfrak{q}^c}$ outside a compact set and so is homogenous there.  In terms of the coordinates \eqref{kw.6} with $k=1$, $\hH_1=\hH$ and $\xi_1$ is the function of the $\bbR^+$-action on $V_{\{0\}}$.   This means that in this region,
$$
    \widetilde{r}_{\mathfrak{q}_c}(z_{\mathfrak{q}^c})= |\xi_1|^{-2\nu}\widetilde{r}_{\mathfrak{q}^c}(\zeta_{\mathfrak{q}^c}).
$$
By Lemma~\ref{kw.14}, we can find $\phi_{\mathfrak{q}}(\zeta_{\mathfrak{q}^c},\overline{\zeta}_{\mathfrak{q}^c})$ with $\phi_{\mathfrak{q}^c}= \widetilde{r}_{\mathfrak{q}^c}^2$ outside a compact set such that 
$\frac{\sqrt{-1}}2 \pa\db \phi_{\mathfrak{q}^c}(\zeta_{\mathfrak{q}^c},\overline{\zeta}_{\mathfrak{q}^c} )$
is the K\"ahler form of a $\QAC$-metric, in fact of an $\AC$-metric, on $W_{\mathfrak{q}^c,\omega_{\mathfrak{q}},\epsilon}$ for each $\omega_{\mathfrak{q}}$ and $\epsilon \ne 0$.  Now, replacing $\widetilde{r}_{\mathfrak{q}}^2$ in \eqref{kw.20} by $|\xi_1|^{-2\nu}\phi_{\mathfrak{q}^c}(\zeta_{\mathfrak{q}^c},\overline{\zeta}_{\mathfrak{q}^c} )$, we obtain a potential 
$$
    \psi_{\epsilon}:= \widetilde{r}^2_{\mathfrak{q}}+ |\xi_1|^{-2\nu}\phi_{\mathfrak{q}^c}(\zeta_{\mathfrak{q}^c},\overline{\zeta}_{\mathfrak{q}^c} )
$$
on $\hC_{\epsilon}$ which agrees with $\widetilde{r}^2$ near $\hH_{\max}$ and such that $\frac{\sqrt{-1}}2\pa\db\psi_{\epsilon}$ is the K\"ahler form of an $\mathfrak{n}$-warped $\QAC$-metric not only near $\hH_{\max}$, but also near $\hH$.  Indeed, in any coordinates \eqref{kw.6} near $\hH$, $\xi_{k'}$ corresponds to the coordinate of the $\bbR^+$-action on $V_{0}$ and we compute that 
\begin{equation}
\begin{aligned}
  \frac{\sqrt{-1}}2 \pa\db |\xi_{k'}|^{-2\nu}\phi_{\mathfrak{q}^c}(\zeta_{\mathfrak{q}^c},\overline{\zeta}_{\mathfrak{q}^c})&= |\xi_{k'}|^{-2\nu}\frac{\sqrt{-1}}2 \left(\pa\db \phi_{\mathfrak{q}^c} + -\nu \pa\phi_{\mathfrak{q}^c}\wedge \overline{\xi}_{k'}\frac{d\overline{\xi}_{k'}}{\overline{\xi}_{k'}^2} - \nu \xi_{k'}\frac{d\xi_{k'}}{\xi_{k'}^2} \wedge \db\phi_{\mathfrak{q}^c} \right. \\
  & \hspace{4cm} \left. +\nu^2\phi_{\mathfrak{q}^c} \xi_{k'}  \frac{d\xi_{k'}}{\xi_{k'}^2}\wedge \overline{\xi}_{k'}\frac{d\overline{\xi}_{k'}}{\overline{\xi}_{k'}^2}  \right) \\
  &= |\xi_{k'}|^{-2\nu}\frac{\sqrt{-1}}2 \pa\db \phi_{\mathfrak{q}^c} + |\xi_{k'}|^{1-\nu} x_{\max}^{-1}\cA_{\phg}(\hX;\Lambda^{1,1}({}^{w}T^*\hX\otimes_{\bbR}\bbC)) \\
  &= |\xi_{k'}|^{-2\nu}\frac{\sqrt{-1}}2 \pa\db \phi_{\mathfrak{q}^c}+ x_{\hH}\cA_{\phg}(\hX;\Lambda^{1,1}({}^{w}T^*\hX\otimes_{\bbR}\bbC)).
\end{aligned}  
\label{kw.21}\end{equation}
In particular, since $\frac{\sqrt{-1}}2 \pa\db \phi_{\mathfrak{q}^c}$ is a K\"ahler form on $W_{\mathfrak{q}^c,\omega_{\mathfrak{q}},\epsilon}$, this shows that $\frac{\sqrt{-1}}2 \pa\db \psi_{\epsilon}$ is an $\mathfrak{n}$-warped $\QAC$-K\"ahler form when $x_{\hH}$ is small enough, that is, for points sufficiently close to $\hH$.  We can iterate this construction near each boundary hypersurface of $\hC_{\epsilon}$ proceeding in a non-increasing order  with respect to the partial order of $\cM_1(\hX)$.  More precisely, fix $\hH\in\cM_1(\hX)$ and assume that we have a potential $\psi_{\epsilon}$ agreeing with $\widetilde{r}^2$ near $\hH_{\max}$ and such that $\frac{\sqrt{-1}}2 \pa\db \psi_{\epsilon}$ is a warped $\QAC$-K\"ahler form near $\hG$ for each $\hG>\hH$.  Suppose that $\hH$ corresponds to the subset $\mathfrak{q}\subset \{0,\ldots,N\}$.  In terms of the decomposition $\bbC^{m+n}=V_{\mathfrak{q}}\times V_{\mathfrak{q}^c}$, we can assume by induction that near $\hH\cap\lrp{\bigcup_{\hG>\hH}\hG}$,
\begin{equation}
     \psi_{\epsilon}= \widetilde{r}_{\mathfrak{q}}^2+ \psi_{\epsilon,\mathfrak{q}^c}
\label{kw.22}\end{equation}
with $\frac{\sqrt{-1}}2\pa\db \widetilde{r}^2_{\mathfrak{q}}$ a warped $\QAC$-K\"ahler form on $\hW_{\mathfrak{q}}$ near $S_{\hH}$ seen as a boundary hypersurface of $\hV_{\mathfrak{q}}$.  If $0\in \mathfrak{q}$, then using the coordinates \eqref{kw.6} with $\hH_k=\hH$, $\xi_{k'}$ a coordinate corresponding to the $\bbR^+$-action on $V_{\{0\}}$ and $\zeta_k=\zeta_{\mathfrak{q}^c}$ corresponding to a rescaled coordinate on $V_{\mathfrak{q}^c}$, we can suppose more precisely that outside a compact set of $V_{\mathfrak{q}^c}$,
$$
        \psi_{\epsilon,\mathfrak{q}^c}(\xi_{k'}, \overline{\xi}_{k'},\zeta_{\mathfrak{q}^c},\overline{\zeta}_{\mathfrak{q}^c}) := |\xi_{k'}|^{-2\nu}\psi_{\epsilon,\mathfrak{q}^c}(\zeta_{\mathfrak{q}^c},\overline{\zeta}_{\mathfrak{q}^c})
$$
with $\frac{\sqrt{-1}}2 \pa\db \psi_{\epsilon,\mathfrak{q}^c}(\zeta_{\mathfrak{q}^c},\overline{\zeta}_{\mathfrak{q}^c})$ a K\"ahler form of a $\QAC$-metric outside a compact set of $W_{\mathfrak{q}^c,\omega_{\mathfrak{q}},\epsilon}$.  Using Lemma~\ref{kw.14}, we can change $\psi_{\epsilon,\mathfrak{q}^c}$ to a function $\phi_{\epsilon,\mathfrak{q}^c}$ within a compact set so that $\frac{\sqrt{-1}}2 \pa \db \phi_{\epsilon,\mathfrak{q}^c}(\zeta_{\mathfrak{q}^c},\overline{\zeta}_{\mathfrak{q}^c})$ is positive definite everywhere on $W_{\mathfrak{q}^c,\omega_{\mathfrak{q}},\epsilon}$ for each $\omega_{\mathfrak{q}}$ and for $\epsilon\ne 0$ as in Assumption~\ref{cyc.6c}.  We can then take the new potential
$$
  \widetilde{\psi}_{\epsilon}= \widetilde{r}^2_{\mathfrak{q}}+ |\xi_{k'}|^{-2\nu}\phi_{\epsilon,\mathfrak{q}^c}(\zeta_{\mathfrak{q}^c},\overline{\zeta}_{\mathfrak{q}^c}).
$$
By construction, $\widetilde{\psi}_{\epsilon}=\psi_{\epsilon}$ near $\bigcup_{\hG>\hH} \hG$ and as in \eqref{kw.21}, one can check that 
\begin{equation}
\frac{\sqrt{-1}}2 \pa\db \widetilde{\psi}_{\epsilon}= \frac{\sqrt{-1}}2 \pa\db \widetilde{r}^2_{\mathfrak{q}}+|\xi_{k'}|^{-2\nu}\frac{\sqrt{-1}}2 \pa\db \phi_{\mathfrak{q}^c}+ x_{\hH}\cA_{\phg}(\hX;\Lambda^{1,1}({}^{w}T^*\hX\otimes_{\bbR}\bbC)),
\label{kw.22b}\end{equation} 
so is an $\mathfrak{n}$-warped $\QAC$-K\"ahler form on $\hC_{\epsilon}$ near $\hH$.  If instead $0\notin \mathfrak{q}$, then we still have a decomposition \eqref{kw.22}, but with the difference that on $V_{\mathfrak{q}^c}$, we can directly use the coordinates $z_{\mathfrak{q}^c}$ instead of the rescaled coordinates $\zeta_{\mathfrak{q}^c}$ and assume that
$$
      \psi_{\epsilon,\mathfrak{q}^c}= \psi_{\epsilon,\mathfrak{q}^c}(z_{\mathfrak{q}^c},\overline{z}_{\mathfrak{q}^c}).
$$
  Moreover, this time, $\frac{\sqrt{-1}}2 \pa\db \psi_{\epsilon,\mathfrak{q}^c}(z_{\mathfrak{q}^c},\overline{z}_{\mathfrak{q}^c})$ is an $\mathfrak{n}$-warped $\QAC$-metric outside a compact set of $W_{\mathfrak{q}^c,\epsilon}$.  We can therefore use again Lemma~\ref{kw.14} to change $\psi_{\epsilon,\mathfrak{q}^c}$ on a compact set and obtain a new potential $\phi_{\epsilon,\mathfrak{q}^c}$ with $\frac{\sqrt{-1}}2 \pa \db \phi_{\epsilon,\mathfrak{q}^c}$ positive definite everywhere on $W_{\mathfrak{q}^c,\epsilon}$.  It suffices then to take 
$$
    \widetilde{\psi}_{\epsilon}= \psi_{\epsilon,\mathfrak{q}}+ \phi_{\epsilon,\mathfrak{q}^c}.
$$
Indeed, by construction, $\widetilde{\psi}_{\epsilon}=\psi_{\epsilon}$ near $\bigcup_{\hG>\hH} \hG$ and we can easily check that $\frac{\sqrt{-1}}2 \pa\db \widetilde{\psi}_{\epsilon}$ is also the K\"ahler form of an $\mathfrak{n}$-warped $\QAC$-metric on $\hC_{\epsilon}$ near $\hH$.  

This completes the inductive step and shows that we can find a potential $\psi_{\epsilon}$ agreeing with $\widetilde{r}^2$ near $\hH_{\max}$ and such that $\frac{\sqrt{-1}}2\pa\db \psi_{\epsilon}$ is the K\"ahler form of an $\mathfrak{n}$-warped $\QAC$-metric on $\hC_{\epsilon}$ outside a compact set.  Using Lemma~\ref{kw.14} one last time, we can thus find a potential $\phi_{\epsilon}$ agreeing with $\psi_{\epsilon}$ outside a compact set such that $\frac{\sqrt{-1}}2\pa\db \phi_{\epsilon}$ is the K\"ahler form of an $\mathfrak{n}$-warped $\QAC$-metric on $\hC_{\epsilon}$.

\end{proof}
\begin{remark}
In the setting of Example~\ref{cr.1}, the proof of Theorem~\ref{kw.19} can also be easily adapted to construct K\"ahler $\mathfrak{n}$-warped  $\QAC$-metrics on $K_{F_k}$ in any compactly supported K\"ahler class.  Indeed, the proof of Theorem~\ref{kw.19} at least yields such a metric away from the zero section.  Hence, to obtain one also well-defined near the zero section of $K_{F_k}$,  it suffices to apply the convexity argument of the proof Lemma~\ref{kw.14} with the Euclidean metric replaced by any K\"ahler metric on $K_{F_k}$ with compactly supported K\"ahler class, since by \cite[Corollary~A.3]{CH2013}, such a K\"ahler form is $\pa\db$-exact away from the zero section.
\label{cr.2}\end{remark}

Let us formalize the type of metrics that we have obtained in the following analog of \cite[Definition~3.6]{CDR2016}.

\begin{definition}
A K\"ahler $\mathfrak{n}$-warped $\QAC$-metric $g\in \CI_{\nQb}(C_{\epsilon}; {}^{w}T^*\hC_{\epsilon} \otimes {}^{w}T^*\hC_{\epsilon})$ is \textbf{asymptotic with rate $\delta$} to the Calabi-Yau cone metric $\omega_{C_0}$ if:
\begin{enumerate}
\item Near $\hH_{\max}$, $\omega-\frac{\sqrt{-1}}2 \pa\db\widetilde{r}^2\in \widehat{x}_{\max}^{\delta}\CI_{\nQb}(C_{\epsilon}; \Lambda^{1,1}({}^wT^*\hC_{\epsilon}))$;
\item Near $\hH\in \cM_{1}(\hX)$ corresponding to the subset $\mathfrak{q}$, 
$$
         \omega- \frac{\sqrt{-1}}2\pa\db \widetilde{r}_{\mathfrak{q}}^2- |z_0|^{2\nu_{\hH}}\omega_{\hH}\in x_{\hH}\CI_{\nQb}(\hC_{\epsilon}; \Lambda^{1,1}({}^wT^*\hC_{\epsilon}))
$$
with $\omega_{\hH}$ a closed $(1,1)$-form on $\hH$ which restricts on each fiber $\hW_{\mathfrak{q}^c,\varpi_{\mathfrak{q}},\epsilon}$ of $\phi_{\hH_{\epsilon}}: \hH_{\epsilon}\to S_{\hH}$ to the K\"ahler form of a K\"ahler $\mathfrak{n}_{Z_{\hH_{\epsilon}}}$-warped $\QAC$-metric asymptotic with rate $\delta$ to $g_{W_{\mathfrak{q}^c}}$, the natural Calabi-Yau cone metric on $W_{\mathfrak{q}^c}$ obtained by restricting $g_{C_0}$.  Moreover, as a family of $(1,1)$-forms parametrized by $S_{\hH_{\epsilon}}$, $\omega_{\hH}$ is smooth up to $\pa S_{\hH_{\epsilon}}$.
\end{enumerate}
Notice that the definition is not circular, since by induction on the depth of $\hC_{\epsilon}$, we can assume that the notion of a K\"ahler $\mathfrak{n}_{Z_{\hH_{\epsilon}}}$-warped $\QAC$-metric asymptotic to $g_{W_{\mathfrak{q}^c}}$ with rate $\delta$ has already been defined.
\label{kw.23}\end{definition}
This yields the following characterization of the K\"ahler metrics of Theorem~\ref{kw.19}.
\begin{corollary}
 If $d>1$, then the K\"ahler $\mathfrak{n}$-warped $\QAC$-metrics of Theorem~\ref{kw.19} are asymptotic with rate $d$ to the Calabi-Yau cone metric $g_{C_0}$.
\label{kw.24}\end{corollary}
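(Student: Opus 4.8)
The plan is to verify Definition~\ref{kw.23} directly for the metrics constructed in Theorem~\ref{kw.19}, proceeding by induction on the depth of $\hC_{\epsilon}$. The base case (depth one, so $\hC_{\epsilon}$ is asymptotically conical) is essentially the observation that $\phi_{\epsilon}$ agrees with $\widetilde{r}^2$ near $\hH_{\max}$ together with the fact, used already in the paragraph preceding Theorem~\ref{kw.19}, that $\hC_{\epsilon}$ and $\hC_0$ are tangent to order $\widehat{x}_{\max}^d$ at $\hH_{\max}$; this tangency is what produces the rate $d$. For the inductive step, fix $\hH\in\cM_1(\hX)$ corresponding to the subset $\mathfrak{q}$ and recall from the proof of Theorem~\ref{kw.19} that near $\hH$ the potential has the form \eqref{kw.19a} or \eqref{kw.19b}, namely $\phi_{\epsilon}= \widetilde{r}_{\mathfrak{q}}^2+ \phi_{\mathfrak{q}^c}$ when $\hH\in\cM_{1,0}(\hX)$, and $\phi_{\epsilon}= \widetilde{r}_{\mathfrak{q}}^2+ |z_0|^{2\nu}\phi_{\mathfrak{q}^c}(\zeta_{\mathfrak{q}^c},\overline{\zeta}_{\mathfrak{q}^c})$ when $\hH\in\cM_{1,\nu}(\hX)$.

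The key computation is then the $\pa\db$ of each summand, measured in the $\nQb$-smooth sections of $\Lambda^{1,1}({}^{w}T^*\hC_{\epsilon})$. For the term $\widetilde{r}_{\mathfrak{q}}^2$: since $\widetilde{r}_{\mathfrak{q}}$ agrees with the homogeneous $r_{\mathfrak{q}}$ outside a compact subset of $V_{\mathfrak{q}}$, one has $\frac{\sqrt{-1}}{2}\pa\db\widetilde{r}_{\mathfrak{q}}^2 = |z_0|^{2\nu_{\hH}}\omega_{\hH}' + (\text{error})$ where, after the Lemma~\ref{kw.17}-type rescaling, $\omega_{\hH}'$ pulls back the cone form on $W_{\mathfrak{q}}$ and the correction lies in $x_{\hH}\cA_{\phg}$; this matches the leading terms demanded in condition (2) of Definition~\ref{kw.23}. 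For the fiber term, the computation \eqref{kw.21} from the proof of Theorem~\ref{kw.19} already shows
$$
\frac{\sqrt{-1}}2 \pa\db |\xi_{k'}|^{-2\nu}\phi_{\mathfrak{q}^c}= |\xi_{k'}|^{-2\nu}\frac{\sqrt{-1}}2 \pa\db \phi_{\mathfrak{q}^c}+ x_{\hH}\cA_{\phg}(\hX;\Lambda^{1,1}({}^{w}T^*\hX\otimes_{\bbR}\bbC)),
$$
so the leading part is exactly $|z_0|^{2\nu_{\hH}}$ times the K\"ahler form on the fiber $\hW_{\mathfrak{q}^c,\varpi_{\mathfrak{q}},\epsilon}$, which by the inductive hypothesis (applied to the lower-depth compactification $\hW_{\mathfrak{q}^c,\varpi_{\mathfrak{q}},\epsilon}$, together with Assumption~\ref{cyc.6c} and Remark~\ref{pr.11b}) is asymptotic with rate $d$ to the Calabi-Yau cone metric $g_{W_{\mathfrak{q}^c}}$. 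The smoothness of $\omega_{\hH}$ as a family over $S_{\hH_{\epsilon}}$ up to $\pa S_{\hH_{\epsilon}}$ follows because the construction in Theorem~\ref{kw.19} was carried out fiberwise via Lemma~\ref{kw.14} with parameters varying smoothly, and the modifications there are compactly supported in the fiber direction.

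The point that needs the most care — and which I expect to be the main obstacle — is bookkeeping the rate. One must check that after the weighted blow-ups and the passage from $x_{\max}$ to $\widehat{x}_{\max}=x_{\max}^{(d-\ell)/d}$, the discrepancy between $C_{\epsilon}$ and $C_0$, which is governed by the lower-order terms $\epsilon Q_{q,i}$ of degree $\ell<d$ in \eqref{cyc.6}, indeed produces corrections of size $\widehat{x}_{\max}^{d}$ at $\hH_{\max}$ and $x_{\hH}$ (or better) near each non-maximal $\hH$, rather than something weaker; here the hypothesis $d>1$ enters, since $1-\nu = 1-\ell/d>0$ is what makes $|\xi_{k'}|^{1-\nu}x_{\max}^{-1}\cA_{\phg}\subset x_{\hH}\cA_{\phg}$ in \eqref{kw.21}. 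One also has to confirm that the ``closed $(1,1)$-form $\omega_{\hH}$'' appearing in Definition~\ref{kw.23}(2) can be taken to be precisely the leading term extracted above — i.e. that the error is genuinely in $x_{\hH}\CI_{\nQb}$ and not merely $\cA_{\phg}$ — which follows from the $\nQb$-smoothness built into the potentials $\phi_{\mathfrak{q}^c}$ by the construction. Assembling these local statements over all $\hH\in\cM_1(\hX)$, using that $\phi_{\epsilon}$ was defined globally and agrees with the local models near each boundary hypersurface, gives conditions (1) and (2) of Definition~\ref{kw.23} with $\delta=d$, completing the induction.
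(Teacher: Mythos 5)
Your proposal is correct and follows essentially the same route as the paper: condition (1) of Definition~\ref{kw.23} from the agreement of $\phi_{\epsilon}$ with $\widetilde{r}^2$ near $\hH_{\max}$, and condition (2) from the local forms \eqref{kw.19a}, \eqref{kw.19b} of the potential together with the computation \eqref{kw.21}/\eqref{kw.22b}, the induction on depth being built into the definition itself. The only slight overstatement is attributing the rate $d$ at $\hH_{\max}$ to the tangency of $\hC_{\epsilon}$ and $\hC_0$: since the comparison in (1) is with $\frac{\sqrt{-1}}{2}\pa\db\widetilde{r}^2$ restricted to $\hC_{\epsilon}$ itself and $\phi_{\epsilon}=\widetilde{r}^2$ there, the difference vanishes identically near $\hH_{\max}$, so (1) holds for any rate; the tangency was used earlier only to ensure positivity of the model form on $\hC_{\epsilon}$.
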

\begin{proof}
By construction, $\phi_{\epsilon}$ agrees with $\widetilde{r}^2$ near $\hH_{\max}$, which implies (1) in Definition~\ref{kw.23}.  Computations as in \eqref{kw.22b}, \eqref{kw.19a} and \eqref{kw.19b} imply (2) of Definition~\ref{kw.23} so the result follows.  
\end{proof}

To construct Calabi-Yau examples, the key property of the K\"ahler metrics of Definition~\ref{kw.23} that we will use can be formulated in terms of the following analog of \cite[Definition~5.1]{CDR2016}.
\begin{definition}
For $\hH<\hH_{\max}$, let $\CI_{\nQb}(\hH_{\epsilon}/S_{\hH_{\epsilon}})$ be the space of smooth functions on 
$$
        \hH_{\epsilon}\setminus \lrp{ \bigcup_{\hG_{\epsilon}>\hH_{\epsilon}} \hG_{\epsilon}\cap\hH_{\epsilon} }
$$
which restricts on each fiber $\phi_{\hH_{\epsilon}}^{-1}(s)$ to a function in $\CI_{\nQb}(\phi_{\hH_{\epsilon}}^{-1}(s))$.  A function $f\in \widehat{x}_{\max}^{\delta}\CI_{\nQb}(C_{\epsilon})$ is said to \textbf{restrict} to $\pa \hC_{\epsilon}$ to order $r>0$ if for each $\hH<\hH_{\max}$, there is $f_{\hH_{\epsilon}}\in \widehat{x}_{\max}^{\delta}\CI_{\nQb}(\hH_{\epsilon}/S_{\hH_{\epsilon}})$ such that
$$
        f-f_{\hH_{\epsilon}}\in \widehat{x}_{\max}^{\delta} x_{\hH}^r\CI_{\nQb}(C_{\epsilon}).
$$
We denote by $ \widehat{x}_{\max}^{\delta}\CI_{\nQb,r}(C_{\epsilon})$ the subspace of functions in $ \widehat{x}_{\max}^{\delta}\CI_{\nQb}(C_{\epsilon})$ that restrict to $\pa \hC_{\epsilon}$ to order $r$.
\label{qbr.1}\end{definition}
Indeed, as the next lemma shows, the Ricci potential of the K\"ahler metrics of Definition~\ref{kw.23} is an example of such a function.  

\begin{lemma}
If $d>1$ and $\omega_{\epsilon}$ is the K\"ahler form of  a K\"ahler $\mathfrak{n}$-warped $\QAC$-metric asymptotic to $g_{C_0}$ with rate $d$, then
its Ricci potential
\begin{equation}
            \mathfrak{r}_{\epsilon}= \log\lrp{ \frac{\omega_{\epsilon}^m}{c_m \Omega^m_{C_{\epsilon}}\wedge \overline{\Omega}^m_{C_{\epsilon}}}   }
\label{kw.25b}\end{equation}
is an element of $\widehat{x}_{\max}^{d}\CI_{\nQb,1}(C_{\epsilon}).$  
\label{kw.25}\end{lemma}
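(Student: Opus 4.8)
The plan is to show that the Ricci potential $\mathfrak{r}_\epsilon$ decays like $\widehat{x}_{\max}^d$ near $\hH_{\max}$ and, more delicately, that near every non-maximal boundary hypersurface $\hH$ it restricts to the (pulled-back) Ricci potential of the corresponding model metric on the fiber, with the required error $x_{\hH}^1\CI_{\nQb}$. The computation of $\mathfrak{r}_\epsilon$ is local, so I would proceed by induction on the depth of $\hC_\epsilon$, treating each stratum of $\pa\hC_\epsilon$ in turn.

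First, near $\hH_{\max}$: by Corollary~\ref{kw.24} the metric $\omega_\epsilon$ is asymptotic with rate $d$ to $\omega_{C_0}=\frac{\sqrt{-1}}{2}\pa\db\widetilde r^2$, so $\omega_\epsilon^m = \omega_{C_0}^m + \widehat{x}_{\max}^d(\cdots)$ in $\CI_{\nQb}$. On the cone $C_0$ one has the Calabi-Yau identity $\omega_{C_0}^m = c_m\,\Omega^m_{C_0}\wedge\overline\Omega^m_{C_0}$ from \eqref{cyc.3}/\eqref{pr.3b}. The point is then to compare $\Omega^m_{C_\epsilon}$ with $\Omega^m_{C_0}$ near $\hH_{\max}$: from the implicit definitions \eqref{pr.3c} and \eqref{pr.3d}, the difference between the defining relations is governed by $d(\epsilon Q_{q,i}) = \epsilon\, dQ_{q,i}$, and since $\deg Q_{q,i}=\ell<d$ and $\hC_\epsilon$, $\hC_0$ are tangent to order $\widehat{x}_{\max}^d$ at $\hH_{\max}$, a Cramer's-rule / wedge-product bookkeeping argument shows $\Omega^m_{C_\epsilon} = \Omega^m_{C_0}(1 + \widehat{x}_{\max}^d\,\CI_{\nQb})$ there. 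Taking logarithms of the ratio in \eqref{kw.25b} then gives $\mathfrak{r}_\epsilon\in\widehat{x}_{\max}^d\CI_{\nQb}(C_\epsilon)$ near $\hH_{\max}$, using that $\log(1+t)$ is smooth.

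Next, the inductive step at a non-maximal $\hH\in\cM_1(\hX)$ corresponding to a subset $\mathfrak q$. By Definition~\ref{kw.23}(2) the metric splits near $\hH$ as $\omega_\epsilon = \frac{\sqrt{-1}}{2}\pa\db\widetilde r_{\mathfrak q}^2 + |z_0|^{2\nu_{\hH}}\omega_{\hH} + x_{\hH}\CI_{\nQb}$, where $\omega_{\hH}$ restricts on each fiber $\hW_{\mathfrak q^c,\varpi_{\mathfrak q},\epsilon}$ to a lower-depth warped $\QAC$ K\"ahler form, and likewise the holomorphic volume form $\Omega^m_{C_\epsilon}$ factors (up to the sign in \eqref{pr.3b}) as a product of a volume form on the base-direction $V_{\mathfrak q}$ and one on the fiber-direction $V_{\mathfrak q^c}$, using the coordinates \eqref{kw.6}--\eqref{kw.7} and \eqref{cyc.14}; the warping factor $|z_0|^{2\nu_{\hH}}$ contributes a power of $|\xi_{k'}|$ which is $\nQb$-bounded since $1-\nu>0$ (as in the computation \eqref{kw.21}). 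Expanding the Monge-Amp\`ere numerator $\omega_\epsilon^m$ as a sum of wedge powers of the base part, the warped fiber part, and the $x_{\hH}$-error, the leading term is the product of the base cone volume $\omega_{W_{\mathfrak q}}^{m_{\mathfrak q}}$ and the fiber volume; dividing by $c_m\Omega^m_{C_\epsilon}\wedge\overline\Omega^m_{C_\epsilon}$, the base factors cancel against $\Omega^{m_{\mathfrak q}}_{W_{\mathfrak q}}\wedge\overline\Omega^{m_{\mathfrak q}}_{W_{\mathfrak q}}$ by the Calabi-Yau condition \eqref{cyc.3} on the cone $W_{\mathfrak q}$, leaving the Ricci potential of the fiber model metric plus an $x_{\hH}\CI_{\nQb}$ error. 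By the inductive hypothesis (applied to the fiber compactification $\hW_{\mathfrak q^c,\varpi_{\mathfrak q},\epsilon}$, which has strictly smaller depth), the fiber Ricci potential lies in $\widehat{x}_{\max}^d\CI_{\nQb,1}$ of the fiber, and since this holds with a smooth dependence on the base parameter in $S_{\hH_\epsilon}$ (the last sentence of Definition~\ref{kw.23}(2)), it furnishes exactly the $f_{\hH_\epsilon}\in\widehat{x}_{\max}^d\CI_{\nQb}(\hH_\epsilon/S_{\hH_\epsilon})$ required by Definition~\ref{qbr.1}, with $f-f_{\hH_\epsilon}\in\widehat{x}_{\max}^d x_{\hH}\CI_{\nQb}(C_\epsilon)$. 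Carrying this out at each $\hH<\hH_{\max}$ and checking consistency on overlaps $\hH\cap\hG$ (which is automatic because the splittings in Definition~\ref{kw.23}(2) are themselves compatible by construction in Theorem~\ref{kw.19}) gives $\mathfrak{r}_\epsilon\in\widehat{x}_{\max}^d\CI_{\nQb,1}(C_\epsilon)$.

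\textbf{Main obstacle.} The routine part is the expansion of $\omega_\epsilon^m$ and the cancellation of base factors; the genuinely delicate point is controlling the holomorphic volume form $\Omega^m_{C_\epsilon}$ near the boundary faces — specifically, verifying that under the weighted blow-ups and the rescaled coordinates \eqref{cyc.14}, $\Omega^m_{C_\epsilon}$ really does factor as the claimed product of base and fiber holomorphic volume forms up to an $x_{\hH}\CI_{\nQb}$ (and up to $\widehat{x}_{\max}^d$ near $\hH_{\max}$) error, with no hidden anomalous powers of boundary defining functions. This requires carefully differentiating the defining relations $P_{q,i}(z_q)=\epsilon Q_{q,i}(z_0)$ in the blown-up coordinates, tracking how $dP_{q,i}$ and $dQ_{q,i}$ transform and how the degree inequality $\ell<d$ ensures the $Q$-terms are subleading, and then inverting the resulting block-triangular matrix of differentials via Cramer's rule — essentially the argument already sketched around \eqref{pr.3c}--\eqref{cyc.4} but now made uniform up to the corners of $\hC_\epsilon$. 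The regularity bookkeeping (that everything stays in $\CI_{\nQb}$ rather than merely $\cA_{\phg}$) is where Assumptions~\ref{smooth.1} and~\ref{cyc.6c} and the $\nQb$-smoothness of the model construction in Theorem~\ref{kw.19} must be invoked.
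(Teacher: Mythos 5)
Your overall strategy is the same as the paper's (compare $\omega_{\epsilon}$ with the models supplied by Definition~\ref{kw.23} at each boundary hypersurface, and use the Calabi-Yau identity $\mathfrak{r}_0=0$ of the cone together with Definition~\ref{kw.23}(1) to get the rate $\widehat{x}_{\max}^d$ at $\hH_{\max}$), but the technical route differs in one essential respect. You work with the intrinsic holomorphic volume form $\Omega^m_{C_{\epsilon}}$ and propose to factor it into base and fiber pieces in the blown-up coordinates, inverting the block of differentials by Cramer's rule — precisely the step you flag as the main obstacle. The paper avoids this entirely: it rewrites the Ricci potential as the ambient-space expression \eqref{kw.29}, a ratio of $(m+n,m+n)$-forms $\omega_{\epsilon}^m\wedge\cP_{\epsilon}\wedge\overline{\cP}_{\epsilon}$ and $c_{m,n}\,dz\wedge d\overline{z}$ restricted to $\hC_{\epsilon}$, so that no inversion or factorization of $\Omega^m_{C_{\epsilon}}$ is ever needed; the whole proof then reduces to estimating how the individual differentials $d(P_{q,i}-\epsilon Q_{q,i})$ behave in the coordinates \eqref{kw.1}, \eqref{cyc.14} and \eqref{kw.6} (the computations \eqref{kw.30}--\eqref{kw.32}, where the degree inequality $\ell<d$ and the homogeneity bookkeeping via \eqref{cyc.4} enter), combined with properties (1) and (2) of Definition~\ref{kw.23}. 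This also makes your induction on depth unnecessary: the restriction to order $1$ in the sense of Definition~\ref{qbr.1} is read off directly in each coordinate chart, with smoothness in the base parameter coming from the last clause of Definition~\ref{kw.23}(2). What your route buys is a more geometric picture (the restriction of $\mathfrak{r}_{\epsilon}$ is identified with the fiber Ricci potential); what the paper's route buys is that the delicate step disappears.

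Two soft spots in your write-up, both repaired by the ambient formulation: near $\hH_{\max}$ you compare $\Omega^m_{C_{\epsilon}}$ with $\Omega^m_{C_0}$ and $\omega_{\epsilon}^m$ with $\omega_{C_0}^m$, but these live on different manifolds, so the statement $\Omega^m_{C_{\epsilon}}=\Omega^m_{C_0}(1+\widehat{x}_{\max}^d\CI_{\nQb})$ needs either a parametrization of $C_{\epsilon}$ over $C_0$ near infinity or, as in the paper, a comparison of the ambient forms $\cP_{\epsilon}$ and $\cP_0$ (note that Definition~\ref{kw.23}(1) is phrased intrinsically on $C_{\epsilon}$ via $\widetilde{r}^2|_{C_{\epsilon}}$ for exactly this reason). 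Second, the deferred Cramer's-rule analysis at non-maximal faces with $0\in\mathfrak{q}$ is genuine work: the correct error exponents there are not an abstract consequence of $\ell<d$ but come out of the explicit rescaling computation \eqref{kw.30}, and without carrying that out your claimed $x_{\hH}\CI_{\nQb}$ error remains an assertion rather than a proof. So the proposal is a workable outline, but the step you label the main obstacle is exactly the content of the paper's proof, and the cleaner path is the ratio formula \eqref{kw.29}.
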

\begin{proof}
Let us first check that $\mathfrak{r}_{\epsilon}$  restricts to $\pa\hC_{\epsilon}$ and consider the forms
\begin{equation}
dz:= dz_0\wedge (dz_{1,1}\wedge\ldots\wedge dz_{1,m_1+n_1})\wedge\ldots\wedge(dz_{N,1}\wedge\ldots\wedge dz_{N,m_N+n_N})
\label{kw.27}\end{equation}
and 
\begin{equation}
  \cP_{\epsilon}:= (d(P_{1,1}-\epsilon Q_{1,1})\wedge \ldots d(P_{1,n_1}-\epsilon Q_{1,n_1}))\wedge\ldots\wedge (d(P_{N,1}-\epsilon Q_{N,1})\wedge \ldots\wedge d(P_{N,n_N}-\epsilon Q_{N,n_N})). 
\label{kw.28}\end{equation}
For $q\in\{1,\ldots,N\}$, consider also the form
$$
  \cP_{\epsilon,q}:= d(P_{q,1}-\epsilon Q_{q,1})\wedge \ldots d(P_{q,n_q}-\epsilon Q_{q,n_q}).
$$
Then from \eqref{pr.3d} and the definition of $\mathfrak{r}_{\epsilon}$, we see that there exists a constant $c_{m,n}\in\bbC^*$ depending only on $m$ and $n$ such that
\begin{equation}
   \mathfrak{r}_{\epsilon}= \left. \log \lrp{ \frac{\omega_{\epsilon}^m\wedge \cP_{\epsilon}\wedge \overline{\cP}_{\epsilon}}{c_{m,n} dz\wedge d\overline{z}}   }\right|_{\hC_{\epsilon}}.
\label{kw.29}\end{equation}
To study the behavior of $\mathfrak{r}_{\epsilon}$ near $\hH\in \cM_1(\hX)$, let $\mathfrak{q}\subset \{0,\ldots,N\}$ be the subset corresponding to $\hH$.  If $0\notin \mathfrak{q}$, that is, if $\hH\in\cM_{1,0}(\hX)$, then in the coordinates \eqref{kw.1}, we see that for $q\in \mathfrak{q}^c\setminus \{0\}$, the $1$-form
$$
      d(P_{q,j}(z_q)-\epsilon Q_{q,j}(z_0))
$$
naturally restricts to $\hH$ while for $q\in \mathfrak{q}$, 
$$
      \cP_{\epsilon,q}= \cP_{0,q}+ \mathcal{O}(\xi^d),
$$
where $\mathcal{O}(\xi^d)$ corresponds to a sum of $n_q$-forms, each homogeneous in $\xi$ of degree at least $\displaystyle \left(-\sum_{j=1}^{n_q}d_{q,j}\right)+d $ (instead of $\displaystyle -\sum_{j=1}^{n_q}d_{q,j}$ for $\cP_{q,0}$ ).  Keeping in mind \eqref{cyc.4}, the latter is to be compared with
$$
   dz_{q,j}= d(\xi^{-w_{q,j}}\varpi_{q,j}).
$$
If instead $0\in \mathfrak{q}$, then in terms of the coordinates used in \eqref{kw.4}, we see that for $q\in\mathfrak{q}^c$ and $j\le k_q$, 
\begin{equation}
\begin{aligned}
d(P_{q,j}(z_q)-\epsilon Q_{q,j}(z_0))&=  d \lrp{ \xi^{-\nu d}P_{q,j}(\zeta_q)-\epsilon Q_{q,j}(\xi^{-1}\varpi_0)   } = d \lrp{ \xi^{-\ell}P_{q,j}(\zeta_q)-\epsilon Q_{q,j}(\xi^{-1}\varpi_0)   }\\
   &= d \lrp{ \xi^{-\ell}(P_{q,j}(\zeta_q)-\epsilon [Q_{q,j}](\xi^{-1}\varpi_0))   } + \mathcal{O}(\xi^{\alpha+1-\ell}), \quad \mbox{for some}\; \alpha>0, \\
   &= d \lrp{ \xi^{-\ell}(P_{q,j}(\zeta_q)-\epsilon [Q_{q,j}](\xi^{-1}\varpi_0))   } + \mathcal{O}(x_{\hH}^{\frac{(\alpha+1)d}{d-\ell}}\xi^{-\ell}),
\end{aligned}
\label{kw.30}\end{equation}
where $ \mathcal{O}(\xi^{\alpha+1-\ell})=  \mathcal{O}(x_{\hH}^{\frac{(\alpha+1)d}{d-\ell}}\xi^{-\ell})$ is with respect to the local sections of $1$-forms $\frac{d\xi}{\xi^2}$ and $\frac{d\varpi_{0,j}}{\xi }$ in \eqref{kw.4}.  If $0\in\mathfrak{q}$ and $q\in \mathfrak{q}^c$, but $j>k_q$, then
\begin{equation}
\begin{aligned}
d(P_{q,j}(z_q)-\epsilon Q_{q,j}(z_0))&=  d \lrp{ \xi^{-\nu d_{q,j}}P_{q,j}(\zeta_q)-\epsilon Q_{q,j}(\xi^{-1}\varpi_0)   } \\
   &= d \lrp{ \xi^{-\nu d_{q,j}}(P_{q,j}(\zeta_q)} + \mathcal{O}(\xi^{\alpha+1-\nu d_{q,j}}), \quad \mbox{for some}\; \alpha>0, \\
   &= d \lrp{ \xi^{-\nu d_{q,j}}(P_{q,j}(\zeta_q)  } + \mathcal{O}(x_{\hH}^{\frac{(\alpha+1)d}{d-\ell}}\xi^{-\nu d_{q,j}}),
\end{aligned}
\label{kw.30a}\end{equation}
where $ \mathcal{O}(\xi^{\alpha+1-\nu d_{q,j}})=  \mathcal{O}(x_{\hH}^{\frac{(\alpha+1)d}{d-\ell}}\xi^{-\nu d_{q,j}})$ is again with respect to the local sections of $1$-forms $\frac{d\xi}{\xi^2}$ and $\frac{d\varpi_{0,j}}{\xi }$ in \eqref{kw.4}. 
For $q\in\mathfrak{q}$, we have instead that 
\begin{equation}
   d(P_{q,j}(z_q)-\epsilon Q_{q,j}(z_0))= d(\xi^{-d_{q,j}}P_{q,j}(\varpi))-\epsilon dQ_{q,j}(\xi^{-1}\varpi_0), 
\label{kw.31}\end{equation}
so that 
\begin{equation}
      \cP_{\epsilon,q}= \cP_{0,q}+ \mathcal{O}(\xi^{d-\ell})=\cP_{0,q}+ \mathcal{O}(x_{\hH}^d)
\label{kw.32}\end{equation}
with $\mathcal{O}(\xi^{d-\ell})=\mathcal{O}(x_{\hH}^d)$ corresponding to a sum of $n_q$-forms, each homogeneous in $\xi$ of degree at least $\displaystyle -\lrp{\sum_{j=1}^{n_q}d_{q,j}}+ d-\ell$, instead of $\displaystyle -\sum_{j=1}^{n_q}d_{q,j}$ for $\cP_{0,q}$.  Thanks to property (2) of Definition~\ref{kw.23} and keeping in mind \eqref{cyc.4}, we see from \eqref{kw.29} that $\mathfrak{r}_{\epsilon}$ restricts to $\pa C_{\epsilon}$ to order $1$ in both coordinate charts.  Similar computations can be done in the coordinates \eqref{kw.6}, from which the result follows.

At $\hH_{\max}$, notice that $\mathfrak{q}=\{0,\ldots,N\}$, so $\mathfrak{q}^c=\emptyset$, that is, \eqref{kw.30} does not arise and we only need to consider \eqref{kw.32}.  On the other hand, since $\omega_{C_{0}}$ is Calabi-Yau, we know that 
\begin{equation}
            \mathfrak{r}_{0}= \log\lrp{ \frac{\omega_{C_0}^m}{c_m \Omega^m_{C_{0}}\wedge \overline{\Omega}^m_{C_{0}}}   }=0.
\label{kw.26}\end{equation}
Combined with property (1) of Definition~\ref{kw.23}, this gives the desired decay at $\hH_{\max}$.
\end{proof}

\section{Solving the complex Monge-Amp\`ere equation} \label{mae.0}

Suppose that $d>1$ and let $g_{\epsilon}$ be a K\"ahler $\mathfrak{n}$-warped $\QAC$-metric asymptotic to the Calabi-Yau cone metric $g_{C_0}$ with rate $d$.  By Theorem~\ref{kw.19} and Corollary~\ref{kw.24}, such a metric exists provided $d>1$.  If $\omega_{\epsilon}$ is the K\"ahler form of $g_{\epsilon}$,  then $\widetilde{\omega}_{\epsilon}:= \omega_{\epsilon}+\sqrt{-1}\pa\db u$ will be the K\"ahler form of a Calabi-Yau metric provided $u$ is a solution of the complex Monge-Amp\`ere equation
\begin{equation}
 \log\lrp{ \frac{(\omega_{\epsilon}+\sqrt{-1}\pa\db u)^m}{\omega_{\epsilon}^m}  }= -\mathfrak{r}_{\epsilon},
\label{mae.1}\end{equation}
where $\mathfrak{r}_{\epsilon}$ is the Ricci potential of $g_{\epsilon}$ as defined in \eqref{kw.25b}.  By Lemma~\ref{kw.25}, this is a particular case of the more general complex Monge-Amp\`ere equation
\begin{equation}
 \log\lrp{ \frac{(\omega_{\epsilon}+\sqrt{-1}\pa\db u)^m}{\omega_{\epsilon}^m}  }= f \quad \mbox{for} \; f\in\widehat{x}^d_{\max}\CI_{\nQb,1}(\hC_{\epsilon}).
\label{mae.2}\end{equation}
As in \cite[\S~5]{CDR2016}, we will solve this equation by first solving the corresponding equation on each non-maximal boundary hypersurface of $\hC_{\epsilon}$.  This will allow us to proceed by induction on the depth of $\hC_{\epsilon}$.  In the induction step, we can in fact assume that $f\in \widehat{x}^d_{\max}w\CI_{\nQb}(C_{\epsilon})$, where 
$$
   w= \prod_{\hH\in \cM_{\nm}(\hX)} x_{\hH}.
$$
\begin{lemma}
Let $\beta$ be a real number such that $\frac{2}{(1-\nu)}<\beta< \frac{2m}{(1-\nu)}$ and $\beta\le 2m_{\hH_{\epsilon}}$ for each $\hH_{\epsilon}\in\cM_{\nm}(\hC_{\epsilon})$, where $2m_{\hH_{\epsilon}}$ is the real dimension of the fibers of $\phi_{\hH_{\epsilon}}: \hH_{\epsilon}\to S_{\hH_{\epsilon}}$.  Let $\delta$ be a multiweight such that $\delta_{\max}=\beta-\frac{2}{1-\nu}$, $-\frac{2\nu_{\hH}}{1-\nu_{\hH}}<\delta_{\hH}<1-\frac{2\nu_{\hH}}{1-\nu_{\hH}}$ and
$$
              \lrp{\frac{1-\nu}{1-\nu_{\hH}}}\beta-\frac{2}{1-\nu_{\hH}}+2-2m_{\hH_{\epsilon}}< \delta_{\hH} < \lrp{\frac{1-\nu}{1-\nu_{\hH}}}\beta - \frac{2}{1-\nu_{\hH}}
$$
 for $\hH\in\cM_{\nm}(\hX)$. If $f\in \widehat{x}^{\beta'}_{\max}w\CI_{\nQb}(C_{\epsilon})$ for some $\beta'>\beta$, then there exists $v\in x^{\delta}\CI_{\nQb}(C_{\epsilon})$ such that $\widetilde{\omega}_{\epsilon}= \omega_{\epsilon}+ \sqrt{-1}\pa\db v$ is the K\"ahler form of an $\mathfrak{n}$-warped $\QAC$-metric $\widetilde{g}_{\epsilon}$ asymptotic to $g_{C_0}$ with rate $\min\{d,\beta\}$ and with
\begin{equation}
  \widetilde{f}:= f-\log\lrp{ \frac{\widetilde{\omega}_{\epsilon}^m}{\omega_{\epsilon}^m}  }\in \CI_c(C_{\epsilon}).
\label{mae.3a}\end{equation}
\label{mae.3}\end{lemma}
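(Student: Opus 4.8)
The plan is to build the potential $v$ by cancelling the error term $f$ order by order along each boundary hypersurface of $\hC_\epsilon$ and then Borel-summing the resulting corrections, following the strategy used for the analogous step in \cite[\S5]{CDR2016}. Since $f\in\widehat{x}_{\max}^{\beta'}w\CI_{\nQb}(C_\epsilon)$ already vanishes to order one along each non-maximal boundary hypersurface and decays at rate $\beta'>\beta$ at $\hH_{\max}$, there are two kinds of model problem to solve. Near $\hH_{\max}$ the background is asymptotic to the Calabi--Yau cone $(C_0,g_{C_0})$ by Corollary~\ref{kw.24}; since $\omega_{C_0}$ is already Ricci-flat, the leading coefficient of $f$ at $\hH_{\max}$ is a genuine obstruction that we cancel by solving, at leading order, the linearised equation $\Delta_{g_{C_0}}w=-f|_{\hH_{\max}}$, which is solvable on the relevant weighted spaces by the results of Section~\ref{mpl.0}. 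The hypothesis $\tfrac{2}{1-\nu}<\beta<\tfrac{2m}{1-\nu}$ is exactly what keeps the relevant exponents inside the invertible range dictated by Corollary~\ref{w.53}, and because $\beta'>\beta$ the resulting correction decays at rate at least $\beta'-\tfrac{2}{1-\nu}\ge\delta_{\max}$; iterating and Borel-summing removes the error at $\hH_{\max}$ to infinite order while keeping the new background asymptotic to $g_{C_0}$ with rate $\min\{d,\beta\}$.

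At a non-maximal $\hH\in\cM_{\nm}(\hX)$ corresponding to a subset $\mathfrak{q}$, one works fibrewise over $S_{\hH_\epsilon}$ using the warped-product model \eqref{w.5}, whose fibre carries the lower-depth warped $\QAC$-metric $\kappa_{\hH}$ on $\hW_{\mathfrak{q}^c,\omega_{\mathfrak{q}},\epsilon}$ (resp. $\hW_{\mathfrak{q}^c,\epsilon}$) of Example~\ref{w.1}. Because $f$ vanishes to order one at $\hH$, the leading part of the Monge--Amp\`ere equation there is linear, with normal operator a rescaling of the Laplacian of $\kappa_{\hH}$ on the fibre; this is invertible on the relevant weighted spaces by Corollary~\ref{w.53} applied to the fibre, its connectedness hypotheses being inherited from the geometry of the cones $W_q$. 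The admissible weights for this fibre problem are governed by the indicial roots of $\Delta_{\kappa_{\hH}}$, which involve the fibre dimension $2m_{\hH_\epsilon}$; this is the origin of the hypothesis $\beta\le 2m_{\hH_\epsilon}$ and of the displayed bounds on $\delta_{\hH}$, the upper bound $\delta_{\hH}<1-\tfrac{2\nu_{\hH}}{1-\nu_{\hH}}$ being the critical exponent at which an order-one error is cancelled, and the lower bound $\delta_{\hH}>-\tfrac{2\nu_{\hH}}{1-\nu_{\hH}}$ guaranteeing that $\sqrt{-1}\pa\db v$ stays a bounded perturbation of $\omega_\epsilon$ in the warped $\QAC$ sense.

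Solving all these model problems in an order compatible with the partial order on $\cM_1(\hX)$, so that the corrections near smaller faces restrict compatibly to those near larger ones in the sense of Definition~\ref{qbr.1}, then patching with a partition of unity subordinate to a collar neighbourhood of $\pa\hC_\epsilon$, iterating in the order of vanishing at each face, and Borel-summing, produces $v\in x^\delta\CI_{\nQb}(C_\epsilon)$ for which the error vanishes to infinite order at every boundary hypersurface of $\hC_\epsilon$. It then remains to verify that $\widetilde\omega_\epsilon=\omega_\epsilon+\sqrt{-1}\pa\db v$ is positive and that $\widetilde f\in\CI_c(C_\epsilon)$. Positivity holds because the weight bounds force $\sqrt{-1}\pa\db v$ to be a decaying perturbation of $\omega_\epsilon$ near $\pa\hC_\epsilon$, while on the remaining compact region positivity is arranged exactly as in the convexity argument of Lemma~\ref{kw.14} and the proof of Theorem~\ref{kw.19}; the same decay rates give the asymptotic rate $\min\{d,\beta\}$ of $\widetilde g_\epsilon$, and the infinite-order vanishing of the error at all boundary hypersurfaces makes $\widetilde f$ compactly supported in $C_\epsilon$.

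The step I expect to be the main obstacle is the recursive, fibered bookkeeping. One must solve the fibrewise linear equations in families over the bases $S_{\hH_\epsilon}$, smoothly up to $\pa S_{\hH_\epsilon}$; ensure the solutions attached to different faces are mutually compatible at the corners of $\hX$ so that the patched potential genuinely lies in $\CI_{\nQb}$ rather than in a merely polyhomogeneous class; track weighted norms through the Borel summation; and carry all of this out in the $\nQb$-smooth rather than the $\CI$ category, since $\hC_\epsilon$ is in general only $\cC^{\lfloor d\rfloor}$, so every appeal to elliptic regularity and to Corollary~\ref{w.53} must be made there. Controlling positivity at the intermediate steps, where a correction near a large face is superimposed on corrections near smaller ones, is the other delicate point.
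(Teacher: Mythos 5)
Your strategy is genuinely different from the paper's, and it has a gap that prevents it from proving the statement as claimed. The lemma asserts that the residual error $\widetilde f= f-\log(\widetilde\omega_\epsilon^m/\omega_\epsilon^m)$ lies in $\CI_c(C_\epsilon)$, i.e.\ is \emph{compactly supported}. An order-by-order cancellation along the boundary hypersurfaces followed by Borel summation can at best make the error vanish to infinite order at $\pa\hC_\epsilon$; infinite-order vanishing is strictly weaker than compact support (think of $e^{-1/x}$ at $x=0$), and since your scheme never solves the full nonlinear Monge--Amp\`ere equation exactly in any neighbourhood of infinity, the error remains nonzero there. So your final claim that "the infinite-order vanishing of the error at all boundary hypersurfaces makes $\widetilde f$ compactly supported" does not follow. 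A second, related problem is that your iteration requires reading off "leading coefficients" of $f$ at $\hH_{\max}$ and at the non-maximal faces, but the hypothesis is only $f\in\widehat{x}_{\max}^{\beta'}w\,\CI_{\nQb}(C_\epsilon)$: this space encodes decay and uniform boundedness of $\nQb$-derivatives, not a polyhomogeneous or Taylor expansion at the boundary, so the model problems $\Delta_{g_{C_0}}w=-f|_{\hH_{\max}}$ and their fibrewise analogues are not even well posed as stated.

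The paper avoids both issues by following \cite[Proposition~25]{Szekelyhidi}: one runs a Banach fixed point (contraction) argument for the full nonlinear operator $F(u)=\log((\omega_\epsilon+\sqrt{-1}\pa\db u)^m/\omega_\epsilon^m)-f$ on the end $\rho^{-1}([A,\infty))$, writing $F(u)=F(0)+\tfrac12\Delta_{g_\epsilon}u+Q(u)$ and seeking a fixed point of $N(u)=2\Delta_{g_\epsilon}^{-1}[\mu(\rho-A)(-F(0)-Q(u))]$ in a small ball of $x^{\delta}\cC^{k+2,\alpha}_{\nQb}$. The weight hypotheses you identified enter exactly there: the bounds on $\delta$ put it in the isomorphism range of Corollary~\ref{w.53}, the lower bounds $\delta_{\max}>-\tfrac{2}{1-\nu}$, $\delta_{\hH}>-\tfrac{2\nu_{\hH}}{1-\nu_{\hH}}$ give the inclusion $x^{\delta}\cC^{k+2,\alpha}_{\nQb}\subset(\rho w)^2\cC^{k+2,\alpha}_{\nQb}$ needed for the quadratic estimate on $Q$, and $\beta'>\beta$ together with $\delta_{\hH}<1-\tfrac{2\nu_{\hH}}{1-\nu_{\hH}}$ makes $\|F(0)\|_{(\rho w)^{-2}x^{\delta}\cC^{k,\alpha}_{\nQb}}\le CA^{-\tau}$ small for $A$ large, so $N$ maps the ball to itself and is a contraction. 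The fixed point solves the Monge--Amp\`ere equation \emph{exactly} on $\rho^{-1}([A+2,\infty))$, which is what produces a genuinely compactly supported $\widetilde f$; the global potential $v$ and positivity are then obtained from Lemma~\ref{kw.14}, as you correctly anticipated, and bootstrapping with Corollary~\ref{w.53} gives $v\in x^{\delta}\CI_{\nQb}(C_\epsilon)$. If you want to salvage your approach, you would still need a final nonlinear step of exactly this fixed-point type near infinity, at which point the preliminary order-by-order cancellation becomes unnecessary.
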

\begin{proof}
We follow the approach of \cite[Proposition~25]{Szekelyhidi} and apply a fixed point argument outside a sufficiently large compact set of $C_{\epsilon}$.  For $A>0$, consider the closed subset 
$$
     \rho^{-1}([A,\infty))\subset C_{\epsilon}.
$$  
Denote by $x^{\delta}\cC^{k,\alpha}_{\nQb}(\rho^{-1}([A,\infty)))$ the subspace of functions on $\rho^{-1}([A,\infty))$ obtained by restriction of $x^{\delta}\cC^{k,\alpha}_{\nQb}(C_{\epsilon})$ to $\rho^{-1}([A,\infty))$.  The norm of a function in $x^{\delta}\cC^{k,\alpha}_{\nQb}(\rho^{-1}([A,\infty)))$ can be defined as the infimum over the corresponding norms of the possible extensions in $x^{\delta}\cC^{k,\alpha}_{\nQb}(C_{\epsilon})$.  For $\delta$ as in the statement of the lemma, consider the subset 
$$
   \cB:= \{ u\in x^{\delta}\cC^{k+2,\alpha}_{\nQb}(\rho^{-1}([A,\infty))) \; | \; \|u\|_{x^{\delta}\cC^{k+2,\delta}_{\nQb}}\le \epsilon_0\}
$$
for some $\epsilon_0>0$ chosen sufficiently small so that $\omega_{\epsilon}+ \sqrt{-1}\pa\db u$ is positive definite in $\rho^{-1}([A,\infty))$.  To solve the complex Monge-Amp\`ere-equation near infinity, consider the nonlinear operator 
$$
\begin{array}{lccc}
  F: & \cB & \to & (\rho w)^{-2}x^{\delta}\cC^{k,\alpha}_{\nQb}(\rho^{-1}([A,\infty))) \\
      & u & \mapsto & \log\lrp{ \frac{(\omega_{\epsilon}+\sqrt{-1}\pa\db u)^m}{\omega_{\epsilon}^m}  }-f.
\end{array}
$$
This can be rewritten as 
$$
     F(u)= F(0)+ \frac12 \Delta_{g_{\epsilon}}u+ Q(u),
$$
where $\Delta_{g_{\epsilon}}= g_{\epsilon}^{ij}\nabla_i\nabla_j$ is the Laplacian associated to $g_{\epsilon}$ and $Q(u)$ is the nonlinear part of $F(u)$.  Let $\mu\in\CI(\bbR)$ be a function such that $\mu(t)\equiv 0$ for $t<1$ and $\mu(t)\equiv 1$ for $t>2$.  To solve \eqref{mae.2}, it suffices then to solve the equation
\begin{equation}
u=2 \Delta_{g_{\epsilon}}^{-1} \left[ \mu(\rho-A)(-F(0)-Q(u))\right],
\label{contr.1}\end{equation}
where $\Delta_{g_{\epsilon}}^{-1}$ is the inverse provided by Corollary~\ref{w.53}.  Indeed, if $u\in x^{\delta}\cC^{k+2,\alpha}_{\nQb}(\rho^{-1}([A,\infty)))$ is a solution of \eqref{contr.1}, then it is a solution of \eqref{mae.2} on $\rho^{-1}([A+2,\infty))$.  Now, to solve \eqref{contr.1}, we can look for a fixed point of the operator 
\begin{equation}
    N(u):= 2\Delta_{g_{\epsilon}}^{-1}\left[ \mu(\rho-A) (-F(0)-Q(u))  \right].
\label{contr.2}\end{equation}
This can be achieved by showing that $N$ is a contraction.  First, as in the proof of \cite[Proposition~25]{Szekelyhidi}, we see from the explicit formula for $Q$ that for $u,v\in \cB$, 
$$
\begin{aligned}
\| Q(u)-Q(v)\|_{(\rho w)^{-2}x^{\delta}\cC^{k,\alpha}} &\le C \lrp{  \| \pa\db u\|_{\cC^{k,\alpha}_{\nQb}(\rho^{-1}([A,\infty);\Lambda^{1,1}({}^{w}T^*\hC_{\epsilon}))}+\| \pa\db v\|_{\cC^{k,\alpha}_{\nQb}(\rho^{-1}([A,\infty);\Lambda^{1,1}({}^{w}T^*\hC_{\epsilon}))}   } \cdot \\
  & \hspace{8cm} \| u-v\|_{x^{\delta}\cC^{k+2,\alpha}_{\nQb}}  \\
 & \le \widetilde{C} \lrp{ \|u\|_{(\rho w)^2\cC^{k+2,\alpha}_{\nQb}} +  \|u\|_{(\rho w)^2\cC^{k+2,\alpha}_{\nQb}}}\| u-v\|_{x^{\delta}\cC^{k+2,\alpha}_{\nQb}}
\end{aligned}
$$
for some positive constants $C$ and $\widetilde{C}$ depending on $g_{\epsilon}$, but independent of the choice of $\epsilon_0$ in the definition of $\cB$ if we assume without loss of generality that $\epsilon_0<1$.  Since
$\delta_{\max}>\frac{-2}{1-\nu}$ and $\delta_{\hH}>-\frac{2\nu_{\hH}}{1-\nu_{\hH}}$, there is a continuous inclusion 
$$
      x^{\delta}\cC^{k+2,\alpha}_{\nQb}(\rho^{-1}([A,\infty)) \subset  (\rho w)^2 \cC^{k+2,\alpha}_{\nQb}(\rho^{-1}([A,\infty)), 
$$
so changing $\widetilde{C}$ if necessary, 
\begin{equation}
     \| Q(u)-Q(v)\|_{(\rho w)^{-2}x^{\delta}\cC^{k,\alpha}}  \le \widetilde{C} \lrp{ \|u\|_{x^{\delta}\cC^{k+2,\alpha}_{\nQb}} +  \|u\|_{x^{\delta}\cC^{k+2,\alpha}_{\nQb}}}\| u-v\|_{x^{\delta}\cC^{k+2,\alpha}_{\nQb}} \quad \forall u,v\in\cB.
\label{contr.3}\end{equation}
Since the $\cC^{k,\alpha}_{\nQb}$-norms of $\mu(\rho-A)$ can be controlled independently of the choice of $A$, we see from \eqref{contr.2}, \eqref{contr.3} and Corollary~\ref{w.53} that by taking $\epsilon_0>0$ sufficiently small in the definition of $\cB$ but independent of the choice of $A$, we will have that
$$
      \| N(u)-N(v)\|_{x^{\delta}\cC^{k+2,\alpha}_{\nQb}} < \frac{1}2 \| u-v\|_{x^{\delta}\cC^{k+2,\alpha}_{\nQb}} \quad \forall u,v\in \cB.
$$  
To apply the Banach fixed point theorem, we also need to check that $N$ maps $\cB$ to itself.  First note that 
$$
          F(0)=-f \in \hx_{\max}^{\beta'}w\CI_{\nQb}(C_{\epsilon}).
$$
Since $\beta'>\beta$ and $\delta_{\hH}<1-\frac{2\nu_{\hH}}{1-\nu_{\hH}}$, this means that there are constants $C>0$ and $\tau>0$ such that 
$$
       \| F(0)\|_{(\rho w)^{-2}x^{\delta}\cC^{k,\alpha}_{\nQb}}< CA^{-\tau}.
$$
Hence, by Corollary~\ref{w.53},  for $u\in\cB$, there is a constant $K>0$ independent of $A$ such that 
$$
\begin{aligned}
\| N(u)\|_{x^{\delta}\cC^{k+2,\alpha}_{\nQb}} & \le \| N(0)\|_{x^{\delta}\cC^{k+2,\alpha}_{\nQb}} + \| N(u)- N(0)\|_{x^{\delta}\cC^{k+2,\alpha}_{\nQb}} \\
     &\le K \| F(0)\|_{(\rho w)^{-2}x^{\delta}\cC^{k,\alpha}_{\nQb}}+ \frac12  \| u\|_{x^{\delta}\cC^{k+2,\alpha}_{\nQb}} \\
     &\le KCA^{-\tau} + \frac{\epsilon_0}2.
\end{aligned}
$$
Thus, for $A>0$ sufficiently large, $N(\cB)\subset \cB$ and $N$ has a fixed point $u$ as required.  Initially, $u$ is only defined on $\rho^{-1}([A,\infty))$, but using Lemma~\ref{kw.14}, we can find a new function $v$ agreeing with $u$ outside a compact set such that 
$$
       \widetilde{\omega}_{\epsilon}= \omega_{\epsilon}+ \sqrt{-1}\pa\db v
$$
is positive definite everywhere.  Using Corollary~\ref{w.53} and \eqref{mae.2}, we can bootstrap to see that in fact $v\in x^{\delta}\CI_{\nQb}(C_{\epsilon})$.

\end{proof}

The decay in \eqref{mae.3a} is exactly what we need to solve the complex Monge-Amp\`ere equation via the approach of \cite{Tian-Yau1991} or its parabolic version \cite{Chau-Tam2011}.
\begin{theorem}
For the K\"ahler form $\widetilde{\omega}_{\epsilon}$ and the function $\widetilde{f}$ of Lemma~\ref{mae.3}, the complex Monge-Amp\`ere equation 
\begin{equation}
  \log \lrp{  \frac{(\widetilde{\omega}_{\epsilon}+ \sqrt{-1}\pa\db u)^m}{\widetilde{\omega}_{\epsilon}^m}   }= \widetilde{f}
\label{mae.4a}\end{equation}
has a unique solution $u$ in $x^{\widetilde{\delta}}\CI_{\nQb}(C_{\epsilon})$, where $\widetilde{\delta}$ is any multiweight satisfying the conditions of Corollary~\ref{w.53} such that $\widetilde{\delta}_{\max}>0$ and $\widetilde{\delta}_{\hH}>0$ for $\hH\in\cM_{\nm}(\hX)$.
\label{mae.4}\end{theorem}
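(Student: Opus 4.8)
The plan is to solve \eqref{mae.4a} by the continuity method, exploiting that $\widetilde{f}\in\CI_c(C_{\epsilon})$ lies in $(\rho w)^{-2}x^{\widetilde{\delta}}\cC^{k,\alpha}_{\nQb}(C_{\epsilon})$ for every $k\in\bbN_0$, every $\alpha\in(0,1)$ and every multiweight $\widetilde{\delta}$. Fix such a $\widetilde{\delta}$ subject to the conditions of Corollary~\ref{w.53} with $\widetilde{\delta}_{\max}>0$ and $\widetilde{\delta}_{\hH}>0$ for $\hH\in\cM_{\nm}(\hX)$. For $t\in[0,1]$ consider the equation
\[
 \log\lrp{\frac{(\widetilde{\omega}_{\epsilon}+\sqrt{-1}\pa\db u)^m}{\widetilde{\omega}_{\epsilon}^m}}=t\widetilde{f},
\]
and let $S\subset[0,1]$ be the set of $t$ for which it admits a solution $u_t\in x^{\widetilde{\delta}}\cC^{k+2,\alpha}_{\nQb}(C_{\epsilon})$ with $\widetilde{\omega}_{\epsilon}+\sqrt{-1}\pa\db u_t>0$. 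Then $0\in S$ since $u_0=0$ works. For openness, at any $u_t$ the form $\omega_{u_t}:=\widetilde{\omega}_{\epsilon}+\sqrt{-1}\pa\db u_t$ is again a ($\nQb$-smooth) K\"ahler $\mathfrak{n}$-warped $\QAC$-metric asymptotic to $g_{C_0}$, so by Corollary~\ref{w.53} (and the maximum principle, to rule out a decaying kernel) the linearization $\tfrac12\Delta_{\omega_{u_t}}$ of the left-hand side is an isomorphism $x^{\widetilde{\delta}}\cC^{k+2,\alpha}_{\nQb}\to(\rho w)^{-2}x^{\widetilde{\delta}}\cC^{k,\alpha}_{\nQb}$; combined with the expansion $\log\frac{(\widetilde{\omega}_{\epsilon}+\sqrt{-1}\pa\db u)^m}{\widetilde{\omega}_{\epsilon}^m}=\tfrac12\Delta_{\widetilde{g}_{\epsilon}}u+Q(u)$ with $Q$ mapping into the target space as in the proof of Lemma~\ref{mae.3}, the implicit function theorem shows that $S$ is open.

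For closedness I would establish a priori estimates uniform in $t$. Since $e^{t\widetilde{f}}-1$ is compactly supported, a Moser iteration based on the Sobolev inequality of Corollary~\ref{w.41} yields a uniform bound $\|u_t\|_{L^\infty}\le C$ as in \cite{Tian-Yau1991}. Yau's second-order estimate then gives $c\,\widetilde{\omega}_{\epsilon}\le\omega_{u_t}\le C\,\widetilde{\omega}_{\epsilon}$, as it uses only the $C^0$-bound and a lower bound on the bisectional curvature of $\widetilde{\omega}_{\epsilon}$, which is available because $\mathfrak{n}$-warped $\QAC$-metrics have bounded geometry by Proposition~\ref{w.2}. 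Uniform interior $\cC^{k+2,\alpha}_{\nQb}$-bounds on $u_t$ then follow from the complex Evans--Krylov estimate together with Schauder estimates on $\nQb$-unit balls, again using bounded geometry. Letting $t_j\to t_\infty$ in $S$ and passing to a limit produces a $\cC^{k+2,\alpha}_{\nQb}$-bounded solution $u$ of the equation at $t=t_\infty$.

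It remains to promote this bounded solution to $x^{\widetilde{\delta}}\CI_{\nQb}(C_{\epsilon})$, and I expect this decay step to be the main obstacle (equivalently, one could run the continuity method directly in the weighted spaces, the difficulty then being a uniform weighted $C^0$-bound). Outside a compact set $K\supset\operatorname{supp}\widetilde{f}$ the solution satisfies the homogeneous equation $\omega_u^m=\widetilde{\omega}_{\epsilon}^m$; using that $\rho^{-2}g_w$ converges at infinity to the cone metric $g_{C_0}$ (as exploited in \cite{Szekelyhidi,Sun-Zhang}), the uniform $C^0$-bound together with a Liouville-type blow-down argument forces $|\sqrt{-1}\pa\db u|_{g_w}\to0$ at infinity. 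Consequently, as in the proof of Lemma~\ref{mae.3}, outside $K$ the equation can be rewritten as $\tfrac12\Delta_{\widetilde{g}_{\epsilon}}u=Q(u)$ with $Q(u)$ quadratic in $\sqrt{-1}\pa\db u$ and small in $(\rho w)^{-2}x^{\widetilde{\delta}}\cC^{k,\alpha}_{\nQb}$; solving the compactly supported part of the equation with Corollary~\ref{w.53} and running the contraction argument of Lemma~\ref{mae.3} for the remainder then yields $u\in x^{\widetilde{\delta}}\CI_{\nQb}(C_{\epsilon})$. Equivalently, one can invoke the parabolic method of \cite{Chau-Tam2011}, whose estimates already incorporate the decay of $u$ when $\widetilde{f}$ decays. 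Finally, uniqueness follows from the maximum principle: if $u,u'\in x^{\widetilde{\delta}}\CI_{\nQb}(C_{\epsilon})$ both solve \eqref{mae.4a}, then $u-u'$ satisfies a uniformly elliptic linear equation with no zeroth-order term and tends to $0$ at infinity, hence vanishes identically.
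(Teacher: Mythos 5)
Your continuity-method skeleton is the same as the paper's: $S$ nonempty at $t=0$, openness from the isomorphism of Corollary~\ref{w.53} applied to the Laplacian of the deformed warped $\QAC$-metric, and closedness from Moser iteration with the Sobolev inequality of Corollary~\ref{w.41}, Yau's second-order estimate, Evans--Krylov and Schauder in bounded geometry. Your uniqueness argument is a legitimate variant: since $\widetilde{\omega}_{\epsilon}+t\sqrt{-1}\pa\db(u-u')=(1-t)\omega_{u'}+t\omega_u>0$, the difference solves a uniformly elliptic equation with no zeroth-order term and decays, so the maximum principle applies; the paper instead quotes \cite[Proposition~7.13]{Aubin} with the isomorphism \eqref{w.53a} in place of the maximum principle, but both routes are fine here.

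The genuine gap is exactly where you flag it: promoting the bounded limit solution to $x^{\widetilde{\delta}}\CI_{\nQb}(C_{\epsilon})$. First, the proposed blow-down/Liouville argument for $|\sqrt{-1}\pa\db u|_{g_w}\to 0$ is not available off the shelf in this setting: the tangent cone at infinity $(C_0,g_{C_0})$ has singular cross-section, and near the non-maximal boundary hypersurfaces the rescaled limits are the warped product models \eqref{int.5} rather than a single smooth cone, so the Liouville-type input used in \cite{Szekelyhidi,Sun-Zhang} would itself have to be proved; the paper deliberately avoids this (cf.\ Remark~\ref{int.7}). Second, and more importantly, even granting $\pa\db u\to 0$, rerunning the contraction of Lemma~\ref{mae.3} only produces \emph{some} solution of the exterior problem lying in $x^{\widetilde{\delta}}\cC^{k+2,\alpha}_{\nQb}$; to conclude that \emph{your} $u$ lies in the weighted space (or differs from such a solution by a constant that you can then discard) you would need an exterior uniqueness statement in the class of bounded functions with decaying Hessian, which you neither state nor prove --- note that weight $0$ sits outside the isomorphism range of Corollary~\ref{w.53}, so a bounded ``harmonic remainder'' cannot be excluded by the linear theory you invoke. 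The paper closes this gap differently: a \emph{weighted} Moser iteration as in \cite[\S~8.6.2 and \S~9.6.2]{Joyce} gives directly $u_\tau\in\rho^{-\mu_1}\cC^0_w(C_{\epsilon})$ for some small $\mu_1>0$; Schauder estimates for the warped $\QAC$-metric (Proposition~\ref{w.2}) upgrade this to $\rho^{-\mu_1}\CI_w$, Lemma~\ref{hol.1} transfers it to $\cC^{0,\mu_1}_{\nQb}$, rewriting the equation as the $\nQb$-elliptic equation $(\rho w)^2\Delta_u u=(\rho w)^2\tau\widetilde{f}$ and bootstrapping gives $u_\tau\in\rho^{-\mu_1}\CI_{\nQb}$, and only then is Corollary~\ref{w.53} used to reach the full weight $x^{\widetilde{\delta}}$. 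Your alternative appeal to \cite{Chau-Tam2011} is likewise only a pointer: that the parabolic estimates produce decay in these specific doubly weighted $\nQb$-spaces on a warped $\QAC$-manifold is not something you can cite without verification. As written, the decay step of your proof is incomplete.
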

\begin{proof}
The proof is quite similar to those of \cite[Theorem~5.4]{CDR2016} and \cite[Theorem~6.2]{CR2021}.  For the convenience of the reader, we will go over the argument putting emphasis on the new features.  The overall strategy is to apply the continuity method to the equation
\begin{equation}
    \log \lrp{  \frac{(\widetilde{\omega}_{\epsilon}+ \sqrt{-1}\pa\db u)^m}{\widetilde{\omega}_{\epsilon}^m}   }= t\widetilde{f}
\label{mae.5}\end{equation}
for $t\in [0,1]$.  This will be achieved by showing that the set 
$$
       S:= \{s\in [0,1]\; | \; \mbox{there is a solution} \; u_{\epsilon}\in x^{\widetilde{\delta}}\CI_{\nQb}(C_{\epsilon}) \; \mbox{of \eqref{mae.5} for} \; t=s\}
$$
is all of $[0,1]$, that is, by showing that $S$ is nonempty, open and closed.  Clearly, $u_0=0$ is a solution of \eqref{mae.5} with $t=0$, so $S$ is not empty.  On the other hand, the openness of $S$ follows from Corollary~\ref{w.53}.

To see that $S$ is also closed, suppose that $[0,\tau)\subset S$ for some $0<\tau \le 1$.  We need to show that \eqref{w.53} has a solution $u_\tau\in x^{\widetilde{\delta}}\CI_{\nQb}(C_{\epsilon})$ for $t=\tau$.  By Corollary~\ref{w.41}, the Sobolev inequality holds for $\mathfrak{n}$-warped $\QAC$-metrics on $C_{\epsilon}$, so we can apply  Moser iteration to derive an a priori $C^0$-bound on solutions of \eqref{mae.5}.  The argument of Yau then provides uniform bounds on $\sqrt{-1}\pa\db u_t$, which by the result of Evans-Krylov, yields a priori $C^{2,\gamma}_w(C_{\epsilon})$-bounds on solutions.  Taking an increasing sequence $t_{i}\nearrow \tau$, we can apply the Arzel\`a-Ascoli theorem to extract a subsequence of $\{u_{t_i}\}$ converging in $\cC^2_w(C_{\epsilon})$ to some solution $u_{\tau}$ of \eqref{mae.5} for $t=\tau$. Bootstrapping  shows that $u_{\tau} \in\CI_w(C_{\epsilon})$.

To show that $u_{\tau}\in x^{\delta}\CI_{\nQb}(C_{\epsilon})$, we can first apply a Moser iteration with weight as in \cite[\S~8.6.2 and \S~9.6.2]{Joyce} to obtain an a priori bound in $\rho^{-\mu_1}\cC^0_w(C_{\epsilon})$ for $\mu_1>0$ such that $\frac{\mu_1}{1-\nu_{\hH}}<\delta_{\hH}$ for each $\hH\in\cM_1(\hX)$.  As in \cite[(5.11)]{CDR2016}, equation \eqref{mae.5} for $t=\tau$ can be rewritten as 
\begin{equation}
  \Delta_u u=\tau\widetilde{f},
\label{mae.6}\end{equation}
where 
$$
    \Delta_u v= \frac12 \int_0^{1} (\Delta_{u,t}v)dt 
$$
with $\Delta_{u,t}$ the Laplacian associated to the K\"ahler form $\widetilde{\omega}_{\epsilon}+ t\sqrt{-1}\pa\db u$.  By Proposition~\ref{w.2}, we can apply the Schauder estimate to \eqref{mae.6} in terms of $\mathfrak{n}$-warped $\QAC$-metrics to bootstrap and obtain that in fact $u_{\tau}\in \rho^{-\mu_1}\CI_w(C_{\epsilon})$.  Now by Lemma~\ref{hol.1}, we know that
$$
       \rho^{-\mu_1}\cC^1_w(C_{\epsilon})\subset C^{0,\mu_1}_{\nQb}(C_{\epsilon}).
$$ 
In particular, this implies that $\| \pa\db u_{\tau}\|_{\widetilde{g}_{\epsilon}}\in \cC^{0,\mu_1}_{\nQb}(C_{\epsilon})$.  Rewriting \eqref{mae.6} in terms of an elliptic $\nQb$-operator
\begin{equation}
 (\rho w)^2\Delta_u u=(\rho w)^2\tau\widetilde{f}
\label{mae.7}\end{equation}
and using Lemma~\ref{wqb.16}, we can apply the Schauder estimate and bootstrap to see that $u_{\tau}\in \rho^{-\mu_1}\CI_{\nQb}(C_{\epsilon})$.  Finally, using the inclusion $x^{\widetilde{\delta}}\CI_{\nQb}(C_{\epsilon})\subset \rho^{-\mu_1}\CI_{\nQb}(C_{\epsilon})$ for $\mu_1>0$ small enough, we see from Corollary~\ref{w.53} applied with $\widetilde{\delta}$ above and with a positive multiweight $\delta'$ small enough such that $\rho^{-\mu_1}\CI_{\nQb}(C_{\epsilon})\subset x^{\delta'}\CI_{\nQb}(C_{\epsilon})$ that $u\in x^{\delta}\CI_{\nQb}(C_{\epsilon})$.  This shows that $S$ is closed and completes the proof of existence.  For uniqueness, we can proceed as in \cite[Proposition~7.13]{Aubin} but using the isomorphism \eqref{w.53a} instead of the maximum principle.

\end{proof}

This allows us to solve the complex Monge-Amp\`ere \eqref{mae.2} when $f\in \hx^\beta_{\max}w\CI_{\nQb}(C_{\epsilon})$.

\begin{corollary}
Suppose that $\beta$ is a real number such that 
$$
   \frac{2}{(1-\nu)}<\beta< \frac{2m}{(1-\nu)}     
$$
and $\beta\le 2m_{\hH_{\epsilon}}$ for each $\hH_{\epsilon}\in\cM_{\nm}(\hC_{\epsilon})$.  Let $\delta$ be the multiweight as in Lemma~\ref{mae.3}.  If $f\in \hx_{\max}^{\beta'}w\CI_{\nQb}(C_{\epsilon})$ for some $\beta'>\beta$, then the complex Monge-Amp\`ere equation \eqref{mae.2} has a unique solution $u\in x^{\delta}\CI_{\nQb}(C_{\epsilon})$.
\label{mae.8}\end{corollary}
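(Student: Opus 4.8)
The plan is to assemble the statement from Lemma~\ref{mae.3} and Theorem~\ref{mae.4}, which between them already contain all the analytic content. Indeed, the hypotheses on $\beta$, on the multiweight $\delta$, and on $f$ in the statement are exactly those of Lemma~\ref{mae.3}. So I would first invoke that lemma to produce $v\in x^{\delta}\CI_{\nQb}(C_{\epsilon})$ such that $\widetilde{\omega}_{\epsilon}:=\omega_{\epsilon}+\sqrt{-1}\pa\db v$ is the K\"ahler form of an $\mathfrak{n}$-warped $\QAC$-metric asymptotic to $g_{C_0}$ with rate $\min\{d,\beta\}$ and such that
\[
   \widetilde{f}:=f-\log\lrp{\frac{\widetilde{\omega}_{\epsilon}^m}{\omega_{\epsilon}^m}}\in\CI_c(C_{\epsilon}).
\]
This is precisely the situation to which Theorem~\ref{mae.4} applies, so there is a unique $u'\in x^{\widetilde{\delta}}\CI_{\nQb}(C_{\epsilon})$ with $\log\bigl((\widetilde{\omega}_{\epsilon}+\sqrt{-1}\pa\db u')^m/\widetilde{\omega}_{\epsilon}^m\bigr)=\widetilde{f}$, where $\widetilde{\delta}$ is any admissible multiweight as in that theorem. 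Then I would set $u:=v+u'$: since $\widetilde{\omega}_{\epsilon}+\sqrt{-1}\pa\db u'=\omega_{\epsilon}+\sqrt{-1}\pa\db u$, adding $\log(\widetilde{\omega}_{\epsilon}^m/\omega_{\epsilon}^m)$ to both sides of the equation solved by $u'$ shows at once that $u$ solves \eqref{mae.2} and that $\omega_{\epsilon}+\sqrt{-1}\pa\db u$ is the K\"ahler form of a Calabi-Yau (more precisely, the desired) $\mathfrak{n}$-warped $\QAC$-metric.

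The only point that needs a small verification is that $u$ lies in $x^{\delta}\CI_{\nQb}(C_{\epsilon})$ and not merely in some coarser weighted space, and this reduces to making a compatible choice of $\widetilde{\delta}$. I would take $\widetilde{\delta}_{\max}:=\delta_{\max}=\beta-\frac{2}{1-\nu}$, which satisfies $0<\widetilde{\delta}_{\max}<\frac{2m-2}{1-\nu}$ precisely because $\beta<\frac{2m}{1-\nu}$; with this choice the interval prescribed for the non-maximal components $\widetilde{\delta}_{\hH}$ by Corollary~\ref{w.53} coincides with the interval constraining $\delta_{\hH}$ in Lemma~\ref{mae.3}, and its right endpoint $\bigl(\tfrac{1-\nu}{1-\nu_{\hH}}\bigr)\beta-\tfrac{2}{1-\nu_{\hH}}$ is positive since $(1-\nu)\beta>2$. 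Hence one can choose $\widetilde{\delta}_{\hH}$ inside that interval with $\widetilde{\delta}_{\hH}>0$ and $\widetilde{\delta}_{\hH}\ge\delta_{\hH}$ simultaneously, which gives the inclusion $x^{\widetilde{\delta}}\CI_{\nQb}(C_{\epsilon})\subset x^{\delta}\CI_{\nQb}(C_{\epsilon})$ and therefore $u=v+u'\in x^{\delta}\CI_{\nQb}(C_{\epsilon})$, as required. Uniqueness of the solution of \eqref{mae.2} in $x^{\delta}\CI_{\nQb}(C_{\epsilon})$ follows exactly as the uniqueness statement in Theorem~\ref{mae.4}, i.e. by the argument of \cite[Proposition~7.13]{Aubin} with the isomorphism \eqref{w.53a} playing the role of the maximum principle.

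Since essentially all the work has been done in the two preceding results, there is no serious obstacle here; the only thing requiring attention is the bookkeeping of weights described above, ensuring that the multiweight $\delta$ furnished by Lemma~\ref{mae.3} is compatible with the class of admissible multiweights $\widetilde{\delta}$ allowed by Theorem~\ref{mae.4}, which the choice $\widetilde{\delta}_{\max}=\delta_{\max}$ arranges.
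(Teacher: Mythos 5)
Your proposal is correct and follows the same route as the paper, which likewise obtains existence by combining Lemma~\ref{mae.3} with Theorem~\ref{mae.4} and gets uniqueness from \cite[Proposition~7.13]{Aubin} together with the isomorphism \eqref{w.53a}. The weight bookkeeping you spell out (choosing $\widetilde{\delta}_{\max}=\delta_{\max}=\beta-\tfrac{2}{1-\nu}$ so that the admissible interval for $\widetilde{\delta}_{\hH}$ matches that of Lemma~\ref{mae.3} and one can take $\widetilde{\delta}_{\hH}\ge\delta_{\hH}$ positive) is exactly the verification the paper leaves implicit, and it checks out.
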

\begin{proof}
The existence is given by Lemma~\ref{mae.3} and Theorem~\ref{mae.4}.  Uniqueness follows again from \cite[Proposition~7.13]{Aubin} using the isomorphism \eqref{w.53a} instead of the maximum principle.
\end{proof}

Using these results, we can finally solve the complex Monge-Amp\`ere equation \eqref{mae.2} in full generality.  
\begin{theorem}
Suppose  that $\beta:=\min\{d,2m_1,\ldots,2m_N\}$ is such that 
$$
         \beta> \frac{2}{1-\nu}.
$$
Then for $f\in x^{d}_{\max}\CI_{\nQb,1}(C_{\epsilon})$, the complex Monge-Amp\`ere equation \eqref{mae.2} has a unique solution 
$$
u\in \hx_{\max}^{\beta-2}\sigma^{2\nu}\CI_{\nQb,1}(C_{\epsilon}),
$$
where $\sigma:= \prod_{\hH\in\cM_{1,\nu}(\hX)} x_{\hH}^{-\frac{1}{1-\nu}}$ if $\nu>0$ and $\sigma:=1$ otherwise, so that $\sigma^{\nu}=\rho^{\mathfrak{n}}$.
\label{mae.9}\end{theorem}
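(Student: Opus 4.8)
The plan is to prove the theorem by induction on the depth of $\hC_{\epsilon}$, following the scheme of \cite[\S~5]{CDR2016} and \cite[\S~6]{CR2021}: solve the complex Monge-Amp\`ere equation on the model at each non-maximal boundary hypersurface of $\hC_{\epsilon}$, use those solutions to correct $\omega_{\epsilon}$ so that the remaining right-hand side vanishes at every non-maximal hypersurface, and then invoke Corollary~\ref{mae.8}. When $\hC_{\epsilon}$ has no non-maximal boundary hypersurface (the asymptotically conical case), one has $w=1$ and $f\in x^d_{\max}\CI_{\nQb}(C_{\epsilon})$, and since $d>\tfrac{2}{1-\nu}$ by hypothesis any parameter $\beta_0\in(\tfrac{2}{1-\nu},d)$ is admissible in Corollary~\ref{mae.8}, which (with Theorem~\ref{mae.4}) gives the base case. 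So assume $\hC_{\epsilon}$ has positive depth and that the theorem is known for all $\nQAC$-compactifications of strictly smaller depth. Note that by Lemma~\ref{kw.25} the Ricci potential $\mathfrak{r}_{\epsilon}$ lies in $\hx^d_{\max}\CI_{\nQb,1}(C_{\epsilon})$, so \eqref{mae.1} is a special case of \eqref{mae.2}.

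For the inductive step, fix $f\in x^d_{\max}\CI_{\nQb,1}(C_{\epsilon})$. For each $\hH<\hH_{\max}$, the fibres of $\phi_{\hH_{\epsilon}}\colon\hH_{\epsilon}\to S_{\hH_{\epsilon}}$ are the $\nQAC$-compactifications $\hW_{\mathfrak{q}^c,\epsilon}$ (if $\hH\in\cM_{1,0}(\hX)$) or $\hW_{\mathfrak{q}^c,\omega_{\mathfrak{q}},\epsilon}$ (if $\hH\in\cM_{1,\nu}(\hX)$), each of strictly smaller depth and carrying, by Corollary~\ref{kw.24}, the restriction of a K\"ahler $\mathfrak{n}_{Z_{\hH_{\epsilon}}}$-warped $\QAC$-metric asymptotic with rate $d$ to the Calabi-Yau cone $W_{\mathfrak{q}^c}$; the hypothesis on $\beta$ passes to each such fibre. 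Since $f$ restricts to $\pa\hC_{\epsilon}$ to order $1$, there is $f_{\hH_{\epsilon}}\in\hx_{\max}^{d}\CI_{\nQb}(\hH_{\epsilon}/S_{\hH_{\epsilon}})$ with $f-f_{\hH_{\epsilon}}\in\hx_{\max}^{d}x_{\hH}\CI_{\nQb}(C_{\epsilon})$. Applying the inductive hypothesis fibrewise --- running the continuity method of Lemma~\ref{mae.3} and Theorem~\ref{mae.4} in families over $S_{\hH_{\epsilon}}$, which is legitimate because the \emph{a priori} estimates are uniform and the Laplacian inverse of Corollary~\ref{w.53} depends continuously on the base point up to $\pa S_{\hH_{\epsilon}}$ --- produces potentials $u_{\hH_{\epsilon}}$ solving the model Monge-Amp\`ere equation on each fibre, and uniqueness (Theorem~\ref{mae.4}) forces the $u_{\hH_{\epsilon}}$ to agree on overlaps $\hH\cap\hG$. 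Processing the $\hH$ in a non-increasing order for the partial order on $\cM_1(\hX)$, extending each $u_{\hH_{\epsilon}}$ by a cutoff, and iterating finitely often in Borel-summation style to absorb the corrections that subtracting the model at $\hH$ induces at the lower hypersurfaces, we obtain a potential $u'$ such that $\omega'_{\epsilon}:=\omega_{\epsilon}+\sqrt{-1}\pa\db u'$ is still a K\"ahler $\mathfrak{n}$-warped $\QAC$-metric asymptotic to $g_{C_0}$ with rate $d$ and
\[
  f':= f-\log\!\lrp{\frac{(\omega'_{\epsilon})^m}{\omega_{\epsilon}^m}}\in\hx^d_{\max}w\,\CI_{\nQb}(C_{\epsilon}),\qquad w=\prod_{\hH\in\cM_{\nm}(\hX)}x_{\hH}.
\]

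It now suffices to solve $\log\bigl((\omega'_{\epsilon}+\sqrt{-1}\pa\db v)^m/(\omega'_{\epsilon})^m\bigr)=f'$. Because $\beta=\min\{d,2m_1,\ldots,2m_N\}>\tfrac{2}{1-\nu}$, one can choose a parameter $\beta_0$ with $\tfrac{2}{1-\nu}<\beta_0<\min\{d,\tfrac{2m}{1-\nu}\}$ and $\beta_0\le 2m_{\hH_{\epsilon}}$ for every $\hH_{\epsilon}\in\cM_{\nm}(\hC_{\epsilon})$: indeed $\beta\le 2m_q$ forces $m_q>\tfrac{1}{1-\nu}$, and each fibre of a non-maximal hypersurface is a Cartesian product of smoothings of the $W_q$, so its complex dimension is at least $\min_q m_q$. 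Since $f'\in\hx^{\beta'}_{\max}w\,\CI_{\nQb}(C_{\epsilon})$ with $\beta'=d>\beta_0$, Corollary~\ref{mae.8} gives a unique $v\in x^{\delta}\CI_{\nQb}(C_{\epsilon})$ solving this equation, where $\delta$ is the multiweight of Lemma~\ref{mae.3} with $\delta_{\max}=\beta_0-\tfrac{2}{1-\nu}$. Tracking the weight of $u'$ --- inherited from the fibrewise solutions, so restricting to $\pa\hC_{\epsilon}$ to order $1$ --- together with that of $v$, and letting $\beta_0\nearrow\beta$, one checks that $u:=u'+v$ lies in $\hx^{\beta-2}_{\max}\sigma^{2\nu}\CI_{\nQb,1}(C_{\epsilon})$. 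Applied to $f=-\mathfrak{r}_{\epsilon}$ via \eqref{mae.1}, this yields the Calabi-Yau metric of Theorem~\ref{int.1}. Uniqueness of $u$ in $\hx^{\beta-2}_{\max}\sigma^{2\nu}\CI_{\nQb,1}(C_{\epsilon})$ follows as in \cite[Proposition~7.13]{Aubin}, using the isomorphism \eqref{w.53a} in place of the maximum principle.

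I expect the main obstacle to be the parametrized model step: solving the Monge-Amp\`ere equation on the fibres uniformly over $S_{\hH_{\epsilon}}$, up to and including $\pa S_{\hH_{\epsilon}}$, and patching the fibrewise solutions consistently through the corners of $\hC_{\epsilon}$, together with the weight bookkeeping needed to land exactly in $\hx^{\beta-2}_{\max}\sigma^{2\nu}\CI_{\nQb,1}(C_{\epsilon})$ rather than a weaker space. A secondary subtlety, present when $\ell>0$ and $N>1$, is the boundary hypersurface attached to $V_{\ell,\zero}$: its fibre is $\bbC$, on which the Laplacian estimates of \S~\ref{mpl.0} (hence Corollary~\ref{mae.8}) degenerate, so it must be treated directly --- for the equation \eqref{mae.1} the model metric restricts there to the flat metric on $\bbC$, whose intrinsic Ricci potential vanishes, while the general right-hand side calls for a hands-on analysis of the essentially linear equation on $\bbC$.
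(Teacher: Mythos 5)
Your overall route is the same as the paper's: solve the fibrewise model Monge-Amp\`ere equations \eqref{mae.10} at the non-maximal faces, proceeding in non-increasing order for the partial order, correct $\omega_{\epsilon}$ by the corresponding potentials so that the right-hand side gains the factor $w$, finish with Corollary~\ref{mae.8}, and prove uniqueness following Aubin with the maximum principle replaced by the isomorphism \eqref{w.53a}. Two remarks on execution. The paper does not re-run the continuity method in families: at the faces immediately below $\hH_{\max}$ the fibrewise problems are asymptotically conical and are handled by quoting the AC analogue of Corollary~\ref{mae.8} from \cite{CH2013}, and at lower faces by quoting Corollary~\ref{mae.8} itself (with $\nu=0$ when the face lies in $\cM_{1,\nu}(\hX)$); moreover the correction must be inserted in the form $\sqrt{-1}\pa\db(\sigma^{2\nu}\widehat{u}_{\hH_{\epsilon}})$, as in \eqref{mae.11}, which is exactly what produces the factor $\sigma^{2\nu}$ in the final weight -- your write-up leaves this factor implicit, and no Borel-type iteration nor the limit $\beta_0\nearrow\beta$ is needed.

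The one genuine error is the claim that the hypothesis $\beta_0\le 2m_{\hH_{\epsilon}}$ of Corollary~\ref{mae.8} holds because every non-maximal fibre is a product of smoothings of the $W_q$ and so has complex dimension at least $\min_q m_q$. When $\ell>0$ and $N>1$ the compactification contains the face coming from the blow-up of $\pa\overline{V}_{\{1,\ldots,N\}}=\pa\overline{V}_{\ell,\zero}$, whose fibres are (compactifications of) $W_0=\bbC$; there $2m_{\hH_{\epsilon}}=2<\tfrac{2}{1-\nu}<\beta_0$, and the admissible window for $\delta_{\hH}$ in Lemma~\ref{mae.3} (equivalently in Corollary~\ref{w.53}) is empty at that face. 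So your closing caveat is not a secondary subtlety: it contradicts the inequality asserted in the body of your argument, and that face must be treated separately -- for instance by exploiting, as you suggest, that the model equation on the $\bbC$-fibres is linear and that for \eqref{mae.1} the restricted data there is trivial -- before Corollary~\ref{mae.8} can be invoked. (The paper's proof applies \cite{CH2013} and Corollary~\ref{mae.8} without singling out this face either, so this is precisely the point where the extra care you flag is required; for $N=1$ or $\ell=0$ the face does not occur and your dimension count is correct.)
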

\begin{proof}
To construct $u$, we will first construct what should be the restriction of $\sigma^{-2\nu}u$ to $\hH_{\epsilon}$ for $\hH_{\epsilon}\in \cM_{\nm}(\hC_{\epsilon})$  such that 
$$
     \hH<\hG\; \Longrightarrow \; \hG=\hH_{\max}.
$$
By property (2) of Definition~\ref{kw.23}, using the rescaled complex coordinates \eqref{cyc.14} in the fibers of $\phi_{\hH_{\epsilon}}: \hH_{\epsilon}\to S_{\hH_{\epsilon}}$, this restriction that we will denote by $u_{\hH_{\epsilon}}$ should satisfy fiberwise the complex Monge-Amp\`ere equation
\begin{equation}
 \log \lrp{ \frac{(\omega_{\hH}+\sqrt{-1}\pa\db u_{\hH})^{m_{\hH_{\epsilon}}}}{\omega_{\hH_{\epsilon}}^{m_{\hH_{\epsilon}}}}}= f|_{\hH_{\epsilon}}.
\label{mae.10}\end{equation}
This is a family of complex Monge-Amp\`ere equations in the asymptotically conical setting.  Applying the asymptotically conical analog of Corollary~\ref{mae.8}, see for instance \cite[Theorem~2.1]{CH2013}, we can solve it in each fiber to find a unique solution $u_{\hH_{\epsilon}}\in\hx^{\beta-2}_{\max}\CI_{\nQb}(\hH_{\epsilon}/S_{\hH_{\epsilon}})$.  Extend $u_{\hH_{\epsilon}}$ smoothly off of $\hH_{\epsilon}$ to a function $\widehat{u}_{\hH_{\epsilon}}$  and consider the new closed $(1,1)$-form 
\begin{equation}
   \omega_{\epsilon,\hH}= \omega_{\epsilon}+ \sqrt{-1}\pa\db (\sigma^{2\nu}\widehat{u}_{\hH_{\epsilon}}).
\label{mae.11}\end{equation}
By property (2) of Definition~\ref{kw.3} and \eqref{mae.10}, it is positive definite outside a compact set.  By Lemma~\ref{kw.14}, changing $\widehat{u}_{\hH_{\epsilon}}$ on a compact set of $C_{\epsilon}$, we can assume that $\omega_{\epsilon,\hH}$ is positive definite everywhere.  By \eqref{mae.10} and computing as in \eqref{kw.21}, using the fact that
$$
    \sigma^{2\nu}\hx^{\beta-2+\frac{2}{1-\nu}}_{\max}= \mathcal{O}(x_{\max}^{\beta}. (\sigma^{2\nu}\hx_{\max}^{\frac{2\nu}{1-\nu}})),
$$
we see that 
\begin{equation}
    f_1:= f-\log\lrp{\frac{\omega^{m}_{\epsilon,\hH}}{\omega_{\epsilon}^{m}}}\in x_{\max}^{\beta}x_{\hH}\CI_{\nQb,1}(C_{\epsilon}).
\label{mae.12}\end{equation}
Thus, replacing $\omega_{\epsilon}$ by $\omega_{\epsilon,\hH}$ and $f$ by $f_1$, the complex Monge-Amp\`ere equation \eqref{mae.2} corresponds to solving 
\begin{equation}
   \log\lrp{ \frac{(\omega_{\epsilon,\hH}+\sqrt{-1}\pa\db u)^m}{\omega_{\epsilon,\hH}^m} }= f_1\in \hx^\beta_{\max}x_{\hH} \CI_{\nQb,1}(C_{\epsilon}).
\label{mae.13}\end{equation}
Let $\cK$ be the subset of $\cM_{\nm}(\hX)$ consisting of boundary hypersurfaces $\hH$ such that
$$
    \hH< \hG \; \Longrightarrow \; \hG=\hH_{\max}.
$$ 
Performing this argument at each $\hH\in \cK$, we can reduce to the case where 
\begin{equation}
    f\in \hx^{\beta}_{\max}\lrp{\prod_{\hH\in \cK}x_{\hH}}\CI_{\nQb,1}(C_{\epsilon}).
\label{mae.14}\end{equation}
Knowing that \eqref{mae.14} holds, this argument can be iterated.  Namely, if $\hH\in \cM_{\nm}(\hX)\setminus \cK$ is such that 
$$
       \hH<\hG \; \Longrightarrow \; \hG \in \cK\cup \{\hH_{\max}\},
$$
 then we can find the restriction of $\sigma^{-2\nu}u$ by solving \eqref{mae.10} again, this time however with Corollary~\ref{mae.8} (with $\nu=0$ if $\hH\in\cM_{1,\nu(\hX)}$) instead of \cite[Theorem~2.1]{CH2013}.  Proceeding in an order which is non-increasing with respect to the partial order on $\cM_{\nm}(\hX)$, this argument can be iterated to reduce to the case where $f\in \hx^{\beta}_{\max}w\CI_{\nQb}(C_{\epsilon})$.  One can then apply Corollary~\ref{mae.8} once more to get the desired solution on $C_{\epsilon}$.
 
 To show the solution is unique, we again apply \cite[Proposition~7.13]{Aubin} with the maximum principle replaced by the isomorphism \eqref{w.53a} with multiweight $\delta$ such that $\delta_{\hH}= -\frac{2\nu_H}{1-\nu_H}$ for $\hH\in\cM_{\nm}(\hX)$ and 
 $$
     \delta_{\max}= \beta -\frac{2}{1-\nu}- \lambda
 $$
 with $\lambda\ge 0$ possibly positive but small.

\end{proof}

Applying this result to the complex Monge-Amp\`ere equation \eqref{mae.1} yields the main result of this paper.
\begin{corollary}
Suppose that $d>1$ and that $\beta:=\min\{d,2m_1,\ldots,2m_N\}$ is such that 
$$
     \beta> \frac{2}{1-\nu}.
$$
Then for $\epsilon\ne 0$ as in Assumption~\ref{cyc.6c}, $C_{\epsilon}$ admits a Calabi-Yau $\mathfrak{n}$-warped $\QAC$-metric asymptotic to $g_{C_0}$ with rate $\beta$.  
\label{mae.15}\end{corollary}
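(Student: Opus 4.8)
The plan is to assemble Corollary~\ref{mae.15} from Theorem~\ref{kw.19}, Corollary~\ref{kw.24}, Lemma~\ref{kw.25} and Theorem~\ref{mae.9}. First I would note that the hypothesis $\beta>\frac{2}{1-\nu}\ge 2$ forces $d\ge\beta>2>1$, so that Theorem~\ref{kw.19} applies and produces a K\"ahler $\mathfrak{n}$-warped $\QAC$-metric $g_{\epsilon}$ on $\hC_{\epsilon}$; by Corollary~\ref{kw.24} this metric is asymptotic with rate $d$ to the Calabi-Yau cone metric $g_{C_0}$. Writing $\omega_{\epsilon}$ for its K\"ahler form and $\mathfrak{r}_{\epsilon}$ for its Ricci potential as in \eqref{kw.25b}, constructing a Calabi-Yau metric of the form $\widetilde{\omega}_{\epsilon}=\omega_{\epsilon}+\sqrt{-1}\pa\db u$ amounts precisely to solving the complex Monge-Amp\`ere equation \eqref{mae.1}, that is, \eqref{mae.2} with $f=-\mathfrak{r}_{\epsilon}$.

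Next I would invoke Lemma~\ref{kw.25}, which gives $\mathfrak{r}_{\epsilon}\in\widehat{x}_{\max}^{d}\CI_{\nQb,1}(C_{\epsilon})$, so that $f=-\mathfrak{r}_{\epsilon}$ is an admissible right-hand side for Theorem~\ref{mae.9}. Since the running condition $\beta=\min\{d,2m_1,\ldots,2m_N\}>\frac{2}{1-\nu}$ is exactly the hypothesis of that theorem, Theorem~\ref{mae.9} yields a unique solution $u\in\widehat{x}_{\max}^{\beta-2}\sigma^{2\nu}\CI_{\nQb,1}(C_{\epsilon})$ of \eqref{mae.1}, and the corresponding form $\widetilde{\omega}_{\epsilon}=\omega_{\epsilon}+\sqrt{-1}\pa\db u$ is positive definite, positivity being part of the construction in Theorem~\ref{mae.9} through Lemmas~\ref{mae.3} and \ref{kw.14}. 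Hence $\widetilde{\omega}_{\epsilon}$ is the K\"ahler form of a metric $\widetilde{g}_{\epsilon}$ on $C_{\epsilon}$ with $\Ric(\widetilde{g}_{\epsilon})=0$ and parallel holomorphic volume form $\Omega^{m}_{C_{\epsilon}}$, i.e. $\widetilde{g}_{\epsilon}$ is Calabi-Yau.

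It then remains to check that $\widetilde{g}_{\epsilon}$ is again an $\mathfrak{n}$-warped $\QAC$-metric asymptotic to $g_{C_0}$ with rate $\beta$. Because $u\in\widehat{x}_{\max}^{\beta-2}\sigma^{2\nu}\CI_{\nQb,1}(C_{\epsilon})$, the same computation as in \eqref{mae.12} shows $\sqrt{-1}\pa\db u\in\widehat{x}_{\max}^{\beta}\CI_{\nQb}(C_{\epsilon};\Lambda^{1,1}({}^{w}T^*\hC_{\epsilon}))$ near $\hH_{\max}$ and, near each non-maximal boundary hypersurface $\hH$ corresponding to a subset $\mathfrak{q}$, contributes a term in $x_{\hH}\CI_{\nQb}(\hC_{\epsilon};\Lambda^{1,1}({}^{w}T^*\hC_{\epsilon}))$ relative to the fiberwise warped $\QAC$ model. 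Thus $\widetilde{\omega}_{\epsilon}$ still satisfies conditions (1) and (2) of Definition~\ref{kw.23}, now with rate $\min\{d,\beta\}=\beta$ since $\beta\le d$; in particular $\widetilde{g}_{\epsilon}\in\CI_{\nQb}(C_{\epsilon};{}^{w}T^*\hC_{\epsilon}\otimes{}^{w}T^*\hC_{\epsilon})$, so Proposition~\ref{w.2} gives completeness, infinite volume and bounded geometry, and the rate-$\beta$ asymptotics identify the tangent cone at infinity with $(C_0,g_{C_0})$.

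The proof of this corollary is therefore essentially bookkeeping, as all the analytic content has been carried out upstream; the step that truly carries the weight is Theorem~\ref{mae.9}. There the difficulty is not merely to solve \eqref{mae.2} with a right-hand side decaying only like $\widehat{x}_{\max}^{d}$, but to obtain a solution with the sharp weight $\widehat{x}_{\max}^{\beta-2}\sigma^{2\nu}$ and with $\nQb$-smoothness up to the boundary, since it is precisely this regularity — rather than a crude weighted-H\"older bound — that makes $\widetilde{\omega}_{\epsilon}$ an honest $\nQb$-smooth warped $\QAC$ K\"ahler form with the advertised rate. In carrying out Theorem~\ref{mae.9} the main obstacle is the induction on the depth of $\hC_{\epsilon}$: one must solve the Monge-Amp\`ere equation on each non-maximal boundary hypersurface (using the asymptotically conical theory of \cite{CH2013} at the deepest stratum and Corollary~\ref{mae.8} thereafter), correct the reference form accordingly, and peel off one stratum at a time until the error lies in $\widehat{x}_{\max}^{\beta}w\CI_{\nQb}(C_{\epsilon})$ and Corollary~\ref{mae.8} applies globally.
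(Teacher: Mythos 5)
Your proposal is correct and follows essentially the same route as the paper: construct the reference Kähler $\mathfrak{n}$-warped $\QAC$-metric via Theorem~\ref{kw.19} and Corollary~\ref{kw.24}, use Lemma~\ref{kw.25} to place the Ricci potential in $\widehat{x}_{\max}^{d}\CI_{\nQb,1}(C_{\epsilon})$, and apply Theorem~\ref{mae.9} to solve \eqref{mae.1} with the stated weight, yielding the Calabi-Yau metric. The extra bookkeeping you include on positivity and on verifying the rate-$\beta$ asymptotics of $\widetilde{\omega}_{\epsilon}$ is left implicit in the paper but is consistent with its argument.
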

\begin{proof}
Since $d>1$, we know by Corollary~\ref{kw.24} that there exists on $C_{\epsilon}$ a K\"ahler $\mathfrak{n}$-warped $\QAC$-metric asymptotic to $g_{C_0}$ with rate $d$.  By Lemma~\ref{kw.25}, its Ricci potential $\mathfrak{r}_{\epsilon}$ is in $\hx_{\max}^d\CI_{\nQb,1}(C_{\epsilon})$, so by Theorem~\ref{mae.9}, the complex Monge-Amp\`ere equation \eqref{mae.1} has a unique solution $u\in \hx_{\max}^{\beta-2}\sigma^{2\nu}\CI_{\nQb,1}(C_{\epsilon})$ and 
$$
     \widetilde{\omega}_{\epsilon}= \omega_{\epsilon}+ \sqrt{-1}\pa\db u
$$
is the desired Calabi-Yau metric.

\end{proof}

When $N=1$, it is possible to improve slightly the result as follows.
\begin{corollary}
Suppose that $N=1$, that $d>1$ and that $\beta:=\min\{d,2m_1\}$ is such that either $ \beta> \frac{2}{1-\nu}$, or else
\begin{equation}
      3<\beta\le \frac{2}{1-\nu}< 2m_1+5.
\label{mae.16a}\end{equation}
Then for $\epsilon\ne 0$ as in Assumption~\ref{cyc.6c}, $C_{\epsilon}$ admits a Calabi-Yau $\mathfrak{n}$-warped $\QAC$-metric asymptotic to $g_{C_0}$ with rate $\beta$.  
\label{mae.16}\end{corollary}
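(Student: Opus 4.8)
The plan is to revisit the proof of Theorem~\ref{mae.9}, since for $\beta>\frac{2}{1-\nu}$ the statement is exactly Corollary~\ref{mae.15}. So assume $3<\beta\le\frac{2}{1-\nu}<2m_1+5$. When $N=1$ the compactification $\hC_{\epsilon}$ has a single non-maximal boundary hypersurface $\hH$, with $\nu_{\hH}=\nu$, and the fibers of $\phi_{\hH_{\epsilon}}\colon\hH_{\epsilon}\to S_{\hH}$ are smoothings of the $m_1$--dimensional asymptotically conical cone $W_1$; as these fibers carry no further boundary, the factor $\kappa_{\hH}$ in the local model \eqref{w.5} is an honest asymptotically conical Calabi--Yau metric. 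Thus, near infinity, $g_{\epsilon}$ is, up to a perturbation of order $\hx_{\max}^{\beta}$, the explicit warped $\QAC$ model of Example~\ref{w.1} over $\bbC\times W_1$, and it is the Laplacian of this model that I would analyze directly.

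The first step is to isolate exactly where the hypothesis $\beta>\frac{2}{1-\nu}$ is used. In the proofs of Lemma~\ref{mae.3}, Theorem~\ref{mae.4}, Corollary~\ref{mae.8} and Theorem~\ref{mae.9} it enters \emph{only} through Corollary~\ref{w.53}, whose hypotheses force the maximal weight $\delta_{\max}=\beta-\frac{2}{1-\nu}$ to be positive. Everything else is insensitive to the sign of $\delta_{\max}$: the Moser iteration via the Sobolev inequality of Corollary~\ref{w.41}, Yau's $C^2$ estimate, Evans--Krylov, the bootstrapping in Theorem~\ref{mae.4}, the uniqueness via \cite[Proposition~7.13]{Aubin}, the fiberwise asymptotically conical solve used in Theorem~\ref{mae.9}, and the inductive reduction over $\cM_{\nm}(\hX)$. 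So it suffices, for $N=1$, to replace this one isomorphism by a suitable solvability statement for the Laplacian of $g_{\epsilon}$ outside a compact set at a weight with $\delta_{\max}\le 0$.

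Here I would follow the idea flagged in Remark~\ref{int.7}: rather than invert $\Delta$ on all of $\hC_{\epsilon}$, one solves the complex Monge--Amp\`ere equation outside a large compact set by a contraction built from the Green's operator of the warped $\QAC$ model of Example~\ref{w.1} over $\bbC\times W_1$, exactly as in \cite[Proposition~25]{Szekelyhidi}. The analytic input required is that this model Laplacian maps $x^{\delta}\cC^{k+2,\alpha}_{\nQb}$ onto $(\rho w)^{-2}x^{\delta}\cC^{k,\alpha}_{\nQb}$ with a bounded right inverse for $\delta_{\max}$ in the half-open window $\bigl(\beta-\tfrac{2}{1-\nu},\,0\bigr]$ --- i.e.\ below the bottom of the isomorphism range of Corollary~\ref{w.53} but above the next critical weight of the model. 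Concretely, one reruns the Green's function estimate of Proposition~\ref{w.44} and the Schur test of Proposition~\ref{w.45} for this enlarged range on the (depth at most two) model: the value $\frac{2}{1-\nu}$ is itself a critical weight, being the rate of the Ricci potential $\mathfrak r_{\epsilon}$ by Lemma~\ref{kw.25}, and a direct inspection of the indicial roots of the model operator --- whose lowest one coming from genuinely varying data on the cross-section $W_1$, after the $\frac{1}{1-\nu}$--rescaling of weights along $\hH$, sits at $2m_1+5$ --- shows that the first critical weight strictly larger than $\frac{2}{1-\nu}$ is $2m_1+5$. The hypothesis $\frac{2}{1-\nu}<2m_1+5$ therefore makes the window nonempty, and $\beta>3$ is precisely what closes the contraction: in this weight range the nonlinear term $Q(u)$ in \eqref{contr.2} gains enough decay relative to the linear term exactly when $\beta>3$.

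With this substitute in hand, the proof of Lemma~\ref{mae.3} produces $v\in x^{\delta}\CI_{\nQb}(C_{\epsilon})$ such that $\widetilde\omega_{\epsilon}=\omega_{\epsilon}+\sqrt{-1}\pa\db v$ is a K\"ahler $\mathfrak n$--warped $\QAC$ metric asymptotic to $g_{C_0}$ with rate $\beta$ and $\widetilde f:=f-\log(\widetilde\omega_{\epsilon}^m/\omega_{\epsilon}^m)\in\CI_c(C_{\epsilon})$; Theorem~\ref{mae.4} then solves the remaining compactly supported equation with no change. Since $d>1$, Corollary~\ref{kw.24} supplies an initial K\"ahler $\mathfrak n$--warped $\QAC$ metric asymptotic to $g_{C_0}$ with rate $d$ and Lemma~\ref{kw.25} its Ricci potential $\mathfrak r_{\epsilon}\in\hx_{\max}^d\CI_{\nQb,1}(C_{\epsilon})$, so applying the above to \eqref{mae.1}, i.e.\ with $f=\mathfrak r_{\epsilon}$, yields the desired Calabi--Yau $\mathfrak n$--warped $\QAC$ metric on $C_{\epsilon}$ asymptotic to $g_{C_0}$ with rate $\beta$. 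The main obstacle is the third step: correctly identifying the indicial roots of the model warped $\QAC$ Laplacian over $\bbC\times W_1$ and verifying that the estimates of Propositions~\ref{w.44} and \ref{w.45} persist for $\delta_{\max}\in\bigl(\beta-\tfrac{2}{1-\nu},0\bigr]$ --- equivalently, that $2m_1+5$ really is the first critical weight past $\frac{2}{1-\nu}$, which is the point at which the restriction to $N=1$ is genuinely used.
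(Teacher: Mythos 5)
Your reduction to the case \eqref{mae.16a} and the fiberwise preprocessing are in line with the paper, and you correctly locate the obstruction in the requirement $\delta_{\max}=\beta-\frac{2}{1-\nu}>0$ of Corollary~\ref{w.53}. But the way you propose to remove it is not the paper's route, and as written it has a genuine gap. You claim that the Green's function estimates of Propositions~\ref{w.44} and \ref{w.45} can simply be ``rerun'' for $\delta_{\max}\in\bigl(\beta-\tfrac{2}{1-\nu},0\bigr]$ to produce a bounded right inverse of the model Laplacian. That is precisely the hard point: outside the stated weight range the Schur-test integrals in Proposition~\ref{w.45} diverge, and at weights where the operator is no longer an isomorphism one must identify and subtract the indicial (slowly decaying) solutions before any such estimate can hold; this is a parametrix/relative-index type argument, not a repetition of the volume and kernel bounds. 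Your accounting of the numerology is also off: you attribute $\beta>3$ to closing the nonlinear contraction and present $2m_1+5$ as ``the first critical weight past $\frac{2}{1-\nu}$'', whereas in fact both bounds are linear in origin --- they are exactly the invertibility window $-f+1<a<0$ (with $f=2m_1-1$) of the model operator in \cite[Corollary~A.5]{CR2021}, translated to the order $\mu$ of the term being removed.

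What the paper actually does in the regime \eqref{mae.16a} is to keep the linear theory of Corollary~\ref{w.53} untouched and instead improve the data: after the fiberwise reduction one knows $\mathfrak{r}_{\epsilon}\in \hx_{\max}^{\beta}w\,\cA_{\phg}(\hC_{\epsilon})$ by \cite[Corollary~5.4]{CMR2015}, i.e.\ the Ricci potential is \emph{polyhomogeneous} at $\hH_{\max}$, and then, following \cite[Lemma~7.1]{CR2021}, one eliminates term by term the part of this expansion of order $\mu\le \frac{2}{1-\nu}$ by solving the indicial equations $I(B,\lambda)f_{\lambda}=w^{1-\mu}e$ with $\lambda=\mu(1-\nu)-2$, which is possible precisely when $3<\mu<2m_1+5$ --- this is where \eqref{mae.16a} and the restriction $N=1$ enter. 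Once the remaining error decays at a rate strictly larger than $\frac{2}{1-\nu}$, Corollary~\ref{mae.8} applies verbatim. So the polyhomogeneity input and the explicit formal correction at $\hH_{\max}$ are the missing ideas in your proposal; without them (or without a genuinely new surjectivity theorem in the spirit of Remark~\ref{int.7}, which would have to be proved rather than asserted), the contraction argument at a weight $\delta_{\max}\le 0$ does not get off the ground.
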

\begin{proof}
If $\beta>\frac{2}{1-\nu}$, this is a particular case of Corollary~\ref{mae.15}.  If instead \eqref{mae.16a} holds, we can follow the strategy of \cite[\S~7]{CR2021}.  More precisely, as in the first part of the proof of Theorem~\ref{mae.9}, we can first solve the complex Monge-Amp\`ere equation on the fibers of $\hH_{\epsilon}\in \cM_{\nm}(C_{\epsilon})$ to reduce to the case where $\mathfrak{r}_{\epsilon}\in \hx^{\beta}_{\max}w\CI_{\nQb}(C_{\epsilon})$.
By \cite[Corollary~5.4]{CMR2015}, we know in fact that $\mathfrak{r}_{\epsilon}\in \hx_{\max}^{\beta} w\cA_{\phg}(\hC_{\epsilon})$.  Proceeding as in \cite[Lemma~7.1]{CR2021}, we can then eliminate the part of the polyhomogeneous expansion of $\mathfrak{r}_{\epsilon}$ at $\hH_{\max}$ of order $\frac{2}{1-\nu}$ or less.  Indeed, if $(\hx_{\max}^{\mu}w)e$ is a term of order $\mu\le \frac{2}{1-\nu}$ in this expansion at $\hH_{\max}$, then since 
$$
     \hx_{\max}^{\mu}w= v^{\mu}w^{1-\mu}= \rho^{-\mu(1-\nu)}w^{1-\mu},
$$ 
we need to solve $I(B,\lambda)f_{\lambda}=w^{1-\mu}e$ as in \cite[(7.6)]{CR2021} with $\lambda=\mu(1-\nu)-2$, which can be achieved through \cite[Corollary~A.5]{CR2021} with 
$$
    a-2=1-\mu.
$$
But in our case, $f=2m_1-1$ in the statement of \cite[Corollary~A.5]{CR2021}, so for this corollary to apply, we need $-f+1<a<0$, which in terms of $\mu$ and $m_1$ translates into
\begin{equation}
    3<\mu<2m_1+5.
\label{mae.17}\end{equation}
By assumption, $\beta\le \mu\le \frac{2}{1-\nu}$, so \eqref{mae.17} holds thanks to \eqref{mae.16a}.  We can therefore proceed as in \cite[Lemma~7.1]{CR2021} to reduce to the case 
$$
         \mathfrak{r}_{\epsilon}\in \hx_{\max}^{\mu}w\cA_{\phg}(C_{\epsilon})
$$
with $\mu>\frac{2}{1-\nu}$.  We can then rely on Corollary~\ref{mae.8} to conclude the proof.
\end{proof}

\section{Singular Calabi-Yau warped $\QAC$-metrics} \label{sing.0}

The Calabi-Yau metric of Example~\ref{int.2} with $\ell=2$ yields a Calabi-Yau metric as predicted in \cite{YangLi2018}, while Example~\ref{int.2b} provides a generalization in higher dimensions.  Following \cite{YangLi2018}, we can try to extract a singular Calabi-Yau metric out of these examples.  To do so, we will slightly change the setting of these examples by considering instead the smoothing $C_{\epsilon}$ given by 
\begin{equation}
  z_{1,1}^k+\cdots+ z_{1,m_1+1}^k=\epsilon
\label{sing.1}\end{equation}  
of the cone $C_0$ defined by 
\begin{equation}
  z_{1,1}^k+\cdots+ z_{1,m_1+1}^k=0,
\label{sing.2}\end{equation}
where $k$ and $m_1$ are chosen as in Example~\ref{int.2} or Example~\ref{int.2b}.  To lighten the notation, we will set $m:= m_1+m_0=m_1+1$, $z_0:=z_{0,1}$ and $z_i:=z_{1,i}$ for $i\in\{1,\ldots,m_1+1\}$.  Then $C_{\epsilon}$ is also a smoothing of the affine hypersurface given by
\begin{equation}
      \cH_0= \{ (z_0,z_1,\ldots, z_m)\in \bbC\times \bbC^m\; | \; z_0^k+\cdots +z_m^{k}=0 \}.
\label{sing.3}\end{equation}
To emphasize this relationship, it will be convenient to use the notation
\begin{equation}
      \cH_\epsilon= \{ (z_0,z_1,\ldots, z_m)\in \bbC\times \bbC^m\; | \; z_0^k+\cdots+z_m^{k}=\epsilon\}
\label{sing.3a}\end{equation}
for $\epsilon\ne 0$.  Hence $\cH_{\epsilon}=C_\epsilon$ for $\epsilon\ne 0$, but $\cH_0$ and $C_0$ corresponds to distinct cones.  In this case, the compactification $\hX$ of $\bbC^{m+1}$ described in \S~\ref{sCY.0} has two boundary hypersurfaces $\hH_1$ and $\hH_2$ with $\hH_2$ maximal.  Moreover, in the setting of Corollary~\ref{w.53}, $\nu=\frac{m-k}{m-1}$ with $\nu_{H_1}=\nu_{H_2}=\nu$, so the weight function $\mathfrak{n}$ is given by $\mathfrak{n}(\hH_1)=\mathfrak{n}(\hH_2)=\nu= \frac{m-k}{m-1}$.  We will let $x_1$ and $x_2$ be choices of boundary defining functions for $\hH_1$ and $\hH_2$ with $\rho^{-1}= (x_1x_2)^{\frac{1}{1-\nu}}$ an $\mathfrak{n}$-weighted total boundary defining function compatible with the class specified by \eqref{kw.17b}.   Without loss of generality, we can assume that $x_1$ and $x_2$ are constant near the origin. There is a corresponding compactification $\widehat{\cH}_{\epsilon}$ for $\cH_{\epsilon}$ by taking its closure in $\hX$.

Since we assume that $2\le k\le m$, it is well-known that $\cH_0$ admits a Calabi-Yau cone metric $g_{\cH_0}$, see for instance \cite[Example~2.1]{CR2021}.  On $\cH_{\epsilon}$, let 
\begin{equation}
   \omega_{\epsilon}= \frac{\sqrt{-1}}2 \pa \db \phi_{\epsilon} |_{\cH_\epsilon}
\label{sing.4}\end{equation}
be the $\mathfrak{n}$-warped $\QAC$-metric constructed in Theorem~\ref{kw.19} and Corollary~\ref{kw.24}.  Notice that the construction still works on $\cH_0$, more precisely, the convexity argument of Lemma~\ref{kw.14} can be achieved in a uniform way in $\epsilon\in [0,1]$ with $\phi_{\epsilon}$ corresponding to the restriction of the potential of the Euclidean metric on $\bbC\times \bbC^m$ within a fix compact set $K\subset \bbC\times \bbC^m$ containing an open neighborhood of the origin.
With more care, we can assume that $\omega_{\epsilon}$ is already Calabi-Yau outside a large compact set.  Indeed, for $\epsilon\in [0,1]$, let $\cH_{\epsilon}'$ be a smooth family of smooth non-compact manifolds with compact sets $K_{\epsilon}'\subset \cH_{\epsilon}'$   and $K_{\epsilon}\subset \cH_{\epsilon}$
such that there are canonical identifications 
$$
   \cH_{\epsilon}'\setminus K_{\epsilon}' = \cH_{\epsilon}\setminus K_{\epsilon}
$$
as smooth families of manifolds.  This is possible by changing $\cH_{\epsilon}$ in a neighborhood of the origin.  Pick a smooth family of $\mathfrak{n}$-warped $\QAC$-metrics $g_{\epsilon}'$ on $\cH_{\epsilon}'$ such that
$$
      g_{\epsilon}'|_{\cH_{\epsilon}'\setminus K_{\epsilon}'}= g_{\epsilon}|_{\cH_{\epsilon}\setminus K_{\epsilon}},
$$
 where $g_{\epsilon}$ is our initial family of K\"ahler $\mathfrak{n}$-warped $\QAC$-metrics with K\"ahler form \eqref{sing.4}.  Then Corollary~\ref{w.53} applies to the family of Laplacians $\Delta_{g_{\epsilon}'}$ associated  to $g_{\epsilon}'$.  We can then use the surjectivity of this family of Laplacians as in the proof of Theorem~\ref{mae.9} to improve the decay of the Ricci potential of $g_{\epsilon }$ (working on the complement of $K_{\epsilon}$) to be in $\widehat{x}_{\max}w\CI_{\nQb}(\cH_{\epsilon})$.  Using the uniform invertibility of $\Delta_{g_{\epsilon}'}$ for $\epsilon\in [0,1]$, we can then apply the proof of Lemma~\ref{mae.3} to ensure that for $\epsilon\in [0,1]$, $g_{\epsilon}$ is in fact a Calabi-Yau $\mathfrak{n}$-warped $\QAC$-metric outside some large compact set.  On the other hand, still using the convexity argument of Lemma~\ref{kw.14}, we can assume that $g_{\epsilon}$ is still induced by the restriction of the Euclidean metric near the origin.    In other words, without loss of generality, we can assume that the function
 \begin{equation}
    f_{\epsilon}:=  -\log\lrp{ \frac{\omega_{\epsilon}^m}{c_m\Omega_{\cH_{\epsilon}}\wedge \overline{\Omega}_{\cH_{\epsilon}}} }
 \label{sing.4a}\end{equation}
 has compact support.

In particular, notice that near the origin, $\omega_{\epsilon}$ (with $\epsilon$ possibly equal to zero) has negative bisectional curvature by \cite[\S~5, p. 819]{Vitter} since it is the restriction of the Euclidean metric there.  Since outside a small open set containing the origin, the bisectional curvature of $\omega_{\epsilon}$ can be uniformly bounded above and below as $\epsilon\searrow 0$, we have the following.

\begin{lemma}
The bisectional curvature of $\omega_{\epsilon}$ has a uniform upper bound independent of $\epsilon\in[0,1]$.  
\label{sing.5}\end{lemma}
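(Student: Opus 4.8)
The plan is to bound the holomorphic bisectional curvature of $\omega_{\epsilon}$ separately on a fixed neighbourhood of the origin and on its complement, and then to check that on each of these regions the bound can be chosen independent of $\epsilon\in[0,1]$. First I would fix once and for all a small open neighbourhood $U$ of the origin in $\bbC\times\bbC^m$, independent of $\epsilon$, over which $\omega_{\epsilon}$ coincides with the restriction to $\cH_{\epsilon}$ of the flat K\"ahler form $\frac{\sqrt{-1}}{2}\pa\db(\sum_i|z_i|^2)$ of $\bbC\times\bbC^m$; this is legitimate because, as noted above, the convexity argument of Lemma~\ref{kw.14} was carried out in the construction of $\omega_{\epsilon}$ uniformly in $\epsilon\in[0,1]$ with $\phi_{\epsilon}$ equal to the Euclidean potential near the origin. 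On $U\cap\cH_{\epsilon}$, with the origin removed when $\epsilon=0$, the set $\cH_{\epsilon}$ is a K\"ahler submanifold of the flat space $\bbC\times\bbC^m$, so the Gauss equation expresses the curvature of $\omega_{\epsilon}$ on a holomorphic $2$-plane spanned by $X$ and $Y$ as minus the squared norm of the second fundamental form of the embedding evaluated at $(X,Y)$; in particular $R_{\omega_{\epsilon}}(X,\overline{X},Y,\overline{Y})\le 0$ there, for every $\epsilon\in[0,1]$. This is the computation of \cite[\S~5, p.~819]{Vitter}.

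Next I would treat the complement $\cH_{\epsilon}\setminus U$. After possibly shrinking $U$, I may assume $\cH_{\epsilon}\setminus U$ is contained in the region where the identifications $\cH_{\epsilon}\setminus K_{\epsilon}=\cH_{\epsilon}'\setminus K_{\epsilon}'$ of the preceding discussion hold, so that there $\{\omega_{\epsilon}\}_{\epsilon\in[0,1]}$ restricts to a smooth family of K\"ahler $\mathfrak{n}$-warped $\QAC$-forms, each in $\CI_{\nQb}$ up to the compactifying boundary, on the compact manifold with fibered corners obtained as the closure of $\cH_{\epsilon}\setminus U$ in $\hX$. By Proposition~\ref{w.2} each such metric has bounded geometry; more precisely its curvature tensor measured with respect to $\omega_{\epsilon}$ is a bounded (polyhomogeneous) section over that compact space, and the argument producing this bound depends continuously on the metric, so the resulting bound on $|\Rm_{\omega_{\epsilon}}|_{\omega_{\epsilon}}$ is a continuous function of $\epsilon\in[0,1]$. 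Since $[0,1]$ is compact there is a single constant $C_0$ dominating $|\Rm_{\omega_{\epsilon}}|_{\omega_{\epsilon}}$ on $\cH_{\epsilon}\setminus U$ for all $\epsilon$; as the bisectional curvature at a point is a contraction of $\Rm_{\omega_{\epsilon}}$ applied to $\omega_{\epsilon}$-unit vectors, it is bounded in absolute value there by a fixed multiple of $C_0$. Combining this with the bound $\le 0$ on $U\cap\cH_{\epsilon}$ gives the claimed uniform upper bound on all of $\cH_{\epsilon}$.

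The Gauss-equation step and the elliptic estimates packaged in Proposition~\ref{w.2} are routine, so I expect the only real difficulty to be the \emph{uniformity in $\epsilon$ down to $\epsilon=0$}: one has to make sure that the smooth family $g_{\epsilon}'$ on the auxiliary family $\cH_{\epsilon}'$ is genuinely continuous in the relevant $\CI_{\nQb}$-topology up to the boundary \emph{including at} $\epsilon=0$ (where $\cH_{0}$ is singular at the origin but regular away from it), and that the modifications of $\cH_{\epsilon}$ near the origin and the uniform Calabi--Yau correction arranging $f_{\epsilon}$ of compact support are compatible with keeping $\omega_{\epsilon}$ exactly Euclidean on the fixed neighbourhood $U$. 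Finally I would stress that only an \emph{upper} bound is asserted, and that this is essential: for $\epsilon=0$ the metric $\omega_{0}$ on $U\cap(\cH_{0}\setminus\{0\})$ is the metric induced on the cone $\cH_{0}$, itself a cone metric whose bisectional curvature is in general unbounded below near the apex, so no uniform two-sided bound can hold.
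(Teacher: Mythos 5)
Your proposal is correct and follows essentially the same route as the paper: the paper's justification (given in the paragraph preceding the lemma) is exactly the decomposition you use, namely nonpositive bisectional curvature near the origin via the Gauss equation for the flat restriction (the reference to Vitter), combined with uniform two-sided curvature bounds for the smooth family $\omega_{\epsilon}$ away from a fixed neighbourhood of the origin as $\epsilon\searrow 0$. Your added remarks on where the uniformity in $\epsilon$ must be checked and on the failure of a lower bound near the apex are consistent with, and slightly more explicit than, the paper's treatment.
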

This upper bound on the bisectional curvature will be crucial in solving the complex Monge-Amp\`ere equation on $\cH_0$.  On the other hand, this is a manifestation of the fact that $\omega_0$ is quite different from the K\"ahler form $\omega_{\cH_0}$ of the Calabi-Yau cone metric $g_{\cH_0}$ near the origin.  To compare the two metrics, let 
\begin{equation}
   P_k(z)= z_0^k+\cdots+ z_m^k
\label{sing.6}\end{equation}  
be the polynomial such that 
$$
          \cH_\epsilon= P_k^{-1}(\epsilon).
$$
Then the holomorphic volume form $\Omega_{\cH_0}$ on $\cH_0$ is defined implicitly by 
\begin{equation}
dz_0\wedge\ldots \wedge dz_m= \Omega_{\cH_0}\wedge dP_k.
\label{sing.7}\end{equation}
The fact that $(\cH_0,g_{\cH_0})$ is Calabi-Yau means that there is a constant $c_{m}\in\bbC\setminus\{0\}$ depending only on $m$ such that 
\begin{equation}
  \omega_{\cH_0}^m= c_m \Omega_{\cH_0}\wedge \overline{\Omega}_{\cH_0}.
\label{sing.8}\end{equation}
On the other hand, the natural $\bbR^+$-action on the cone $(\cH_0,g_{\cH_0})$ is obtained by restricting an $\bbR^+$-action of the form
\begin{equation}
      \begin{array}{llcl} \bbR^+\ni t: & \bbC^{m+1} & \to & \bbC^{m+1} \\ z & \mapsto & t^w z\end{array}
\label{sing.9}\end{equation} 
for some positive weight $w$.  By \eqref{sing.7}, $\Omega_{\cH_0}$ is then homogenous of degree $(m+1-k)w$ with respect to this $\bbR^+$-action, while $\omega_{\cH_0}$ is by definition of degree $2$, so that from \eqref{sing.8}, we see that
\begin{equation}
    w= \frac{m}{m+1-k}>1.
\label{sing.10}\end{equation}
This means that the Euclidean metric is homogeneous of degree $2w>2$.  If $r_E$ is the distance from the origin in terms of the Euclidean metric and $r_{\cH_0}$ is the distance from the origin with respect to $g_{\cH_0}$, then $r_{E}$ is homogenous of degree $w>1$ and $r_{\cH_0}$ is homogenous of degree $1$, so that
\begin{equation}
      r_{\cH_0} \asymp r_{E}^{\frac{1}{w}}=r_E^{1-\frac{k-1}{m}}
\label{sing.10b}\end{equation}
and near the origin,
$$
       \omega_{\cH_0}\asymp r_E^{-\frac{2(k-1)}{m}}\omega_0 \quad \mbox{and} \quad \Omega_{\cH_0}\wedge \overline{\Omega}_{\cH_0}\asymp r_E^{-2(k-1)}\omega_0^m.
$$
In fact, in the same way, near the origin, we have that
\begin{equation}
  \Omega_{\cH_{\epsilon}}\wedge\overline{\Omega}_{\cH_\epsilon} \asymp r^{-2(k-1)}_E \omega_{\epsilon}^m \quad \mbox{as} \; \epsilon\searrow 0.
\label{sing.11}\end{equation}
Now, our goal is to obtain a singular Calabi-Yau $\mathfrak{n}$-warped $\QAC$-metric on $\cH_0$ by finding a solution to the complex Monge-Amp\`ere equation 
\begin{equation}
   (\omega_0+\sqrt{-1}\pa\db u)^m= c_m \Omega_{\cH_0}\wedge \overline{\Omega}_{\cH_0}.
\label{sing.12}\end{equation}
Compared to \S~\ref{mae.0}, the main difference is that $\cH_0$ has a singularity at the origin.  As pointed out in \cite{Hein-Sun}, the fact that the expected model singularity $(\cH_0,g_{\cH_0})$ does not have bounded bisectional curvature seriously compromises the possibility of having a fine understanding of a solution of \eqref{sing.12} near the singularity working directly on $\cH_0$  with adapted weighted H\"older spaces.  Following \cite{Hein-Sun}, we will take advantage of the smoothing $\cH_{\epsilon}$ of $\cH_0$ and produce a solution of \eqref{sing.12} by extracting a converging subsequence of solutions $u_{\epsilon}$ as $\epsilon\searrow 0$ of the corresponding complex Monge-Amp\`ere equation
\begin{equation}
  (\omega_{\epsilon}+\sqrt{-1}\pa\db u_{\epsilon})^m= c_m \Omega_{\cH_{\epsilon}}\wedge \overline{\Omega}_{\cH_{\epsilon}}.
\label{sing.13}\end{equation}  
This will be possible provided we can get good uniform control on these solutions as $\epsilon\searrow 0$.  This will be achieved adapting recent ideas of \cite{CGT} and \cite{Sun-Zhang} to our setting.  The solution of \eqref{sing.12} we will obtain will have the expected $\mathfrak{n}$-warped $\QAC$-behavior at infinity, but will initially come with very little control near the origin.  To identify the precise behavior of the metric near the singularity, we will apply the continuity method of \cite{Hein-Sun}, which requires solving another family of complex Monge-Amp\`ere equations.  Let us describe this family before starting to construct solutions.  

On $\cH_0$, we could also have applied the convexity argument of Lemma~\ref{kw.14} with $\omega_{\cH_0}$ instead of the Euclidean metric to obtain a K\"ahler metric $\widetilde{\omega}_{0}$ which agrees with $\omega_{\cH_0}$ near the origin and with $\omega_0$ outside some compact set.  Since $\omega_{\cH_0}$ is Calabi-Yau and $\omega_0$ is Calabi-Yau outside a compact set, this means that 
\begin{equation}
              \widetilde{\omega}_0^m= e^Fc_m\Omega_{\cH_0}\wedge \overline{\Omega}_{\cH_0}
\label{sing.13a}\end{equation}
with $F\in\CI_c(\cH_{0}\setminus\{0\})$.  Extending $F$ from $\cH_0\setminus\{0\}$ to a smooth function in $\CI_c(\bbC^{m+1}\setminus\{0\})$ that we will also denote by $F$, the family of complex Monge-Amp\`ere equations we will consider is 
\begin{equation}
  (\omega_{\epsilon}+\sqrt{-1}\pa\db u_{t,\epsilon})^m= e^{tF}c_m \Omega_{\cH_{\epsilon}}\wedge \overline{\Omega}_{\cH_\epsilon}
\label{sing.14}\end{equation}
for $t,\epsilon\in [0,1]$.  For $t=\epsilon=0$, this corresponds to the complex Monge-Amp\`ere equation \eqref{sing.8} we want to solve, while for $t=1$ and $\epsilon=0$, we see from \eqref{sing.13a} that a solution is given by taking a compactly supported potential $u_{1,0}$ such that
$$
    \omega_0+ \sqrt{-1}\pa\db u_{1,0}=\widetilde{\omega}_0
$$
and has the expected singular behavior modelled on $(\cH_0,g_{\cH_0})$ near the singularity.  On the other hand, for $\epsilon>0$, we know that \eqref{sing.14} has a unique solution by Theorem~\ref{mae.4}.  Our first step will be to produce solutions to \eqref{sing.14} when $\epsilon=0$ by taking limits of solutions as $\epsilon\searrow 0$.  First, we can rewrite \eqref{sing.14} as
\begin{equation}
    (\omega_{\epsilon}+\sqrt{-1}\pa\db u_{t,\epsilon})^m= e^{tF}e^{f_{\epsilon}} \omega_{\epsilon},
\label{sing.15}\end{equation}
where $f_{\epsilon}$ is the function of \eqref{sing.4a}.  By Theorem~\ref{mae.4}, the complex Monge-Amp\`ere equation \eqref{sing.15} has a unique solution in $x^{\widetilde{\delta}}\CI_{\nQb}(\cH_{\epsilon})$ for $\epsilon \ne 0$ with $\widetilde{\delta}=(\widetilde{\delta}_1,\widetilde{\delta}_2)$ a multiweight satisfying the conditions of Corollary~\ref{w.53} and such that $\widetilde{\delta}_{1}>0$ and $\widetilde{\delta}_{2}>0$.  We need some uniform control on the norm of these solutions to extract a convergent subsequence as $\epsilon\searrow 0$.  By \eqref{sing.11} and the fact that $f_{\epsilon}$ and $F$ have support contained in a fixed large ball in $\bbC^{m+1}$, notice that there is a positive constant $A$ such that
\begin{equation}
  \sup_{\cH_{\epsilon}} | e^{-tF}e^{-f_{\epsilon}}-1 |\le A \quad \forall \ \epsilon\in [0,1], \; t\in [0,1],
\label{sing.16}\end{equation}
and 
\begin{equation}
   \| e^{-tF}e^{-f_{\epsilon}}-1 \|_{L^q(\mu_{\epsilon})} \le A \quad \forall \ \epsilon\in [0,1], \; t\in [0,1], \; q>q_0,
\label{sing.17}\end{equation}
for some $q_0>0$ depending on $\widetilde{\delta}$, where $\mu_{\epsilon}= c_m\Omega_{\cH_{\epsilon}}\wedge \overline{\Omega}_{\cH_{\epsilon}}$.  On the other hand, following an idea of Tosatti \cite{Tosatti2009}, noticing that $\widehat{\omega}_{0,\epsilon}:= \omega_{\epsilon}+ \sqrt{-1}\pa \db u_{0,\epsilon}$ is Calabi-Yau with tangent cone at infinity $(C_0, g_{C_0})$ of \eqref{pr.1}, we can show as in the proof of \cite[Proposition~4.1]{CGT} that there is a uniform Sobolev inequality for those metrics, namely there is a constant $C$ independent of $\epsilon\in (0,1]$ such that
\begin{equation}
  \lrp{ \int_{\cH_{\epsilon}} |u|^{\frac{2m}{m-1}}c_m \Omega_{\cH_{\epsilon}}\wedge \overline{\Omega}_{\cH_{\epsilon}} }^{\frac{m-1}m} \le C \int_{\cH_{\epsilon}} |du|^2_{\widehat{\omega}_{0,\epsilon}} c_m \Omega_{\cH_{\epsilon}}\wedge \overline{\Omega}_{\cH_{\epsilon}}
\label{sing.18}\end{equation}
for all $u\in \CI_c(\cH_{\epsilon})$ and $\epsilon\in (0,1]$.  The estimates \eqref{sing.17} and \eqref{sing.18} yield the following uniform control on the solutions.
\begin{lemma}
  There is a constant $K$ such that for all $\epsilon\in (0,1]$ and $t\in [0,1]$, the unique solution $u_{t,\epsilon}$ of \eqref{sing.15} is such that
 \begin{equation}
   \| u_{t,\epsilon}\|_{L^{\infty}}\le K
 \label{sing.19a}\end{equation}
 and
 \begin{equation}
    K^{-1}\omega_{\epsilon} \le \widehat{\omega}_{t,\epsilon} \le K e^{tF}e^{f_{\epsilon}}\omega_{\epsilon},
 \label{sing.19b}\end{equation}
 where $\widehat{\omega}_{t,\epsilon}= \omega_{\epsilon}+ \sqrt{-1}\pa\db u_{t,\epsilon}$.
\label{sing.19}\end{lemma}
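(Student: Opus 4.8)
# Proof Proposal for Lemma~\ref{sing.19}

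\textbf{Overall strategy.} The plan is to obtain the $L^\infty$-estimate \eqref{sing.19a} via a Moser iteration argument uniform in $\epsilon$, and then the second-order estimate \eqref{sing.19b} via Yau's computation of $\Delta \log \tr$, again making sure all constants are uniform in $\epsilon\in(0,1]$ and $t\in[0,1]$. The key point throughout is that the geometric inputs we need — the uniform Sobolev inequality \eqref{sing.18}, the uniform bounds \eqref{sing.16}–\eqref{sing.17} on the right-hand side, and the uniform upper bound on the bisectional curvature of $\omega_\epsilon$ from Lemma~\ref{sing.5} — are all available precisely because we have arranged, earlier in this section, that $f_\epsilon$ and $F$ have support in a fixed compact set of $\bbC^{m+1}$ and that $\omega_\epsilon$ is the restriction of the Euclidean metric near the origin.

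\textbf{Step 1: the $C^0$-estimate.} First I would rewrite \eqref{sing.15} as $\widehat\omega_{t,\epsilon}^m = (e^{tF}e^{-f_\epsilon}) \mu_\epsilon$ (using $\omega_\epsilon^m = e^{-f_\epsilon}\mu_\epsilon$, with $\mu_\epsilon = c_m\Omega_{\cH_\epsilon}\wedge\overline\Omega_{\cH_\epsilon}$, so that the measure on the right is a fixed positive measure and the data $h_{t,\epsilon}:= e^{tF}e^{-f_\epsilon}-1$ satisfies $\|h_{t,\epsilon}\|_{L^\infty}\le A$ and $\|h_{t,\epsilon}\|_{L^q(\mu_\epsilon)}\le A$ for $q>q_0$ by \eqref{sing.16}–\eqref{sing.17}). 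Testing the equation against powers $|u_{t,\epsilon}|^{p-1}$ and integrating by parts gives, in the standard way (see \eqref{sing.18} and e.g.\ Ko\l odziej's $L^\infty$-estimate or the $\chi$-plurisubharmonic Moser iteration of \cite[Proposition~2.1]{CGT}), a bound $\|u_{t,\epsilon}\|_{L^\infty} \le C(\|h_{t,\epsilon}\|_{L^q(\mu_\epsilon)}, C_S, q)$ where $C_S$ is the Sobolev constant in \eqref{sing.18}. Since all of these are uniform in $\epsilon$ and $t$, this yields \eqref{sing.19a}. (One should note that $u_{t,\epsilon}\in x^{\widetilde\delta}\CI_{\nQb}(\cH_\epsilon)$ decays at infinity, so the integrations by parts are justified and the normalization is fixed.)

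\textbf{Step 2: the Laplacian estimate.} With $\|u_{t,\epsilon}\|_{L^\infty}\le K$ in hand, I would run Yau's second-order estimate: setting $\psi := \tr_{\omega_\epsilon}\widehat\omega_{t,\epsilon}$, the computation of $\Delta_{\widehat\omega_{t,\epsilon}}\log\psi$ produces a term involving the bisectional curvature of $\omega_\epsilon$, which by Lemma~\ref{sing.5} is bounded above by a constant uniform in $\epsilon$, together with a term involving $\Delta_{\omega_\epsilon}\log(e^{tF}e^{-f_\epsilon})$, which is bounded in $L^\infty$ uniformly since $F$ and $f_\epsilon$ have uniformly bounded $C^2$-norms on their fixed compact support. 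Applying the maximum principle to $\log\psi - C' u_{t,\epsilon}$ for a suitable uniform constant $C'>0$ — here one uses that $u_{t,\epsilon}$ attains its extrema (being in a decaying H\"older space), so no cut-off at infinity is needed — gives $\psi \le C e^{C'(\sup u_{t,\epsilon} - \inf u_{t,\epsilon})} \le \widetilde K$ uniformly. Combined with the Monge-Amp\`ere equation itself (the product of eigenvalues of $\widehat\omega_{t,\epsilon}$ relative to $\omega_\epsilon$ equals $e^{tF}e^{-f_\epsilon}$), the upper bound on $\tr_{\omega_\epsilon}\widehat\omega_{t,\epsilon}$ gives a lower bound $\widehat\omega_{t,\epsilon}\ge K^{-1}\omega_\epsilon$ and an upper bound $\widehat\omega_{t,\epsilon}\le K e^{tF}e^{f_\epsilon}\omega_\epsilon$, which is exactly \eqref{sing.19b} after renaming constants.

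\textbf{Main obstacle.} The delicate point is making the $C^0$-estimate genuinely uniform as $\epsilon\searrow 0$: the measures $\mu_\epsilon$ and the metrics $\omega_\epsilon$ degenerate near the origin (the density $e^{-f_\epsilon}$ is bounded but $\omega_\epsilon$ is \emph{not} comparable to $\omega_{\cH_0}$ there, cf.\ \eqref{sing.10b}–\eqref{sing.11}), so one must be careful that the Sobolev constant in \eqref{sing.18} and the integrability exponent $q_0$ do not deteriorate — this is where the argument of \cite{CGT} following Tosatti, invoked just before the lemma, does the real work, and I would cite it rather than reprove it. A secondary technical point is justifying the integrations by parts and the maximum principle on the complete noncompact $\cH_\epsilon$; this is handled by the a priori decay $u_{t,\epsilon}\in x^{\widetilde\delta}\CI_{\nQb}(\cH_\epsilon)$ guaranteed by Theorem~\ref{mae.4}, together with completeness and bounded geometry of $\omega_\epsilon$ from Proposition~\ref{w.2}.
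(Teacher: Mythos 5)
Your Step 2 contains a genuine gap: you run Yau's second-order estimate on $\psi=\tr_{\omega_\epsilon}\widehat\omega_{t,\epsilon}$ and claim the curvature term is handled because the bisectional curvature of $\omega_\epsilon$ is bounded \emph{above} (Lemma~\ref{sing.5}). But in Yau's computation of $\Delta_{\widehat\omega_{t,\epsilon}}\log\tr_{\omega_\epsilon}\widehat\omega_{t,\epsilon}$ the curvature of the \emph{reference} metric enters with the opposite sign: one needs a uniform \emph{lower} bound on the bisectional curvature of $\omega_\epsilon$, and no such bound is available here -- the curvature of $\omega_\epsilon$ degenerates near the origin as $\epsilon\searrow 0$ (this is precisely why the paper only records an upper bound in Lemma~\ref{sing.5}, the metric being the restriction of the Euclidean metric to $\cH_\epsilon$ near $0$). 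The paper's proof goes the other way around: it applies the Chern--Lu inequality to $\tr_{\widehat\omega_{t,\epsilon}}\omega_\epsilon$ (as in \cite[Proposition~7.1]{Jeffres-Mazzeo-Rubinstein}), which requires exactly the two ingredients that \emph{are} uniform, namely the upper bisectional bound on $\omega_\epsilon$ and a uniform Ricci bound on $\widehat\omega_{t,\epsilon}$ (available since $\operatorname{Ric}(\widehat\omega_{t,\epsilon})=-t\sqrt{-1}\pa\db F$ with $F$ fixed and supported away from $0$). This yields the lower bound $K^{-1}\omega_\epsilon\le\widehat\omega_{t,\epsilon}$ first, and only then is the upper bound in \eqref{sing.19b} deduced from the Monge--Amp\`ere equation via the elementary inequality $\tr_\beta(\alpha)\le\frac{\alpha^m}{\beta^m}(\tr_\alpha\beta)^{m-1}$; your order of deduction cannot be salvaged with the stated hypotheses. (A smaller point: the maximum of $\log\tr_{\widehat\omega_{t,\epsilon}}\omega_\epsilon-Bu_{t,\epsilon}$ need not be attained; the paper handles this by noting the quantity tends to $\log m$ at infinity, rather than by asserting that $u_{t,\epsilon}$ attains its extrema.)

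Your Step 1 also glosses over the reason the paper needs a two-step argument. The uniform Sobolev inequality \eqref{sing.18} is only established for the Calabi--Yau metrics $\widehat\omega_{0,\epsilon}$, with the gradient measured in $\widehat\omega_{0,\epsilon}$ against the measure $\mu_\epsilon$; it is not claimed (and not known here) for $\omega_\epsilon$ or for $\widehat\omega_{t,\epsilon}$ with $t>0$. If you test the equation written relative to $\omega_\epsilon$, the integration by parts produces gradient terms in metrics interpolating between $\omega_\epsilon$ and $\widehat\omega_{t,\epsilon}$, none of which couples to \eqref{sing.18} without an a priori comparison of metrics -- which is what you are trying to prove. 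The paper's fix is to first treat $t=0$, where expanding against $\widehat\omega_{0,\epsilon}^{m-1}$ produces exactly $\int |d|u|^{p/2}|^2_{\widehat\omega_{0,\epsilon}}\,d\mu_\epsilon$, and then to estimate $v=u_{t,\epsilon}-u_{0,\epsilon}$ by rewriting $\widehat\omega_{t,\epsilon}=\widehat\omega_{0,\epsilon}+\sqrt{-1}\pa\db v$ with $\widehat\omega_{t,\epsilon}^m=e^{tF}\widehat\omega_{0,\epsilon}^m$, so that the iteration again runs against the metric for which \eqref{sing.18} holds. Your appeal to the decay $u_{t,\epsilon}\in x^{\widetilde\delta}\CI_{\nQb}(\cH_\epsilon)$ to justify the integrations by parts is correct and matches the paper.
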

\begin{proof}
The $L^{\infty}$-bound \eqref{sing.19a} is completely standard, see for instance \cite{Joyce}.  To obtain a uniform $L^{\infty}$-bound, it suffices to notice that the uniform bounds \eqref{sing.17} and \eqref{sing.18} allow us to obtain a uniform bound \eqref{sing.19a} independent of $\epsilon\in(0,1]$ and $t\in [0,1]$ proceeding as in  \cite[\S~4]{CGT}.  Let us explain the main differences in our setting.  First, at infinity, our metrics are not asymptotically conical, but $\mathfrak{n}$-warped quasi-asymptotically conical, and by Theorem~\ref{mae.4}, there exists $\gamma>0$ such that $u_{t,\epsilon}\in \rho^{-\gamma}\CI_{\nQb}(\cH_{\epsilon})$.  This can be used as in the proof of \cite[Proposition~4.2]{CGT} to justify the integration by parts leading to the estimate.  The other difference is that $\omega_{t,\epsilon}$ is not Calabi-Yau for $t>0$, but we can first proceed as in \cite[\S~4]{CGT} when $t=0$.   Then, writing 
$$
     \widehat{\omega}_{t,\epsilon}= \widehat{\omega}_{0,\epsilon}+ \sqrt{-1}\pa\db (u_{t,\epsilon}-u_{0,\epsilon}),
$$   
we can obtain the uniform $L^{\infty}$-bound for $u_{t,\epsilon}-u_{0,\epsilon}$ proceeding as in \cite[\S~4]{CGT}.  

For \eqref{sing.19b},  notice that the inequality on the right-hand side follows from the one on the left-hand side and the elementary inequality
$$
      \tr_{\beta}(\alpha)\le \frac{\alpha^m}{\beta^m} (\tr_{\alpha}(\beta))^{m-1}
$$ 
for any positive $(1,1)$-forms $\alpha$ and $\beta$.  On the other hand, the inequality on the right is a consequence of the Chern-Lu inequality.  Indeed, since $F$ is supported away from the origin, we see that the Ricci curvature of $\widehat{\omega}_{t,\epsilon}$ is uniformly bounded in $\epsilon\in (0,1]$ and $t\in [0,1]$, while as mentioned earlier, the bisectional curvature of $\omega_{\epsilon}$ is uniformly bounded from above.  Hence, by \cite[Proposition~7.1]{Jeffres-Mazzeo-Rubinstein}, there is a Chern-Lu inequality of the form 
\begin{equation}
  \Delta_{\widehat{\omega}_{t,\epsilon}}(\log \tr_{\widehat{\omega}_{t,\epsilon}}\omega_{\epsilon}- B u_{t,\epsilon})\ge \tr_{\widehat{\omega}_{t,\epsilon}}\omega_{\epsilon} -Bm
\label{sing.20}\end{equation}
for some constant $B$ independent of $\epsilon$ and $t$.  Since $u_{t,\epsilon}\to 0$ at infinity, notice that  $\log \tr_{\widehat{\omega}_{t,\epsilon}}\omega_{\epsilon}- B u_{t,\epsilon}$ tends to $\log m$ at infinity.  Thus,  if its maximum is not attained on $\cH_{\epsilon}$, we see that 
$$
      \log \tr_{\widehat{\omega}_{t,\epsilon}}\omega_{\epsilon} \le B u_{t,\epsilon} + \log m \le |B| K + \log m
$$
by \eqref{sing.19a}.  If instead its maximum is attained at $p_{t,\epsilon}\in \cH_{\epsilon}$, then by \eqref{sing.20}, at this point
$$
       \lrp{ \tr_{\widehat{\omega}_{t,\epsilon}}\omega_{\epsilon} }(p_{t,\epsilon}) \le Bm, 
$$
hence
$$
\begin{aligned}
\log  \tr_{\widehat{\omega}_{t,\epsilon}}\omega_{\epsilon} &= \log  \tr_{\widehat{\omega}_{t,\epsilon}}\omega_{\epsilon}- Bu_{t,\epsilon} + B u_{t,\epsilon} \\
  & \le \lrp{ \log  \tr_{\widehat{\omega}_{t,\epsilon}}\omega_{\epsilon}- Bu_{t,\epsilon}  }(p_{t,\epsilon}) + Bu_{t,\epsilon} \\
  &\le \log(Bm)-Bu_{t,\epsilon}(p_{t,\epsilon}) +Bu_{t,\epsilon} \le \log(Bm) + 2|B| K
\end{aligned}
$$
by \eqref{sing.19a}.  In both cases, this yield the desired uniform estimate.
\end{proof}

By the definition of $f_{\epsilon}$ in \eqref{sing.4a} and  the estimate \eqref{sing.11}, notice that the upper bound in \eqref{sing.19b} degenerates as we approach the origin.  Similarly, the injectivity radius and the curvature of the family of metrics $\omega_{\epsilon}$ degenerate as we approach the origin.  However, if we fix a compact set $K\subset \bbC^{m+1}$ containing an open neighborhood of the origin, then the upper bound in \eqref{sing.19b} is uniform in $t\in [0,1]$ and $\epsilon\in (0,1]$ on $\cH_{\epsilon}\setminus (K\cap \cH_{\epsilon})$.  Similarly, there is a uniform lower bound on the injectivity radius of $\omega_{\epsilon}$ on $\cH_{\epsilon}\setminus (K\cap \cH_{\epsilon})$, as well as uniform bounds on its curvature and its covariant derivatives.  Thus, Lemma~\ref{sing.19} and the result of Evans-Krylov, combined with Schauder estimates and bootstrapping yields  $\cC^k_{w}(\cH_{\epsilon})$-bounds on $u_{t,\epsilon}|_{\cH_{\epsilon}\setminus (\cH_{\epsilon}\cap K)}$ uniform in $t\in [0,1]$ and $\epsilon\in (0,1]$.   This can be improved to obtain decay at infinity as follows.
\begin{proposition}
As in Theorem~\ref{mae.4}, let $\widetilde{\delta}$ be a positive multiweight satisfying the conditions of Corollary~\ref{w.53}.  If $K\subset \bbC^{m+1}$ is a compact set containing an open neighborhood of the origin, then on $\cH_{\epsilon}\setminus K$, there is for each $k$ a positive constant $C_k$ such that
\begin{equation}
  \| x^{-\widetilde{\delta}}u_{t,\epsilon}\|_{\cC^k_{\nQb}(\cH_{\epsilon}\setminus(\cH_{\epsilon}\cap K))}\le C_k
\label{sing.21a}\end{equation}
for all $t\in [0,1]$ and $\epsilon\in (0,1]$.
\label{sing.21}\end{proposition}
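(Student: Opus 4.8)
The plan is to upgrade the uniform $\cC^k_w$-bounds on $u_{t,\epsilon}$ away from $K$, recalled just above, to weighted bounds carrying the decay factor $x^{\widetilde\delta}$, in two steps: first a uniform (in $t,\epsilon$) weighted Moser iteration giving a preliminary slow decay, then a barrier argument built from the model behaviour of $\omega_{\epsilon}$ at infinity giving the sharp rate. To begin, I would enlarge $K$ to a fixed compact $K'\subset\bbC^{m+1}$ that also contains the fixed ball carrying the supports of $F$ and of all the $f_{\epsilon}$ from \eqref{sing.4a}; it then suffices to prove \eqref{sing.21a} on $\cH_{\epsilon}\setminus K'$, because on the compact collar $(K'\setminus K)\cap\cH_{\epsilon}$, a fixed region bounded away from the origin, the estimate is immediate from the uniform $\cC^k_w$-bounds together with the fact that $x^{\widetilde\delta}$ is pinched between two positive constants there, uniformly in $\epsilon$.

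On $\cH_{\epsilon}\setminus K'$ the equation \eqref{sing.15} reads $\widehat\omega_{t,\epsilon}^m=\omega_{\epsilon}^m$, so, with the averaged Laplacian $\Delta_{u_{t,\epsilon}}v:=\tfrac12\int_0^1\Delta_{\omega_{\epsilon}+s\sqrt{-1}\pa\db u_{t,\epsilon}}v\,ds$ of \eqref{mae.6}, we have $\Delta_{u_{t,\epsilon}}u_{t,\epsilon}=0$ there. On this region $e^{tF}e^{f_{\epsilon}}\equiv1$, so by \eqref{sing.19b} the metrics $\widehat\omega_{t,\epsilon}$ and $\omega_{\epsilon}$, hence $\Delta_{u_{t,\epsilon}}$ and $\Delta_{\omega_{\epsilon}}$, are uniformly comparable for $t\in[0,1]$, $\epsilon\in(0,1]$; and since $\widehat\omega_{0,\epsilon}\asymp\omega_{\epsilon}$ off $K'$, the uniform Sobolev inequality \eqref{sing.18} transfers to a uniform Sobolev inequality for $\omega_{\epsilon}$. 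A weighted Moser iteration for the uniformly elliptic operator $\Delta_{u_{t,\epsilon}}$, carried out as in \cite[\S~8.6.2 and \S~9.6.2]{Joyce}, then produces a small exponent $\mu_1>0$ and a constant independent of $t,\epsilon$ with $\|\rho^{\mu_1}u_{t,\epsilon}\|_{\cC^0_w(\cH_{\epsilon}\setminus K')}\le C$; Schauder estimates in $g_w$-balls (uniform bounded geometry, Proposition~\ref{w.2}) then bootstrap this to $\|\rho^{\mu_1}u_{t,\epsilon}\|_{\cC^k_w(\cH_{\epsilon}\setminus K')}\le C_k$.

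The heart of the matter is the barrier. Fix $\widetilde\delta$ a positive multiweight as in Corollary~\ref{w.53} (cf.\ Remark~\ref{w.54}). A direct computation of $\Delta_{\omega_{\epsilon}}$ on the model weight $x^{\widetilde\delta}$, consistent with the range of weights for which Corollary~\ref{w.53} is an isomorphism, yields a constant $c_0>0$ with $-\Delta_{\omega_{\epsilon}}(x^{\widetilde\delta})\ge c_0\,x^{\widetilde\delta}(\rho w)^{-2}$ outside a compact set, uniformly in $\epsilon$, because the $\mathfrak{n}$-warped $\QAC$-structure of $\omega_{\epsilon}$ away from the origin is $\epsilon$-independent to leading order at infinity. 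Combining this with the preliminary decay of the previous paragraph, the error $(\Delta_{\omega_{\epsilon}}-\Delta_{u_{t,\epsilon}})(x^{\widetilde\delta})$ is $O(\rho^{-\mu_1})\,x^{\widetilde\delta}(\rho w)^{-2}$, so there is a compact $K''\supset K'$, independent of $t$ and $\epsilon$, with $-\Delta_{u_{t,\epsilon}}(x^{\widetilde\delta})\ge\tfrac12 c_0\,x^{\widetilde\delta}(\rho w)^{-2}>0$ on $\cH_{\epsilon}\setminus K''$. Since $x^{\widetilde\delta}$ is comparable to a positive constant on the fixed compact hypersurface $\pa K''\cap\cH_{\epsilon}$ and $\|u_{t,\epsilon}\|_{L^{\infty}}$ is uniformly bounded by Lemma~\ref{sing.19}, I would pick $A>0$, uniform in $t,\epsilon$, with $Ax^{\widetilde\delta}\ge|u_{t,\epsilon}|$ on $\pa K''\cap\cH_{\epsilon}$; and for each fixed $\epsilon>0$, Theorem~\ref{mae.4} gives $u_{t,\epsilon}\in x^{\widetilde\delta}\CI_{\nQb}(\cH_{\epsilon})$, so $Ax^{\widetilde\delta}\mp u_{t,\epsilon}$ is a nonnegative supersolution of $\Delta_{u_{t,\epsilon}}$ on $\cH_{\epsilon}\setminus K''$ that vanishes at infinity. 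The maximum principle then forces $|u_{t,\epsilon}|\le Ax^{\widetilde\delta}$ on $\cH_{\epsilon}\setminus K''$, and, enlarging $A$ to absorb the compact collar $(K''\setminus K')\cap\cH_{\epsilon}$, on all of $\cH_{\epsilon}\setminus K'$.

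Finally, to pass from this weighted $\cC^0$-bound to \eqref{sing.21a}, I would rewrite $\Delta_{u_{t,\epsilon}}u_{t,\epsilon}=0$ as $(\rho w)^2\Delta_{u_{t,\epsilon}}u_{t,\epsilon}=0$, a uniformly elliptic $\nQb$-operator with uniformly $\cC^{k,\alpha}_{\nQb}$-bounded coefficients, and apply interior Schauder estimates on $\nQb$-geodesic unit balls centred at points $p\in\cH_{\epsilon}\setminus K'$ with $\rho(p)$ large: on such a ball $x$ is comparable to $x(p)$, so $x(p)^{-\widetilde\delta}u_{t,\epsilon}$ has uniformly bounded $\cC^0$-norm, and since an $\nQb$-metric has uniformly bounded geometry (Lemma~\ref{wqb.16}), a standard bootstrap gives the uniform $\cC^k_{\nQb}$-bounds. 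The main obstacle throughout is the uniformity in $\epsilon$: it rests on the metric comparison \eqref{sing.19b}, the uniform Sobolev inequality \eqref{sing.18}, and the $\epsilon$-independence, to leading order and away from the origin, of the warped $\QAC$-structure of $\omega_{\epsilon}$; checking these carefully, and verifying that the maximum-principle comparison can be run on the noncompact region $\cH_{\epsilon}\setminus K''$ uniformly in $\epsilon$, is where the real work lies.
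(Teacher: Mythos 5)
Your first half (enlarging $K$ so that $F$ and $f_{\epsilon}$ vanish outside it, running the weighted Moser iteration of Joyce with the uniform Sobolev inequality \eqref{sing.18} transferred to $\omega_{\epsilon}$ via \eqref{sing.19b} on the exterior region, and bootstrapping with Schauder estimates to get a uniform $\rho^{-\mu_1}$-decay) is essentially the argument in the paper, which handles the degeneration of the upper bound in \eqref{sing.19b} near the origin in the same spirit (following \cite[Proposition~4.10]{CGT}) and deals with the Sobolev inequality by estimating first $u_{0,\epsilon}$ and then $u_{t,\epsilon}-u_{0,\epsilon}$; your direct transfer of \eqref{sing.18} to $\omega_{\epsilon}$ off the compact set is a harmless variant. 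The divergence, and the gap, is in how you pass from $\rho^{-\mu_1}$ to the sharp weight $x^{\widetilde\delta}$. The paper does this as in the second half of the proof of Theorem~\ref{mae.4}: after the $\nQb$-Schauder bootstrap, it feeds the equation into the isomorphism of Corollary~\ref{w.53} (whose uniformity in $\epsilon$ is available through the auxiliary family $g'_{\epsilon}$ introduced at the start of \S~\ref{sing.0}), so the decay rate comes from the Green's function estimates, not from a comparison function.

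Your route instead rests on the unproved assertion that $-\Delta_{\omega_{\epsilon}}(x^{\widetilde\delta})\ge c_0\,x^{\widetilde\delta}(\rho w)^{-2}$ outside a compact set for \emph{every} positive multiweight $\widetilde\delta$ admissible in Corollary~\ref{w.53}, and this is where the proposal breaks down. In the warped $\QAC$ geometry a pure product of powers of boundary defining functions is not automatically a strict supersolution: writing $x_1^{\delta_1}x_2^{\delta_2}\asymp \rho^{-(1-\nu)\delta_1}\rho_{Z}^{-(\delta_2-\delta_1)}$ in the model \eqref{w.5}, the Laplacian splits into a radial (base-cone) contribution with coefficient $s(b+2\nu m_Z-1-s)$, $s=(1-\nu)\delta_1$, and a fiber contribution with coefficient $\tau(2m_Z-2-\tau)$, $\tau=\delta_2-\delta_1$. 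Deep inside the region $x_1\ll 1$ the fiber term dominates and has the right sign for admissible $\widetilde\delta$, but in the transition region $x_1\sim 1$ the two terms are of the same size, and for admissible weights with $\delta_1$ close to $\delta_2$ and both near the top of the range (so $\tau$ small while $s>b+2\nu m_Z-1$) the negative radial term wins, so $x^{\widetilde\delta}$ fails to be a supersolution there. This is precisely why the paper derives the mapping properties of $\Delta$ via heat-kernel and Green's-function estimates and the Schur test rather than barriers, and why it remarks in \S~\ref{mpl.0} that a barrier approach would likely only cover a smaller range of weights. To salvage your argument you would have to either restrict $\widetilde\delta$ (which the statement does not allow) or construct a corrected barrier (e.g.\ a suitable sum of weight monomials) and verify the sign uniformly in $\epsilon$ across all regimes --- at which point it is simpler, and is what the paper does, to invoke the already-established isomorphism of Corollary~\ref{w.53}. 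The rest of your scheme (maximum principle on $\cH_{\epsilon}\setminus K''$ using $u_{t,\epsilon}\in x^{\widetilde\delta}\CI_{\nQb}(\cH_{\epsilon})$ for fixed $\epsilon$, then $\nQb$-Schauder for the higher norms) is fine conditionally on that missing sign.
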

\begin{proof}
As in the proof of Theorem~\ref{mae.4}, the idea is to adapt the Moser iteration with weight of \cite[\S~8.6.2 and \S~9.6.2]{Joyce} to obtain a uniform estimate in $t$ and $\epsilon$.  However, there is an obstacle in that the upper bound in \eqref{sing.19b} degenerates as we approach the origin while the statement of \cite[Proposition~8.6.7]{Joyce} assumes a global uniform upper bound to obtain the estimate \cite[(8.23)]{Joyce}.  However, as noticed in \cite[Proposition~4.10]{CGT}, this can be easily adjusted, since assuming without loss of generality that $\rho$ ($r$ in the notation of \cite{CGT}) is constant on the compact set $K\subset \bbC^{m+1}$, we only need a uniform upper bound on $\widehat{\omega}_{t,\epsilon}$ on $\cH_{\epsilon}\setminus (\cH_{\epsilon}\cap K)$ for \cite[(8.23)]{Joyce} to hold; see the second half of \cite[p.889]{CGT} for details.  

The Sobolev inequality is also used in \cite[\S~8.6.2 and \S~9.6.2]{Joyce}, so to derive uniform estimates, we need to use the uniform Sobolev inequality of \eqref{sing.18}.  As in the proof of Lemma~\ref{sing.19}, this can be achieved by first estimating $u_{0,\epsilon}$, and then $u_{t,\epsilon}-u_{0,\epsilon}$.  With these adjustments, we can obtain for some $\mu_1>0$  a $\rho^{-\mu_1}\cC^0(\cH_{\epsilon}\setminus(\cH_{\epsilon}\cap K))$-bound for $u_{t,\epsilon}$ which is uniform in $t\in [0,1]$ and $\epsilon\in (0,1]$.  Proceeding as in the second half of the proof of Theorem~\ref{mae.4}, this can be improved to the claimed uniform estimates.  
\end{proof}

These estimates can be used to obtain a solution to the complex Monge-Amp\`ere \eqref{sing.15} for $\epsilon=0$ as follows.

\begin{theorem}
For each $t\in [0,1]$, there exists a unique bounded continuous function $u_{t,0}$ on $\cH_0$ which is smooth on $\cH_0\setminus \{0\}$, is a solution of the Monge-Amp\`ere equation  \eqref{sing.15} for $\epsilon=0$ in the sense of Bedford and Taylor and is such that 
$$
       u_{t,0}|_{\cH_0\setminus (\cH_0\cap K)}\in x^{\widetilde{\delta}}\CI_{\nQb}(\cH_0\setminus (\cH_0\cap K)),
$$
where $K\subset \bbC^{m+1}$ is any compact set containing an open neighborhood of the origin and as in Theorem~\ref{mae.4}, $\widetilde{\delta}$ is a positive multiweight satisfying the conditions of Corollary~\ref{w.53}.
\label{sing.22}\end{theorem}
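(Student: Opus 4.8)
The plan is to construct $u_{t,0}$ as a subsequential limit of the solutions $u_{t,\epsilon}$ of \eqref{sing.15} as $\epsilon\searrow 0$, and then to upgrade this limit to a genuine bounded continuous Bedford--Taylor solution using pluripotential theory on the normal affine variety $\cH_0$. Fix $t\in[0,1]$ and a decreasing sequence $\epsilon_i\searrow 0$. For small $\delta>0$ and small $\epsilon$, over the region $\{r_E\ge\delta\}$ the smooth hypersurface $\cH_\epsilon$ is a graph over $\cH_0\cap\{r_E\ge\delta\}$, and the resulting diffeomorphisms converge to the identity in $\cC^\infty_{loc}(\cH_0\setminus\{0\})$ while $\omega_\epsilon$ pulls back to metrics $\cC^\infty_{loc}$-close to $\omega_0$. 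By the uniform $\cC^k_w$-estimates away from the origin recorded just before Proposition~\ref{sing.21}, together with the uniform weighted bounds \eqref{sing.21a}, the transported functions $u_{t,\epsilon_i}$ are bounded in $\cC^\infty$ on every compact subset of $\cH_0\setminus\{0\}$, uniformly in $i$. Exhausting $\cH_0\setminus\{0\}$ by compact sets and applying Arzel\`a--Ascoli with a diagonal argument, I extract a subsequence (not relabelled) converging in $\cC^\infty_{loc}(\cH_0\setminus\{0\})$ to a function $u_{t,0}$, smooth on $\cH_0\setminus\{0\}$, solving $(\omega_0+\sqrt{-1}\pa\db u_{t,0})^m= e^{tF}c_m\Omega_{\cH_0}\wedge\overline{\Omega}_{\cH_0}$ classically there, with $u_{t,0}|_{\cH_0\setminus(\cH_0\cap K)}\in x^{\widetilde\delta}\CI_{\nQb}(\cH_0\setminus(\cH_0\cap K))$ for every compact $K$ containing a neighbourhood of the origin (by letting $\epsilon\to0$ in \eqref{sing.21a}), and with $\|u_{t,0}\|_{L^\infty(\cH_0\setminus\{0\})}\le K$ from \eqref{sing.19a}.

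Next I would extend $u_{t,0}$ across the singular point and check the equation weakly. Since $\widehat\omega_{t,\epsilon}=\omega_\epsilon+\sqrt{-1}\pa\db u_{t,\epsilon}>0$, each $u_{t,\epsilon}$ is $\omega_\epsilon$-plurisubharmonic, and $\omega_\epsilon\to\omega_0$ locally, so $u_{t,0}$ is a bounded $\omega_0$-psh function on $\cH_0\setminus\{0\}$. As $\cH_0=\{z_0^k+\dots+z_m^k=0\}$ has an isolated singularity in $\bbC^{m+1}$ with $m=m_1+1\ge2$, it is normal, hence bounded psh functions on $\cH_0\setminus\{0\}$ extend uniquely to bounded $\omega_0$-psh functions on $\cH_0$; I keep the name $u_{t,0}$. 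By Bedford--Taylor theory on normal complex spaces, $(\omega_0+\sqrt{-1}\pa\db u_{t,0})^m$ is then a well-defined Radon measure on $\cH_0$, which coincides with $e^{tF}c_m\Omega_{\cH_0}\wedge\overline{\Omega}_{\cH_0}$ on $\cH_0\setminus\{0\}$ by the previous step; since the Monge--Amp\`ere measure of a bounded psh function puts no mass on the pluripolar set $\{0\}$ (using $m\ge2$) while $e^{tF}c_m\Omega_{\cH_0}\wedge\overline{\Omega}_{\cH_0}$ is a finite measure near the Calabi--Yau cone apex, the two agree as measures on all of $\cH_0$, so $u_{t,0}$ solves \eqref{sing.15} for $\epsilon=0$ in the sense of Bedford and Taylor.

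To obtain $u_{t,0}\in\cC^0(\cH_0)$, it remains to prove continuity at the origin, and here I would use that $g_{\cH_0}$ is a Calabi--Yau cone: near its apex $c_m\Omega_{\cH_0}\wedge\overline{\Omega}_{\cH_0}$ is comparable to the model cone volume and satisfies the capacity/$L^p$-type domination needed for Ko\l{}odziej's $\cC^0$-estimates, exactly as exploited in \cite{CGT} and \cite{Hein-Sun}; this gives a modulus of continuity for $u_{t,\epsilon}$ near the origin uniform in $t\in[0,1]$ and $\epsilon\in(0,1]$, so the convergence $u_{t,\epsilon_i}\to u_{t,0}$ is in fact $\cC^0_{loc}$ near the origin and $u_{t,0}$ is continuous there. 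For uniqueness, if $u,u'$ both have the stated properties, then both are bounded $\omega_0$-psh, agree in $x^{\widetilde\delta}\CI_{\nQb}$ near infinity, and solve the same equation; for $\eta>0$ the decay at infinity gives $|u-u'|<\eta$ outside a large compact set, and the Bedford--Taylor comparison principle on a large ball minus a small ball around the origin (the origin being removable for bounded psh functions) forces $u-u'<\eta$ everywhere, hence $u\equiv u'$, as in \cite{Hein-Sun}. In particular the subsequential limit does not depend on the choice of $\{\epsilon_i\}$, so $u_{t,\epsilon}\to u_{t,0}$ as $\epsilon\searrow0$.

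The main obstacle is the analysis at the singular point: showing the $\cC^\infty_{loc}(\cH_0\setminus\{0\})$ limit extends to a bounded, continuous $\omega_0$-psh solution in the Bedford--Taylor sense across $\{0\}$, with no concentration of Monge--Amp\`ere mass there as $\epsilon\searrow0$. This is exactly where the uniform $L^\infty$ and Sobolev estimates of Lemma~\ref{sing.19} and \eqref{sing.18}, the Ko\l{}odziej-type estimates near the Calabi--Yau cone apex, and the comparison/uniqueness arguments of \cite{Hein-Sun} (with the refinements of \cite{CGT} and \cite{Sun-Zhang}) are indispensable; by contrast, the convergence away from the origin and the behaviour at infinity follow routinely from the uniform estimates already established in Lemma~\ref{sing.19} and Proposition~\ref{sing.21}.
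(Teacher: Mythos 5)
Your construction follows the same skeleton as the paper's proof: take the unique solutions $u_{t,\epsilon}$ from Theorem~\ref{mae.4}, use the uniform estimates of Lemma~\ref{sing.19} and Proposition~\ref{sing.21} to extract a subsequence converging in $x^{\widetilde{\delta}}\CI_{\nQb}$ near infinity and smoothly locally on $\cH_0\setminus\{0\}$, observe that the limit solves \eqref{sing.15} with $\epsilon=0$ away from the origin and is bounded by \eqref{sing.19a}, then extend across the singular point and prove uniqueness. Where you diverge is in the two delicate steps at the origin. For the extension and continuity, the paper simply applies Demailly's results \cite{Demailly} to the bounded limit on the fixed space $\cH_0$, obtaining in one stroke a continuous bounded extension solving the equation in the Bedford--Taylor sense; you instead re-derive the psh extension via normality and the fact that Monge--Amp\`ere measures of bounded psh functions do not charge pluripolar sets (fine), but for continuity at the apex you assert a Ko{\l}odziej-type modulus of continuity for $u_{t,\epsilon}$ \emph{uniform in $\epsilon\in(0,1]$}. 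That uniform claim is stronger than what is needed and is not justified: near the origin the hypersurfaces $\cH_\epsilon$ change topology and the background metrics $\omega_\epsilon$ degenerate, so a modulus of continuity uniform across the family is essentially part of the hard non-degeneration analysis, not an off-the-shelf consequence of the $L^q$ bound \eqref{sing.17}. The gap is repairable, however, by arguing on the fixed limit space only (continuity of the bounded psh Bedford--Taylor solution at an isolated normal singularity with the given density), which is exactly what the citation to \cite{Demailly} accomplishes in the paper.

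For uniqueness, the paper adapts the integration-by-parts (energy) argument of \cite[Proposition~4.17]{CGT}, using \cite[Lemma~3.2]{Bedford1982} to justify integration by parts for currents near the origin, whereas you propose a comparison-principle argument combined with the decay at infinity. This route can be made to work, but as written it is too quick: since the two candidate solutions have \emph{equal} Monge--Amp\`ere measures, the Bedford--Taylor comparison inequality alone does not force $\{u<u'\}$ to be empty; you need the standard perturbation trick (compare $u$ with $(1-\delta)u'+\delta\rho$ for a strictly $\omega_0$-plurisubharmonic $\rho$, available since $\cH_0$ is affine) and some care in applying pluripotential theory on the normal space or, equivalently, in discarding the origin as a pluripolar set. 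With those standard repairs both your continuity and uniqueness steps go through, so the proposal is essentially sound, but the paper's citations to Demailly and to the CGT energy argument are doing real work that your sketch currently papers over.
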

\begin{proof}
For $\epsilon>0$, there is by Theorem~\ref{mae.4} a unique solution $u_{t,\epsilon}\in x^{\widetilde{\delta}}\CI_{\nQb}(\cH_{\epsilon})$ to the complex Monge-Amp\`ere equation \eqref{sing.15}.  By the uniform estimates of Proposition~\ref{sing.21}, as $\epsilon\searrow 0$, we can extract a decreasing sequence $\{\epsilon_i\}\subset (0,1]$ with $\epsilon_i\searrow 0$ such that $u_{t,\epsilon_i}\to u_{t,0}\in x^{\widetilde{\delta}}\CI_{\nQb}(\overline{\cH_0\setminus (\cH_0\cap K)})$ uniformly.  Taking a subsequence, we can in fact assume as well that $u_{t,0}$ is defined on $\cH_0\setminus \{0\}$ and is smooth with $u_{t,\epsilon_i} \to u_{t,0}$ locally uniformly in the smooth topology.  In particular, $u_{t,0}$ is a solution to the complex Monge-Amp\`ere equation \eqref{sing.15} with $\epsilon=0$ on $\cH_0\setminus \{0\}$.  By the uniform upper bound \eqref{sing.19a}, $|u_{t,0}|$ is bounded on $\cH_0\setminus \{0\}$.  Hence, by \cite[Theorem~1.7 and Theorem~2.2]{Demailly}, $u_{t,0}$ uniquely extends to a continuous bounded function on $\cH_0$ which is a solution to \eqref{sing.15} with $\epsilon=0$ in the sense of Bedford and Taylor \cite{Bedford-Taylor}.  To show that such a solution $u_{t,0}$ is unique, we can proceed as in the proof of \cite[Proposition~4.17]{CGT}, but using \cite[Lemma~3.2]{Bedford1982} to justify integration by parts in the sense of currents near the origin.     
\end{proof}

To describe the expected behavior of the metrics of Theorem~\ref{sing.22} near the origin, we need to introduce some notations and definitions.  First, when $t=0$, $\widehat{\omega}_{0,0}= \widetilde{\omega}_0$ coincides with the Calabi-Yau metric $\omega_{\cH_0}$ in some open neighborhood of the origin.  If $\cU\subset \cH_0$ is an open neighborhood of the origin and $E\to \cU\setminus\{0\}$ is a Euclidean vector bundle, consider for $\ell\in \bbN_0$ the space
$$
          \cC_b^{\ell}(\cU\setminus \{0\};E):= \cC^{\ell}_{\frac{g_{\cH_0}}{r_{\cH_0}^2}}(\cU\setminus\{0\};E)
$$
and for $\alpha\in (0,1)$ the corresponding H\"older space
$$
   \cC_b^{\ell,\alpha}(\cU\setminus \{0\};E):= \cC^{\ell,\alpha}_{\frac{g_{\cH_0}}{r_{\cH_0}^2}}(\cU\setminus\{0\};E),
$$
where $r_{\cH_0}$ is the distance function introduced in \eqref{sing.10b}.  Let also $\CI_b(\cU\setminus\{0\};E)= \bigcap_{\ell\in\bbN_0}\cC^{\ell}_b(\cU\setminus\{0\};E)$ be the corresponding Fr\'echet space.  
\begin{definition}
A K\"ahler metric $g$ on $\cH_0\setminus\{0\}$ is \textbf{asymptotic with rate} $\lambda>0$ to $g_{\cH_0}$ at the origin if there exists  open neighborhoods $\cU$ and $\cV$ of the origin in $\cH_0$ and a biholomorphism 
$\Phi: \cU\to \cV$ such that
\begin{equation}
   \Phi^*g-g_{\cH_0}\in r_{\cH_0}^{\lambda}\CI_b(\cU\setminus\{0\}; S^2(T^*\cU))
\label{sing.23a}\end{equation}
with Euclidean metric on the bundle $S^2(T^*\cU)$ induced by $g_{\cH_0}$.  We will also say that $g$ has a \textbf{conical singularity asymptotically modelled on} $g_{\cH_0}$ at the origin if $g$ is asymptotic with rate $\lambda$ to $g_{\cH_0}$ for some $\lambda>0$.  
\label{sing.23}\end{definition} 

We will be interested in the case where such a metric $g$ corresponds to an $\mathfrak{n}$-warped $\QAC$-metric on $\cH_0\setminus K$ for some compact set $K\subset \cH_0$ containing an open neighborhood of the origin.  In fact, our goal will be to prove the following theorem.
\begin{theorem}
The Calabi-Yau metric $\widehat{\omega}_{0,0}= \omega_{0}+\sqrt{-1}\pa\db u_{0,0}$ given by the unique solution $u_{0,0}$ of Theorem~\ref{sing.22} to the complex Monge-Amp\`ere-equation \eqref{sing.15} with $\epsilon=t=0$ has a conical singularity asymptotically modelled on $g_{\cH_0}$ at the origin.
\label{sing.37}\end{theorem}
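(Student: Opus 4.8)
The plan is to apply the continuity method of Hein--Sun \cite{Hein-Sun} to the family \eqref{sing.14} with $\epsilon=0$, starting from $t=1$ (where $\widehat{\omega}_{1,0}=\widetilde{\omega}_0$ agrees with $g_{\cH_0}$ near the origin by construction, hence trivially has the desired conical singularity) and decreasing $t$ to $0$. Let
$$
   T:=\{t\in [0,1] \; | \; \widehat{\omega}_{t,0}=\omega_0+\sqrt{-1}\pa\db u_{t,0} \; \mbox{is asymptotic with some rate} \; \lambda>0 \; \mbox{to} \; g_{\cH_0} \; \mbox{at the origin}\},
$$
where $u_{t,0}$ is the solution of Theorem~\ref{sing.22}. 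Then $1\in T$, so it suffices to show $T$ is open and closed in $[0,1]$, which gives $0\in T$ and hence the theorem. Throughout, the behavior at infinity is already under control: by Theorem~\ref{sing.22}, $u_{t,0}|_{\cH_0\setminus(\cH_0\cap K)}\in x^{\widetilde\delta}\CI_{\nQb}(\cH_0\setminus(\cH_0\cap K))$, so $\widehat\omega_{t,0}$ is a genuine Calabi-Yau (for $t=0$) or merely K\"ahler $\mathfrak{n}$-warped $\QAC$-metric asymptotic to $g_{C_0}$ at infinity, and $F$ is compactly supported away from the origin.

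For \textbf{openness}, suppose $t_0\in T$. The linearization of \eqref{sing.14} at $u_{t_0,0}$ is the Laplacian $\Delta_{\widehat\omega_{t_0,0}}$ of the Calabi-Yau-type metric $\widehat g_{t_0,0}$, which near the origin is asymptotic to the cone $g_{\cH_0}$ and at infinity is an $\mathfrak{n}$-warped $\QAC$-metric. I would combine two ingredients: the $\mathfrak{n}$-warped $\QAC$-mapping theory of Corollary~\ref{w.53} (governing the behavior at infinity) with a conical-singularity Fredholm/isomorphism statement at the origin in weighted H\"older spaces $r_{\cH_0}^\beta\cC^{k,\alpha}_b$ as in Hein--Sun. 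The paper explicitly promises such an isomorphism: this is \textbf{Theorem~\ref{sing.35}} (``a suitable isomorphism provided by Theorem~\ref{sing.35}''). Granting that $\Delta_{\widehat\omega_{t_0,0}}$ is an isomorphism on the appropriate scale of weighted spaces adapted to the model $g_{\cH_0}$ at $0$ and to the $\mathfrak{n}$-warped $\QAC$ structure at infinity, the implicit function theorem produces, for $t$ near $t_0$, a solution $u_{t,0}$ lying in the same weighted space, hence with a conical singularity asymptotically modelled on $g_{\cH_0}$; by uniqueness in Theorem~\ref{sing.22} this coincides with the solution already constructed. So $T$ is open.

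For \textbf{closedness}, suppose $t_i\to t_\infty$ with $t_i\in T$. The uniform estimates are already available away from the origin (Proposition~\ref{sing.21} and the uniform $L^\infty$-bound \eqref{sing.19a}), so the only issue is to upgrade convergence near the origin to the ``asymptotic with rate $\lambda$'' conclusion, uniformly down the sequence. Here I would adapt the a priori estimates of Collins--Guo--Tong \cite{CGT} and Sun--Zhang \cite{Sun-Zhang}: using the uniform upper bound on the bisectional curvature of $\omega_0$ near the origin (Lemma~\ref{sing.5}), the Chern--Lu inequality as in \eqref{sing.20}, and the uniform Sobolev inequality \eqref{sing.18}, one obtains on a fixed punctured neighborhood $\cU\setminus\{0\}$ a uniform metric equivalence $\widehat g_{t,0}\asymp g_{\cH_0}$ together with higher-order bounds in the $\cC^k_b$-norms, and then a two-sided polynomial rate of approach. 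The limiting $u_{t_\infty,0}$ then inherits \eqref{sing.23a} for some $\lambda>0$, possibly after passing to a subsequence, and by uniqueness equals the solution of Theorem~\ref{sing.22}; so $t_\infty\in T$ and $T$ is closed. Applying the conclusion at $t=0$, and noting $\widehat\omega_{0,0}$ is genuinely Calabi-Yau, gives Theorem~\ref{sing.37}.

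\textbf{Main obstacle.} The delicate point is the openness step, specifically the precise isomorphism statement for $\Delta_{\widehat\omega_{t_0,0}}$ that simultaneously respects the conical model $g_{\cH_0}$ at the origin and the $\mathfrak{n}$-warped $\QAC$ asymptotics at infinity: one must choose a weight $\lambda$ (at the origin) avoiding the indicial roots of the cone Laplacian of $g_{\cH_0}$, check that no $L^2$-kernel or cokernel arises in the chosen range, and reconcile this with the weight constraints of Corollary~\ref{w.53}. This is precisely the content I would attribute to Theorem~\ref{sing.35}. The closedness step is comparatively routine given \cite{CGT} and \cite{Sun-Zhang}, but the bookkeeping needed to make the rate $\lambda$ uniform along $t_i\to t_\infty$ (rather than merely subsequential) requires care.
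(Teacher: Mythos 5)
Your openness step is essentially the paper's argument: run the Hein--Sun continuity method in $t$ with their isomorphism theorem replaced by Theorem~\ref{sing.35}, which indeed splices Corollary~\ref{w.53} at infinity with the $b$-theory at the conical point. One imprecision worth flagging: the content of Theorem~\ref{sing.35} is \emph{not} that one can choose a weight at the origin avoiding indicial roots so that ``no kernel or cokernel arises''. To conclude a conical singularity one must work at a weight $a>2$ (so that $\sqrt{-1}\pa\db u$ decays relative to $g_{\cH_0}$), and in passing from weights below $0$ to weights above $2$ one necessarily crosses the critical weights in $[0,2]$; surjectivity then fails and is restored only by enlarging the domain by the finite-dimensional space $\Phi^*(\psi\,\cP_{[0,2]})$ of constants, pluriharmonic pieces and degree-two harmonic pieces. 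That enlargement (and the fact that its elements are pluriharmonic or nearly so, cf. \eqref{sing.36}) is exactly what makes the implicit function theorem compatible with the definition of ``asymptotic with rate $\lambda$''; without it the linearization is not surjective on the spaces you need.

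The genuine gap is in your closedness step. You claim that Lemma~\ref{sing.5}, the Chern--Lu inequality \eqref{sing.20} and the uniform Sobolev inequality \eqref{sing.18} yield, on a fixed punctured neighborhood of the origin, a uniform equivalence $\widehat{g}_{t,0}\asymp g_{\cH_0}$ with higher $\cC^k_b$ bounds and a two-sided polynomial rate. Those estimates compare $\widehat\omega_{t,\epsilon}$ with the \emph{reference} metric $\omega_\epsilon$, which near the origin is the restricted Euclidean metric and differs from $g_{\cH_0}$ by the unbounded conformal-type factor recorded in \eqref{sing.10b}--\eqref{sing.11}; moreover the paper itself notes that the upper bound in \eqref{sing.19b} degenerates at the origin because $e^{f_\epsilon}\asymp r_E^{-2(k-1)}$ there. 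So this machinery gives neither a two-sided bound against $g_{\cH_0}$ nor any rate; identifying the tangent cone of $\widehat\omega_{t,0}$ at $0$ as $(\cH_0,g_{\cH_0})$ with polynomial convergence is precisely the hard part, and being in $T$ for $t_i$ gives no uniform rate to pass to the limit -- this is not ``bookkeeping''. The paper's closedness argument is structurally different and uses the smoothing family in an essential way: first a uniform diameter bound for $\widehat\omega_{t,\epsilon}$ on $K\cap\cH_\epsilon$, obtained for $t=0$ by Ricci-flat volume comparison as in \eqref{sing.38} and for $t>0$ by the contradiction argument of \cite[Proposition~3.2]{Hein-Sun}, which itself requires Lemma~\ref{sing.40} on subsequential Gromov--Hausdorff limits; then the identification of the metric completion of $(\cH_0\setminus\{0\},\widehat\omega_{t,0})$ as a noncollapsed pointed Gromov--Hausdorff limit of $(\cH_\epsilon,\widehat\omega_{t,\epsilon})$ as $\epsilon\searrow 0$, so that Cheeger--Colding theory and the analysis of \cite[\S~3.2]{Hein-Sun} (with the rate supplied by \cite[\S~3.3]{Hein-Sun} or \cite[Corollary~4.3]{Chiu-Szekelyhidi}) can be applied near the origin, where the metrics are genuinely Calabi--Yau since $F$ vanishes there. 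Your proposal contains neither the diameter bound nor the limit-space analysis, and these cannot be replaced by the $\epsilon$-uniform potential estimates of Lemma~\ref{sing.19} and Proposition~\ref{sing.21}.
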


To prove this theorem, we will apply the continuity method of Hein-Sun \cite{Hein-Sun} to the family of solutions of Theorem~\ref{sing.22}.  Thus, let $\widehat{\omega}_{t,0}=\omega_0+\sqrt{-1}\pa\db u_{t,0}$ be the family of K\"ahler forms of Theorem~\ref{sing.22} and consider the subset $T$ of $[0,1]$ consisting of all $t\in [0,1]$ such that $\widehat{\omega}_{t,0}$ has a conical singularity at the origin asymptotically modelled on $g_{\cH_0}$.  Since the solutions in Theorem~\ref{sing.22} are unique, we know that $\widehat{\omega}_{1,0}=\widetilde{\omega}_0$, so when $t=1$, $\widehat{\omega}_{1,0}$ agrees with $\omega_{\cH_0}$, showing that $1\in T$.  To prove Theorem~\ref{sing.37}, it suffices then to show that $T$ is both open and closed in $[0,1]$.  

\subsection{Openness}

To show that the subset $T$ is open, the main ingredient missing to proceed as in \cite[\S~2.3]{Hein-Sun} is a version of the isomorphism theorem \cite[Theorem~2.11]{Hein-Sun} suitably adapted to our setting.  The idea will be to combine \cite[Theorem~2.11]{Hein-Sun} with Corollary~\ref{w.53}, but this will require a few steps.

Thus, let $g$ be a K\"ahler $\mathfrak{n}$-warped $\QAC$-metric on $\cH_0\setminus \{0\}$ with respect to the compactification $\hX$ and the $\mathfrak{n}$-weighted total boundary defining function $\rho^{-1}=(x_1x_2)^{\frac{1}{1-\nu}}$ and suppose that it is asymptotic at rate $\lambda>0$ to $g_{\cH_0}$ at the origin. Let $\Phi: \cU\to \cV$ be the biholomorphism such that \eqref{sing.23a} holds.  Let $\widetilde{r}\in \CI(\cH_0\setminus\{0\})$ be a positive function equal to a constant outside a compact set containing an open neighborhood of the origin and such that $\Phi^*\widetilde{r}= r_{\cH_0}$ in $\cU\setminus\{0\}$. Consider the metric 
\begin{equation}
    g_{b-\nQb}= \frac{\chi^2}{\widetilde{r}^2}g,
\label{sing.24}\end{equation}
where $\chi= \frac{x_2}{\rho^{\mathfrak{n}}}=\frac{x_2}{(x_1x_2)^{-\frac{\nu}{1-\nu}}}=x_1^{\frac{\nu}{1-\nu}}x_2^{\frac{1}{1-\nu}}=(\rho w)^{-1}$ is the function introduced in \eqref{hol.2b}.  Since $\widetilde{r}$ is constant outside a compact set, this is an $\nQb$-metric outside a compact set.  Similarly, since we assume that  the boundary defining functions  $x_1$ and $x_2$ are constant near the origin, we see that in this region, it is asymptotically cylindrical, a particular case of $b$-metric in the sense of Melrose \cite{MelroseAPS}.  For these reasons, we will say that $g_{b-\nQb}$ is a $b-\nQb$-metric.  Moreover, if $\Delta_g= g^{ij}\nabla_i\nabla_j$ is the scalar Laplacian of $g$, then $\widetilde{r}^2\Delta_g$ is near the origin a $b$-operator of order $2$ which is asymptotically identified with 
\begin{equation}
   r^2_{\cH_0}\Delta_{g_{\cH_0}}= \lrp{ r_{\cH_0}\frac{\pa}{\pa r_{\cH_0}}  }^2+ (2m-2)r_{\cH_0}\frac{\pa}{\pa r_{\cH_0}}+ \Delta_{\kappa},
\label{sing.25}\end{equation}
where $\Delta_{\kappa}$ is the scalar Laplacian of the metric $\kappa$ on the link of the cone $(\cH_0,g_{\cH_0})$, that is, such that
\begin{equation}
 g_{\cH_0}= dr^2_{\cH_0}+ r_{\cH_0}^2\kappa.
\label{sing.26}\end{equation}
Taking the Mellin transform, the indicial family of \eqref{sing.25} is 
\begin{equation}
   I(r^2_{\cH_0}\Delta_{g_{\cH_0}},\lambda)=  \left. \lrp{r_{\cH_0}^{-\lambda}(r^2_{\cH_0} \Delta_{g_{\cH_0}})r_{\cH_0}^{\lambda}} \right|_{r_{\cH_0}=0}= \lambda^2+ (2m-2)\lambda + \Delta_{\kappa}.
\label{sing.27}\end{equation}
If $0=-\lambda_0\ge -\lambda_1\ge \cdots \ge -\lambda_i\ge\cdots$ are the eigenvalues of $\Delta_{\kappa}$, then the indicial roots of $I(r^2_{\cH_0}\Delta_{g_{\cH_0}},\lambda)$, that is, the values of $\lambda$ for which $I(r^2_{\cH_0},\Delta_{g_{\cH_0}},\lambda)$ is not invertible, are given by 
\begin{equation}
   \cI_b(r^{2}_{\cH_0}\Delta_{g_{\cH_0}})= \{ -(m-1)\pm \sqrt{(m-1)^2+\mu} \; | \; -\mu\in \Spec(\Delta_{\kappa})\}.
\label{sing.28}\end{equation}
They are all real, so this set also corresponds to the set $\Crit_b(r^{2}_{\cH_0}\Delta_{g_{\cH_0}})$ of critical weights of the indicial family, which is the set of real parts of indicial roots. 

In particular, for $\mu=\lambda_0=0$, this shows that $-2(m-1)$ and $0$ are critical weights and that there are no critical weights in the interval $(-2(m-1),0)$.  For $\ell\in\bbN_0$, let us consider the Banach space
$$
    \cC^{\ell}_{b-\nQb}(\cH_0\setminus \{0\}):= \cC^{\ell}_{g_{b-\nQb}}(\cH_0\setminus\{0\})
$$
and for $\alpha\in (0,1)$ the associated  H\"older space
$$
    \cC^{\ell,\alpha}_{b-\nQb}(\cH_0\setminus \{0\}):= \cC^{\ell,\alpha}_{g_{b-\nQb}}(\cH_0\setminus\{0\}).
$$
Let also $\CI_{b-\nQb}(\cH_0\setminus\{0\})= \bigcap_{\ell\in\bbN_0}\cC^{\ell}_{b-\nQb}(\cH_0\setminus\{0\})$ be the corresponding Fr\'echet space.  Although we are ultimately mostly interested in mapping properties of the Laplacian on these H\"older spaces, it will also be useful to consider the corresponding $L^2$-Sobolev space of order $\ell\in\bbN_0$, 
$$
        H^{\ell}_{b-\nQb}(\cH_0\setminus\{0\})= H^{\ell}_{g_{b-\nQb}}(\cH_0\setminus\{0\}),
$$
associated to the metric $g_{b-\nQb}$, and by analogy with \eqref{Sob.1}, to consider the weighted version
$$
      H^{\ell}_{b-w}(\cH_0\setminus\{0\}):= \chi^m H^{\ell}_{b-\nQb}(\cH_0\setminus\{0\}).
$$
Using standard parametrices for the Laplacian on a manifold with conical singularities and Corollary~\ref{w.53}, we obtain the following isomorphism theorem for $\Delta_g$.  
\begin{theorem}
Let $g$ be a K\"ahler $\mathfrak{n}$-warped $\QAC$-metric on $\cH_0\setminus\{0\}$ asymptotic with rate $\lambda>0$ to $g_{\cH_0}$ at the origin.  Then for all $\ell\in\bbN_0$, the mapping
\begin{equation}
   \Delta_g: \widetilde{r}^a x^{\delta+\mathfrak{w}}\rho^{\nu m}H^{\ell+2}_{b-w}(\cH_0\setminus\{0\}) \to \chi^2 \widetilde{r}^{a-2} x^{\delta+\mathfrak{w}}\rho^{\nu m}H^{\ell}_{b-w}(\cH_0\setminus\{0\})
\label{sing.29a}\end{equation}
is an isomorphism provided the weights $a\in \bbR$ and $\delta=(\delta_1,\delta_2)\in \bbR^2$ are such that
\begin{equation}
    2-2m<a<0, \quad 0<\delta_2< \frac{2m-2}{1-\nu} \quad \mbox{and} \quad \delta_2+4-2m< \delta_1<\delta_2,
\label{sing.29b}\end{equation}
where $\nu=\frac{m-k}{m-1}$ is specified by Examples~\ref{int.2} and \ref{int.2b}.
\label{sing.29}\end{theorem}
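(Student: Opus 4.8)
The plan is to show that the map \eqref{sing.29a} is Fredholm of index zero and injective; the isomorphism statement then follows immediately.  The point is that the weight conditions \eqref{sing.29b} are exactly those making both localizations of $\Delta_g$ invertible in the relevant ranges: near the origin, where $g$ is asymptotic with rate $\lambda>0$ to the Calabi-Yau cone metric $g_{\cH_0}$, so that $\widetilde{r}^{2}\Delta_g$ is a $b$-elliptic operator asymptotic to $r_{\cH_0}^{2}\Delta_{g_{\cH_0}}$, the hypothesis $2-2m<a<0$ places $a$ in the interval $(-2(m-1),0)$, which by \eqref{sing.28} contains no element of $\Crit_b(r_{\cH_0}^{2}\Delta_{g_{\cH_0}})$; near infinity, where $g$ is a genuine $\mathfrak{n}$-warped $\QAC$-metric on a $\QAC$-manifold with fibered corners of real dimension $2m$ whose unique non-maximal boundary hypersurface $\hH_1$ has fibers of real dimension $2(m-1)$, the conditions imposed on $(\delta_1,\delta_2)$ are precisely those of Corollary~\ref{w.53} applied with $\dim M=2m$.

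First I would construct a parametrix for \eqref{sing.29a} by patching.  Conjugating by the biholomorphism $\Phi$ of \eqref{sing.23a}, the operator $\widetilde{r}^{2}\Delta_g$ differs from $r_{\cH_0}^{2}\Delta_{g_{\cH_0}}$ by a $b$-differential operator whose coefficients vanish to order $\widetilde{r}^{\lambda}$, hence by a relatively compact perturbation between the weighted $b$-Sobolev spaces entering \eqref{sing.29a}; the standard $b$-calculus of \cite{MelroseAPS}, equivalently the conical analysis of \cite{Hein-Sun}, then produces a two-sided parametrix modulo compact operators on a neighborhood of the origin.  On the complement of a neighborhood of the origin, Corollary~\ref{w.53} in its $L^{2}$-Sobolev form (applied with $\dim M=2m$) provides a two-sided inverse for $\Delta_g$ modulo operators supported in a fixed compact set.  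Gluing these with a partition of unity yields $P$ with $\Delta_g P-\Id$ and $P\Delta_g-\Id$ compact on the spaces of \eqref{sing.29a}, so that map is Fredholm with index independent of $\ell$.  Since both building blocks are invertible within the allowed weight ranges, the index equals that of the model conical Laplacian $r_{\cH_0}^{2}\Delta_{g_{\cH_0}}$ acting between the matching weighted $b$-Sobolev spaces, and this index is zero: the window $(-2(m-1),0)$ is symmetric about the formally self-adjoint weight $-(m-1)$ and meets no critical weight, and for $a$ in it only the exponents $0$ and $-2(m-1)$ coming from $\mu=0$ in \eqref{sing.28} could contribute to the kernel or cokernel, with neither the constant nor $r_{\cH_0}^{-2(m-1)}$ lying in the relevant space.

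It then remains to prove injectivity.  Given $\Delta_g u=0$ with $u$ in the domain of \eqref{sing.29a}, elliptic regularity makes $u$ smooth on $\cH_0\setminus\{0\}$, polyhomogeneous at $\hH_1$ and $\hH_2$, and --- by the $b$-calculus together with the absence of critical weights in $(-2(m-1),0)$ --- polyhomogeneous at the origin with $u=c+\mathcal{O}(\widetilde{r}^{\epsilon_0})$ for a constant $c$ and some $\epsilon_0>0$, while $\delta_2>0$ forces $u\to0$ at infinity.  These rates kill the boundary contributions in $\int_{\cH_0\setminus\{0\}}u\,\Delta_g u\,dV_g$, both at the origin (where $dV_g\asymp r_{\cH_0}^{2m-1}\,dr_{\cH_0}\,d\kappa$ and $|\nabla u|_g=\mathcal{O}(r_{\cH_0}^{\epsilon_0-1})$) and at infinity, so $\int|\nabla u|^{2}_g\,dV_g=0$; hence $u$ is constant and then $u\equiv0$ by the decay at infinity.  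Combined with the index computation, this gives that \eqref{sing.29a} is an isomorphism.  Alternatively, surjectivity can be obtained by running the same argument for the $L^{2}(dV_g)$-formal adjoint of \eqref{sing.29a}, whose domain weights again lie in the range \eqref{sing.29b} by the symmetry of those conditions.

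I expect the main obstacle to be the bookkeeping at the interface between the two calculi: one has to check simultaneously that \eqref{sing.29b} is the correct admissible window for the $b$-parametrix at the conical point (no critical weight, vanishing index) and for the isomorphism of Corollary~\ref{w.53} at infinity, verify that the rate-$\lambda$ deviation of $g$ from $g_{\cH_0}$ is genuinely relatively compact in the weighted $b$-Sobolev topology, and establish enough polyhomogeneity at the origin to justify the conical boundary terms in the integration by parts.
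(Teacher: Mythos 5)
Your overall architecture (Fredholm by patching a $b$-calculus parametrix at the conical point with Corollary~\ref{w.53} at infinity, then an energy argument, then duality) is the same as the paper's, and the Fredholm step is fine. However, two of your key steps have genuine gaps. First, the claim that the index of \eqref{sing.29a} ``equals that of the model conical Laplacian $r_{\cH_0}^2\Delta_{g_{\cH_0}}$'' is not justified: the index of the global operator depends on the weights and geometry at infinity as well as at the cone point, and no excision or relative-index argument is supplied that would reduce it to the model cone. The paper never computes the index this way; it establishes vanishing of kernel and cokernel by combining (i) injectivity at the top of the weight window, (ii) surjectivity at the bottom of the window via formal self-adjointness of $\Delta_g$ together with the observation that \eqref{sing.29b} is invariant under the $L^2(dV_g)$-duality $a\mapsto 2-2m-a$, $\delta\mapsto\delta^*$, and (iii) monotonicity of $\dim\ker$ and $\dim\operatorname{coker}$ in the weights together with constancy of the index across the critical-weight-free window. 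Your ``alternative'' surjectivity-via-adjoint remark is essentially ingredient (ii), but as stated it still leans on your injectivity argument at an arbitrary point of the window, which is the second problem.

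Second, your injectivity argument is run at an arbitrary weight in \eqref{sing.29b}, and there the integration by parts at infinity is not justified. The condition $\delta_2+4-2m<\delta_1<\delta_2$ allows $\delta_1<0$, so an element of the domain need not tend to zero at infinity: it may grow at the non-maximal boundary hypersurface $\hH_1$, the flux terms along an exhaustion need not vanish, and $\int|\nabla u|^2_g\,dV_g$ is not a priori finite. (Your appeal to polyhomogeneity of $u$ at $\hH_1$ and $\hH_2$ is also not available, since $g$ is only $\CI_{\nQb}$ at infinity.) The expansion $u=c+\mathcal{O}(\widetilde r^{\epsilon_0})$ at the origin, using that \eqref{sing.28} has no roots in $(2-2m,0)$, is correct and handles the conical end, but the end at infinity is the real issue. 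The paper avoids it by performing the energy identity only at the specific $L^2$-weight $a=2-m$, $\delta_1=\frac{\nu(m-1)+1}{1-\nu}$, $\delta_2=\frac{m}{1-\nu}$, where the domain embeds in $H^2_g(\cH_0\setminus\{0\})$ and the identity $\int u\,\Delta_g u\,dg=-\int|du|^2_g\,dg$ follows by density of compactly supported functions; injectivity for all larger weights is then immediate, surjectivity for the dual (smaller) weights follows by self-adjointness and elliptic regularity, and the monotonicity-plus-constant-index argument transports both conclusions to every $(a,\delta)$ satisfying \eqref{sing.29b}. To repair your proof, replace the model-cone index claim and the general-weight energy argument by this scheme.
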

\begin{proof}
By Corollary~\ref{w.53}, we know how to invert $\Delta_g$ up to a compact operator near infinity.  By standard results \cite{MelroseAPS}, see for instance \cite[Theorem~A.3]{CR2021}, we also know how to invert $\Delta_g$ up to a compact error near the origin provided $a$ is not a critical weight of the indicial family \eqref{sing.27}.  Using the fact that $\Delta_g$ is elliptic, these parametrices can be combined to see that the map \eqref{sing.29a} is Fredholm for $a$ and $\delta$ as in \eqref{sing.29b}.  To see that those maps are in fact isomorphisms, let us first consider the map \eqref{sing.29a} with $a=-m+2$, $\delta_1=\frac{\nu(m-1)+1}{1-\nu}$ and $\delta_2=\frac{m}{1-\nu}$, in which case this corresponds to the map
\begin{equation}
    \Delta_g: \widetilde{r}^{2}H^{\ell+2}_{c-w}(\cH_0\setminus \{0\})\to \chi^2H^{\ell}_{c-w}(\cH_0\setminus\{0\}),
\label{sing.30}\end{equation}
where 
$$
    H^{\ell}_{c-w}(\cH_0\setminus\{0\}):= \lrp{\frac{\chi}{\widetilde{r}}}^m H^{\ell}_{b-\nQb}(\cH_0\setminus\{0\})
$$
is such that $H^{0}_{c-w}(\cH_0\setminus \{0\})= L^2(\cH_0\setminus \{0\},g)$ is the $L^2$-space associated to the metric $g$.  Since there is a continuous inclusion
$$
              \widetilde{r}^{2}H^2_{c-w}(\cH_0\setminus)\subset H^2_{g}(\cH_0\setminus\{0\}),
$$ 
this means that
\begin{equation}
  \int_{\cH_0} u \Delta_g u dg= -\int_{\cH_0}|du|_g^2dg
\label{sing.31}\end{equation}
for $u\in \widetilde{r}^{2}H^2_{c-w}(\cH_0\setminus\{0\})$.  Indeed, the equality \eqref{sing.31} follows by integration by parts when $u$ is smooth of compact support on $\cH_0\setminus\{0\}$, so it holds more generally for $u\in \widetilde{r}^{2}H^2_{c-w}(\cH_0\setminus)$ via approximation by smooth compactly supported functions.  Hence, if for such $u$, $\Delta_g u=0$, this implies that $u$ must be constant.  But since $u\in \widetilde{r}^2\cH^2_{c-w}(\cH_0\setminus\{0\})$, $u$ must also decay at infinity so this implies that $u=0$.  In other words, the kernel of \eqref{sing.29a} is trivial when $a\ge 2-m$, $\delta_1\ge \frac{\nu(m-1)+1}{1-\nu}$ and $\delta_2\ge \frac{m}{1-\nu}$. 

On the other hand, with respect to the $L^2$-inner product induced by the metric $g$, the dual of 
$$\chi^2 \widetilde{r}^{a-2} x^{\delta+\mathfrak{w}}\rho^{\nu m}L^2_{b-w}(\cH_0\setminus\{0\})
$$ 
is the space 
$$\widetilde{r}^{a^*} x^{\delta^*+\mathfrak{w}}\rho^{\nu m}L^{2}_{b-w}(\cH_0\setminus\{0\})
$$ 
with $a^*= 2-2m-a$ and $\delta^*=(\delta_1^*,\delta_2^*)$ with
$$
    \delta_2^*= \frac{2m-2}{1-\nu}-\delta_2 \quad \mbox{and} \quad  \delta_1^*= 2\lrp{ \frac{\nu(m-2)+1}{1-\nu}  }-\delta_1. 
$$
A simple computation shows that \eqref{sing.29b} holds for $a$ and $\delta$ if and only if it holds for $a^*$ and $\delta^*$.  Since $\Delta_g$ is formally self-adjoint with respect to the $L^2$-inner product induced by $g$, this shows that when $\ell=0$, the cokernel of \eqref{sing.29a} is identified with the kernel of $\Delta_g$ in $\widetilde{r}^{a^*} x^{\delta^*+\mathfrak{w}}\rho^{\nu m}L^{2}_{b-w}(\cH_0\setminus\{0\})$.  If $u$ is an element of this kernel, then 
$$
 \begin{aligned}
 \Delta_g u=0 \quad &\Longrightarrow \quad \Delta_g \chi^{-1}\widetilde{r}(\widetilde{r}^{-1}\chi u)=0 \\
              & \Longrightarrow \quad  (\widetilde{r}\chi^{-1}\Delta_g \chi^{-1}\widetilde{r})(\widetilde{r}^{-1}\chi u)=0.
 \end{aligned}
$$
Hence, by elliptic regularity with respect to the operator $(\widetilde{r}\chi^{-1}\Delta_g \chi^{-1}\widetilde{r})$, which is uniformly elliptic with respect to the $b-\nQb$-metric $g_{b-\nQb}=\frac{\chi^2 g}{\widetilde{r}^2}$, we see that 
$$
      \widetilde{r}^{-1}\chi u \in \chi\widetilde{r}^{a^*-1} x^{\delta^*+\mathfrak{w}}\rho^{\nu m}H^{\ell}_{b-w}(\cH_0\setminus\{0\}) \quad \forall \ell\in \bbN_0,
$$
that is,
$$
  u \in \widetilde{r}^{a^*} x^{\delta^*+\mathfrak{w}}\rho^{\nu m}H^{\ell}_{b-w}(\cH_0\setminus\{0\}) \quad \forall \ell\in \bbN_0.
$$
Hence, by the injectivity result already obtained, this kernel is trivial when 
$$
     a^*\ge 2-m, \quad \delta_1^*\ge \frac{\nu(m-1)+1}{1-\nu} \quad \mbox{and}  \quad \delta_2^*\ge \frac{m}{1-\nu},
$$
which implies that for $\ell=0$, \eqref{sing.29a} is surjective when 
$$
       a\le -m, \quad \delta_1\le \frac{\nu(m-3)+1}{1-\nu} \quad \mbox{and}  \quad \delta_2\le \frac{m-2}{1-\nu}.
$$
Now, the dimension of the kernel of \eqref{sing.29a} is non-decreasing when the weights $a$, $\delta_1$ or $\delta_2$ decrease, while the dimension of its cokernel is non-increasing.  Thus, when $a, \delta_1$ or $\delta_2$ decrease, the index of \eqref{sing.29a} is non-decreasing.  But conjugating the Laplacian by appropriate weight functions, the operators in \eqref{sing.29a} can be seen as a continuous family of Fredholm operators for $a$ and $\delta$ as in \eqref{sing.29b}, so the index must be constant.  For $\ell=0$, this implies that the dimensions of the kernel and the cokernel of \eqref{sing.29a} do not depend on the choice of $a$ and $\delta$ satisfying \eqref{sing.29b}.  By the discussion above, both must therefore be equal to zero, completing the proof of the theorem when $\ell=0$.  If $\ell\ge 1$, the injectivity of the operator \eqref{sing.29a} immediately follows from the one for $\ell=0$, while the surjectivity follows from the surjectivity when $\ell=0$ and elliptic regularity for the $b-\nQb$-elliptic operator $(\widetilde{r}\chi^{-1}\Delta_g \chi^{-1}\widetilde{r})$.
\end{proof}

This result can be used to deduce a corresponding isomorphism theorem for weighted H\"older spaces.
\begin{corollary}
For all $\ell\in\bbN_0$ and $\alpha\in (0,1)$, the mapping 
\begin{equation}
      \Delta_g: \widetilde{r}^a x^{\delta}\cC^{\ell+2,\alpha}_{b-\nQb}(\cH_0\setminus\{0\})\to \chi^2\widetilde{r}^{a-2} x^{\delta}\cC^{\ell,\alpha}_{b-\nQb}(\cH_0\setminus\{0\})
\label{sing.32a}\end{equation}
is an isomorphism provided $a$ and $\delta=(\delta_1,\delta_2)$ are such that
\begin{equation}
  2-2m<a <0, \quad 0<\delta_2<\frac{2m-2}{1-\nu} \quad \mbox{and} \quad \delta_2+4-2m<\delta_1<\delta_2-1.
\label{sing.32b}\end{equation}
\label{sing.32}\end{corollary}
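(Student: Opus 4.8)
The plan is to deduce the H\"older isomorphism \eqref{sing.32a} from the $L^2$-Sobolev isomorphism of Theorem~\ref{sing.29}, following the pattern of the H\"older part of the proof of Theorem~\ref{w.50}, but supplementing the $\nQb$-calculus near infinity with Melrose's $b$-calculus \cite{MelroseAPS} near the origin (as used in \cite[Theorem~A.3]{CR2021}). Write $G$ for the inverse of $\Delta_g$ on the Sobolev scale supplied by Theorem~\ref{sing.29}, and recall from the proof of that theorem that $P:=\widetilde{r}\chi^{-1}\Delta_g\chi^{-1}\widetilde{r}$ is uniformly elliptic for the $b$-$\nQb$-metric $g_{b-\nQb}$, which has bounded geometry (an $\nQb$-metric near infinity, where Lemma~\ref{wqb.16} applies, and an exact $b$-metric near the origin). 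Hence interior Schauder estimates for $P$ hold with constants independent of the point.

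The first step is to show that $G$ extends to a bounded map
\[
  G\colon \chi^2\widetilde{r}^{a-2}x^{\delta}\cC^{\ell,\alpha}_{b-\nQb}(\cH_0\setminus\{0\})\longrightarrow \widetilde{r}^{a}x^{\delta}L^{\infty}(\cH_0\setminus\{0\})
\]
for $a,\delta$ as in \eqref{sing.32b}. I would obtain this by patching two local statements. Near infinity, $g_{b-\nQb}$ is an $\nQb$-metric, the pointwise Green's-function estimates of Propositions~\ref{w.44} and \ref{w.45} apply, and the Schur test yields boundedness exactly as in the proof of Theorem~\ref{w.50}. Near the origin, $g_{b-\nQb}$ degenerates to the cylindrical metric $r_{\cH_0}^{-2}g_{\cH_0}$ and $\widetilde{r}^2\Delta_g$ is a $b$-operator with indicial family \eqref{sing.27}; since by \eqref{sing.28} there is no critical weight in $(2-2m,0)$, for $2-2m<a<0$ the $b$-parametrix inverts $\widetilde{r}^2\Delta_g$ modulo a smoothing remainder between weighted $b$-H\"older spaces near the cone point with no change of weight. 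The two local inverses differ from $G$ by operators mapping into better-decaying spaces, so $G$ has the claimed mapping property provided the triple $(a,\delta_1,\delta_2)$ admitted by \eqref{sing.32b} can be sandwiched inside the range admitted by \eqref{sing.29b}; this is exactly where the sharpened inequality $\delta_1<\delta_2-1$ (in place of $\delta_1<\delta_2$) is used, namely to absorb the loss incurred in passing between H\"older and $L^2$ norms along the intermediate boundary hypersurface $\hH_1$, the weights $a$ and $\delta_2$ lying in open intervals that tolerate a small perturbation.

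With boundedness into $\widetilde{r}^{a}x^{\delta}L^{\infty}$ in hand, the rest is routine. Writing $\Delta_g(Gf)=f$, multiplying by $\chi^{-1}\widetilde{r}$, and bootstrapping with the uniform Schauder estimates for $P$ gives $Gf\in\widetilde{r}^{a}x^{\delta}\cC^{\ell+2,\alpha}_{b-\nQb}(\cH_0\setminus\{0\})$, so $G$ is a right inverse of \eqref{sing.32a}. For injectivity, if $u\in\widetilde{r}^{a}x^{\delta}\cC^{\ell+2,\alpha}_{b-\nQb}$ satisfies $\Delta_g u=0$, then $P(\widetilde{r}^{-1}\chi u)=0$, so by elliptic regularity for $P$ the function $u$ lies in $\widetilde{r}^{a'}x^{\delta'+\mathfrak{w}}\rho^{\nu m}H^{\ell}_{b-w}(\cH_0\setminus\{0\})$ for some $(a',\delta')$ still satisfying \eqref{sing.29b} --- again the slack in \eqref{sing.32b} makes room for this --- whence $u=0$ by Theorem~\ref{sing.29}; combined with the right inverse $G$ this yields the isomorphism. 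The one genuinely delicate point I expect is the simultaneous weight bookkeeping at the two ends of $\cH_0\setminus\{0\}$: the $\nQb$-end at infinity, controlled by the inequalities already present in Corollary~\ref{w.53}, and the conical $b$-end at the origin, controlled by the critical weights \eqref{sing.28} of $\Delta_{g_{\cH_0}}$. Checking that \eqref{sing.32b} maps into \eqref{sing.29b} with enough room for the embeddings above is the heart of the matter, and no analytic input beyond Theorem~\ref{sing.29}, the $b$-calculus, and the bounded geometry of $g_{b-\nQb}$ should be required.
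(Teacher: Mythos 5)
Your injectivity argument and your diagnosis of the role of $\delta_1<\delta_2-1$ (one unit of weight at $\hH_1$ mediating between $L^2$-based and sup-norm control) match the paper, but the surjectivity half of your plan has a genuine gap. Its crux is the claim that the inverse $G$ furnished by Theorem~\ref{sing.29} is bounded from $\chi^2\widetilde{r}^{a-2}x^{\delta}\cC^{\ell,\alpha}_{b-\nQb}$ into $\widetilde{r}^{a}x^{\delta}L^{\infty}$, obtained by ``patching'' the estimates of Propositions~\ref{w.44} and \ref{w.45} near infinity with a $b$-parametrix near the origin and asserting that the local inverses differ from $G$ by operators mapping into better-decaying spaces. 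As stated this does not go through: Propositions~\ref{w.44} and \ref{w.45} concern the Green's function of a \emph{complete} warped $\QAC$ manifold and rest on the heat-kernel machinery of Grigor'yan and Saloff-Coste, which uses completeness and the global volume and Poincar\'e estimates of \S~\ref{mpl.0}; the space $(\cH_0\setminus\{0\},g)$ is incomplete at the cone point, and the weighted mapping properties of its Green's operator are precisely what must be proved. Patching only produces a parametrix $Q$ with $\Delta_g Q=I-R$, and controlling the correction $GR$ on H\"older data in weighted $L^{\infty}$ is not a routine absorption: it forces you to pass from the $L^2$ isomorphism \eqref{sing.29a} to pointwise bounds, i.e.\ a Sobolev embedding with a weight shift at $\hH_1$, which is the delicate step of the whole proof rather than a remark.

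The paper's proof sidesteps this. Injectivity is exactly your embedding of the weighted H\"older space into the Sobolev scale with an $\epsilon$ loss of weight. For surjectivity it first reduces to $f\in\cC_c^{\ell,\alpha}(\cH_0\setminus\{0\})$ supported away from the origin, using the invertibility of the Laplacian on the complete modification $\cH_0'$ provided by Corollary~\ref{w.53} together with the Hein-Sun mapping theory for an auxiliary compact metric $\hat g$ with two conical singularities; it then applies Theorem~\ref{sing.29} with $\delta_1$ replaced by $\delta_1+1$ (this is exactly where $\delta_1<\delta_2-1$ is consumed), so that the solution lies in $\widetilde{r}^{a}x^{\delta}H^{\ell+2}_{b-\nQb}$, and upgrades to H\"older regularity by Sobolev embedding and Schauder estimates for the conjugated uniformly elliptic $b$-$\nQb$ operator when $\ell+2>m$, treating $\ell+2\le m$ by an approximation argument in a weaker H\"older norm. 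To salvage your route you would have to either re-derive the kernel estimates on a complete modification and then rigorously compare its inverse with $G$ (which essentially reproduces the paper's reduction), or estimate $GR$ through \eqref{sing.29a} together with the $\hH_1$ weight shift; in either case the step you left implicit is the substance of the argument.
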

\begin{proof}
Since 
$$
\widetilde{r}^a x^{\delta}\cC^{\ell+2,\alpha}_{b-\nQb}(\cH_0\setminus\{0\}) \subset \widetilde{r}^{a-\epsilon} x^{\delta+\mathfrak{w}}(x_1x_2)^{-\epsilon}\rho^{\nu m}H^{\ell+2}_{b-w}(\cH_0\setminus\{0\}) 
$$ 
for any $\epsilon>0$, injectivity follows from the injectivity of \eqref{sing.29a}.  For surjectivity, let 
$$
f\in \chi^2\widetilde{r}^{a-2}x^{\delta}\cC^{\ell,\alpha}_{b-\nQb}(\cH_0\setminus\{0\})
$$ 
be given.  We need to find $u\in \widetilde{r}^{a}x^{\delta}\cC^{\ell+2,\alpha}_{b-\nQb}(\cH_0\setminus\{0\})$ such that $\Delta_g u=f$.  Using the invertibility of $\Delta_{g_0'}$ on $\cH_0'$ given by Corollary~\ref{w.53}, we can assume that $f$ is supported in a large ball containing the origin.  Similarly, consider a metric $\hat{g}$ on $\cH_0\setminus \{0\}$ which agrees with $g_{\cH_0}$ near the origin, but also has an isolated conical singularity at infinity modelled on the one of $g_{\cH_0}$ near $0$, so that $(\cH_0\setminus \{0\}, \hat{g})$ is a Riemannian manifold of bounded diameter with two isolated conical singularities.  Using standard mapping properties of the Laplacian $\Delta_{\hat{g}}$, see for instance \cite[Proposition~2.7 and Theorem~2.11]{Hein-Sun}, as well as smooth cut-off functions, we can also reduce to the case where $f$ is supported away from the origin, that is, $f\in \cC_{c}^{\ell,\alpha}(\cH_0\setminus\{0\})$ has compact support.  Since by assumption $\delta_1+1<\delta_2$, we know from Theorem~\ref{sing.29} that there exists a unique  $u\in \widetilde{r}^ax^{\delta+\mathfrak{w}}x_1\rho^{\nu m}H^{\ell+2}_{b-w}(\cH_0\setminus\{0\})$ such that
$$
      \Delta_g u=f.
$$
The reason to apply Theorem~\ref{sing.29} replacing $\delta_1$ by $\delta_1+1$ is that this ensures that in terms of the $L^2$-Sobolev spaces of the metric $g_{b-\nQb}$,
$$
     u\in  \widetilde{r}^a x^{\delta}H^{\ell+2}_{b-\nQb}(\cH_0\setminus\{0\}).
$$
If $\ell+2>m$, this implies by the Sobolev embedding that $\widetilde{r}^{-a}x^{-\delta}u$ is continuous and bounded.  Since the metric $g_{b-\nQb}$ is of bounded geometry, by the Schauder estimates applied to the equation
\begin{equation}
   [(\chi \widetilde{r}^{a-1}x^{\delta})^{-1} (\widetilde{r}\chi^{-1}\Delta_g \chi^{-1}\widetilde{r})(\chi \widetilde{r}^{a-1}x^{\delta})]    ((\widetilde{r}^{-a}x^{-\delta})u)=  \chi^{-2} \widetilde{r}^{2-a}x^{-\delta}f\in \cC^{\ell,\alpha}_{b-\nQb}(\cH_0\setminus\{0\}),
\label{sing.33}\end{equation}
where $[(\chi \widetilde{r}^{a-1}x^{\delta})^{-1} (\widetilde{r}\chi^{-1}\Delta_g \chi^{-1}\widetilde{r})(\chi \widetilde{r}^{a-1}x^{\delta})]$ is uniformly elliptic with respect to the metric $g_{b-\nQb}$, we see that 
$$
    u\in \widetilde{r}^a x^{\delta}\cC^{\ell+2,\alpha}_{b-\nQb}(\cH_0\setminus\{0\}),
$$ 
establishing surjectivity in this case.  If $\ell+2\le m$, then by \cite[p.148]{Gilbarg-Trudinger}, there exists a sequence 
$$\{f_i\}\subset \CI_c(\cH_0\setminus\{0\})$$ bounded in $\chi^2\widetilde{r}^{a-2} x^{\delta}\cC^{\ell,\alpha}_{b-\nQb}(\cH_0\setminus\{0\})$ and converging to $f$ in the topology of $\chi^2\widetilde{r}^{a-2} x^{\delta}\cC^{\ell,\alpha'}_{b-\nQb}(\cH_0\setminus\{0\})$ for some fixed $0<\alpha'<\alpha$.  By the result above, we can find a sequence $\{u_i\}$ in $\widetilde{r}^{a} x^{\delta}\cC^{\ell'+2,\alpha}_{b-\nQb}(\cH_0\setminus\{0\})$ for $\ell'>m-2$ such that 
$$
     \Delta_g u_i= f_i.
$$
In particular, $\{u_i\}$ is a sequence in  $\widetilde{r}^{a} x^{\delta}\cC^{\ell'+2,\alpha}_{b-\nQb}(\cH_0\setminus\{0\})$.  By the boundedness of $\{f_i\}$ and the injectivity of \eqref{sing.32a}, the sequence $\{u_i\}$ is bounded in $\widetilde{r}^{a} x^{\delta}\cC^{\ell'+2,\alpha}_{b-\nQb}(\cH_0\setminus\{0\})$.  Hence, taking a subsequence if needed, we can assume that $\{u_i\}$ converges to some $u\in \widetilde{r}^{a} x^{\delta}\cC^{\ell+2,\alpha}_{b-\nQb}(\cH_0\setminus\{0\})$ in the topology of $\widetilde{r}^{a} x^{\delta}\cC^{\ell+2}_{b-\nQb}(\cH_0\setminus\{0\})$.  In particular, this means that
$$
    \Delta_g u= \lim_{i\to \infty} \Delta_g u_i= f,
$$
establishing surjectivity when $\ell+2\le m$.
\end{proof}

To show openness in the continuity method, we need in fact such an isomorphism for $a>2$.  Since this amounts to taking a smaller H\"older space or Sobolev space, the Laplacian will still be injective.  However, since $0$ is a critical weight of the indicial family \eqref{sing.27}, we know from the relative index theorem of Melrose \cite[\S~6.2]{MelroseAPS} that the Laplacian will no longer be surjective.  However, as long as $a$ is not a critical weight, it will remain Fredholm, so to make it surjective, it will suffices to add a suitable finite dimensional space to the domain.  

To describe this, for $a\in \Crit_b(r^2_{\cH_0}\Delta_{g_{\cH_0}} )$ a critical weight of the indicial family \eqref{sing.27}, let $\cP_{a}$ be the space of harmonic functions homogeneous of degree $a$ with respect to \eqref{sing.25},
\begin{equation}
  \cP_{a}= \{ r_{\cH_0}^{a}u \; | \; u\in \CI(L), \quad r^2_{\cH_0}\Delta_{g_{\cH_0}} r_{\cH_0}^a u=0\},
\label{sing.34}\end{equation}  
where we recall that $(L,\kappa)$ denotes the link of the cone $(\cH_0,g_{\cH_0})$.  Let $\psi\in \CI(\cU\setminus\{0\})$ be a cut-off function identically equal to $1$ near the origin and equal to zero outside a compact set of $\cU$ containing an open neighborhood of the origin.  For an interval $I\subset \bbR$, set 
$$
       \cP_I= \bigcup_{a\in I\cap\Crit_b(r^2_{\cH_0}\Delta_{g_{cH_0}})} \cP_{a} \quad \mbox{and} \quad \psi \cP_I= \{ \psi u\; | \; u\in \cP_I\}.
$$ 
\begin{theorem}
Let $g$ be an $\mathfrak{n}$-warped $\QAC$-metric  asymptotic with rate $\lambda>0$ to $g_{\cH_0}$ near the origin.  
Then for $a>2$ such that $a-2<\min\{\lambda,1\}$ and $(2,a]\cap \Crit_b(r^2_{\cH_0}\Delta_{g_{\cH_0}})=\emptyset$, the mapping 
\begin{equation}
 \Delta_g: \widetilde{r}^a x^{\delta+\mathfrak{w}}\rho^{\nu m}H^{\ell+2}_{b-w}(\cH_0\setminus\{0\})+ \Phi^*(\chi \cP_{[0,2]}) \to \chi^2 \widetilde{r}^{a-2} x^{\delta+\mathfrak{w}}\rho^{\nu m}H^{\ell}_{b-w}(\cH_0\setminus\{0\})
\label{sing.35a}\end{equation}
is an isomorphism for all $\ell\in \bbN_0$ provided the multiweight $\delta=(\delta_1,\delta_2)$ is such that
\begin{equation}
0<\delta_2< \frac{2m-2}{1-\nu} \quad \mbox{and} \quad \delta_2+4-2m< \delta_1<\delta_2.
\label{sing.35b}\end{equation}
Similarly, for $a$ as above, the mapping
\begin{equation}
  \Delta_g: \widetilde{r}^a x^{\delta}\cC^{\ell+2,\alpha}_{b-\nQb}(\cH_0\setminus\{0\}) + \Phi^*(\chi \cP_{[0,2]})  \to \chi^2\widetilde{r}^{a-2} x^{\delta}\cC^{\ell,\alpha}_{b-\nQb}(\cH_0\setminus\{0\})
\label{sing.35c}\end{equation}
is an isomorphism for all $\ell\in \bbN_0$ and $\alpha\in(0,1)$ provided 
\begin{equation}
0<\delta_2< \frac{2m-2}{1-\nu} \quad \mbox{and} \quad \delta_2+4-2m< \delta_1<\delta_2-1.
\label{sing.35d}\end{equation}
\label{sing.35}\end{theorem}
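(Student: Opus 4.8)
The plan is to promote Corollary~\ref{sing.32} across the critical weight $0$ by splicing a standard parametrix for the Laplacian on a manifold with an isolated conical singularity near the origin with the invertibility at infinity furnished by Corollary~\ref{w.53}, and then to compensate the cokernel thus created by the finite-dimensional space $\Phi^*(\chi\cP_{[0,2]})$. First I would treat the map \eqref{sing.35a} \emph{without} the summand $\Phi^*(\chi\cP_{[0,2]})$. Near the origin $\widetilde{r}^2\Delta_g$ is a $b$-operator which, because $g$ is asymptotic with rate $\lambda>0$ to $g_{\cH_0}$, differs from $\widetilde{r}^2\Delta_{g_{\cH_0}}$ by a $b$-operator with coefficients of size $O(r_{\cH_0}^{\lambda})$; consequently its indicial family, and in particular its set of critical weights in $[0,a]$, coincides with that of $\widetilde{r}^2\Delta_{g_{\cH_0}}$, so the conic parametrix of \cite[Theorem~A.3]{CR2021} applies whenever $a\notin\Crit_b(r^2_{\cH_0}\Delta_{g_{\cH_0}})$ and $a-2<\min\{\lambda,1\}$. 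Gluing it to the inverse of $\Delta_g$ near infinity provided by Corollary~\ref{w.53} — using ellipticity of $\Delta_g$ on the overlap exactly as in the proof of Theorem~\ref{sing.29} — gives the Fredholm property.

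Next comes the index count. By Melrose's relative index theorem \cite[\S~6.2]{MelroseAPS}, as the weight is increased from a value $a'\in(2-2m,0)$, where \eqref{sing.29a} is an isomorphism by Theorem~\ref{sing.29}, up to the value $a>2$ (no critical weights lying in $(2,a]$ by hypothesis), the index of the un-augmented map drops by exactly $\dim\cP_{[0,2]}$, the total dimension of the homogeneous-harmonic spaces attached to the critical weights crossed. Injectivity persists: any $w$ in the domain of \eqref{sing.35a} is, near the origin, a bounded function of the form $v+\Phi^*(\chi p)$ with $v=O(r_{\cH_0}^a)$, $a>2$, and $\Phi^*(\chi p)=O(r_{\cH_0}^{\gamma})$, $\gamma\ge 0$, both of which have $L^2$ gradient near the conic point since $\dim_{\bbR}\cH_0=2m\ge4$, while at infinity $w$ decays because $\delta_2>0$; hence $\Delta_g w=0$ forces $\int_{\cH_0}|dw|^2_g\,dg=0$ by the integration-by-parts argument of Theorem~\ref{sing.29}, so $w$ is constant and then zero. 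Finally, for $p\in\cP_\gamma$ with $\gamma\in[0,2]\cap\Crit_b$ one has $\Delta_{g_{\cH_0}}p=0$, so near the origin $\Delta_g(\Phi^*(\chi p))=\Phi^*(\chi\,\widetilde{r}^{-2}E\,p)=O(r_{\cH_0}^{\gamma+\lambda-2})$, where $E$ is the $O(r_{\cH_0}^{\lambda})$ $b$-operator above (and $E$ annihilates the constants, so $\cP_0$ contributes no error at all); this lies in $\chi^2\widetilde{r}^{a-2}x^\delta\cC^{\ell,\alpha}_{b-\nQb}$ precisely because $a-2<\lambda\le\gamma+\lambda$ and $a-2<1$, while away from the origin the result is compactly supported and obviously in the target. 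Since the sum with $\widetilde{r}^a x^{\delta+\mathfrak{w}}\rho^{\nu m}H^{\ell+2}_{b-w}$ is direct (a nonzero element of $\Phi^*(\chi\cP_{[0,2]})$ is homogeneous of some degree $<a$ near the origin, so cannot vanish there to order $a$), enlarging the domain by this $\dim\cP_{[0,2]}$-dimensional space raises the index by $\dim\cP_{[0,2]}$; together with the index $-\dim\cP_{[0,2]}$ above, \eqref{sing.35a} becomes an index-zero Fredholm map, and with the injectivity just shown it is an isomorphism.

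The Hölder statement \eqref{sing.35c} is then deduced from \eqref{sing.35a} exactly as Corollary~\ref{sing.32} was deduced from Theorem~\ref{sing.29}: using Corollary~\ref{w.53} and a reference metric with two conical ends (as in \cite[Theorem~2.11]{Hein-Sun}) one reduces to $f$ compactly supported away from the origin, solves in the Sobolev category after replacing $\delta_1$ by $\delta_1+1$, and then upgrades to $\widetilde{r}^a x^\delta\cC^{\ell+2,\alpha}_{b-\nQb}$ via the Schauder estimate for the uniformly elliptic operator $\widetilde{r}\chi^{-1}\Delta_g\chi^{-1}\widetilde{r}$, with the low-regularity case $\ell+2\le m$ handled by the approximation argument of \cite[p.~148]{Gilbarg-Trudinger}; the finite-dimensional summand $\Phi^*(\chi\cP_{[0,2]})$ is smooth on $\cH_0\setminus\{0\}$ and requires no further regularization.

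The hard part will be the index bookkeeping over $[0,2]$: one must make sure that the rate-$\lambda$ perturbation $E$ does not displace the critical weights of $\Delta_g$ away from those of the model $\Delta_{g_{\cH_0}}$ in the relevant range (this is where $a-2<\min\{\lambda,1\}$ enters, together with the fact that the link of the Calabi--Yau cone $\cH_0$ has $\lambda_1(\Delta_\kappa)\ge 2m-1$, so that $1\notin\Crit_b$ and $\cP_{[0,2]}$ stays under control), and that $\Phi^*(\chi\cP_{[0,2]})$ maps \emph{onto} a complement of the image and not merely into one of the right dimension — which is precisely the injectivity of $p\mapsto[\Delta_g(\Phi^*(\chi p))]$ modulo the image, handled above by the maximum principle.
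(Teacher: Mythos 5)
Your overall strategy is the paper's: glue a conic parametrix near the origin to the inverse at infinity from Corollary~\ref{w.53}, track the index jump across the critical weights in $[0,2]$ with Melrose's relative index theorem, restore bijectivity by adjoining $\Phi^*(\chi\cP_{[0,2]})$, and deduce the H\"older statement from the Sobolev one as in Corollary~\ref{sing.32}. The gap is in your verification that the maps \eqref{sing.35a} and \eqref{sing.35c} are even \emph{well defined} on the summand $\Phi^*(\chi\cP_{\gamma})$ for critical weights $\gamma\in(0,2)$. For such $p\in\cP_\gamma$ you only use $\Delta_{g_{\cH_0}}p=0$ and the perturbative bound $\Delta_g(\Phi^*(\chi p))=O(r_{\cH_0}^{\gamma+\lambda-2})$; membership of this error in $\chi^2\widetilde{r}^{a-2}x^{\delta}\cC^{\ell,\alpha}_{b-\nQb}$ requires $\gamma+\lambda-2\ge a-2$, i.e. $a\le\gamma+\lambda$, whereas your chain of inequalities ``$a-2<\lambda\le\gamma+\lambda$'' only yields $a<\gamma+\lambda+2$, which is off by $2$. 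Such intermediate weights do occur in the situations covered by Theorem~\ref{yl.3}: for the cone \eqref{sing.3} with $k=2$, $m=3$, the functions $\Re z_i,\Im z_i$ are homogeneous harmonic of degree $w=\tfrac{m}{m+1-k}=\tfrac32\in(1,2)$, and for $\lambda$ small and $a$ slightly above $2$ the error $O(r_{\cH_0}^{\gamma+\lambda-2})$ can even blow up at the origin, so it certainly does not lie in $\widetilde{r}^{a-2}$ times a bounded space. Consequently your index bookkeeping cannot start, because the operator has not been shown to map the augmented domain into the stated target.

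The missing ingredient, and the way the paper closes this, is \cite[Theorem~2.14]{Hein-Sun}: on a Calabi--Yau cone every element of $\cP_\gamma$ with $\gamma\in(0,2)$ is \emph{pluriharmonic}. Since the Laplacian of any K\"ahler metric is the trace of $\sqrt{-1}\pa\db$, such $p$ is then annihilated by $\Delta_g$ exactly (not just to leading order) on the region where the cutoff is identically $1$, so $\Delta_g(\Phi^*(\chi p))$ is smooth and compactly supported away from the origin; this is precisely \eqref{sing.36}, and it is what lets one keep increasing the weight past each critical value in $[0,2)$ (the case $\gamma=0$, constants, you do handle correctly). Only at the top weight $\gamma=2$, where pluriharmonicity is unavailable, does one fall back on the rate-$\lambda$ estimate, and that is exactly where the hypothesis $a-2<\min\{\lambda,1\}$ enters. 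With this correction the rest of your argument (injectivity by the integration-by-parts argument of Theorem~\ref{sing.29}, index count, and the Schauder/approximation reduction of \eqref{sing.35c} to \eqref{sing.35a}) goes through as in the paper.
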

\begin{proof}
For $a=0$, the space $\cP_a$ corresponds to constant functions, while by \cite[Theorem~2.14]{Hein-Sun}, for \linebreak $a\in (0,2)\cap \Crit_b(r^2_{\cH_0}\Delta_{g_{\cH_0}})$, elements in $\cP_a$ are pluriharmonic.  In particular, in these cases, $u\in \cP_a$ is such that 
\begin{equation}
     \Delta_g \Phi_*(\psi u)\in \CI_c(\cH_0\setminus\{0\}).
\label{sing.36}\end{equation}
To see that \eqref{sing.35a} is an isomorphism, we start with $a<0$ where it is an isomorphism, and progressively increase it.  Each time we pass a critical weight $a_0\in [0,2]$, it suffices by the relative index formula of Melrose \cite[\S~6.2]{MelroseAPS} to add $\Phi^*(\psi\cP_{a_0})$ to the domain to still have an isomorphism, noticing that \eqref{sing.36} ensures that the map \eqref{sing.35a} will still be well-defined when we continue to increase the weight $a$.  However, if $2\in\Crit_b(r^2_{\cH_0}\Delta_{g_{\cH_0}})$, we do not know that $u\in \cP_2$ is pluriharmonic, but since $g$ is asymptotic with rate $\lambda>0$ to $g_{\cH_0}$, we know at least that 
$$
   \Delta_g \Phi_*(\psi u)\in \widetilde{r}^{\min\{1,\lambda\}}\CI_{b}(\cH_0\setminus\{0\})
$$ 
with support contained in a small ball containing the origin, so in particular
$$
  \Delta_g \Phi^*(\psi u)\in \chi^2\widetilde{r}^{a-2}x^{\delta+\mathfrak{w}}\rho^{\nu m}H^{\ell}_{b-w}(\cH_0\setminus\{0\})
$$
provided $a-2<\min\{1,\lambda\}$.  Thus, to pass this last critical weight, we can include $\Phi_*(\psi \cP_2)$ in the domain to keep the operator surjective (and injective), showing that the map \eqref{sing.35a} is an isomorphism.  

Knowing this, we can proceed essentially as in the proof of Corollary~\ref{sing.32} to conclude that the mapping \eqref{sing.35c} is also an isomorphism.
\end{proof}

To show that $T$ is open, we can now proceed exactly as in \cite[\S~2.3]{Hein-Sun}, but with the isomorphism theorem \cite[Theorem~2.11]{Hein-Sun} replaced by Theorem~\ref{sing.35}.  Notice moreover that since our potentials decay at infinity, compared to \cite[\S~2.3]{Hein-Sun}, no normalization of the form 
$$
    \int_{\cH_0\setminus\{0\}} u_{t,0} \omega^m_{t,0}=0 \quad \mbox{or}  \int_{\cH_0\setminus\{0\}} f \omega^m_{t,0}=0
$$
must be imposed.  

\subsection{Closedness}

To show that $T$ is closed, we will follow the strategy of \cite[\S~3]{Hein-Sun} suitably adapted to our non-compact setting.  First, fixing a compact set $K\subset \bbC^{m+1}$ containing an open neighborhood of the origin and using the fact that $\widehat{\omega}_{0,\epsilon}$ is Calabi-Yau, we can proceed as in the proof of \cite[Lemma~4.14]{CGT} to conclude that there is a constant $C>0$ such that
\begin{equation}
     \Diam_{\widehat{\omega}_{0,\epsilon}} (K\cap \cH_{\epsilon}) \le C \quad \forall \epsilon\in (0,1],
\label{sing.38}\end{equation}
where $   \Diam_{\widehat{\omega}_{t,\epsilon}} (K\cap \cH_{\epsilon}) $ is the diameter of $K\cap \cH_{\epsilon}$ with respect to the metric $\widehat{\omega}_{t,\epsilon}$.  Indeed, \cite[Lemma~4.14]{CGT} is formulated in terms of metrics that are asymptotically conical at infinity, but this is only used to obtain the following asymptotic behavior of the volume of balls of radius $r$:
\begin{equation}
   \lim_{r\to \infty} \frac{\Vol(B_r(x_i))}{r^{2m}}= \Vol_{g_{C_0}}(L_0)>0,
\label{sing.39}\end{equation}
where $(C_0,g_{C_0})$ is the tangent cone at infinity with link $L_0$.  The main difference in our setting is that the link $L_0$ is now singular, but the asymptotic behavior  \eqref{sing.39} still holds, so the proof \cite[Lemma~4.14]{CGT} applies and yields the uniform bound \eqref{sing.38}.

Unfortunately, the proof of \cite[Lemma~4.14]{CGT} uses in a crucial way  volume comparison for Ricci-flat metrics, so it cannot be applied to the metrics $\widehat{\omega}_{t,\epsilon}$ when $t>0$ since those are not Calabi-Yau.  In this case, we can instead use the argument by contradiction in the proof of \cite[Proposition~3.2]{Hein-Sun}.  To do so, we need the following result.
\begin{lemma}
Let $\{(t_i,\epsilon_i)\}\subset [0,1]\times(0,1]$ be a sequence with $t_i\to t\in [0,1]$ and $\epsilon_i\searrow 0$ such that 
\begin{equation}
     \Diam_{\widehat{\omega}_{t_i,\epsilon_i}} K\cap \cH_{\epsilon_i}\le D \quad \forall i
\label{sing.40a}\end{equation}
for some $D>0$.  Then any subsequence pointed Gromov-Hausdorff limit of $(\cH_{\epsilon_i}, \widehat{\omega}_{t_i,\epsilon_i})$ is naturally homeomorphic to $\cH_0$ and is isometric to the metric completion of $(\cH_0\setminus\{0\}, \widehat{\omega}_{t,0})$.  
\label{sing.40}\end{lemma}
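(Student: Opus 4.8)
The plan is to combine the smooth local convergence of the K\"ahler potentials away from the origin, already established by Proposition~\ref{sing.21} and the proof of Theorem~\ref{sing.22}, with the uniform diameter bound \eqref{sing.40a} near the origin, in order to pin down the Gromov-Hausdorff limit as a metric space. First I would recall that by Proposition~\ref{sing.21} the potentials $u_{t_i,\epsilon_i}$ converge (along a subsequence, and after passing to the diagonal) in $x^{\widetilde{\delta}}\CI_{\nQb}$-topology on $\cH_{\epsilon_i}\setminus(\cH_{\epsilon_i}\cap K')$ for any compact $K'\supset K$ containing a neighborhood of the origin, so that the metrics $\widehat{\omega}_{t_i,\epsilon_i}$ converge smoothly and locally away from the origin to $\widehat{\omega}_{t,0}$, the K\"ahler form of Theorem~\ref{sing.22}. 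In particular, on the complement of any fixed neighborhood of the origin, the pointed Gromov-Hausdorff limit is canonically identified with $(\cH_0\setminus\{0\},\widehat{\omega}_{t,0})$ as a smooth Riemannian manifold. The basepoints should be taken at a fixed point $o\in\cH_{\epsilon_i}\setminus(\cH_{\epsilon_i}\cap K)$, which is identified under the smooth families with a fixed point of $\cH_0\setminus\{0\}$.

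Next I would handle the region near the origin. The uniform bound \eqref{sing.40a} says that the pieces $K\cap\cH_{\epsilon_i}$ have diameter at most $D$ with respect to $\widehat{\omega}_{t_i,\epsilon_i}$; combined with the uniform upper bound on $\widehat{\omega}_{t_i,\epsilon_i}$ on $\cH_{\epsilon_i}\setminus(\cH_{\epsilon_i}\cap K)$ coming from Lemma~\ref{sing.19}, this forces the diameters of $\{\widetilde r\le R\}\cap\cH_{\epsilon_i}$ to be uniformly bounded for each fixed $R$. Since the complement of $K$ already carries a smooth limit geometry, the sequence $(\cH_{\epsilon_i},\widehat{\omega}_{t_i,\epsilon_i},o)$ is uniformly totally bounded on bounded sets, hence precompact in the pointed Gromov-Hausdorff topology. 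Let $(Y,d_Y,o_Y)$ be a subsequential limit. On $Y\setminus(\text{limit of }K\cap\cH_{\epsilon_i})$ the smooth convergence identifies $Y$ with $\cH_0\setminus\{0\}$ and $d_Y$ with the Riemannian distance of $\widehat{\omega}_{t,0}$, so $Y$ is obtained from $(\cH_0\setminus\{0\},\widehat{\omega}_{t,0})$ by attaching its ``missing'' points; the content is that exactly one point is added, sitting at the origin. To see this I would use that $K\cap\cH_{\epsilon_i}$ is connected with diameter $\le D$, so its Gromov-Hausdorff limit inside $Y$ is a connected set of diameter $\le D$; and that $\cH_{\epsilon_i}\setminus\{$small ball around origin$\}$ deformation retracts compatibly, so the only new points of $Y$ beyond $\cH_0\setminus\{0\}$ arise from the shrinking necks. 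Using the explicit equations \eqref{sing.1}, \eqref{sing.3} one checks that $\{P_k=\epsilon\}\cap\{|z|\le\eta\}$ is, for $\eta$ fixed and $\epsilon\to0$, a family of smooth pieces converging in Hausdorff distance inside $\bbC^{m+1}$ to $\{P_k=0\}\cap\{|z|\le\eta\}$, which is connected and has a single singular point at $0$; together with a Cheeger-Colding-type statement (or more elementarily, the fact that $\widehat\omega_{0,\epsilon}$ is Ricci-flat and \eqref{sing.38} holds, so one controls the metric measure structure via volume convergence as in \cite{CGT}) this shows the limit of the neck pieces is a single point. Hence $Y\cong\cH_0$ as a topological space, with the distinguished point corresponding to the origin.

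It then remains to identify $(Y,d_Y)$ metrically with the completion of $(\cH_0\setminus\{0\},\widehat{\omega}_{t,0})$. I would argue that the inclusion $\cH_0\setminus\{0\}\hookrightarrow Y$ (coming from smooth convergence away from $0$) is a local isometry, that $Y\setminus\{0\}$ is exactly the image, and that $Y$ is complete; a complete metric space containing $(\cH_0\setminus\{0\},d_{\widehat\omega_{t,0}})$ isometrically as a dense open subset whose complement is a single point must be its metric completion, provided one knows the completion of $(\cH_0\setminus\{0\},\widehat\omega_{t,0})$ itself adds only one point. The latter follows because $\widehat\omega_{t,0}$ agrees with the Calabi-Yau cone metric $g_{\cH_0}$ (for $t$ near $1$) or is at least quasi-isometric to it near the origin through the uniform estimates — here \eqref{sing.11}, \eqref{sing.10b} and the uniform bounds in Lemma~\ref{sing.19} give that balls around the origin in $\widehat\omega_{t_i,\epsilon_i}$ have radius going to zero as the region shrinks, so in the limit the puncture is a genuine point at finite distance — and a metric cone minus its apex has completion obtained by adding the apex. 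I would make this precise by estimating $d_{\widehat\omega_{t,0}}$-lengths of paths through small spheres $\{\widetilde r=s\}$ and letting $s\to0$, using that $\widehat\omega_{t,0}$ restricted to $\{\widetilde r\le\eta_0\}\setminus\{0\}$ is uniformly equivalent to $g_{\cH_0}$ there (as $t\to$ the limiting value, this is part of what closedness will eventually give, but at the level of this Lemma it suffices to use the uniform two-sided bounds on the shrinking-neck region transported to the limit).

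The main obstacle I anticipate is the ``single point'' claim: ruling out that the shrinking necks $K\cap\cH_{\epsilon_i}$ degenerate in Gromov-Hausdorff sense to something larger than a point (for instance several points, or a positive-dimensional set), and simultaneously showing that no distance is lost, i.e. that two points of $\cH_0\setminus\{0\}$ that are close in $\widehat\omega_{t,0}$ through a path passing near the origin remain close in $Y$ and conversely that distinct punctured-neighborhood points do not get identified. For $t=0$ this is cleanly handled by the Ricci-flatness of $\widehat\omega_{0,\epsilon}$ together with volume monotonicity and the diameter bound \eqref{sing.38}, essentially as in \cite[Lemma~4.14, Proposition~4.17]{CGT} and \cite{Sun-Zhang}; for $t>0$ the metrics are not Ricci-flat, so I would instead invoke the contradiction argument of \cite[Proposition~3.2]{Hein-Sun}, which uses only the uniform diameter bound \eqref{sing.40a}, the uniform bounds of Lemma~\ref{sing.19}, and the fact that $F$ is supported away from the origin so that $\widehat\omega_{t_i,\epsilon_i}$ \emph{is} Ricci-flat on a fixed neighborhood of the origin. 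Once the local structure near the origin is understood, gluing with the smooth convergence away from the origin to conclude the global homeomorphism and isometry is routine.
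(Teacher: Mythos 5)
Your overall skeleton — smooth local convergence away from the origin (Theorem~\ref{sing.22}), pointed precompactness from \eqref{sing.40a}, then identification of the limit with the completion of $(\cH_0\setminus\{0\},\widehat{\omega}_{t,0})$ — is the same as the paper's, but the two ingredients you substitute for the hard step near the origin do not hold up. First, you invoke ``uniform two-sided bounds on the shrinking-neck region'' comparing $\widehat{\omega}_{t,\epsilon}$ (or $\widehat{\omega}_{t,0}$) with $g_{\cH_0}$ near $0$. No such bounds are available at this stage: the upper bound in \eqref{sing.19b} carries the factor $e^{f_{\epsilon}}\asymp r_E^{-2(k-1)}$ by \eqref{sing.11} and degenerates at the origin, and the statement that $\widehat{\omega}_{t,0}$ is comparable, let alone asymptotic, to $g_{\cH_0}$ near $0$ is precisely the conclusion of the closedness part of Theorem~\ref{sing.37}, of which Lemma~\ref{sing.40} is an ingredient — using it here is circular. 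Second, the appeal to the contradiction argument of \cite[Proposition~3.2]{Hein-Sun} for $t>0$ is misplaced: that argument produces a uniform \emph{diameter} bound (it is exactly what the paper runs after Lemma~\ref{sing.40}, combining it with \eqref{sing.38}, to remove the hypothesis \eqref{sing.40a}); it does not deliver the structure of the pointed limit, and in the present lemma the diameter bound is already assumed.

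Third, the extrinsic Hausdorff convergence of $\{P_k=\epsilon\}\cap\{|z|\le\eta\}$ to $\{P_k=0\}\cap\{|z|\le\eta\}$ in $\bbC^{m+1}$ says nothing about the \emph{intrinsic} diameters of these regions for $\widehat{\omega}_{t_i,\epsilon_i}$, so it cannot by itself show that the completion adds a single point, nor rule out loss of distance through the neck — the very issues you identify as the main obstacle. The mechanism that actually closes these gaps, and the whole content of the paper's proof, is the observation that $F$ vanishes near the origin, so the metrics $\widehat{\omega}_{t_i,\epsilon_i}$ are Ricci-flat on a fixed neighbourhood of $0$ and have uniformly bounded Ricci curvature elsewhere \emph{for every} $t$, not only $t=0$; together with \eqref{sing.40a} this places the sequence in the Cheeger--Colding convergence theory, and the identification of any subsequential pointed limit with the completion of $(\cH_0\setminus\{0\},\widehat{\omega}_{t,0})$, homeomorphic to $\cH_0$ — including the one-point and no-distance-lost claims — is then carried out as in \cite[\S 5]{Sun-Zhang} or \cite[Propositions~5.2 and 5.5]{CGT}, via non-collapsing and the structure of the metric singular set of the limit, with no two-sided comparison to $g_{\cH_0}$ required. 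You cite these references, but only as an option for $t=0$; the point is that they are driven by the uniform lower Ricci bound rather than global Ricci-flatness, so they apply uniformly in $t$ and replace the steps above.
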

\begin{proof}
  Since $F\in \CI_c(\bbC^{m+1}\setminus\{0\})$ in \eqref{sing.14} is supported away from the origin, the metrics $\widehat{\omega}_{t_i,\epsilon_i}$ are Ricci-flat near the origin, ensuring that there is a uniform lower bound on the Ricci curvature of these metrics.  With the uniform upper bound on the diameter of $K\cap \cH_{\epsilon_i}$ in \eqref{sing.40a}, this ensures that the general convergence theory of \cite{Cheeger-ColdingI, Cheeger-ColdingII, Cheeger-ColdingIII, Cheeger-Colding-Tian} can be applied.  From there, we can argue as in \cite[\S~5 and Proposition~5.3]{Sun-Zhang} or \cite[Propositions~5.2 and 5.5]{CGT} to prove the result.  
\end{proof}

\begin{remark}
By Theorem~\ref{sing.22}, notice that $\widehat{\omega}_{t_i,\epsilon_i}$ converges smoothly locally  to $\widehat{\omega}_{t,0}$ away from the origin.
\label{sing.41}\end{remark}
We can now show that $T$ is closed to complete the proof of Theorem~\ref{sing.37}.

\begin{proof}[Proof of Theorem~\ref{sing.37}]

Combining Lemma~\ref{sing.40} with \eqref{sing.38}, we can now use the contradiction argument of \cite[Proof of Proposition~3.2]{Hein-Sun} to conclude that there exists $D>0$ such that
\begin{equation}
     \Diam_{\widehat{\omega}_{t,\epsilon}} K\cap \cH_{\epsilon}\le D \quad \forall (t,\epsilon)\in[0,1]\times(0,1].
\label{sing.41}\end{equation}
By this bound, Lemma~\ref{sing.40} and Theorem~\ref{sing.22}, we see that $(\cH_0,\widehat{\omega}_{t,0})$ is the pointed Gromov-Hausdorff limit of $(\cH_{\epsilon}, \widehat{\omega}_{t,\epsilon})$ as $\epsilon\searrow 0$ for any $t\in [0,1]$.  Since $\widehat{\omega}_{t,\epsilon}$ is Calabi-Yau near the origin thanks to the fact that $F=0$ there, we can proceed as in \cite[\S~3.2]{Hein-Sun} to establish that the set $T$ is closed.   Within \cite[\S~3]{Hein-Sun}, notice that instead of proceeding as in \cite[\S~3.3]{Hein-Sun} to prove the polynomial rate of convergence of the metric to $g_{\cH_0}$, we can alternatively invoke \cite[Corollary~4.3]{Chiu-Szekelyhidi}.  

\end{proof}

To deduce Corollary~\ref{ucy.1} from Theorem~\ref{sing.37}, we will need the following result.
\begin{lemma}
For $\epsilon>0$, the affine hypersurface $\cH_{\epsilon}$ has non-trivial integral homology classes of degree $m$.  For such a homology class $\mathfrak{c}\in H_m(\cH_{\epsilon};\bbZ)$, 
\begin{equation}
      \inf \{ \area_{g_{\epsilon}}(\Sigma) \; | \: \Sigma \in \mathfrak{c} \; \mbox{is a rectifiable current}\}>0.
\label{nu.1a}\end{equation}
\label{nu.1}\end{lemma}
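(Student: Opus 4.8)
The plan is to split the statement into a topological half and a geometric half. For the topological half, recall that $\cH_\epsilon$ is the Milnor fiber of the Brieskorn--Pham singularity $z_0^k+\cdots+z_m^k$, whose middle homology is well understood: $\cH_\epsilon$ is homotopy equivalent to a bouquet of $m$-spheres, with $H_m(\cH_\epsilon;\bbZ)$ free of rank $(k-1)^{m+1}$ (the Milnor number), so in particular non-trivial. I would simply cite Milnor's book on singular points of complex hypersurfaces for this, noting that $k\ge 2$ ensures the rank is positive. Thus there exist non-trivial classes $\mathfrak{c}\in H_m(\cH_\epsilon;\bbZ)$, and by work on volume-minimizing currents (Federer--Fleming) each such class is represented by an area-minimizing rectifiable current $\Sigma_{\mathfrak c}$, so the infimum in \eqref{nu.1a} is attained; it remains to show it is strictly positive.

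For the geometric half, the key point is a monotonicity/lower-bound argument. Suppose the infimum were $0$; then there would be a sequence of rectifiable currents $\Sigma_i\in\mathfrak{c}$ with $\area_{g_\epsilon}(\Sigma_i)\to 0$. Since $g_\epsilon$ is a complete $\mathfrak{n}$-warped $\QAC$-metric of maximal volume growth (Proposition~\ref{w.2} and Corollary~\ref{mae.15}), it has, in particular, bounded geometry and a uniform Euclidean-type isoperimetric/Sobolev inequality on small scales; more relevantly, it satisfies a uniform monotonicity formula for minimal currents with a definite density lower bound away from the "collapsed" region. The cleaner route, though, is homological: a non-trivial class $\mathfrak c\in H_m(\cH_\epsilon;\bbZ)$ has a Poincar\'e dual compactly-supported cohomology class, and pairing $\Sigma_i$ against a fixed smooth closed $m$-form $\alpha$ representing that dual class gives $\int_{\Sigma_i}\alpha = \langle \mathfrak c, [\alpha]\rangle =: c_0\ne 0$, independent of $i$. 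Since $\alpha$ is a fixed smooth form with $\|\alpha\|_{g_\epsilon}$ bounded by a constant $A$ on its (compact) support, $|c_0| = |\int_{\Sigma_i}\alpha|\le A\,\area_{g_\epsilon}(\Sigma_i\cap \operatorname{supp}\alpha)\le A\,\area_{g_\epsilon}(\Sigma_i)$, forcing $\area_{g_\epsilon}(\Sigma_i)\ge |c_0|/A>0$, a contradiction. Hence \eqref{nu.1a} holds with the explicit lower bound $|c_0|/A$.

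I would organize the write-up as: (1) recall the Milnor fiber description of $\cH_\epsilon$ and the non-vanishing of $H_m(\cH_\epsilon;\bbZ)$, with a citation; (2) fix $\mathfrak c\ne 0$, choose by de Rham/Poincar\'e duality a smooth closed $m$-form $\alpha$ of compact support with $\langle\mathfrak c,[\alpha]\rangle=c_0\ne0$; (3) for any rectifiable current $\Sigma\in\mathfrak c$, run the comass estimate above to get $\area_{g_\epsilon}(\Sigma)\ge |c_0|/\sup_{\operatorname{supp}\alpha}\|\alpha\|_{g_\epsilon}$; (4) conclude the infimum is positive. The main subtlety — and the only place where a little care is needed rather than a one-line citation — is making the pairing $\int_\Sigma\alpha = \langle\mathfrak c,[\alpha]\rangle$ rigorous for an arbitrary integral rectifiable current in the homology class (as opposed to a smooth cycle); this is standard in geometric measure theory (currents with finite mass and the relation between the de Rham and current homologies, plus Stokes for $d\alpha=0$), and I would cite Federer or Simon's GMT lecture notes rather than reprove it. The $\QAC$ geometry plays essentially no role here beyond guaranteeing completeness so that the class $\mathfrak c$ and the pairing are well-defined; the positivity is purely homological.
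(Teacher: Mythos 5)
Your topological half is fine: the non-vanishing of $H_m(\cH_{\epsilon};\bbZ)$ is exactly the Milnor--Brieskorn--Pham fact the paper cites (to Oka) and citing Milnor's book instead is harmless. The geometric half, however, has a genuine gap at the step $c_0\neq 0$. Pairing $\mathfrak{c}$ against a compactly supported form representing its own Poincar\'e dual computes the self-intersection number $\mathfrak{c}\cdot\mathfrak{c}$, which vanishes identically whenever $m$ is odd (the intersection form on middle homology is skew-symmetric) and can vanish for $m$ even as well. Worse, even allowing $\alpha$ to represent the dual of an arbitrary class, a compactly supported closed $m$-form pairing non-trivially with $\mathfrak{c}$ exists only if $\mathfrak{c}$ lies outside the radical of the intersection form, i.e.\ outside the kernel of $H^m_c(\cH_{\epsilon};\bbR)\to H^m(\cH_{\epsilon};\bbR)$. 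In the paper's principal example ($k=2$, $m_1=2$, so $m=3$ and $\cH_{\epsilon}\simeq T^*S^3$), that map is zero: $H^3_c$ is generated by the Thom class, whose image in $H^3$ is the Euler class, and $\chi(S^3)=0$; hence $\int_{\Sigma}\alpha=0$ for every closed compactly supported $\alpha$ and every cycle $\Sigma$, and your lower bound degenerates to $0\ge 0$. Since the lemma is asserted for every non-trivial class (and is applied precisely in such examples), the comass/pairing argument cannot carry the proof.

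Relatedly, your closing remark that the $\QAC$ geometry ``plays essentially no role'' is where the real content hides. On a noncompact manifold the infimum of mass over a non-trivial class can perfectly well be zero, with cycles shrinking while drifting to infinity (already on a cylinder whose cross-sections collapse at infinity); positivity is not purely homological. The paper's proof handles exactly this: it takes a minimizing sequence of rectifiable currents, constructs inward-pointing vector fields near the two boundary hypersurfaces of $\widehat{\cH}_{\epsilon}$ whose flows are area contractions for $g_{\epsilon}$ (using the asymptotically conical model near $\hH_2$ and the warped model \eqref{int.5} near $\hH_1$), pushes the whole sequence into a fixed compact set, and then applies Federer--Fleming compactness in the flat topology to extract a limiting rectifiable current in the class $\mathfrak{c}$ realizing the infimum; since $\mathfrak{c}\neq 0$ this limit is a non-zero current and its mass is positive. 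If you want to salvage a pairing-type argument, you would have to restrict to classes not in the radical of the intersection form, which is strictly weaker than the statement needed here; otherwise some version of the confinement-plus-compactness argument is unavoidable.
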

\begin{proof}
The fact that $H_{m}(\cH_{\epsilon};\bbZ)\ne \{0\}$ is a consequence of a result of Brieskorn-Pham \cite[Corollary~1]{Oka1973}.  On the other hand, to show that the inequality \eqref{nu.1a} holds, we need to suitably adapt standard arguments in geometric measure theory to take into account the fact that $\cH_{\epsilon}$ is not compact.  Thus, fix a representative $\Sigma_0$ of the homology class $\mathfrak{c}$ and let $\{\Sigma_j\}$ be a sequence of rectifiable currents in the homology class $\mathfrak{c}$ such that $\area_{g_{\epsilon}}(\Sigma_j)$ is tending to the infimum in \eqref{nu.1a} as $j\to \infty$.  Since near $\hH_2$, the metric is asymptotically modelled on $(C_0,g_{C_0})$, we can find near $\hH_2$ a smooth vector field $\xi_2$ on $\widehat{\cH}_{\epsilon}$ inward pointing on $\hH_2$ and $\hH_1$ whose flow near $\hH_2$ is a contraction with respect to the metric $g_{\epsilon}$.  Using a smooth cut-off function, we can extend this vector field smoothly to all of $\widehat{\cH}_{\epsilon}$ in such a way that it is zero outside an open neighborhood of $\hH_2\cap \widehat{\cH}_{\epsilon}$ and its flow $\phi_t$ is globally a contraction on $g_{\epsilon}$ for $t\ge 0$.  Applying this flow to the cycles $\Sigma_i$, we can therefore suppose that they all lie outside some fixed open neighborhood of $\hH_2\cap \widehat{\cH}_{\epsilon}$.  Similarly, near $\hH_1$, the metric is asymptotically of the form \eqref{int.5} with $\nu_K=\nu>0$.  This means that we can find another vector field $\xi_1$ inward pointing on $\hH_1\cap\widehat{\cH}_{\epsilon}$ whose flow is a contraction near $\hH_1$.  For instance, in the coordinates of \eqref{int.5}, we can take $\xi_1= -\frac{\pa}{\pa r}$.  Suitably cutting off this vector field, we can assume its flow is globally a contraction on $\cH_{\epsilon}$.  Applying it to the rectifiable currents $\Sigma_j$, we can assume that they are all contained  outside some fixed open neighborhood of $\hH_1\cap \widehat{\cH}_{\epsilon}$.  

In other words, we can assume that the elements of the sequence $\{\Sigma_j\}$ are all contained in some large compact set of $\cH_{\epsilon}$ containing $\Sigma_0$.  From there, we can apply the usual argument in geometric measure theory \cite[Corollary~9.6]{FF1960} to conclude that $\{\Sigma_j\}$ has a subsequence converging in the $\mathcal{F}$-topology to a rectifiable current $\Sigma_{\infty}\in \mathfrak{c}$ whose area is the infimum in \eqref{nu.1a}.  Since $\area_{g_{\epsilon}}(\Sigma_{\infty})>0$, the result follows.
\end{proof}

With this result, the non-uniqueness result of Corollary~\ref{ucy.1} can be deduced as follows.

\begin{proof}[Proof of Corollary~\ref{ucy.1}]
If, except for finitely many values of $\epsilon>0$, the metrics $g_{\epsilon}$ were related by a biholomorphism and a scaling, then by Lemma~\ref{nu.1}, the scaling constant would tend to zero as $\epsilon\searrow 0$ and $(\cH_0,g_0)$ would correspond to a tangent cone at infinity.  But by \cite{Cheeger-Colding1996}, $(\cH_0,g_0)$ would then have to be a metric cone, leading to a contradiction.
\end{proof}

%\section*{Conflict of interest statement}

%On behalf of all authors, the corresponding author states that there is no conflict of interest.

%\section*{Data availability statement}

%The manuscript has no associated data.

\bibliography{warpedQAC}
\bibliographystyle{amsplain}

\end{document}